\definecolor{trp}{rgb}{1,1,1}
\newcommand\setItemnumber[1]{\setcounter{enumi}{\numexpr#1-1\relax}}
\definecolor{red}{rgb}{1,0,.2}
\newtheorem{theorem}{Theorem}[section]
\theoremstyle{plain}
\newtheorem{corollary}[theorem]{Corollary}
\newtheorem{definition}[theorem]{Definition}
\newtheorem{example}[theorem]{Example}
\newtheorem{question}[theorem]{Question}
\newtheorem{lemma}[theorem]{Lemma}
\newtheorem{prop}[theorem]{Proposition}
\newtheorem{remark}[theorem]{Remark}
\numberwithin{equation}{section}
\newcommand{\R}{\mathbb{R}}
\newcommand{\N}{\mathbb{N}}
\newcommand{\btau}{\boldsymbol\tau}
\newcommand{\ii}{\mathbf{i}}
\newcommand{\jj}{\mathbf{j}}
\newcommand{\iih}{\boldsymbol{\hat\imath}}
\newcommand{\jjh}{\boldsymbol{\hat\jmath}}
\newcommand{\kkh}{\mathbf{\hat k}}
\newcommand{\ih}{\hat\imath}
\newcommand{\jh}{\hat\jmath}
\newcommand{\kh}{\hat k}
\newcommand{\iiv}{\overline{\imath}}
\newcommand{\jjv}{\overline{\jmath}}
\newcommand{\supp}{\mathrm{supp}}
\def\verbatim@font{\normalfont\rmfamily}
\addspace\mkbibbrackets{\thefield{eprintclass}}}}}
\addspace\mkbibbrackets{\thefield{eprintclass}}}}}
\definecolor{blue}{rgb}{0,0,1}
\definecolor{red}{rgb}{1,0,.7}
\begin{document}
\title[Intermediate dimensions of Bedford--McMullen carpets]{Intermediate dimensions of Bedford--McMullen carpets with applications to Lipschitz equivalence}

\author{Amlan Banaji}
\address{Amlan Banaji, \newline Mathematical Sciences, Loughborough University, Loughborough, LE11 3TU, UK} \email{A.F.Banaji@lboro.ac.uk}

\author{Istv\'an Kolossv\'ary}
\address{Istv\'an Kolossv\'ary, \newline HUN-REN Alfréd Rényi Institute of Mathematics, 1053 Budapest, Reáltanoda u. 13–15, Hungary} 
\email{kolossvary.istvan@renyi.hu}

\thanks{2020 {\em Mathematics Subject Classification.} Primary 28A80 Secondary 28A78 37C45
\\ \indent
{\em Key words and phrases.} intermediate dimensions, Bedford--McMullen carpet, Hausdorff dimension, box dimension, multifractal analysis, bi-Lipschitz equivalence, method of types}

\begin{abstract}
Intermediate dimensions were introduced to provide a spectrum of dimensions interpolating between Hausdorff and box-counting dimensions for fractals where these differ. In particular, the self-affine Bedford--McMullen carpets are a natural case for investigation, but until now only very rough bounds for their intermediate dimensions have been found. In this paper, we determine a precise formula for the intermediate dimensions $\dim_{\, \theta}\Lambda$ of any Bedford--McMullen carpet~$\Lambda$ for the whole spectrum of~$\theta \in [0,1]$, in terms of a certain large deviations rate function. The intermediate dimensions exist and are strictly increasing in~$\theta$, and the function $\theta\mapsto \dim_{\, \theta}\Lambda$ exhibits interesting features not witnessed on any previous example, such as having countably many phase transitions, between which it is analytic and strictly concave.

We make an unexpected connection to multifractal analysis by showing that two carpets with non-uniform vertical fibres have equal intermediate dimensions if and only if the Hausdorff multifractal spectra of the uniform Bernoulli measures on the two carpets are equal. Since intermediate dimensions are bi-Lipschitz invariant, this shows that the equality of these multifractal spectra is a necessary condition for two such carpets to be Lipschitz equivalent. 
\end{abstract}

\maketitle
\begin{figure}[h]
	\centering
	\includegraphics[width=0.85\textwidth]{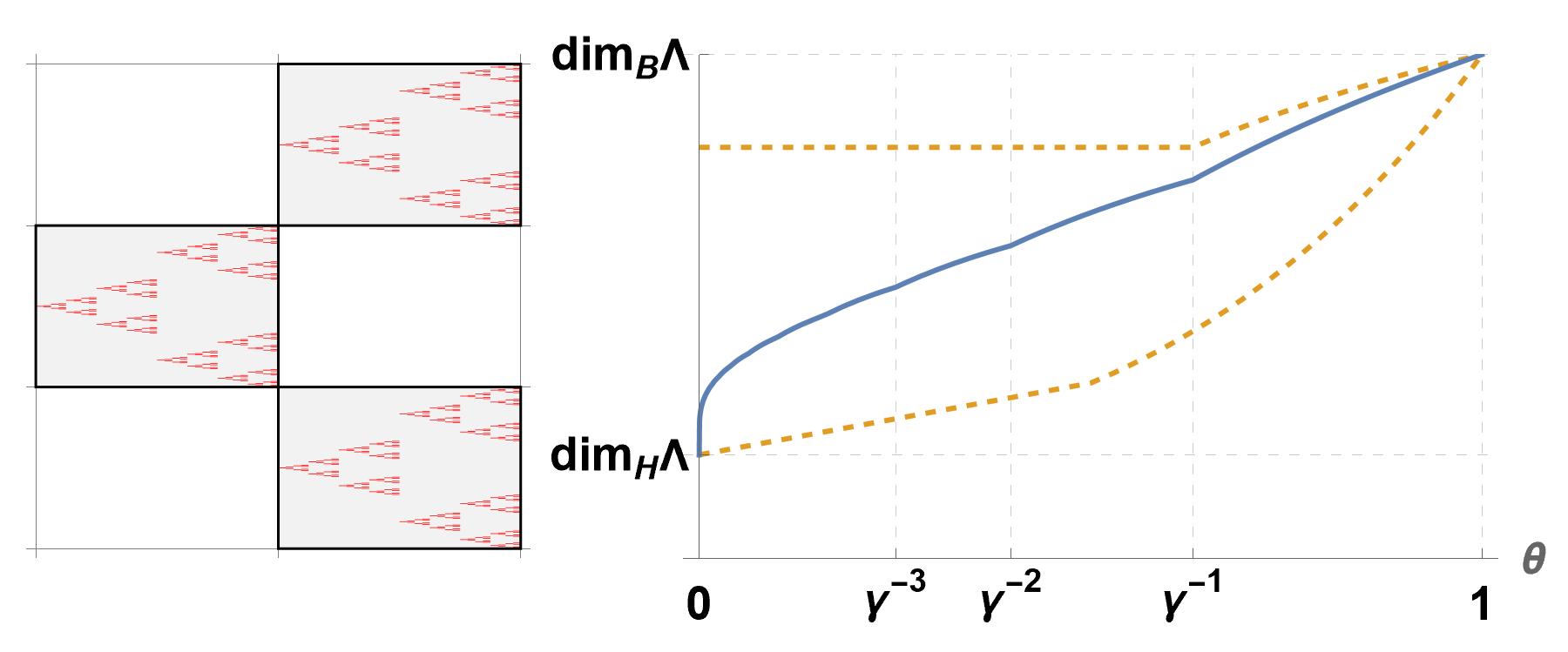}
	\caption{Intermediate dimensions give finer geometric information about the fractal set. Left: a Bedford--McMullen carpet $\Lambda$ in red and the shaded images of $[0,1]^2$ under the iterated function system generating $\Lambda$. Right: certain bounds for the intermediate dimensions $\dim_{\, \theta}\Lambda$ in dashed orange, and the true value in blue obtained in this paper.}
	\label{fig:BMPicture}
\end{figure}
\clearpage
\tableofcontents

\thispagestyle{empty}
\section{Introduction}\label{sec:01}

In dimension theory, the two most studied notions of dimension are the Hausdorff and box (also called Minkowski) dimension. For a specific set, a central question is whether its Hausdorff and box dimension are distinct or not. Intuitively, a dimension gap occurs when the set is inhomogeneous in space, and thus cannot be covered optimally by using sets of equal diameter. 
An important class of dynamically defined sets exhibiting this phenomena are self-affine carpet-like constructions in the plane and sponges in higher dimensions. 
There are many other sets in $\mathbb{R}^d$ whose Hausdorff and box dimensions differ: for example infinitely generated self-conformal sets, elliptical polynomial spirals, images of sets under certain stochastic processes~\cite{Burrell2022brownian,Daw2023}, or even the connected component of supercritical fractal percolation~\cite{Bromanetal2013FractalPerc}. 

A line of research has been initiated by Falconer, Fraser and Kempton~\cite{Falconer2020firstintermediate} in order to get a better understanding of the subtle differences between the Hausdorff and box dimensions. The idea is that by introducing a continuum of \emph{intermediate dimensions} which interpolate between these two notions of dimension, one can obtain more nuanced information about the geometry of the set. This is done by imposing increasing restrictions on the relative sizes of covering sets governed by a parameter $0\leq \theta\leq 1$. The Hausdorff and box dimension are the two extreme cases when $\theta=0$ and $1$, respectively. All sets we consider will be bounded and non-empty subsets of Euclidean space. The diameter of a set $F$ is denoted by $|F|$, its Hausdorff dimension by $\dim_{\mathrm H}F$ and its lower and upper box dimensions by $\underline{\dim}_{\mathrm B}F$ and $\overline{\dim}_{\mathrm B}F$. A finite or countable collection of sets $\{U_i\}$ is a \emph{cover} of $F$ if $F\subseteq \bigcup_i U_i.$

\begin{definition}\label{def:01}
	For $0\leq \theta\leq 1$, the \emph{lower $\theta$-intermediate dimension} of a set $F\subset \R^d$ is given by
	\begin{multline*}
		\underline{\dim}_{\, \theta}F = \inf \{s\geq 0:\, \text{for all } \varepsilon>0 \text{ and all } \delta_0>0, \text{ there exists } 0<\delta\leq \delta_0 \\
		\text{ and a cover } \{U_i\} \text{ of } F \text{ such that } \delta^{1/\theta}\leq |U_i| \leq \delta \text{ and } {\textstyle\sum} |U_i|^s\leq \varepsilon\},
	\end{multline*}
	while its \emph{upper $\theta$-intermediate dimension} is 
	\begin{multline*}%
		\overline{\dim}_{\, \theta}F = \inf \{s\geq 0:\, \text{for all } \varepsilon>0 \text{ there exists } \delta_0>0 \text{ such that for all } 0<\delta\leq \delta_0, \\
		\text{ there is a cover } \{U_i\} \text{ of } F \text{ such that } \delta^{1/\theta}\leq |U_i| \leq \delta \text{ and }  {\textstyle\sum} |U_i|^s\leq \varepsilon\}.
	\end{multline*}
	For a given $\theta$, if the values of $\underline{\dim}_{\, \theta}F$ and $\overline{\dim}_{\, \theta}F$ coincide, then the common value is called the \emph{$\theta$-intermediate dimension} and is denoted by  $\dim_{\, \theta}F$. We refer to the quantity ${\textstyle\sum} |U_i|^s$ as the \emph{$s$-cost} of the cover.
\end{definition}
With these definitions, $\underline{\dim}_{0}F=\overline{\dim}_{0}F=\dim_{\mathrm H}F$, $\underline{\dim}_{1}F=\underline{\dim}_{\mathrm B}F$ and $\overline{\dim}_{1}F=\overline{\dim}_{\mathrm B}F$, see~\cite[Chapter~2, Section~3.2]{Falconer2014main}. For the Hausdorff dimension there are no restrictions on the diameters of the covering sets, for the box dimension the diameters of the covering sets must be the same, while for $\dim_{\, \theta}F$ the restriction is to only consider covering sets with diameter in the range $[\delta^{1/\theta},\delta]$. As $\theta\to 0$, the $\theta$-intermediate dimension gives more insight into which scales are used in the optimal cover to reach the Hausdorff dimension. For $\theta<1$, a natural covering strategy to improve on the exponent given by the box dimension is to use covering sets with diameter of the two permissible extremes, in other words either $\delta^{1/\theta}$ or $\delta$. Intuitively, it is more cost efficient in terms of $s$-cost to cover $F$ with sets of diameter $\delta^{1/\theta}$ where $F$ is ``sparse'' in some appropriate sense and use the larger sets of diameter $\delta$ where $F$ is ``dense''.

Most previous examples whose $\theta$-intermediate dimensions are known explicitly \cite{Burrell2022spirals,Falconer2022seq,
Falconer2020firstintermediate,Tan2020} share the common feature that the function $\theta\mapsto \dim_{\, \theta}F$ for the range of $\theta$ where $\dim_{\, \theta}F>\dim_{\mathrm H}F$ is strictly increasing, concave and analytic, in particular there are no phase transitions. Interestingly, except in~\cite{Banaji2021infinite} and \cite{Banaji2022moran}, the heuristic of using the two extreme scales already leads to the optimal covering strategy. 
The standard approach to see whether a covering strategy gives the optimal exponent is to check whether a mass distribution principle~\cite[Proposition~2.2]{Falconer2020firstintermediate} applied to a measure constructed from this particular cover gives the matching lower bound. 

For a set $F$, the functions $\theta \mapsto \overline{\dim}_{\, \theta} F$ and $\theta \mapsto \underline{\dim}_{\, \theta} F$ are non-decreasing, and are continuous for $\theta \in (0,1]$ but not necessarily at $\theta = 0$. 
Banaji and Rutar \cite{Banaji2022moran} proved a necessary and sufficient condition for a given function $h(\theta)$ to be realised as the intermediate dimensions of a set. This condition involves a straightforward constraint on the Dini derivatives of $h(\theta)$, and demonstrates that the intermediate dimensions of sets can have highly varied behaviour. 
Banaji~\cite{Banaji2023gen} introduced a more general family of dimensions, called the $\Phi$-intermediate dimensions, which can provide more refined geometric information about sets whose intermediate dimensions are discontinuous at $\theta=0$. If the intermediate dimensions of a set are in fact continuous at $\theta = 0$, then this has consequences for its orthogonal projections, see~\cite[Section~6]{Burrell2021projections}, and its images under index-$\alpha$ fractional Brownian motion, see~\cite[Section~3]{Burrell2022brownian}.

Intermediate dimensions can also be formulated using capacity theoretic methods~\cite{Burrell2021projections}. This approach has been used to bound the H\"{o}lder regularity of maps that can deform one spiral into another~\cite{Burrell2022spirals},  compute the almost-sure value of the intermediate dimension of the image of Borel sets under index-$\alpha$ fractional Brownian motion in terms of dimension profiles~\cite{Burrell2022brownian} and under more general Rosenblatt processes~\cite{Daw2023}.

Qualitative information about $\dim_{\, \theta}F$ can also yield interesting applications. For example, the intermediate dimensions can be used to give information about the H{\"o}lder distortion of maps between sets (see \cite[Section~17.10]{Fraser2020book} for discussion of the H{\"o}lder mapping problem in the context of dimension theory). 
In \cite[Section~14.2.1~5.]{Falconer2021intdimsurvey} Falconer noted that if $F \subset \mathbb{R}^d$ and $f \colon F \to \mathbb{R}^d$ is an $\alpha$-H{\"o}lder map for some $\alpha \in (0,1]$, meaning that there exists $c>0$ such that $\|f(x)-f(y) \| \leq c\|x-y\|^\alpha$ for all $x,y \in F$, then 
\begin{equation}\label{eq:generalholderint} \overline{\dim}_{\, \theta} f(F) \leq \alpha^{-1} \overline{\dim}_{\, \theta} F 
\end{equation}
and $\underline{\dim}_{\, \theta}  f(F) \leq \alpha^{-1} \underline{\dim}_{\, \theta}  F$ (see \cite[Theorem~3.1]{Burrell2022brownian} and \cite[Section~4]{Banaji2023gen} for further H{\"o}lder distortion estimates). In \cite[Example~4.5]{Banaji2021infinite}, Banaji and Fraser showed that the intermediate dimensions can give better information about the H\"older distortion between some continued fraction sets than the Hausdorff or box dimensions.

We mention a similar concept of dimension interpolation between the upper box dimension and the (quasi-)Assouad dimension, called the \emph{Assouad spectrum}, which was initiated in~\cite{Fraser2018secondassouad,Fraser2018firstassspec}. We refer the reader to the surveys~\cite{Falconer2021intdimsurvey,Fraser2021interpolating} for additional references on the topic of dimension interpolation.

An important method for generating fractal sets is via \emph{iterated function systems (IFSs)}; see \cite{Falconer2014main} for general background. In this paper, we consider \emph{self-affine sets}, which are the attractors of IFSs consisting of affine contractions. We refer the reader to \cite{Falconer2013affine} for a survey of the dimension theory of self-affine sets, and to \cite{Barany2019HochmanRapaport} for an important result. 
Self-affine carpets are widely-studied families of sets in the plane which often have distinct Hausdorff and box dimensions. As such, it is natural to consider their intermediate dimensions. The main objective of this paper is to determine an explicit formula for the intermediate dimensions of the simplest model, originally introduced independently by~Bedford~\cite{Bedford1984phd} and McMullen~\cite{McMullen1984carpet}. Moreover, in the process we uncover new interesting features about the form of the intermediate dimensions and make an unexpected connection to multifractal analysis and bi-Lipschitz equivalence of these carpets.

\subsection{Bedford--McMullen carpets}

The construction of the carpet goes by splitting $[0,1]^2$ into $m$ columns of equal width and $n$ rows of equal height for some integers $n>m\geq 2$ and considering orientation preserving maps of the form
\begin{equation*}
	f_{(i,j)}(\underline{x})\coloneqq \begin{pmatrix} 1/m & 0 \\ 0 & 1/n \end{pmatrix} \begin{pmatrix} x \\ y \end{pmatrix} + \begin{pmatrix} (i-1)/m \\ (j-1)/n
	\end{pmatrix}
\end{equation*}
for the index set $(i,j)\in \mathcal{A}\subseteq \{1,\dotsc,m\}\times\{1,\dotsc,n\}$. It is well-known that associated to the iterated function system (IFS) $\mathcal{F}=\{f_{(i,j)}\}_{(i,j)\in \mathcal{A}}$ there exists a unique non-empty compact set $\Lambda=\Lambda_{\mathcal{F}}$, called the attractor, satisfying
\begin{equation*}
	\Lambda = \bigcup_{(i,j)\in \mathcal{A}} f_{(i,j)} ( \Lambda).
\end{equation*}
We call $\Lambda$ a \emph{Bedford--McMullen carpet}. The carpet $\Lambda$ can also be viewed as an invariant subset of the 2-torus $[0,1)^2$ under the toral endomorphism $(x,y)\mapsto (mx \mod 1, ny \mod 1)$. We refer the interested reader to the survey~\cite{Fraser2021bedford} for further references. The left hand side of Figure~\ref{fig:BMPicture} shows a simple example of a Bedford--McMullen carpet with distinct Hausdorff and box dimension. The three shaded rectangles show the image of $[0,1]^2$ under the three maps in the IFS, while the attractor is shown in red. For this carpet, $\dim_{\mathrm H}\Lambda\approx1.34968<1.36907\approx\dim_{\mathrm B}\Lambda$. 

Let $\Lambda$ be the Bedford--McMullen carpet associated to the IFS $\mathcal{F}$. Fix notation 
\[\gamma \coloneqq \frac{\log n}{\log m}>1.\]
For the remainder of the paper, we index the maps of $\mathcal{F}$ by $i\in\{1,\dotsc,N\}$. Let $1<M\leq m$ denote the number of non-empty columns and $\mathbf{N}\coloneqq (N_1,\dotsc, N_M)$, where $N_{\jh}$ is the number of maps $f_i$ that map $[0,1]^2$ to the $\jh$-th non-empty column. Observe that $N=N_1+\dotsb+N_M$.

Let $\mathcal{P}_M$ denote the set of probability vectors on $\{1,\dotsc,M\}$. 
The \emph{entropy} of $\mathbf{q}\in\mathcal{P}_M$ is
\begin{equation*}
	H(\mathbf{q}) = -\sum_{\jh=1}^M q_{\jh}\log q_{\jh},
\end{equation*}
where we use the convention that $0 \log 0 = 0$. 
We introduce
\begin{equation*}
	\mathbf{P}=(P_1,\dotsc,P_M) \coloneqq \left(\frac{N_1}{N},\dotsc,\frac{N_M}{N} \right)  \quad \mbox{ and } \quad   \mathbf{Q} \coloneqq \left( \frac{1}{M}, \dotsc, \frac{1}{M} \right) . 
\end{equation*}
For $\mathbf{q}\in\mathcal{P}_M$, it holds that $H(\mathbf{q})\leq \log M$ with equality if and only if $\mathbf{q} = \mathbf{Q}$. 
For the entire paper, we also introduce  
\begin{equation}\label{e:underlineoverlinedef}
\underline{t}\coloneqq \frac{1}{M} \sum_{\jh=1}^{M} \log N_{\jh} \quad \mbox{ and } \quad \overline{t}\coloneqq \log N-H(\mathbf{P}).
\end{equation}
We say that $\Lambda$ has \emph{uniform (vertical) fibres} if and only if $\mathbf{P}=\mathbf{Q}$, in other words each non-empty column has the same number of maps.

Bedford and McMullen showed that the Hausdorff and box dimensions of $\Lambda$ are equal to  
\begin{align}
	\dim_{\mathrm H}\Lambda &= \frac{1}{\log m}\log \Bigg( \sum_{\jh=1}^MN_{\jh}^{\frac{\log m}{\log n}} \Bigg) \label{eq:102}, \\          %
	\dim_{\mathrm B}\Lambda &= \frac{\log N}{\log n} + \left(1-\frac{\log m}{\log n}\right)\frac{\log M}{\log m}. \label{eq:103} 
\end{align}
In particular, $\dim_{\mathrm H}\Lambda = \dim_{\mathrm B}\Lambda$ if and only if $\Lambda$ has uniform fibres. In this case, $\theta\mapsto \dim_{\, \theta}\Lambda$ is just a constant function. Therefore, we assume throughout that the carpet has non-uniform fibres, in which case the appropriate dimensional Hausdorff measure of $\Lambda$ is infinite~\cite{Peres1994infinitemeasure}. Using the concavity of the logarithm function, an immediate consequence of non-uniform fibres is that $\underline{t} < \log(N/M)$.

\subsubsection*{Previous results}

Previous papers on the topic~\cite{Falconer2020firstintermediate,Fraser2021interpolating,Kolossvary2022bm} have established crude bounds for the intermediate dimensions and speculated about the possible form. The question of determining the intermediate dimensions of all Bedford--McMullen carpets was explicitly asked in \cite{Falconer2021intdimsurvey,Falconer2020firstintermediate,Fraser2021interpolating,Fraser2021bedford,  Kolossvary2022bm}. Loosely speaking, the results of Falconer, Fraser and Kempton~\cite{Falconer2020firstintermediate} concentrate on the behaviour of $\dim_{\, \theta}\Lambda$ for $\theta$ close to 0, while the results of Kolossv\'ary~\cite{Kolossvary2022bm} concentrate on the behaviour for $\theta\geq \gamma^{-1}$. 

More precisely, a linear lower bound was obtained in~\cite{Falconer2020firstintermediate} which shows that $\underline{\dim}_{\, \theta}\Lambda>\dim_{\mathrm H}\Lambda$ for every $\theta\in(0,1]$. This bound can be used to show that $\theta\mapsto\underline{\dim}_{\, \theta}\Lambda$ cannot be convex in general. Moreover, the authors give a general lower bound which reaches $\dim_{\mathrm B}\Lambda$ at $\theta=1$. This general lower bound was improved in \cite[Proposition~3.10]{Banaji2023gen}. For many, but not all carpets, a lower bound in~\cite{Kolossvary2022bm} performs better than these general bounds and the linear bound for larger values of $\theta$. The lower bound depicted in Figure~\ref{fig:BMPicture} in orange is the best combination of these results. 

Falconer, Fraser and Kempton~\cite[Proposition~4.1]{Falconer2020firstintermediate} show an upper bound of the form $\overline{\dim}_{\, \theta}\Lambda \leq \dim_{\mathrm H}\Lambda + c/(-\log \theta)$ for an explicit $c>0$ and $\theta$ sufficiently small. In particular, this implies that $\theta\mapsto\underline{\dim}_{\, \theta}\Lambda$ and $\theta\mapsto\overline{\dim}_{\, \theta}\Lambda$ are continuous also at $\theta=0$. Hence, the results of Burrell, Falconer and Fraser~\cite[Section~6]{Burrell2021projections} and Burrell~\cite[Section~3]{Burrell2022brownian} can be applied. For example, if $\dim_{\mathrm H} \Lambda < 1 \leq \dim_{\mathrm B} \Lambda$ then $\overline{\dim}_{\mathrm B} \pi(\Lambda) < 1$ for every orthogonal projection $\pi$ from $\mathbb{R}^2$ onto a 1-dimensional subspace. For almost every projection, $\underline{\dim}_{\, \theta} \pi(\Lambda)$ and $\overline{\dim}_{\, \theta}\pi(\Lambda)$ are continuous at $\theta = 0$, and if $\gamma \notin \mathbb{Q}$ then this holds for \emph{every} orthogonal projection. 
Furthermore, if $B_h \colon \mathbb{R}^2 \to \mathbb{R}^2$ is index-$h$ fractional Brownian motion, then $\theta\mapsto\underline{\dim}_{\, \theta}B_h(\Lambda)$ and $\theta\mapsto\overline{\dim}_{\, \theta}B_h(\Lambda)$ are almost surely continuous, and if $h > (\dim_{\mathrm H} \Lambda)/2$ then almost surely $\overline{\dim}_{\mathrm B} B_h (\Lambda) < 2$. 

A cover of $\Lambda$ is constructed in~\cite{Kolossvary2022bm} using just the two extreme scales to obtain an explicit upper bound of the form $\overline{\dim}_{\, \theta}\Lambda\leq\dim_{\mathrm B}\Lambda -\Delta(\theta)$ for $\theta\geq \gamma^{-1}$, where $\Delta(\theta)\searrow 0$ as $\theta\to 1$ and has a strictly positive derivative at $\theta=1$. 
This bound was used to show that $\overline{\dim}_{\, \theta}\Lambda$ is not concave for the whole range of $\theta$ in general, already hinting at richer behaviour than previously witnessed in other examples. Figure~\ref{fig:BMPicture} shows this upper bound in green. 

\subsection{Summary of results}
The formal statements are presented in Section~\ref{sec:CompResults}. In Theorem~\ref{thm:main} we state an explicit formula for $\underline{\dim}_{\, \theta}\Lambda=\overline{\dim}_{\, \theta}\Lambda$ for all $\theta\in(0,1]$, thus completely resolving the problem of calculating the intermediate dimensions of any Bedford-McMullen carpet $\Lambda$. For illustration see the right hand side of Figure~\ref{fig:BMPicture}, where $\theta\mapsto\dim_{\, \theta}\Lambda$ is plotted in blue for the carpet on the left hand side of Figure~\ref{fig:BMPicture}. Central to the formula is a large deviations rate function for which we give three additional equivalent characterisations in Proposition~\ref{prop:1}: one in terms of a pressure-like function, another as a certain probability vector with an entropy maximising property, and finally a relationship to the multifractal spectrum of the uniform self-affine measure on $\Lambda$. In the proof of Theorem~\ref{thm:main}, we construct a cover of $\Lambda$ that uses an increasing number of different scales in the permissible range as $\theta\to 0$. 
We also show in Corollary~\ref{cor:twoscales} that using more than two scales is necessary. 

We prove all the features suggested by the plot in Figure~\ref{fig:BMPicture} about the form of the intermediate dimensions for all carpets in Corollary~\ref{cor:allprop}. Namely, $\theta\mapsto\dim_{\, \theta}\Lambda$ is strictly increasing and has phase transitions at all negative integer powers of $\gamma$. Between consecutive phase transitions the intermediate dimensions are analytic and strictly concave. Moreover, for $\theta$ small enough $\dim_{\, \theta}\Lambda$ behaves like $\dim_{\mathrm H}\Lambda+c(\log\theta)^{-2}$. In particular, the derivative tends to $+\infty$ as $\theta\to0$. No previous family of sets has shown such rich and complex behaviour. Some illustrative examples are presented in Section~\ref{subsec:ex}.

We show in Theorem~\ref{thm:multifractal} that two different carpets with non-uniform vertical fibres have equal intermediate dimensions for every $\theta\in[0,1]$ if and only if the multifractal spectra of the uniform Bernoulli measure on the two carpets are equal. If, in addition, it is assumed that the two carpets are defined on the same grid, then Theorem~\ref{thm:multifractal} provides further equivalent conditions for their intermediate dimensions to be the same: a certain condition on the rate functions appearing in the formula or certain relationships between the parameters of the carpets, or the equality of the intermediate dimensions on any one open interval of $[\gamma^{-1},1]$.

Our main application relates to bi-Lipschitz equivalence. It is known~\cite[Corollary~1.1]{RaoPreprintlipschitz} that the equality of these multifractal spectra is necessary for two carpets to be bi-Lipschitz equivalent if it is assumed that the two carpets are defined on the same grid and are totally disconnected. Since bi-Lipschitz maps preserve intermediate dimensions, Theorem~\ref{thm:multifractal} implies that both of these assumptions can be dropped, see Corollary~\ref{cor:biLip}. In Example~\ref{ex:biLip} we construct two carpets which are not bi-Lipschitz equivalent by Corollary~\ref{cor:biLip}, but where this does not follow from~\cite{RaoPreprintlipschitz}. To our knowledge, this is the first instance where intermediate dimensions are used to show that two sets are not bi-Lipschitz equivalent, but where this fact does not follow from any other notion of dimension or existing result. For this example, we also use the intermediate dimensions to give estimates on the H\"older distortion.

For comparison, we mention that the calculation of the Assouad spectrum of Bedford--McMullen carpets~\cite{Fraser2018secondassouad} is not as involved as the proof of Theorem~\ref{thm:main} for the intermediate dimensions. Indeed, the intermediate dimensions are more subtle in that they depend on all the $N_i$ individually, as does the Hausdorff dimension. This is in contrast to the Assouad spectrum (and indeed the lower spectrum and the box, packing, Assouad, quasi-Assouad, lower and quasi-lower dimensions) which depend only on $m,n,N,M,\max_{1 \leq \ih \leq M} N_{\ih}$ and $\min_{1 \leq \ih \leq M} N_{\ih}$, see~\cite{Fraser2021bedford}. The Assouad spectrum also has just one phase transition at $\theta=\gamma^{-1}$, which occurs when the spectrum reaches the Assouad dimension and thus is constant for $\theta\in(\gamma^{-1},1)$.

\section{Complete statements and examples}\label{sec:CompResults}

\subsection{Main result: formula for intermediate dimensions}

Recalling~\eqref{e:underlineoverlinedef}, let $t\in(\underline{t},\overline{t})$. 
This is a non-empty interval because non-uniform fibres implies that $\underline{t} < \log (N/M) < \overline{t}$. 
Let $X_1,X_2,\dotsc,X_J,\dotsc $ be a sequence of independent and identically distributed random variables taking values in the set $\{\log N_1,\dotsc,\log N_M\}$, with 
\begin{equation}\label{e:definerandomvar} 
\mathds{P}(X_1 = \log N_{\ih}) = \frac{1}{M} \cdot \# \{ \, \jh \in \{1,\dotsc,M\} : N_{\jh} = N_{\ih} \, \} .
\end{equation}
 Then $\underline{t}$ is the expectation of $X_1$. 
The large deviations rate function of the average $\frac{1}{J}\sum_{i=1}^J X_i$ is
\begin{equation}\label{eq:22}
I(t)=\sup_{\lambda\in \R} \bigg\{\lambda t - \log\bigg( \frac{1}{M} \sum_{\jh=1}^{M} N_{\jh}^{\lambda}\bigg)\bigg\},
\end{equation} 
noting that $\frac{1}{M} \sum_{\jh=1}^{M} N_{\jh}^{\lambda}$ is the expectation of $e^{\lambda X_1}$. 
For $t \in [\underline{t},\max_{1 \leq \ih \leq M} \log N_{\ih})$, differentiating shows that the supremum in the definition of $I(t)$ is attained at the unique $\lambda \geq 0$ satisfying $t = \sum_{\ih=1}^M \frac{ N_{\ih}^\lambda }{\sum_{\jh=1}^M N_{\jh}^\lambda}\log N_{\ih}$. This allows $I(t)$ to be calculated numerically. On this interval, $I(t)$ is real analytic (as the Legendre transform of an analytic function). %
The derivative $I'(t)>0$ is the value of $\lambda$ at which the supremum is attained and $I(t)$ is strictly increasing for $t\in[\underline{t},\max \log N_{\ih}]$. %
Moreover, $I''(t) > 0$, so the function is strictly convex on this interval. Some particular values of interest are 
\begin{alignat*}{2}
I(\underline{t}) &= 0, \qquad \qquad &&I'(\underline{t}) = 0, \\ 
I(\overline{t}) &= \log M - H(\mathbf{P}), \qquad \qquad &&I'(\overline{t}) = 1, 
\end{alignat*}
see~\cite[Lemma~2.2.5]{Dembo2010largedeviations}. 
Moreover, 
\[ I(\max_{1 \leq \ih \leq M} \log N_{\ih}) = \log M - \log \# \{ \, \jh \in \{1,\dotsc,M\} : N_{\jh} = \max_{1 \leq \kh \leq M} N_{\kh} \, \}, \]
and 
\[ I'(t) \to \infty\quad \mbox{ as } t \to (\max_{1 \leq \ih \leq M} \log N_{\ih})^- . \] 

For $s \in \mathbb{R}$, %
we define the function $T_s \colon \mathbb{R} \to \mathbb{R}$ by 
\begin{equation}\label{eq:defineiteratingfunction}
T_s(t) \coloneqq \left(s-\frac{\log M}{\log m}\right)\log n +\gamma I(t). 
\end{equation}
For $\ell \in \mathbb{N}$ we denote the composition by $T_s^{\ell} \coloneqq \underbrace{T_s \circ \dotsb \circ T_s }_{\ell \mbox{ times}}$, and $T_s^0$ denotes the identity function. 
We use the sequences $(t_{\ell})_{\ell=1}^\infty = (t_{\ell} (s))_{\ell=1}^\infty$ defined by 
\begin{equation}\label{eq:definetsequence}
t_{\ell}(s) \coloneqq T_s^{\ell-1}\left(\left(s-\frac{\log M}{\log m}\right)\log n \right).
\end{equation}
Note that these depend only on $s$ and the carpet, but not on $\theta$. Observe that $T_s(\underline{t}) = t_1(s)$ for all $s \in \mathbb{R}$. 
We are now ready to state the main result of this paper; see Section~\ref{sec:proofofmain} for the rather involved proof. 
\begin{theorem}\label{thm:main}
Let $\Lambda$ be a Bedford--McMullen carpet with non-uniform vertical fibres. For all $\theta \in (0,1)$, $\dim_{\, \theta} \Lambda$ exists %
and is given in the following way. For fixed $\theta \in (0,1)$ let $L = L(\theta) \coloneqq 1 + \lfloor \frac{-\log \theta}{\log \gamma} \rfloor$, so $\gamma^{-L} < \theta \leq \gamma^{-(L-1)}$. %
Then there exists a unique solution $s = s(\theta) \in (\dim_{\mathrm H} \Lambda, \dim_{\mathrm B} \Lambda)$ to the equation 
\begin{equation}\label{eq:mainformula}
\gamma^L\theta \log N - (\gamma^L\theta - 1) t_L(s) + \gamma(1-\gamma^{L-1}\theta)(\log M - I(t_L(s))) - s\log n = 0, 
\end{equation}
and $s(\theta) = \dim_{\, \theta} \Lambda$. 
\end{theorem}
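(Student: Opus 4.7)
The proof splits into a multi-scale upper bound, a matching lower bound via the mass distribution principle, and an analytic verification that \eqref{eq:mainformula} has a unique solution in the stated range. The natural covering units are \emph{approximate squares}: sub-rectangles coming from IFS iteration whose horizontal and vertical extents are comparable. For $\theta \in (\gamma^{-L}, \gamma^{-(L-1)}]$, the admissible window $[\delta^{1/\theta}, \delta]$ comfortably contains $L+1$ geometric scales of the form $\delta^{\gamma^{\ell}}$ for $\ell = 0, 1, \ldots, L-1$, together with the endpoint $\delta^{1/\theta}$; consecutive scales are related by the eccentricity factor $\gamma$, which is what produces the phase transitions of $\dim_{\,\theta}\Lambda$ at each $\theta = \gamma^{-\ell}$. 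The combinatorial input is the method of types (Cram\'er's theorem): at horizontal scale $m^{-k}$, the number of approximate squares whose sequence of column indices has empirical mean $\frac{1}{k}\sum_{j}\log N_{\hat\imath_{j}} \approx t$ grows like $\exp(k(\log M - I(t)))$, while the subsequent vertical branching contributes an exponential factor with rate controlled linearly by $t$. This is the source of every term in \eqref{eq:defineiteratingfunction}: $T_{s}$ encodes the $s$-cost of a single refinement step from scale $\delta^{\gamma^{\ell}}$ to $\delta^{\gamma^{\ell+1}}$, and the orbit $t_{\ell}(s)$ records the optimal empirical-mean cutoff after $\ell$ refinements.

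\textbf{Upper bound.} I plan to construct an explicit cover using all $L+1$ scales. Starting from level-$k_{0}$ approximate squares with $m^{-k_{0}} \approx \delta$, for each empirical horizontal type $t$ decide whether to keep the square as a covering set (contributing its diameter to the $s$-cost) or to refine it to level $\lceil \gamma k_{0}\rceil$; iterate this decision $L$ times, with the residual scale $\delta^{1/\theta}$ used at the bottom. Summing the $s$-cost contributions scale by scale via the iteration $T_{s}$ produces exactly the left-hand side of \eqref{eq:mainformula}, so choosing $s = s(\theta)$ makes the total cost bounded and gives $\overline{\dim}_{\,\theta}\Lambda \leq s(\theta)$.

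\textbf{Lower bound.} For the matching bound I would apply the mass distribution principle \cite[Proposition~2.2]{FFK2019} to a multi-level Gibbs measure $\mu$ on $\Lambda$: on approximate squares at scale $\delta^{\gamma^{\ell}}$, $\mu$ weights types via the Lagrange multiplier $\lambda = I'(t_{\ell}(s(\theta)))$ from the Legendre representation \eqref{eq:22}. Consistency of $\mu$ across scales is precisely what the fixed-point sequence $t_{\ell}(s)$ encodes. Verifying $\mu(U) \lesssim |U|^{s(\theta)}$ for any set $U$ with $|U| \in [\delta^{1/\theta}, \delta]$ reduces, by slicing into horizontal strips at the appropriate aspect ratio, to a method-of-types estimate on the total $\mu$-mass of approximate squares of a given type; summing over a cover then yields $\underline{\dim}_{\,\theta}\Lambda \geq s(\theta)$.

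\textbf{Existence, uniqueness, and main obstacle.} Because $T_{s}$ is affine and strictly increasing in $s$, so is $s \mapsto t_{L}(s)$; combined with $I' > 0$ on $(\underline{t}, \overline{t}]$ and the sign conditions $\gamma^{L}\theta - 1 > 0$ and $1 - \gamma^{L-1}\theta \geq 0$, the left-hand side of \eqref{eq:mainformula} is strictly monotone in $s$. Evaluating at $s = \dim_{\mathrm{H}}\Lambda$ and $s = \dim_{\mathrm{B}}\Lambda$ using the endpoint values of $I$ recorded just after \eqref{eq:22} produces opposite signs, hence a unique root. The main obstacle will be the lower bound: previous intermediate-dimensions arguments typically only required two scales, whereas here the Gibbs measure must be simultaneously near-optimal at every one of the $L+1$ scales in $[\delta^{1/\theta}, \delta]$. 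Uniformly controlling the method-of-types combinatorics across all scales, and showing that the multiplier evolution forced by \eqref{eq:defineiteratingfunction} is the unique consistent choice, is what makes the iteration function $T_{s}$ and its orbit $t_{\ell}(s)$ inevitable rather than merely convenient.
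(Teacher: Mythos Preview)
Your overall strategy matches the paper's: a multi-scale cover of approximate squares for the upper bound, a mass-distribution-principle measure concentrated on approximate squares whose column types are near-optimal at each relevant scale for the lower bound, and monotonicity of the left-hand side of \eqref{eq:mainformula} in $s$ for uniqueness. Two points deserve attention, though.

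First, for general $\theta \in (\gamma^{-L},\gamma^{-(L-1)})$ the paper's cover uses $2L$ scales rather than $L+1$: both families $\delta^{\gamma^{\ell}}$ and $\delta^{1/(\gamma^{\ell}\theta)}$ for $\ell = 0,\ldots,L-1$ are needed, interleaved. Only at the endpoint $\theta = \gamma^{-(L-1)}$ do these collapse to the $L$ scales you describe. The interleaving is precisely what produces the two coefficients $(\gamma^{L}\theta - 1)$ and $(1-\gamma^{L-1}\theta)$ in \eqref{eq:mainformula}; a cover using only the $\gamma^{\ell}$-scales plus the single endpoint $\delta^{1/\theta}$ does not give the sharp exponent when $\theta$ is strictly between powers of $\gamma^{-1}$. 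Correspondingly, the measure for the lower bound must have its types prescribed on each of the $2L-1$ gaps between consecutive scales.

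Second, and more subtly, the method-of-types asymptotics you invoke only yield the correct rates when every threshold $t_{\ell}(s)$ lies in $(\underline{t},\overline{t})$; outside this interval $I$ is no longer the right object (see the hypotheses of Lemmas~\ref{lem:simpleindcomb}, \ref{lem:inductioncombinatorics}, \ref{lem:tripleindcomb}). That $t_{L}(s) < \overline{t}$ for the relevant range of $s$ is not obvious a priori, and the paper handles it by a bootstrap: first establish the upper bound at the special value $\theta = \gamma^{-(L-1)}$ with the simpler $L$-scale cover (Lemma~\ref{lem:50}), use this to deduce $t_{L}(\overline{\dim}_{\gamma^{-(L-1)}}\Lambda) < T_{\dim_{\mathrm H}\Lambda}(t^{*}) < \overline{t}$ (Lemma~\ref{lem:51}), and only then assemble the general cover and the measure. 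Your proposed existence argument---evaluate \eqref{eq:mainformula} at the endpoints and observe opposite signs---also needs this range information before it is well-posed. The paper instead derives existence directly from the equality of the upper and lower bounds rather than from an intermediate-value argument.
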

In the case $L=1$ the formula~\eqref{eq:mainformula} simplifies to 
\[ \dim_{\mathrm B} \Lambda - \frac{1}{\log n} \left( \frac{1}{\theta} - 1\right)I(t_1(s)) - s = 0.\] 
If $\theta = \gamma^{-(L-1)}$ for some $L \in \mathbb{N}$ with $L \geq 2$, then it becomes 
\[ \dim_{\mathrm B}\Lambda - \frac{1}{\log m}\left(1-\frac{1}{\gamma}\right)I(t_{L-1}(s)) - s = 0.\] %

Theorem~\ref{thm:main} and Corollary~\ref{cor:allprop} below fully resolve \cite[Problem~15.8.1]{Fraser2021bedford} and the questions about Bedford--McMullen carpets in Falconer's survey paper~\cite[Section~14.8]{Falconer2021intdimsurvey}, and indeed provide more information. In particular, this is the first time it has been shown that the intermediate dimensions of Bedford--McMullen carpets exist for $\theta \in (0,1)$. %
Tools used in the proof include the method of types (see~\cite{Kolossvary2022lqtypes}) and a variant of a mass distribution principle for the intermediate dimensions, see Proposition~\ref{prop:mdp}. 
The proof of the upper bound involves the construction of an explicit cover using scales $\delta, \delta^{\gamma}, \delta^{\gamma^2}, \dotsc, \delta^{\gamma^{L-1}}$ and $\delta^{1/\theta},\delta^{1/(\gamma\theta)}, \dotsc, \delta^{1/(\gamma^{L-1}\theta)}$. This cover consists of \emph{approximate squares}, which we define in Section~\ref{subsec:approxsquare}. We decide which parts of each approximate square to cover at which scale depending on how the different parts of the symbolic representation of the approximate square relate to each other. 
The proof simplifies when $\theta \geq 1/\gamma$ (where we just use the smallest and largest permissible scales), and when $\theta = \gamma^{-k}$ for $k \in \mathbb{N}$ (where we use scales $\delta,\delta^{\gamma},\dotsc,\delta^{\gamma^k}$ due to scales `lining up'). 
Note that the cover jumps from using $2k$ scales when $\theta \in (\gamma^{-k},\gamma^{-(k-1)})$ to using $2k + 2$ scales when $\theta \in (\gamma^{-(k+1)},\gamma^{-k})$ (and uses only $k$ scales when $\theta = \delta^{-k}$), which gives an indication of why one might expect a phase transition at $\theta = \gamma^{-k}$. 

In~\cite[Section~14.8]{Falconer2021intdimsurvey}, Falconer asked about the number of different scales of covering sets needed to approximate the intermediate dimensions of sets arbitrarily closely from above. 
Corollary~\ref{cor:twoscales}, which we prove in Section~\ref{subsec:proofform}, %
shows that for Bedford--McMullen carpets with non-uniform fibres, more than two scales are needed when $\theta$ is small. It has been shown that there exist sets where an unbounded number of scales is required as $\delta \to 0^+$, see \cite[Remark~3.11]{Banaji2022moran}. %
\begin{corollary}\label{cor:twoscales}
Let $\Lambda$ be a Bedford--McMullen carpet with non-uniform vertical fibres. There exist $\theta_0, \epsilon, \delta_0 > 0$ such that for all $\theta \in (0,\theta_0)$ and any cover $\{U_i\}$ of $\Lambda$ that uses at most two scales, both of which are less than $\delta_0$, we have $\sum_i |U_i|^{\dim_{\, \theta} \Lambda + \epsilon} \geq 1$. 
\end{corollary}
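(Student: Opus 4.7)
To prove Corollary~\ref{cor:twoscales}, I would convert every 2-scale cover into a cover relevant to the intermediate dimension at an \emph{effective} parameter, and then combine the lower-bound machinery from Theorem~\ref{thm:main} (via Proposition~\ref{prop:mdp}) with the monotonicity and continuity of $\dim_{\,\theta}\Lambda$ proved in Corollary~\ref{cor:allprop}. Given any cover $\{U_i\}$ of $\Lambda$ using at most two scales $\delta_1 \leq \delta_2 \leq \delta_0$, I set $\theta' := \log\delta_2/\log\delta_1 \in (0,1]$ (with $\theta' := 1$ if only one scale is used). Every diameter then lies in $[\delta_2^{1/\theta'}, \delta_2]$, so $\{U_i\}$ is a valid $\theta'$-cover of $\Lambda$ at scale $\delta_2$ in the sense of Definition~\ref{def:01}.

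By Corollary~\ref{cor:allprop}, $\theta \mapsto \dim_{\,\theta}\Lambda$ is continuous with $\dim_{\,\theta}\Lambda > \dim_{\mathrm H}\Lambda$ for every $\theta > 0$, so $\beta := \dim_{\,1/\gamma}\Lambda - \dim_{\mathrm H}\Lambda > 0$. I fix $\epsilon := \beta/4$ and pick $\theta_0 > 0$ small enough that $\dim_{\,\theta}\Lambda < \dim_{\mathrm H}\Lambda + \epsilon$ for $\theta \in (0,\theta_0)$. In the first case $\theta' \geq 1/\gamma$, monotonicity gives $\dim_{\,\theta'}\Lambda \geq \dim_{\,1/\gamma}\Lambda = \dim_{\mathrm H}\Lambda + 4\epsilon$, so the exponent $s := \dim_{\,\theta}\Lambda + \epsilon$ satisfies $s < \dim_{\,\theta'}\Lambda - 2\epsilon$. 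The lower-bound half of Theorem~\ref{thm:main}, via Proposition~\ref{prop:mdp}, then yields $\sum_i |U_i|^s \geq 1$ whenever $\delta_2$ is below a threshold $\delta^\star(\theta') > 0$; compactness of $[1/\gamma, 1]$ together with the explicit Bernoulli-measure construction behind the MDP makes $\inf_{\theta' \in [1/\gamma,1]} \delta^\star(\theta')$ positive and usable as the required $\delta_0$.

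The main obstacle is the case $\theta' < 1/\gamma$, where $\dim_{\,\theta'}\Lambda$ can be arbitrarily close to $\dim_{\mathrm H}\Lambda$ and the generic MDP at $\theta'$ no longer exceeds $\dim_{\,\theta}\Lambda + \epsilon$. My plan is to establish a \emph{two-scale-adapted} strengthening of Proposition~\ref{prop:mdp}: construct a mass distribution $\mu$ on $\Lambda$ (most naturally a Bernoulli self-affine measure, chosen using the rate function $I$ from Proposition~\ref{prop:1}) such that $\mu(U) \leq C|U|^{s^\star}$ holds simultaneously for every $U$ with $|U| \in \{\delta_1, \delta_2\}$, where $s^\star := \dim_{\,1/\gamma}\Lambda - 2\epsilon$. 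The key point is that a 2-scale cover only requires mass bounds at two specific scales rather than on an entire interval, so $\mu$ can exploit this restriction to beat the generic MDP bound. Granted this, $1 = \mu(\Lambda) \leq \sum_i \mu(U_i) \leq C\sum_i |U_i|^{s^\star}$, and absorbing $C$ by shrinking $\delta_0$ (using $C|U|^{s^\star} \leq |U|^{s^\star - \eta}$ whenever $|U| \leq \delta_0 \leq C^{-1/\eta}$) gives $\sum_i |U_i|^s \geq 1$ for $s = \dim_{\,\theta}\Lambda + \epsilon \leq s^\star - \eta$. Verifying this simultaneous two-scale bound uniformly for all admissible pairs of scales with $\theta' < 1/\gamma$ is the step I expect to be the main obstacle, and it is where the approximate-square geometry of Bedford--McMullen carpets, the method of types, and the fine structure of the rate function $I$ come in essentially.
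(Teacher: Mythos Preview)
Your high-level strategy of parameterising a two-scale cover by an effective $\theta' = \log\delta_2/\log\delta_1$ and then splitting into the cases $\theta'$ large and $\theta'$ small matches the paper's structure. However, the two cases are handled rather differently, and your treatment of the hard case ($\theta'$ small) has a genuine gap.

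For the easy case ($\theta'$ bounded away from $0$), you invoke Proposition~\ref{prop:mdp} and a compactness argument over $\theta' \in [1/\gamma,1]$ to obtain a uniform $\delta$-threshold. This is more than is needed: a two-scale cover with $\delta^{1/(2\theta_0)} < \delta' \leq \delta$ is already an admissible cover for the parameter $2\theta_0$ in Definition~\ref{def:01}, so once $\epsilon$ is chosen with $\dim_{\theta_0}\Lambda + \epsilon < \dim_{2\theta_0}\Lambda$ (using strict monotonicity from Corollary~\ref{cor:allprop}), the bound $\sum_i |U_i|^{\dim_\theta\Lambda + \epsilon} \geq 1$ follows straight from the definition of $\dim_{2\theta_0}\Lambda$, with no mass distribution principle and no uniformity issue in $\theta'$.

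For the hard case ($\theta'$ small), you propose to build a measure satisfying $\mu(U) \leq C|U|^{s^\star}$ at exactly the two scales in play, with $s^\star$ close to $\dim_{1/\gamma}\Lambda$, and you explicitly flag this as the main obstacle without carrying it out. The paper sidesteps this entirely by a direct cost comparison: for each level-$K$ approximate square $B_K(\ii)$ one compares the cost $n^{-Ks}$ of keeping it at level $K$ with the cost $\#\mathcal{B}_{\lfloor K/\theta'\rfloor}^{K,\ii} \cdot n^{-sK/\theta'}$ of subdividing to level $\lfloor K/\theta'\rfloor$. An explicit count of $\#\mathcal{B}_{\lfloor K/\theta'\rfloor}^{K,\ii}$ shows that subdivision is beneficial only when the average $\frac{1}{\gamma(K)-K}\sum_{\ell=K+1}^{\gamma(K)}\log N_{\ih_\ell}$ lies below a threshold that tends to $-\infty$ as $\theta' \to 0$. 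Hence for $\theta'$ sufficiently small it is \emph{never} cheaper to subdivide, so the optimal two-scale cover is the one-scale cover at level $K$, whose $s$-cost is $\asymp \delta^{\,s - \dim_{\mathrm B}\Lambda} \geq 1$ for any $s < \dim_{\mathrm B}\Lambda$ and small $\delta$. This elementary computation, using nothing beyond the symbolic description of approximate squares, replaces your proposed two-scale-adapted mass distribution principle altogether.
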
%

A possible direction for further research could be to investigate the intermediate dimensions of higher dimensional Bedford--McMullen sponges, or the self-affine carpets in the plane of Lalley and Gatzouras~\cite{Lalley1992gatzouras} or Bara\'nski~\cite{Baranski2007carpet}. However, we expect this to be challenging, not least because it is not clear what would take the place of the important quantity $\gamma$. 
In light of the recent paper~\cite{BanajiPreprintgl} proving that the Assouad spectrum of Gatzouras--Lalley carpets (unlike Bedford--McMullen carpets) can be a differentiable function of $\theta$, it is natural to ask whether the intermediate dimensions of Gatzouras--Lalley carpets can also be differentiable on $(0,1)$. 

We now continue with corollaries of Theorem~\ref{thm:main} about the form of the graph of the function $\theta \mapsto \dim_{\, \theta} \Lambda$ that do not follow from the general theory, give a rather unexpected connection to multifractal analysis and bi-Lipschitz equivalence of two carpets, and give equivalent formulations of the rate function $I(t)$. 

\subsection{Form of the intermediate dimensions}

We assume that the carpet has non-uniform fibres, otherwise, $\theta \mapsto \dim_{\, \theta} \Lambda$ is just a constant function. %
We denote the left and right derivatives at $\theta$ by 
\begin{equation*}
\partial_- \dim_{\, \theta} \Lambda \coloneqq \lim_{h \to 0^+} \frac{\dim_{\, \theta} \Lambda - \dim_{\, \theta - h} \Lambda}{h} \;\text{ and }\; \partial_+ \dim_{\, \theta} \Lambda \coloneqq \lim_{h \to 0^+} \frac{\dim_{\, \theta + h} \Lambda - \dim_{\, \theta} \Lambda}{h}. 
\end{equation*}

\begin{corollary}\label{cor:allprop}
Let $\Lambda$ be a Bedford--McMullen carpet with non-uniform vertical fibres. Then the function $\theta \mapsto \dim_{\, \theta} \Lambda$ has the following properties:
\begin{enumerate}[(i)]
\item it is real analytic on the interval $(\gamma^{-L},\gamma^{-(L-1)})$ for all $L \in \mathbb{N}$; \label{itemi}
\item $\partial_- \dim_{\, \theta} \Lambda$ exists at every $\theta \in (0,1]$ and $\partial_+ \dim_{\, \theta} \Lambda$ exists at every $\theta \in (0,1)$; \label{itemii}
\item it is strictly increasing and has phase transitions at every negative integer power of $\gamma$. More precisely, there exists $C_0>0$ depending only on $\Lambda$ such that for all $\theta \in (0,1)$,
\begin{equation*}
C_0 < \partial_- \dim_{\, \theta} \Lambda  \leq  \partial_+ \dim_{\, \theta} \Lambda
\end{equation*}
with equality if and only if for all $L \in \mathbb{N}$ we have $\theta\neq \gamma^{-L}$. Moreover, $\frac{\partial_+ \dim_{\gamma^{-L}} \Lambda}{\partial_- \dim_{\gamma^{-L}} \Lambda}$ converges to a constant in $(1,\infty)$ as $L \to \infty$; \label{itemiii}
\item there exist $C \in [1,\infty)$ and $\theta_0 > 0$ depending only on $\Lambda$ such that for all $\theta \in (0,\theta_0]$,
\[ \dim_{\mathrm H} \Lambda + \frac{C^{-1}}{(\log \theta)^2} \leq \dim_{\, \theta} \Lambda \leq \dim_{\mathrm H} \Lambda + \frac{C}{(\log \theta)^2}; \]
\label{itemiv}
\item it is strictly concave on the interval $[\gamma^{-L},\gamma^{-(L-1)}]$ for all $L \in \mathbb{N}$. \label{itemv}
\end{enumerate}
\end{corollary}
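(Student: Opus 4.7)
The plan is to derive all five properties from the analytic structure of the implicit equation~\eqref{eq:mainformula}. For $L\in\N$, abbreviate
\[ F_L(s,\theta) \coloneqq \gamma^L\theta\log N - (\gamma^L\theta-1)t_L(s) + \gamma(1-\gamma^{L-1}\theta)\bigl(\log M-I(t_L(s))\bigr) - s\log n, \]
so that Theorem~\ref{thm:main} identifies $\dim_{\, \theta}\Lambda$ as the unique root of $F_{L(\theta)}(\cdot,\theta)=0$ in $(\dim_{\mathrm H}\Lambda,\dim_{\mathrm B}\Lambda)$. Because $I$ is real-analytic on the interior of its effective domain and each $t_L(s)$ is a finite composition of real-analytic functions, $F_L$ is jointly real-analytic. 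An induction on $L$ using $t_{L+1}'(s)=\log n+\gamma I'(t_L(s))t_L'(s)$ yields $t_L'(s)>0$; combined with $I'\ge 0$ and $\theta\in(\gamma^{-L},\gamma^{-(L-1)})$ this gives $\partial_sF_L<0$ strictly, and the analytic implicit function theorem delivers~(i) and the differentiability part of~(ii) at interior points. A direct computation using $t_{L+1}(s)=(s-\log M/\log m)\log n+\gamma I(t_L(s))$ produces the algebraic identity $F_{L+1}(\cdot,\gamma^{-L})=\gamma F_L(\cdot,\gamma^{-L})$, so the two adjacent formulas define the same $s$ at each phase transition; this yields continuity of $\theta\mapsto\dim_{\, \theta}\Lambda$ at $\gamma^{-L}$ and, via one-sided analytic extension, the existence of the one-sided derivatives claimed in~(ii).

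For~(iii), implicit differentiation gives $s'(\theta)=-\partial_\theta F_L/\partial_sF_L$ with $\partial_\theta F_L=\gamma^Lg(t_L(s))$, where $g(t)\coloneqq\log N-t-\log M+I(t)$. Since $g''=I''>0$, $g(\overline{t})=0$ and $g'(\overline{t})=I'(\overline{t})-1=0$, the minimum of $g$ equals $0$ and is attained only at $\overline{t}$, so $g>0$ elsewhere; hence $\partial_\theta F_L>0$ and $s'(\theta)>0$ strictly. Differentiating the identity $F_{L+1}(\cdot,\gamma^{-L})=\gamma F_L(\cdot,\gamma^{-L})$ in $s$ gives $\partial_sF_{L+1}=\gamma\partial_sF_L$ at $\theta=\gamma^{-L}$, whence
\[ \frac{\partial_+\dim_{\gamma^{-L}}\Lambda}{\partial_-\dim_{\gamma^{-L}}\Lambda} = \frac{g(t_L(s))}{g(t_{L+1}(s))}\bigg|_{s=\dim_{\gamma^{-L}}\Lambda}, \]
which exceeds $1$ because $t_L(s)<t_{L+1}(s)$ on the orbit analysed in~(iv) and $g$ is strictly decreasing on the relevant subinterval of $(\underline{t},\overline{t})$. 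Convergence of the ratio to a constant in $(1,\infty)$ follows from the convergence of $t_L(s(\gamma^{-L}))$ to the specific value $t_\infty$ determined by the simplified constraint $s=\dim_{\mathrm B}\Lambda-(1-1/\gamma)I(t_L(s))/\log m$ at $\theta=\gamma^{-L}$. The uniform lower bound $C_0>0$ on $s'$ follows from continuity on each closed sub-interval and the blow-up of $s'$ near $\theta=0$ established in~(iv).

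The decisive step is~(iv). The Bedford--McMullen formula~\eqref{eq:102} and the Legendre duality in~\eqref{eq:22} together imply that $T_{\dim_{\mathrm H}\Lambda}$ has a fixed point $t^*\in(\underline{t},\overline{t})$ characterised by $I'(t^*)=1/\gamma$, so $T'_{\dim_{\mathrm H}\Lambda}(t^*)=\gamma I'(t^*)=1$ and $t^*$ is \emph{parabolic}. By convexity, $T_{\dim_{\mathrm H}\Lambda}(t)\ge t$ globally with equality only at $t^*$; for $s$ slightly above $\dim_{\mathrm H}\Lambda$ the map $T_s$ has no fixed point, and orbits pass through a neighbourhood of $t^*$ in $\asymp(s-\dim_{\mathrm H}\Lambda)^{-1/2}$ iterations by the standard near-parabolic asymptotic, whose leading coefficients are $T_s''(t^*)=\gamma I''(t^*)>0$ and $\partial_sT_s\equiv\log n>0$. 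Substituting this escape-time relation into the simplified constraint $s=\dim_{\mathrm B}\Lambda-(1-1/\gamma)I(t_L(s))/\log m$ at $\theta=\gamma^{-L}$, and using $L(\theta)\asymp|\log\theta|/\log\gamma$, produces matching two-sided bounds $s(\theta)-\dim_{\mathrm H}\Lambda\asymp(\log\theta)^{-2}$.

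For~(v), I would differentiate $F_L(s(\theta),\theta)=0$ a second time in $\theta$, expressing $s''(\theta)$ as a rational function of $I'(t_L), I''(t_L), t_L'(s), t_L''(s)$ and the first-order quantities already controlled; strict convexity of $I$ and positivity of $t_L'(s)$, combined with the first-order signs obtained in~(iii), force $s''(\theta)<0$ on each $[\gamma^{-L},\gamma^{-(L-1)}]$. The main obstacle throughout is the parabolic analysis in~(iv): algebraically identifying $t^*$ through $I'(t^*)=1/\gamma$ requires matching the recursion structure of $T_s$ to~\eqref{eq:102}, and the sharp, uniform two-sided $(\log\theta)^{-2}$ asymptotic demands careful near-parabolic iteration estimates under the perturbation $s\mapsto s+\epsilon$. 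Once~(iv) is secured, the $C_0$ lower bound in~(iii), the limit of the one-sided-derivative ratio, and the sign of $s''$ in~(v) all follow from the same structural features of the iteration.
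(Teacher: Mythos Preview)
Your approach is essentially the paper's: analytic implicit function theorem for (i); implicit differentiation of $F_L(s(\theta),\theta)=0$ for (ii), (iii), (v); and a near-parabolic orbit analysis for (iv). Your identity $F_{L+1}(\cdot,\gamma^{-L})=\gamma F_L(\cdot,\gamma^{-L})$ and the resulting ratio $\partial_+/\partial_-=g(t_L)/g(t_{L+1})$ match the paper's explicit derivative formula (its Lemma~6.1), which yields the same ratio after a telescoping identity. Your analysis of $g(t)=\log N-t-\log M+I(t)$ and of the parabolic fixed point (the point the paper calls $t'$, defined by $I'(t')=\gamma^{-1}$) is the same mechanism the paper uses; note the paper reserves the symbol $t^*$ for the distinct quantity solving $\dim_{\mathrm H}\Lambda=\dim_{\mathrm B}\Lambda-(1-\gamma^{-1})I(t^*)/\log m$, and Lemma~4.2 shows $t'<t^*$ strictly.

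Two genuine gaps to flag. First, every uniformity you need---the strict positivity of $g(t_L(s(\theta)))$ bounded away from $0$, the bounds $0<c\le I'(t_\ell)\le c'<1$ feeding into (iii) and (v), and the endpoints of the parabolic passage in (iv)---rests on knowing a priori that all $t_\ell(s(\theta))$ lie in a fixed compact subinterval of $(\underline t,\overline t)$, uniformly in $\theta$ and $\ell$. The paper obtains this as Lemma~5.2 (proved inside the proof of Theorem~\ref{thm:main} via a separate cover construction); you should invoke it explicitly rather than leave it implicit. Second, your route to the uniform lower bound $C_0$ via ``blow-up of $s'$ near $0$ from (iv)'' is not quite sound: the two-sided estimate $s(\theta)-\dim_{\mathrm H}\Lambda\asymp(\log\theta)^{-2}$ constrains $s$, not $s'$, and does not by itself rule out small derivatives. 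The paper instead bounds $s'(\theta)$ directly from the explicit formula using the uniform range of the $t_\ell$; since you already have $s'=\gamma^L g(t_L)/(-\partial_s F_L)$ and can bound numerator and denominator uniformly once Lemma~5.2 is in hand, this is the cleaner fix. Finally, for the limit of $\partial_+/\partial_-$ you need the convergence of $t_L(s(\gamma^{-L}))$ (and $t_{L+1}$) to specific limits; the paper isolates this as Lemma~6.2, and the limits are $t^*$ and $T_{\dim_{\mathrm H}\Lambda}(t^*)$ respectively, not a single value.
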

In~\cite[Proposition~4.1]{Falconer2020firstintermediate} for small enough $\theta$ the upper bound $\overline{\dim}_{\, \theta}\Lambda\leq \dim_{\mathrm H}\Lambda+c(-\log \theta)^{-1}$ was proved for a constant $c$ depending only on $\Lambda$. Corollary~\ref{cor:allprop}~\eqref{itemiv} shows that although this bound is not sharp, we do indeed have that $\frac{\dim_{\, \theta} \Lambda - \dim_{\mathrm H} \Lambda}{\theta} \to \infty$ as $\theta \to 0^+$. 

\subsection{Connection to multifractal analysis and bi-Lipschitz equivalence}\label{subsec:multifractal}

In this section, and in Section~\ref{s:multifractalproof} where we prove the results in this section, it is convenient to change notation. Here, the parameters $(M_0,N_1,\dotsc,N_{M_0}, R_1,\dotsc, R_{M_0})$ will define a Bedford--McMullen carpet. 
Now $M_0$ denotes the number of different values that the number of maps in a non-empty column can take. 
Note that $M_0\geq 2$, since $M_0=1$ corresponds to the uniform fibre case. 
We write $N_1,\dotsc,N_{M_0}$ for the actual values that the number of maps in a non-empty column can take, and we order them as $N_1>N_2>\dotsb>N_{M_0}$. 
For each $\ih \in \{1,\dotsc,M_0\}$, we write $R_{\ih}$ for the number of columns containing exactly $N_{\ih}$ maps. 
As with the previous notation, we write $M = \sum_{\ih = 1}^{M_0} R_{\ih}$ for the number of non-empty columns, and $N = \sum_{\ih = 1}^{M_0} R_{\ih} N_{\ih}$ for the total number of maps. 
For example, for the carpet in Figure~\ref{fig:BMPicture}, the number of maps in a non-empty column is either~$1$ or~$2$, so $N_1 = 2$ and $N_2 = 1$; each corresponds to just one column, so $R_1 = R_2 = 1$, and $M_0 = \# \{1,2\} = 2$. 

A central problem in multifractal analysis is to examine the way a Borel measure $\mu$ is spread over its support $\supp\, \mu$. For a survey of this topic, we refer the reader to~\cite[Chapter~17]{Falconer2014main}. 
The \emph{local dimension of $\mu$ at $x$} is
\begin{equation*}
\dim_{\mathrm{loc}}(\mu,x) = \lim_{r\to 0} \frac{\log \mu(B(x,r))}{\log r}
\end{equation*}
if the limit exists, which approximately measures the rate of decay of $\mu(B(x,r))$ as a power law $r^{\alpha}$. The measure $\mu$ is \emph{exact dimensional} if $\dim_{\mathrm{loc}}(\mu,x)$ is equal to a specific $\alpha$ for $\mu$-almost all $x$. However, $\dim_{\mathrm{loc}}(\mu,x)$ can still potentially take up a whole spectrum of different $\alpha$. This motivates the definition of the \emph{fine} or \emph{Hausdorff multifractal spectra}
\begin{equation*}
f_{\mu}(\alpha) \coloneqq \dim_{\mathrm H} \{ \, x\in\supp\, \mu : \dim_{\mathrm{loc}}(\mu,x)=\alpha \, \}.
\end{equation*}

Concentrating on the self-affine setting, given a self-affine iterated function system $\mathcal{S}=\{S_1,\dotsc,S_N\}$, meaning that all $S_i\colon \mathbb{R}^d \to\mathbb{R}^d$ are contracting affine maps, and a probability vector $\mathbf{p}$ with strictly positive entries, the \emph{self-affine measure} $\mu_{\mathbf{p}}$ is the unique probability measure supported on the attractor of $\mathcal{S}$ satisfying
\begin{equation*}
\mu_{\mathbf{p}}(A) = \sum_{i=1}^N p_i \mu_{\mathbf{p}}(S_i^{-1}A) \;\text{  for all Borel sets } A\subset \mathbb{R}^d.
\end{equation*} 
It is known that all self-affine measures are exact dimensional~\cite{Barany2017exactdim,
Feng2023exactdim} and the dimension satisfies a Ledrappier--Young type formula; this was resolved earlier in~\cite{Kenyon1996bmexactdim} for those supported on Bedford--McMullen carpets. 
The fine multifractal spectrum of self-affine measures on Bedford--McMullen carpets is also known~\cite{Barral2007bmmultifractal,
Jordan2011bm,King1995bmmultifractal} and (under the separation condition assumed in~\cite{King1995bmmultifractal}) has been generalised to higher dimensions in~\cite{Olsen1998sponge}. 
When $\mathbf{p}=(1/N,\dotsc,1/N)$ is the uniform vector, we simply write $\nu=\mu_{\mathbf{p}}$ and call it the \emph{uniform self-affine measure}. 
In this case, define the function $\beta_{\nu}(\xi)$ for $\xi \geq 0$ by \begin{equation}\label{eq:definebeta}
m^{-\beta_{\nu}(\xi)}N^{-\xi} \sum_{\ih=1}^{M_0} R_{\ih} N_{\ih}^{\gamma^{-1} + (1-\gamma^{-1})\xi} = 1.
\end{equation}
 Note that because of the minus sign before $\beta_{\nu}(\xi)$ (which in some papers is erroneously omitted), $\beta_{\nu}(\xi)$ is a convex function. 
 Define
 \begin{equation*}
\alpha_{\text{min}} \coloneqq \frac{\log N}{\log m} - \frac{1 - \gamma^{-1}}{\log m} \log N_{1}; \qquad \alpha_{\text{max}} \coloneqq \frac{\log N}{\log m} - \frac{1 - \gamma^{-1}}{\log m} \log N_{M_0}. 
\end{equation*}%
 Then by \cite[Theorem~1]{Jordan2011bm}, the multifractal spectrum is 
\begin{equation}\label{eq:uniformmultifractal} f_{\nu}(\alpha) = \inf_{\xi} (\alpha \xi + \beta_{\nu}(\xi)) = -\beta_{\nu}^*(-\alpha) \qquad \mbox{for all} \qquad \alpha \in (\alpha_{\text{min}},\alpha_{\text{max}}), 
\end{equation}
where $\beta_{\nu}^*(\alpha') \coloneqq \sup_{\xi'} (\alpha' \xi' - \beta_{\nu}(\xi'))$ is the Legendre transform of $\beta_{\nu}$ (defined in the same way as for the rate function in~\eqref{eq:22}). 

Another quantity that is closely related to the multifractal spectrum is the $L^q$ spectrum of a measure $\mu$. It is a function $T_\mu \colon \R \to \R$ which quantifies the global fluctuation of $\mu$, and its value at $q=0$ describes the box dimension of the support of the measure. %
The $L^q$ spectrum can be defined by 
\begin{equation}\label{e:lqdef}
T_\mu(q) \coloneqq \lim_{\delta \to 0^+} \frac{\log T_{\delta}(\mu,q)}{-\log \delta} 
\end{equation}
(the limit exists for all measures we consider), where 
\[ T_{\delta}(\mu,q) \coloneqq \sup \left\{ \, \sum_i (\mu(B_i))^q : B_i \mbox{ disjoint balls of radius } \delta \mbox{ centred in } \supp(\mu) \, \right\}. \]
Let $\nu$ be the uniform self-affine measure on a Bedford--McMullen carpet with non-uniform fibres, and let $\nu_x$ be the measure obtained by projecting $\nu$ orthogonally onto the $x$-axis. 
Note that $\nu_x$ is a self-similar measure with all contraction ratios equal to $1/m$, satisfying the open set condition; the $L^q$ spectrum of such measures has been studied in~\cite{Cawley1992multifractal} and \cite[Chapter~17]{Falconer2014main}. 
For $q$ in an open neighbourhood of $1$, $T_{\nu_x}(q)$ satisfies  
\[ \sum_{\ih = 1}^{M_0} R_{\ih} \left(\frac{N_{\ih}}{N}\right)^q \left(\frac{1}{m}\right)^{T_{\nu_x}(q)} = 1. 
\]%
A direct calculation shows that 
\[ 
T_{\nu_x}(q) = -\frac{\log N}{\log m} q + \frac{\log \sum_{\ih = 0}^{M_0} R_{\ih} (N_{\ih})^q}{\log m}.
\] 
Measures such as $\nu_x$ satisfy the \emph{multifractal formalism}, meaning that the Legendre transform of its $L^q$ spectrum equals its multifractal spectrum, but this is not generally satisfied by $\nu$. 
Instead, a result of Feng and Wang \cite[Theorem~2]{Feng2005lq} shows that $T_\nu(q)$ satisfies 
\[ 
N \cdot N^{-q} m^{-T_{\nu_x}(q)} n^{-(T_\nu(q) - T_{\nu_x}(q))} = 1,
\]
and a direct manipulation shows that for $q$ in an open neighbourhood of~$1$, 
\begin{align}\label{e:lqformula}
\begin{split}
T_\nu(q) &= \frac{\log N}{\log n} - q \frac{\log N}{\log m} + \left( 1 - \frac{\log m}{\log n}\right) \frac{\log \sum_{\ih=1}^{M_0} R_{\ih} (N_{\ih})^q}{\log m} \\*
&= \left( 1-\frac{\log m}{\log n}\right) \beta_{\nu}\left( \frac{\log (n^{q}/m)}{\log(n/m)} \right). 
\end{split}
\end{align}

Before describing the connection between the multifractal spectra and the intermediate dimensions, we observe that the grid on which a carpet can be defined is not unique, and in fact by iterating the IFS one can see that every carpet can be defined on infinitely many grids. For example, by iterating the IFS, the carpet from Figure~\ref{fig:BMPicture} can be defined on a $2 \times 3$ grid or on a $4 \times 9$ grid (though of course many carpets on a $4 \times 9$ grid cannot be realised on a $2 \times 3$ grid). 
Theorem~\ref{t:grid} gives information about the grids on which a carpet can be defined, and is proved in Section~\ref{s:multifractalproof}. %
It demonstrates for example that since $2$ and $3$ are multiplicatively independent, a carpet with non-uniform fibres defined on a $2 \times 4$ grid cannot be defined on a $3 \times 9$ grid (even though $\log 4 / \log 2 = \log 9 / \log 3$). 

\begin{theorem}\label{t:grid}
\begin{enumerate}
\item\label{i:gridonecarpet} 
If a Bedford--McMullen carpet $\Lambda$ with non-uniform fibres can be defined on both a $m \times n$ grid and on a $m' \times n'$ grid, then $\log n / \log m = \log n' / \log m'$ and $\log n / \log n' \in \mathbb{Q}$. %
\item\label{i:gridtwocarpets} 
Consider two carpets $\Lambda_1$ and $\Lambda_2$ with non-uniform fibres which are defined by IFSs $\mathcal{S}_1$ and $\mathcal{S}_2$ on grids of size $m_1 \times n_1$ and $m_2 \times n_2$ respectively. 
Then they can be realised on the same grid if and only if 
\[ \frac{\log n_1}{\log n_2} = \frac{\log m_1}{\log m_2} \in \mathbb{Q}. \]  
\end{enumerate}
\end{theorem}
In fact, we will see below that if two carpets with non-uniform fibres are bi-Lipschitz equivalent then they can be defined on the same grid. The same is true even if we merely assume the carpets have the same intermediate dimensions, or support uniform Bernoulli measures with equal multifractal spectra. 

We make some remarks about part~\eqref{i:gridtwocarpets}. The reverse implication is immediate. Indeed, if $\log n_1 / \log n_2 = \log m_1 / \log m_2  = a/b \in \mathbb{Q}$, then the $b$-th iterate of $\mathcal{S}_1$ and the $a$-th iterate of $\mathcal{S}_2$ are both defined on the same grid of size $m_1^b \times n_1^b$. 
It is straightforward to see that the rate function $I(t)$ of $\mathcal{S}_1$ and the rate function $I^{(b)}(t)$ of the $b$-th iterate of $\mathcal{S}_1$ are related by $I^{(b)}(bt) =b I(t)$. 
For the forward implication, if both carpets can be realised on the same grid, then the fact that $\log n_1 / \log m_1 = \log n_2 / \log m_2$ was noted by Fraser and Yu using the Assouad spectrum in \cite[Theorem~3.3]{Fraser2018secondassouad}, and also follows from the intermediate dimension formula, noting that geometric quantities such as dimensions of course do not change by taking an iterate of the system. 
The fact that $n$ and $n'$ must be multiplicatively dependent follows from Lemma~\ref{lem:multifractallemma}, and is related to work of Meiri and Peres~\cite[Theorem~1.2]{Meiri1999furstenberg}. 
This is in turn related to Furstenberg's $\times 2, \times 3$ principle (which suggests that expansions in multiplicatively independent bases should have no common structure). 

Our next result, which we prove in Section~\ref{s:multifractalproof} using Theorem~\ref{thm:main}, gives a direct connection between the intermediate dimensions and the multifractal and $L^q$ spectra of the uniform self-affine measure. 

\begin{theorem}\label{thm:multifractal}
Let $\Lambda$ and $\Lambda'$ be two Bedford--McMullen carpets with non-uniform fibres, and denote the corresponding uniform self-affine measures by $\nu$ and $\nu'$. Then the following are equivalent:
\begin{enumerate}
\item $\dim_{\, \theta}\Lambda = \dim_{\, \theta}\Lambda'$ for every $\theta\in[0,1]$; \label{item1}
\item $f_{\nu}(\alpha)=f_{\nu'}(\alpha)$ for all $\alpha \in (\alpha_{\text{min}},\alpha_{\text{max}})$. \label{itemnow2}
\end{enumerate}
Moreover, if~\eqref{item1},~\eqref{itemnow2} hold, then both carpets can be defined on the same grid. %

Now assume that $\Lambda$ and $\Lambda'$ are defined on the same $m\times n$ grid to begin with, with parameters $\{M_0,N_1,\dotsc,N_{M_0}, R_1,\dotsc, R_{M_0}\}$ and $\{M'_0,N'_1,\dotsc,N'_{M'_0}, R'_1,\dotsc, R'_{M'_0}\}$, respectively. Denote the corresponding rate functions defined in~\eqref{eq:22} by $I(t)$ and $I'(t)$. Let $\underline{t}$ and $\overline{t}$ be as defined previously, for the carpet $\Lambda$. Let $(a,b) \subset (\gamma^{-1},1)$ be a (non-empty) open interval. Then each of~\eqref{item1},~\eqref{itemnow2} is equivalent to each of the following: 
\begin{enumerate}
\setItemnumber{3}\item $\dim_{\, \theta}\Lambda = \dim_{\, \theta}\Lambda'$ for every $\theta \in (a,b)$; \label{itemnow3}
\setItemnumber{4}\item $T_{\nu}(q) = T_{\nu'}(q)$ for all $q \in \R$. \label{itemlq}
	\setItemnumber{5}\item $I(t) = I'(t-\gamma \log(M'/M))$ for all $t \in (\underline{t},\overline{t})$; \label{item4}
	\setItemnumber{6}\item $M_0=M'_0$, furthermore, $N_{\ih}/N'_{\ih}=(R'_{\ih}/R_{\ih})^{\gamma}= (M'/M)^{\gamma}$ for all $\ih=1,\dotsc,M_0$. \label{itemnow5}
\end{enumerate}   
\end{theorem}

We make several comments about Theorem~\ref{thm:multifractal}. 
\begin{remark}\label{r:multifraccomments}
\begin{enumerate}[(i)]

\item\label{i:citerao}
For carpets defined on the same grid, the equivalence of~\eqref{itemnow2} and the explicit condition~\eqref{itemnow5} was proved by Rao, Yang and Zhang in~\cite[Theorem~1.2]{RaoPreprintlipschitz}, using~\cite{Jordan2011bm}. 

\item\label{i:multifractalratelink}
In Step~4 of the proof of Proposition~\ref{prop:1} in Section~\ref{sec:proofProp}, we use scaling properties of Legendre transforms to establish a direct link between the multifractal spectrum of the uniform Bernoulli measure and the rate function $I(t)$. 
Since $I(t)$ appears in the intermediate dimension formula, this indicates why the link between the intermediate dimensions and multifractal spectrum in Theorem~\ref{thm:multifractal} is to be expected. 

\item\label{i:raosamem}
For carpets defined on the same grid with $M=M'$,~\eqref{itemnow5} is simply saying that the column sequence of one carpet is a permutation of the column sequence of the other.

\item\label{i:coarsespec}
Equality of the $L^q$ dimensions and the coarse multifractal spectra can be added to the above equivalences in Theorem~\ref{thm:multifractal} if the carpets are defined on the same grid, since these quantities can be obtained from the $L^q$ spectrum by dividing by $1-q$ or taking the Legendre transform respectively. 

\item\label{i:otherdimmeas}
If~\eqref{itemlq} holds then other notions of dimension of $\nu$ and $\nu'$ which can be deduced from their $L^q$ spectra must be equal, such as exact (Hausdorff/packing/entropy) dimension, correlation dimension (R\'enyi entropy), and Frostman dimension. 

\item\label{i:otherdimsets}
The formulae in~\cite{Fraser2021bedford} and~\eqref{itemnow5} can be used to show that equality of intermediate dimensions implies equality of other notions of dimensions of sets such as packing, Assouad, quasi-Assouad, lower, quasi-lower or modified lower dimensions, or the Assouad spectrum or lower spectrum for any fixed $\theta \in (0,1)$. 
\end{enumerate}
\end{remark}

Since the multifractal spectrum is analytic (as the Legendre transform of an analytic function), if $I \subseteq (\alpha_{\text{min}},\alpha_{\text{max}})$ is an open interval, then~\eqref{itemnow2} holds for all $\alpha \in I$ if and only if it holds for all $\alpha \in (\alpha_{\text{min}},\alpha_{\text{max}})$. 
Similarly, if $J \subseteq (\underline{t},\overline{t})$ is an open interval then~\eqref{item4} holds for all $t \in J$ if and only if it holds for all $t \in (\underline{t},\overline{t})$. 
\begin{question}
In the statement of Theorem~\ref{thm:multifractal}, can $(a,b)$ be taken to be an arbitrary open subinterval of $(0,1)$? 
\end{question}

Since the proof strategy of Lemma~\ref{lem:multifractallemma} does not seem to work under the assumption that the $L^q$ spectra are equal, we ask the following question. 

\begin{question}
Do there exist two Bedford--McMullen carpets with non-uniform fibres which cannot be realised on the same grid but whose uniform Bernoulli measures have the same $L^q$ spectra? 
\end{question}

Turning now to bi-Lipschitz equivalence, recall that two metric spaces $(X,d_X)$ and $(Y,d_Y)$ are \emph{bi-Lipschitz equivalent} if there is a bi-Lipschitz map $f\colon X\to Y$, i.e. one for which there exists $C \geq 1$ such that $C^{-1}d_X(x,y) \leq d_Y(f(x),f(y)) \leq Cd_X(x,y)$ for all $x,y \in X$. In our setting $X$ and $Y$ are two Bedford--McMullen carpets with the Euclidean distance. The following open problem seems challenging: 
\begin{question}\label{ques:bilip}
Find an explicit necessary and sufficient condition that determines, given two iterated function systems each generating a Bedford--McMullen carpet, whether or not the two carpets are bi-Lipschitz equivalent. 
\end{question}%
Partial progress towards Question~\ref{ques:bilip} has been made in~\cite{Li2013lipschitz,RaoPreprintlipschitz,
Yang2020lipschitz}, all of which assume some disconnectivity property. Fraser and Yu~\cite{Fraser2018secondassouad} used the Assouad spectrum to show that $\gamma$ is a bi-Lipschitz invariant within the class of Bedford--McMullen carpets, a fact which is also evident from observing the form of the intermediate dimensions. Moreover, the \emph{gap sequence} of a set is a topological quantity which has been shown to be bi-Lipschitz invariant \cite{Rao2008lipschitz}, and which is known for Bedford--McMullen carpets \cite{Miao2017gapseq,Liang2022gapseq}. Using the fact that the intermediate dimensions are stable under bi-Lipschitz maps, we obtain the following necessary condition for bi-Lipschitz equivalence as an immediate corollary of Theorem~\ref{thm:multifractal}. 

\begin{corollary}\label{cor:biLip}
Let $\Lambda$ and $\Lambda'$ be two Bedford--McMullen carpets with non-uniform fibres which are bi-Lipschitz equivalent, and let $\nu$ and $\nu'$ be the corresponding uniform Bernoulli measures. Then $f_{\nu}(\alpha)=f_{\nu'}(\alpha)$ for all $\alpha \in (\alpha_{\text{min}},\alpha_{\text{max}})$ and $T_{\nu}(q) = T_{\nu'}(q)$ for all $q \in \R$, and both carpets can be defined on the same $m \times n$ grid, on which condition~\eqref{itemnow5} above holds. 
\end{corollary} 
This strengthens~\cite[Corollary~1.1]{RaoPreprintlipschitz}, where it is assumed that $\Lambda$ and $\Lambda'$ are totally disconnected and defined on the same grid. 
In Example~\ref{ex:biLip}, we construct two carpets which we know are not bi-Lipschitz equivalent by Corollary~\ref{cor:biLip}, but where \cite[Corollary~1.1]{RaoPreprintlipschitz} does not apply. 
Corollary~\ref{cor:biLip} also shows in particular that if two carpets defined on the same grid with the same number of non-empty columns are bi-Lipschitz equivalent then the column sequence of one must be a permutation of the column sequence of the other (though we are not able to draw this conclusion if the number of non-empty columns is different, see Example~\ref{ex:rao}). 

One natural question would be to investigate the intermediate dimensions of self-affine carpets of Lalley and Gatzouras~\cite{Lalley1992gatzouras} or Bara\'nski~\cite{Baranski2007carpet}, or higher-dimensional self-affine sponges. 
Indeed, in light of the recent paper~\cite{BanajiPreprintgl} proving that the Assouad spectrum of Gatzouras--Lalley carpets (unlike Bedford--McMullen carpets) can be a differentiable function of~$\theta$, it is natural to ask whether the intermediate dimensions of Gatzouras--Lalley carpets can also be differentiable on $(0,1)$. 
We expect calculating a formula for the intermediate dimensions of such self-affine sets to be challenging, not least because there is no clear single analogue of the important quantity~$\gamma$, and this is not something which we will explore. 
We remark, however, that after we posted this paper on arXiv, Huang \emph{et al.}~\cite{HuangPreprintboxcounting} introduced so-called box-counting measures of metric spaces and showed that these classes of self-affine carpets support such measures (indeed for Bedford--McMullen carpets they are simply the uniform Bernoulli measures). 
The authors proved (without any connectivity assumption and without using the intermediate dimensions) that the multifractal spectrum of box-counting measures is a bi-Lipschitz invariant. 
Their result therefore generalises both the result from~\cite{RaoPreprintlipschitz} and our result on the bi-Lipschitz invariance of the multifractal spectrum. 
Motivated by Theorem~\ref{thm:multifractal}, the authors also ask in \cite[Open problem~1]{HuangPreprintboxcounting} whether two Gatzouras--Lalley or Bara\'nski carpets have the same intermediate dimensions if and only if their corresponding box-counting measures have the same multifractal spectra. 

\subsection{Equivalent forms of the rate function}\label{subsec:eqformsofrate}%

In this section we provide equivalent formulations of the rate function $I(t)$ in terms of a pressure-like function, a certain probability vector with an entropy maximising property, and the multifractal spectra $f_{\nu}(\alpha)$ defined in~\eqref{eq:uniformmultifractal}. As a result, our main formula~\eqref{eq:mainformula} for $\dim_{\, \theta}\Lambda$ can be expressed with any of these quantities. 

We begin by defining the pressure-like function. For $\iiv=(i_1,\dotsc,i_J)\in\{1,\dotsc,M\}^J$ and $k\in\{0,1,\dotsc,J\}$, we introduce
\begin{equation}\label{eq:20}
	\psi_{\iiv|k}(s) \coloneqq M^{k \gamma}\cdot n^{-sk}\cdot \prod_{\ell=1}^k N_{i_\ell}.
\end{equation}
In particular, for $k=0$, $\psi_{\iiv|0}(s)\equiv 1$. The interpretation of $\psi_{\iiv|k}(s)$ later is that it gives the $s$-cost of a set in the cover with diameter related to $k$, see Remark~\ref{rem:pressureexp}. Moreover, we define the sum
\begin{equation*}
	\Psi_J(s)\coloneqq \sum_{\iiv\in\{1,\dotsc,M\}^J}\min_{k\in\{0,1,\dotsc,J\}} \psi_{\iiv|k}(s).
\end{equation*}
This is connected to the total $s$-cost of the optimal cover, see Remark~\ref{rem:pressureexp} for additional explanation. To determine the critical exponent it is natural to define a pressure-like quantity as the exponential growth rate of $\Psi_J(s)$, more formally,
\begin{equation}\label{eq:21}
	\underline{P}(s)\coloneqq \liminf_{J\to\infty}\frac{1}{J}\log \Psi_J(s) \;\text{ and }\; \overline{P}(s)\coloneqq \limsup_{J\to\infty}\frac{1}{J}\log \Psi_J(s).
\end{equation} 

The probability vector $\mathbf{Q}^*_{t}\in\mathcal{P}_M$ is defined by
\begin{equation}\label{eq:104}
	H(\mathbf{Q}^*_{t}) = \sup\Big\{ \, H(\mathbf{p}): \mathbf{p}\in\mathcal{P}_M \;\text{ such that }\; \sum_{\jh=1}^M p_{\jh}\log N_{\jh} = t \, \Big\}.
\end{equation}
It is well defined, see Lemma~\ref{lem:32}. Moreover, $H(\mathbf{Q}^*_{t})<\log M$ since $t>\underline{t}$.

We regularly relate the arguments $s$ and $t$ to each other via the transformation
\begin{equation}\label{eq:23}
	t= t_1(s) = \left(s-\frac{\log M}{\log m}\right) \log n, \;\text{ or equivalently } s=\frac{t}{\log n} + \frac{\log M}{\log m} .
\end{equation}
We do so to ensure that 
\begin{equation}\label{eq:309}
\psi_{\iiv|J}(s)\leq 1 \;\Longleftrightarrow\; \frac{1}{J} \sum_{\ell=1}^{J} \log N_{i_\ell} \leq \Big(s-\frac{\log M}{\log m}\Big)\cdot \log n =t,
\end{equation}
which now follows from~\eqref{eq:20} and straightforward algebraic manipulations. 
Now, using~\eqref{eq:102} and~\eqref{eq:103}, $\underline{t}$ maps to
\begin{equation}\label{e:sunderlinedef}
	\underline{s}\coloneqq \dim_{\mathrm H}\Lambda - \frac{1}{\log n} \left( \frac{\log n}{\log m} \log \Bigg(  \frac{1}{M}\sum_{\jh=1}^{M} N_{\jh}^{\frac{\log m}{\log n}} \Bigg) - \underline{t} \right),
\end{equation}
while $\overline{t}$ maps to
\begin{equation}\label{e:soverlinedef}
	\overline{s} \coloneqq \dim_{\mathrm B}\Lambda + \frac{\log M - H(\mathbf{P})}{\log n}.
\end{equation}
Observe that by Jensen's inequality and non-uniform fibres, $\underline{s}<\dim_{\mathrm H}\Lambda<\dim_{\mathrm B}\Lambda<\overline{s}$. %
In Proposition~\ref{prop:1}, the key technical result of Section~\ref{subsec:eqformsofrate}, we make a clear connection between~\eqref{eq:21}, \eqref{eq:104}, \eqref{eq:22} and~\eqref{eq:uniformmultifractal} for pairs of $(s,t)$ related by~\eqref{eq:23}. 
The proof is non-trivial and is given in Section~\ref{sec:proofProp}. 

\begin{prop}\label{prop:1}
	Fix $s\in(\underline{s},\overline{s})$. Then $\underline{P}(s)=\overline{P}(s)$; let $P(s)$ denote this common value. Furthermore, for every pair $(s,t)$ related by~\eqref{eq:23},
	\begin{equation*}
		\log M - I(t) = P(s) = H(\mathbf{Q}^*_{t}) =  (\log m ) f_{\nu} \left(\frac{\log N}{\log m} - \left(\frac{1}{\log m} - \frac{1}{\log n}\right) t   \right)  - \frac{t}{\gamma}. 
	\end{equation*}
\end{prop}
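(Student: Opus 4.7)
The plan is to establish the four equalities in a chain: first the ``contraction'' identity $\log M - I(t) = H(\mathbf{Q}^*_t)$ via convex duality, then $P(s) = H(\mathbf{Q}^*_t)$ (incidentally giving $\underline{P}(s)=\overline{P}(s)$) via the method of types, and finally the multifractal identity by an algebraic change of variables. The first identity is essentially the contraction principle: using Lagrange multipliers, maximising $H(\mathbf{p})$ on $\mathcal{P}_M$ subject to $\sum p_{\jh}\log N_{\jh}=t$ and $\sum p_{\jh}=1$ forces the Gibbs form $q_{\jh}^{*}=N_{\jh}^{\lambda^{*}}/\sum_{k}N_{k}^{\lambda^{*}}$, where $\lambda^{*}$ is the dual variable for the mean constraint. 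The first-order condition identifies this same $\lambda^{*}$ as the supremising value in~\eqref{eq:22}, and a direct substitution yields $H(\mathbf{Q}_{t}^{*})=-\lambda^{*}t+\log\sum_{\jh}N_{\jh}^{\lambda^{*}}=\log M-I(t)$. Strict concavity of $H$ together with $t\in(\underline{t},\overline{t})$ (non-empty because the fibres are non-uniform) give existence and uniqueness of $\mathbf{Q}^*_t$.

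The main step, $P(s)=H(\mathbf{Q}^*_t)$, I would prove via the method of types~\cite{Kolossv_MethodTypes_arxiv}. The crucial simplification is that under~\eqref{eq:23}, $M^{\gamma}n^{-s}=e^{-t}$, so $\psi_{\mathbf{i}|k}(s)=e^{S_{k}}$ where $S_{k}\coloneqq\sum_{\ell=1}^{k}(\log N_{i_{\ell}}-t)$ is a running walk along the word. For the upper bound, I would use $\min_{k}e^{S_{k}}\leq\min(e^{S_{0}},e^{S_{J}})=\min(1,e^{J(\langle \mathbf{q},\log\mathbf{N}\rangle -t)})$ for any word of empirical type $\mathbf{q}$, group words by type, and apply Stirling ($|T_{\mathbf{q}}|=e^{JH(\mathbf{q})+O(\log J)}$) together with the fact that the number of feasible types is polynomial in $J$. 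This reduces the problem to maximising $\min(H(\mathbf{q}),H(\mathbf{q})+\langle \mathbf{q},\log\mathbf{N}\rangle-t)$ over $\mathbf{q}\in\mathcal{P}_{M}$; a case split at the boundary $\langle \mathbf{q},\log\mathbf{N}\rangle=t$ shows both regimes are maximised by $\mathbf{Q}^*_t$ with common value $H(\mathbf{Q}^*_t)$. For the lower bound, given $\delta>0$ I would pick a type $\mathbf{q}_{\delta}$ (realisable at denominator $J$) approximating $\mathbf{Q}^*_{t+\delta}$, so that $\langle \mathbf{q}_{\delta},\log\mathbf{N}\rangle>t$ and hence $S_{J}>0$, and invoke a cycle-lemma / Sparre--Andersen argument: a fraction at least $1/(J+1)$ of the orderings in $T_{\mathbf{q}_{\delta}}$ satisfy $S_{k}\geq 0$ for all $k$, and for these $\min_{k}e^{S_{k}}=e^{S_{0}}=1$. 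This gives $\Psi_{J}(s)\geq e^{JH(\mathbf{q}_{\delta})-O(\log J)}$; sending $\delta\to 0^{+}$ and using continuity of $t\mapsto \mathbf{Q}^*_t$ yields $\underline{P}(s)\geq H(\mathbf{Q}^*_t)$, and matching exponential rates with the upper bound automatically produces $\underline{P}(s)=\overline{P}(s)=H(\mathbf{Q}^*_t)$. The hardest part will be the cycle-lemma step, but since only a polynomial-in-$J$ fraction of good orderings is needed (negligible on the exponential scale), the classical statement suffices.

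The multifractal identity is then a purely algebraic manipulation. Under the affine bijection $\lambda=\gamma^{-1}+(1-\gamma^{-1})\xi$, the defining relation~\eqref{eq:definebeta} rearranges to $\beta_{\nu}(\xi)=\frac{1}{\log m}\bigl(\log\sum_{\ih} N_{\ih}^{\lambda}-\xi\log N\bigr)$. Substituting into $f_{\nu}(\alpha)=\inf_{\xi}(\alpha\xi+\beta_{\nu}(\xi))$ and using the prescribed $(\log m)\alpha=\log N-(1-\gamma^{-1})t$ together with the identity $\gamma^{-1}/\log m=1/\log n$, the infimum over $\xi$ converts to the supremum defining $I(t)$; a short computation yields $f_{\nu}(\alpha)=t/\log n+(\log M-I(t))/\log m$, which rearranges to $(\log m)f_{\nu}(\alpha)-t/\gamma=\log M-I(t)$, completing the chain of equalities.
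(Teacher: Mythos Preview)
Your proposal is correct, and for three of the four steps it is essentially the paper's argument: the upper bound $\overline{P}(s)\leq H(\mathbf{Q}^*_t)$ via $\min_k\psi_{\iiv|k}\leq\min\{1,\psi_{\iiv|J}\}$ and the method of types matches the paper's Lemmas~\ref{lem:33-first}--\ref{lem:33-second}; your Lagrange-multiplier derivation of $H(\mathbf{Q}^*_t)=\log M-I(t)$ is a direct analogue of the paper's appeal to Sanov's theorem and the contraction principle; and the multifractal step is the same Legendre-transform calculation as the paper's Step~4.

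The genuine difference is in the lower bound $\underline{P}(s)\geq H(\mathbf{Q}^*_t)$. You use a cycle-lemma argument: take a type $\mathbf{q}_\delta$ approximating $\mathbf{Q}^*_{t+\delta}$ so the walk has strictly positive drift, partition the type class into cyclic orbits, and observe that at least a $1/J$ fraction of words have all partial sums nonnegative, on which $\min_k\psi_{\iiv|k}(s)=1$. The paper instead builds sets $S_{t,J,R}$ (see~\eqref{eq:definestjr}) by concatenating $R$ blocks, each of type close to $\mathbf{Q}^*_t$, and shows that on such words the running minimum is at worst $e^{-3\overline{t}J/R}$; sending $R\to\infty$ gives the bound. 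Your argument is arguably cleaner for Proposition~\ref{prop:1} in isolation, requiring only the classical cycle lemma and no auxiliary parameter. The payoff of the paper's block construction, however, is that the sets $S_{t,J,R}$ and the quantitative control in~\eqref{eq:lowerboundcombinatorialfudge} are reused verbatim when defining the measure $\mu_{\delta,s,\theta,R}$ in~\eqref{eq:definemass1}--\eqref{eq:definemu} and proving Lemma~\ref{lem:lowerallscales} for the mass distribution principle in the main theorem. So if you adopt the cycle-lemma route, be aware that you will still need something like the block construction later; it is not wasted effort in the paper's overall architecture.
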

Note also that by~\eqref{e:lqformula} and standard properties of Legendre transforms, $f_{\nu}$ and $I$ can be written in terms of the Legendre transform of $T_{\nu}$. 

\subsection{Illustrative examples}\label{subsec:ex}%

The simplest example with non-uniform fibres is the one shown in Figure~\ref{fig:BMPicture}. The following examples show additional interesting behaviour. 

\begin{remark}
All figures of the graphs in this paper were created using \emph{Wolfram Mathematica 12.3}, keeping simple implementation in mind rather than efficiency. 
For a fixed $\theta\in(0,1)$, the value of $\dim_{\, \theta}\Lambda$ was approximated by taking $2^{25}$ equally spaced points in the interval $(\dim_{\mathrm{H}}\Lambda,\dim_{\mathrm B}\Lambda)$ and choosing the point $s(\theta)$ for which the expression in~\eqref{eq:mainformula} was closest to~$0$.
\end{remark}

\begin{remark}\label{rem:snake}
It was first observed in~\cite{Kolossvary2022bm} that the graph $\theta\mapsto \dim_{\, \theta}\Lambda$ can approach $\dim_{\mathrm B}\Lambda$ from below the straight line $\ell(\theta)=\dim_{\mathrm H}\Lambda +\theta(\dim_{\mathrm B}\Lambda-\dim_{\mathrm H}\Lambda)$, indicating that it is possible for $\dim_{\, \theta}\Lambda$ not to be concave on the whole range of $\theta$. 
From Corollary~\ref{cor:allprop} it follows that in this case the graph $\theta\mapsto \dim_{\, \theta}\Lambda$ must intersect $\ell(\theta)$. 
In fact, there are even carpets where the graph intersects $\ell(\theta)$ twice, as shown on the left of Figure~\ref{fig:exSeries}. 
For the carpets in this figure, all parameters remain the same except for $m$, which causes different behaviour for larger values of $\theta$ as it changes. 
For $m\leq 25$, the graph stays above $\ell(\theta)$ for all $\theta$. 
\end{remark}

\begin{figure}[ht]
	\centering
	\includegraphics[width=0.99\textwidth]{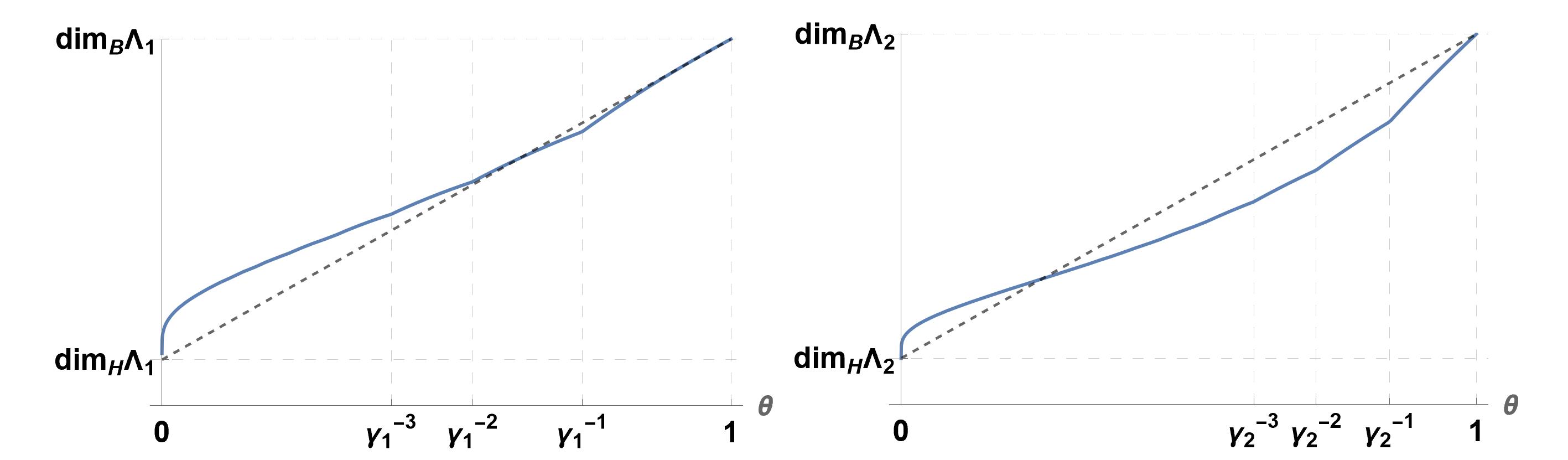}
	\caption{Parameters $n=100$ and $\mathbf{N}=(51,50,50,50,50,50)$ are the same in each example; only $m$ varies from $30$ on the left to $50$ on the right.}
	\label{fig:exSeries}
\end{figure}

Fraser and Yu \cite[Proposition~3.4]{Fraser2018secondassouad} gave two Bedford--McMullen carpets which have different Assouad spectra but for which some other notions of dimension coincide, and we have the following example. 
\begin{example}\label{ex:biLip}
Consider two Bedford--McMullen carpets %
$\Lambda$ and $\Lambda'$ with $m=M=32$ and $n=243$ and the following parameters: 
\begin{align*}
\Lambda &\colon \qquad M_0=3, \quad \{N_1,N_2,N_3\} = \{27,3,1\} \text{ and } \{R_1,R_2,R_3\} = \{2,11,19\}, \\
\Lambda' &\colon  \qquad M'_0=3, \quad \{N'_1,N'_2,N'_3\} = \{27,9,1\} \text{ and } \{R'_1,R'_2,R'_3\} = \{1,6,25\}.
\end{align*}
Then 
\begin{itemize}
	\item There exists $\theta \in (0,1)$ with $\dim_{\, \theta} \Lambda \neq \dim_{\, \theta} \Lambda'$ (by part~\eqref{itemnow5} of Theorem~\ref{thm:multifractal}, since all $R_{\ih}/R'_{\ih}$ are different) 
	\item We have $\dim \Lambda = \dim \Lambda'$ where $\dim$ can be any of the Hausdorff, box, packing, Assouad, quasi-Assouad, lower, quasi-lower or modified lower dimensions, or the Assouad spectrum or lower spectrum for any fixed $\theta \in (0,1)$. This holds since $N = 106$, $\max_{1 \leq i \leq 3} N_i =\max_{1 \leq i \leq 3} N'_i= 27$, $\min_{1\leq i \leq M} N_i = \min_{1\leq i \leq M} N'_i = 1$, so we can use~\eqref{eq:102} and \cite[Corollary~15.5.3]{Fraser2021bedford} to show that the Hausdorff and modified lower dimensions are equal, and the formulae in~\cite{Fraser2021bedford} for the other dimensions. 
\end{itemize}
Therefore $\Lambda$ and $\Lambda'$ are not bi-Lipschitz equivalent but this is revealed only by the intermediate dimensions, not by any of the other dimensions mentioned above. 
If all the rectangles are chosen in a specific row, then neither $\Lambda$ nor $\Lambda'$ is totally disconnected, so~\cite[Corollary~1.1]{RaoPreprintlipschitz} does not apply. 
Figure~\ref{fig:exRatio} shows the plots of $\dim_{\, \theta}\Lambda$ (blue) and $\dim_{\, \theta}\Lambda'$ (orange) side-by-side on the left and the ratio $\dim_{\, \theta}\Lambda'/\dim_{\, \theta}\Lambda$ on the right.

We can use H\"older distortion to obtain a quantitative improvement of the assertion that $\Lambda$ and $\Lambda'$ are not bi-Lipschitz equivalent. Assume $f \colon \Lambda' \to \mathbb{R}^2$ is $\alpha$-H\"older with $f(\Lambda') \supseteq \Lambda$. Then the optimal value of $\theta$ to consider is $\theta = \gamma^{-2} = \left(\frac{\log 2}{\log 3}\right)^2 \approx 0.40$, see the right hand side of Figure~\ref{fig:exRatio}. Then by~\eqref{eq:generalholderint}, 
\[ \alpha \leq \frac{\dim_{\, \gamma^{-2}} \Lambda'}{\dim_{\, \gamma^{-2}} f(\Lambda')} \leq \frac{\dim_{\, \gamma^{-2}} \Lambda'}{\dim_{\, \gamma^{-2}} \Lambda} < 0.9995, \]
with the last inequality computed numerically using Theorem~\ref{thm:main}. 

\begin{figure}[ht]
	\centering
	\includegraphics[width=0.99\textwidth]{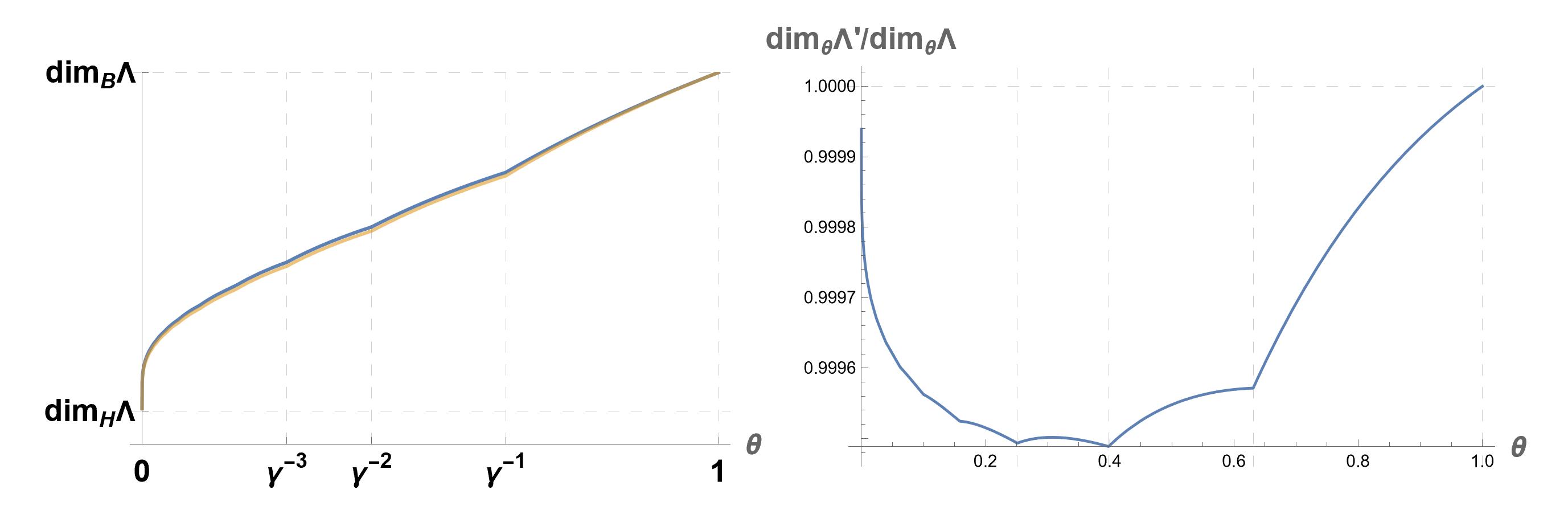}
	\caption{Left: plot of $\dim_{\, \theta}\Lambda$ (blue) and $\dim_{\, \theta}\Lambda'$ (orange) from Example~\ref{ex:biLip}. Right: ratio of $\dim_{\, \theta}\Lambda'/\dim_{\, \theta}\Lambda$ for $\theta\geq \gamma^{-35}$.}
	\label{fig:exRatio}
\end{figure}

\end{example}

\begin{prop}
For any carpet with just two column types, meaning that $M_0=2$ using notation from Section~\ref{subsec:multifractal}, the rate function can be given explicitly by $I(t)=\log M -H(\mathbf{Q}^*_t)$, where 
\[ H(\mathbf{Q}^*_t) = \frac{-1}{\log (N_1/N_2)}\left(\!\! (t-\log N_2)\log \frac{t-\log N_2}{R_1\log (N_1/N_2)}+ (\log N_1-t)\log \frac{\log N_1-t}{R_2\log (N_1/N_2)}  \!\right). \]
\end{prop}
\begin{proof}
Let $\mathbf{q}=(q_1,\dotsc,q_{M_0})\in\mathcal{P}_{M_0}$.
It is straightforward to see that the supremum in~\eqref{eq:104} will not change if we restrict to vectors $\mathbf{p}\in\mathcal{P}_M$ of the form 
\begin{equation}\label{eq:105}
p_{\ih} = q_j/R_j, \;\text{ if the } \ih\text{-th non-empty column has } N_j \text{ maps},
\end{equation}
in other words measure is distributed uniformly amongst columns with the same number of maps. 
As a result, the linear constraints in~\eqref{eq:104} can be rewritten as 
\begin{equation}\label{eq:106}
1 = \sum_{j=1}^{M_0} q_j  \,\;\text{ and }\;\, t =  \sum_{j=1}^{M_0} q_j \log N_j.
\end{equation}
In particular, since $M_0=2$, there is a single vector $(q^*_1,q^*_2)$ which satisfies~\eqref{eq:106}, namely 
\begin{equation*}
q^*_1= \frac{t-\log N_2}{\log( N_1/N_2)} \;\;\text{ and }\;\; q^*_2 =\frac{\log N_1 -t}{\log (N_1/N_2)},
\end{equation*}
recalling $N_1>N_2$. 
Using~\eqref{eq:105}, we can calculate the entropy of the entropy-maximising vector, 
\[ H(\mathbf{Q}^*_t) = - \sum_{j=1}^{M_0} q^*_j \log (q^*_j/R_j), \]
and conclude from Proposition~\ref{prop:1} that $I(t)=\log M -H(\mathbf{Q}^*_t)$, as required. 
\end{proof}

\begin{example}[using the parameters from Example~1.2 of Rao, Yang and Zhang~\cite{RaoPreprintlipschitz}]\label{ex:rao}
Consider two Bedford--McMullen carpets defined on the same grid with $m=8$, $n=27$ and the following parameters: 
\begin{align*}
 \Lambda: \qquad &M_0 = 2, \quad \{N_1,N_2\} = \{6,3\} \mbox{ and } \{R_1,R_2\} = \{1,1\}, \\*
\Lambda': \qquad &M_0' = 2, \quad \{N_1',N_2'\} = \{2,1\} \mbox{ and } \{R_1',R_2'\} = \{2,2\}. 
\end{align*}
Then condition~\eqref{itemnow5} from Theorem~\ref{thm:multifractal} holds, so $\dim_{\, \theta} \Lambda = \dim_{\, \theta} \Lambda'$ for all $\theta \in [0,1]$, despite the fact that the carpets are defined on the same grid with different parameters. This is only possible because the number of non-empty columns is different. 
\end{example}
It is shown in~\cite{RaoPreprintlipschitz} that the carpets in Example~\ref{ex:rao} are not bi-Lipschitz equivalent. Therefore equality of the intermediate dimensions is not a sufficient condition for two carpets with non-uniform fibres to be bi-Lipschitz equivalent, even if they are assumed to be defined on the same grid and totally disconnected. This raises the following question.  

\begin{question}
Suppose two Bedford--McMullen carpets both have non-uniform fibres, are defined on the same grid, and are bi-Lipschitz equivalent. Does it follow that both carpets must have identical parameters $(M_0,N_1,\dotsc,N_{M_0}, R_1,\dotsc, R_{M_0})$? 
\end{question}

\begin{example}\label{ex:Nice}%
Consider the two carpets $\Lambda$ and $\Lambda'$ with parameters 
\begin{align*}
\Lambda : \qquad &n=36, \quad m=6, \quad M = M_0 = 2,\quad \{ N_1,N_2\} = \{9,6\} \mbox{ and }\{R_1,R_2\} = \{1,1\} \\
\Lambda' : \qquad &n=36, \quad m=4,\quad M = M_0 = 2, \quad \{ N_1,N_2\} = \{6,4\} \mbox{ and }\{R_1,R_2\} = \{1,1\}.
\end{align*}
Then it can be checked from Theorem~\ref{thm:main} that $\dim_{\, \theta} \Lambda = \dim_{\, \theta} \Lambda'$ for all $\theta \in [1/2,1]$, but not for the whole range of $\theta$; by Theorem~\ref{thm:multifractal} this is only possible because the carpets are defined on different grids. By Corollary~\ref{cor:allprop}, the graph of $\dim_{\, \theta} \Lambda$ has a phase transition at $\theta = 1/2$ but the graph of $\dim_{\, \theta} \Lambda'$ does not, see Figure~\ref{fig:exNice}.
\end{example}
\begin{figure}[ht]
	\centering
	\includegraphics[width=0.5\textwidth]{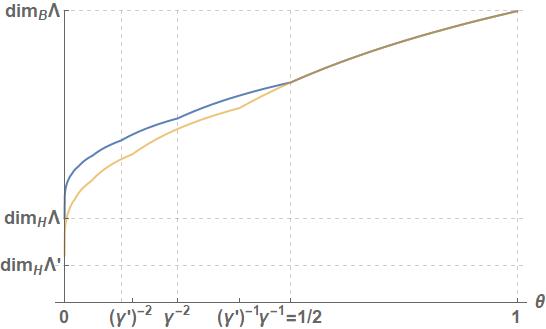}
	\caption{Plots of the intermediate dimensions of carpets in Example~\ref{ex:Nice}.}
	\label{fig:exNice}
\end{figure}

\section{Proof of Proposition \ref{prop:1}}\label{sec:proofProp}

Recall notation from Section~\ref{sec:CompResults}. Assume $s\in(\underline{s},\overline{s})$ and that $(s,t)$ are related by~\eqref{eq:23}. We prove Proposition~\ref{prop:1} (giving the equivalent forms of the rate function) in four steps:
\begin{description}
\item[Step 1] $H(\mathbf{Q}^*_{t}) = \log M - I(t)$ 
\item[Step 2] $\overline{P}(s) \leq H(\mathbf{Q}^*_{t})$ 
\item[Step 3] $H(\mathbf{Q}^*_{t}) \leq \underline{P}(s)$ 
\item[Step 4] $I(t) = -(\log m ) f_{\nu} \left(\frac{\log N}{\log m} - \left(\frac{1}{\log m} - \frac{1}{\log n}\right) t   \right)  + \frac{t}{\gamma} + \log M$ 
\end{description}
We will prove the steps in separate subsections, which will also contain some auxiliary results which are important in their own right in the proof of Theorem~\ref{thm:main}. 

\subsection{Preliminaries} 
First, we need some preliminaries, in particular to describe the probability vectors $\mathbf{P}^*_t$ and $\mathbf{Q}^*_t$ which have certain optimising properties, and to recall some facts from the method of types. 
If $k,k_1,k_2 \in \mathbb{N}$ satisfy $0 \leq k_1 < k_2 \leq k$, and $\iih \in \{1,\dotsc,M\}^k$, then we define the average 
\begin{equation}\label{eq:tauaverage} \tau(\iih,k_1,k_2) \coloneqq \max\Bigg\{ \underline{t} , \frac{1}{k_2 - k_1} \sum_{j=k_1 + 1}^{k_2} \log N_{\ih_j} \Bigg\}. 
\end{equation} 
Recall, $\mathcal{P}_M$ denotes the set of probability vectors on $[M]=\{1,\dotsc,M\}$ and we introduced two distinguished probability vectors $\mathbf{P}\coloneqq (N_1/N,\dotsc,N_M/N)$ and $\mathbf{Q} \coloneqq (1/M,\dotsc,1/M)$.
Recall that the \emph{Kullback--Leibler divergence}, also known as the \emph{relative entropy} of $\mathbf{p}\in\mathcal{P}_M$ with respect to $\mathbf{q}\in\mathcal{P}_M$ is
\begin{equation*}
H(\mathbf{p}\| \mathbf{q}) \coloneqq \sum_{\ih=1}^M p_{\ih} \log \left( \frac{p_{\ih}}{ q_{\ih}}\right) = - H(\mathbf{p}) - \sum_{\ih=1}^M p_{\ih} \log q_{\ih} ,
\end{equation*}
where we set $0 \log 0 = 0$ and $0 \log (0/q_{\ih}) = 0$ regardless of the value of $q_{\ih}$. 
It is asymmetric and $H(\mathbf{p}\| \mathbf{q})\geq 0$ with equality if and only if $\mathbf{p}=\mathbf{q}$. In particular, 
\begin{equation}\label{eq:32}
H(\mathbf{p}\| \mathbf{P}) = \log N -H(\mathbf{p}) -\sum_{\ih=1}^M p_{\ih}\log N_{\ih} \quad \text{ and } \quad H(\mathbf{p}\| \mathbf{Q}) = \log M - H(\mathbf{p}).
\end{equation}

Recall that $\underline{t}\coloneqq \frac{1}{M} \sum_{\jh=1}^{M} \log N_{\jh}$ and $\overline{t}\coloneqq \log N-H(\mathbf{P})$, and let $t\in(\underline{t},\overline{t})$. We divide the set $\mathcal{P}_M$ into two parts: 
\begin{equation}\label{eq:300}
\mathcal{G}_t \coloneqq \bigg\{ \, \mathbf{p}\in\mathcal{P}_M: \sum_{\ih=1}^M p_{\ih}\log N_{\ih} \leq t \, \bigg\} \;\text{ and }\; \mathcal{F}_t \coloneqq \bigg\{ \, \mathbf{p}\in\mathcal{P}_M: \sum_{\ih=1}^M p_{\ih}\log N_{\ih} \geq t \, \bigg\},
\end{equation}
so that $\mathcal{E}_t\coloneqq \mathcal{G}_t \cap \mathcal{F}_t = \big\{ \, \mathbf{p}\in\mathcal{P}_M: \sum_{\ih} p_{\ih}\log N_{\ih} = t \, \big\}$. 
The reason for doing this will become clear in~\eqref{eq:301} in the proof of Step~2. 
Since $t>\underline{t}$, it follows that $\mathbf{Q}\in\mathcal{G}_t\setminus\mathcal{E}_t$, whereas $\mathbf{P}\in\mathcal{F}_t\setminus\mathcal{E}_t$ because $t<\overline{t} = \sum_{\ih=1}^M P_{\ih}\log N_{\ih}$.

\begin{lemma}\label{lem:31}
Let $t\in(\underline{t},\overline{t})$ and $\mathbf{p}\in\mathcal{E}_t$. Then 
\begin{equation*}
H(\mathbf{p}\| \mathbf{P}) = H(\mathbf{p}\| \mathbf{Q})+\log(N/M) -t.
\end{equation*}
\end{lemma}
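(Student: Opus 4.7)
The proof is essentially a one-line computation from the identities already recorded in equation~\eqref{eq:32}, so my plan is simply to expand both sides using those identities and exploit the defining constraint of $\mathcal{E}_t$.

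First I would recall from~\eqref{eq:32} that
\begin{equation*}
H(\mathbf{p}\|\mathbf{P}) = \log N - H(\mathbf{p}) - \sum_{\ih=1}^{M} p_{\ih}\log N_{\ih}, \qquad H(\mathbf{p}\|\mathbf{Q}) = \log M - H(\mathbf{p}).
\end{equation*}
Since $\mathbf{p}\in\mathcal{E}_t$ means $\sum_{\ih=1}^{M} p_{\ih}\log N_{\ih} = t$, substituting into the first identity gives $H(\mathbf{p}\|\mathbf{P}) = \log N - H(\mathbf{p}) - t$. On the other hand, rearranging the second identity yields $H(\mathbf{p}) = \log M - H(\mathbf{p}\|\mathbf{Q})$, and plugging this back in produces $H(\mathbf{p}\|\mathbf{P}) = \log N - \log M + H(\mathbf{p}\|\mathbf{Q}) - t = H(\mathbf{p}\|\mathbf{Q}) + \log(N/M) - t$, as required.

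There is really no obstacle here: the lemma is a direct algebraic consequence of the formulae in~\eqref{eq:32} together with the linear constraint defining $\mathcal{E}_t$. The hypothesis $t\in(\underline{t},\overline{t})$ is not needed for the identity itself (it is only used to ensure that the set $\mathcal{E}_t$ is non-empty, which matters for how the lemma will be applied later), so I would state the proof without invoking it beyond membership of $\mathbf{p}$ in $\mathcal{E}_t$.
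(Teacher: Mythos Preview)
Your proof is correct and is essentially the paper's argument: both reduce to the two identities in~\eqref{eq:32} together with the constraint $\sum_{\ih} p_{\ih}\log N_{\ih}=t$ defining $\mathcal{E}_t$. Your version is in fact more direct, as the paper takes a detour through Gaussian elimination on the grouped system $\sum_j q_j=1$, $\sum_j q_j\log N'_j=t$ which ultimately just recovers the hypothesis before invoking~\eqref{eq:32}.
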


\begin{proof}
Let $\mathbf{p}\in\mathcal{E}_t$. Let $N'_1<N'_2< \dotsb <N'_{M_0}$ denote the different values that the set $\{N_1,\dotsc,N_M\}$ takes. %
For $1\leq j\leq M_0$ let $I_j\coloneqq \{\, \ih\in[M] :  N_{\ih} = N'_j \, \}$ and  $q_j\coloneqq \sum_{\ih\in I_j}p_{\ih}$. Then,
\begin{equation*}
1=\sum_{\ih=1}^M p_{\ih} = \sum_{j=1}^{M_0} q_j  \,\;\text{ and }\;\, t = \sum_{\ih=1}^M p_{\ih}\log N_{\ih} =  \sum_{j=1}^{M_0} q_j \log N'_j.
\end{equation*}
This is a linear system of equations for $\{q_j\}_{j=1}^{M_0}$. Straightforward Gaussian elimination yields
\begin{equation*}
\left[\begin{array}{@{}ccc|c@{}}
1  & \dots & 1 & 1 \\
\log N'_1 & \dots & \log N'_{M_0} & t
\end{array}\right] %
\;\sim\;
\left[\begin{array}{@{}ccccc|c@{}}
1 & 1 & 1 & \dots & 1 & 1 \\
0 & 1 & \frac{\log(N'_3/N'_1)}{\log(N'_2/N'_1)} & \dots & \frac{\log(N'_{M_0}/N'_1)}{\log(N'_2/N'_1)} & \frac{t- \log N'_1}{\log(N'_2/N'_1)} 
\end{array}\right].
\end{equation*}
Thus, for every solution $(q_1,\dotsc,q_{M_0})$, we see that $q_3,\dotsc,q_{M_0}$ are free variables, and moreover
\begin{equation*}
q_2 = \frac{1}{\log (N'_2 / N'_1)} \bigg( t - \sum_{j=3}^{M_0} q_j \log N'_j - \bigg(1-\sum_{j=3}^{M_0} q_j\bigg) \log N'_1\bigg)
\end{equation*}
and $q_1=1-q_2-\sum_{j=3}^{M_0} q_j$. It now follows by a straightforward calculation that $\sum_{j=1}^{M_0} q_j \log N'_j=t$. By~\eqref{eq:32}, the result follows. 
\end{proof}

We introduce $\mathbf{P}^*_t\in\mathcal{G}_t$, $\mathbf{Q}^*_t\in\mathcal{F}_t$ defined by
\begin{equation}\label{eq:defineqstar}
H(\mathbf{P}^*_t\| \mathbf{P}) = \inf_{\mathbf{p}\in\mathcal{G}_t} H(\mathbf{p}\| \mathbf{P}) \qquad \text{ and }\qquad H(\mathbf{Q}^*_t\| \mathbf{Q}) = \inf_{\mathbf{q}\in\mathcal{F}_t} H(\mathbf{q}\| \mathbf{Q}).
\end{equation}
Due to~\eqref{eq:32} and Lemma~\ref{lem:32}, %
 this definition of $\mathbf{Q}^*_t$ is equivalent to~\eqref{eq:104}.
\begin{lemma}\label{lem:32}
Let $t\in(\underline{t},\overline{t})$. Then both $\mathbf{P}^*_t$ and $\mathbf{Q}^*_t$ are well defined and unique, with $\mathbf{P}^*_t,\mathbf{Q}^*_t\in\mathcal{E}_t$. Moreover, 
\begin{equation}\label{eq:33}
H(\mathbf{P}^*_t\| \mathbf{P}) = H(\mathbf{Q}^*_t\| \mathbf{Q})+\log(N/M)-t.
\end{equation}
\end{lemma}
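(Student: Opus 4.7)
The plan has three steps: establish existence and uniqueness of $\mathbf{P}^*_t$ and $\mathbf{Q}^*_t$; show the linear constraint $\sum p_{\ih}\log N_{\ih}=t$ is binding at both minimizers, so that each lies in $\mathcal{E}_t$; and then deduce~\eqref{eq:33} from Lemma~\ref{lem:31}. For the first step, the feasible sets $\mathcal{G}_t$ and $\mathcal{F}_t$ are compact convex subsets of $\mathcal{P}_M$, being cut out by one linear inequality each. Using the second form of the divergence in~\eqref{eq:32} and the fact that $\mathbf{P}$ and $\mathbf{Q}$ have strictly positive entries, both $\mathbf{p}\mapsto H(\mathbf{p}\|\mathbf{P})$ and $\mathbf{p}\mapsto H(\mathbf{p}\|\mathbf{Q})$ are continuous and strictly convex on $\mathcal{P}_M$ (the first because $-H(\mathbf{p})$ is, and adding a linear functional preserves strict convexity), so the infima in~\eqref{eq:defineqstar} are attained at unique points.

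For the second step I would argue by a line-segment perturbation. Since $t>\underline{t}$ we have $\sum Q_{\ih}\log N_{\ih}=\underline{t}<t$, while $\mathbf{P}$ is the unique global minimizer of $H(\cdot\|\mathbf{P})$ on $\mathcal{P}_M$ and satisfies $\sum P_{\ih}\log N_{\ih}=\overline{t}>t$. If $\mathbf{P}^*_t$ satisfied $\sum(\mathbf{P}^*_t)_{\ih}\log N_{\ih}<t$ strictly, then for sufficiently small $\epsilon>0$ the convex combination $(1-\epsilon)\mathbf{P}^*_t + \epsilon\mathbf{P}$ would still lie in $\mathcal{G}_t$ by continuity and would strictly decrease $H(\cdot\|\mathbf{P})$, since convexity combined with $H(\mathbf{P}\|\mathbf{P})=0<H(\mathbf{P}^*_t\|\mathbf{P})$ yields a strict drop. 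This contradicts the minimality of $\mathbf{P}^*_t$. A symmetric perturbation of $\mathbf{Q}^*_t$ toward $\mathbf{Q}$, using $H(\mathbf{Q}\|\mathbf{Q})=0$ and the fact that $\sum Q_{\ih}\log N_{\ih}<t$, shows $\mathbf{Q}^*_t\in\mathcal{E}_t$ as well.

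Once both minimizers lie in $\mathcal{E}_t$, Lemma~\ref{lem:31} says that $H(\cdot\|\mathbf{P})$ and $H(\cdot\|\mathbf{Q})$ differ by the constant $\log(N/M)-t$ on $\mathcal{E}_t$. Therefore minimizing either objective over $\mathcal{E}_t$ produces the same point, and since the minimizers over $\mathcal{G}_t$ and $\mathcal{F}_t$ are already known to lie in $\mathcal{E}_t$, we obtain $\mathbf{P}^*_t=\mathbf{Q}^*_t$ by uniqueness; identity~\eqref{eq:33} is then immediate from Lemma~\ref{lem:31}. The only non-routine part is the binding-constraint step, and within that, verifying that the small perturbation preserves feasibility; both are handled cleanly by the strictness of the inequalities $\underline{t}<t<\overline{t}$ at the endpoints $\mathbf{Q}$ and $\mathbf{P}$.
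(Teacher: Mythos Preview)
Your proof is correct and follows essentially the same three-step outline as the paper: existence on compact sets, showing the constraint binds so both minimizers lie in $\mathcal{E}_t$, then invoking Lemma~\ref{lem:31}. The only noteworthy differences are cosmetic: for the binding-constraint step the paper argues via the vanishing of $\nabla H(\cdot\|\mathbf{Q})$ (only at $\mathbf{Q}\notin\mathcal{F}_t$) rather than your line-segment perturbation, and for the final step the paper chains the inequalities
\[
H(\mathbf{Q}^*_t\|\mathbf{P}) \leq H(\mathbf{P}^*_t\|\mathbf{Q}) + \log(N/M)-t = H(\mathbf{P}^*_t\|\mathbf{P}) \leq H(\mathbf{Q}^*_t\|\mathbf{P})
\]
directly rather than passing through the equality $\mathbf{P}^*_t=\mathbf{Q}^*_t$. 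Your addition of strict convexity to secure uniqueness (and hence the identity $\mathbf{P}^*_t=\mathbf{Q}^*_t$) is a mild strengthening not stated explicitly in the paper.
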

\begin{proof}
Both $H(\cdot\,\| \mathbf{P})$ and $H(\cdot\,\| \mathbf{Q})$ are continuous on the domain $\mathcal{P}_M$, and $\mathcal{G}_t$ and $\mathcal{F}_t$ are compact, so both infima are attained. 

We proceed to differentiate the relative entropy (with respect to a fixed vector $\mathbf{q}$ in the interior of $\mathcal{P}_M$) along straight lines in $\mathcal{P}_M$. 
Fix $\mathbf{p},\mathbf{q} \in \mathcal{P}_M$, and assume that all entries of $\mathbf{q}$ are positive. Fix $\mathbf{v} \in \R^M \setminus \{0\}$ satisfying $\sum_{\ih=1}^M v_{\ih} = 0$. 
Let $[-t_-,t_+]$ denote the maximal interval (containing the origin) such that $H(\mathbf{p} + t \mathbf{v} \| \mathbf{q} ) \in \mathcal{P}_M$ for all $t \in [-t_-,t_+]$. 
Then for all $t \in (-t_-,t_+)$, %
a direct computation gives 
\[ \frac{d}{dt} H(\mathbf{p} + t \mathbf{v} \| \mathbf{q} )  = \sum v_{\ih} \log \left( \frac{p_{\ih} + t v_{\ih}}{q_{\ih}} \right) ; \qquad \frac{d^2}{dt^2} H(\mathbf{p} + t \mathbf{v} \| \mathbf{q} ) = \sum \frac{v_{\ih}^2}{p_{\ih} + t v_{\ih}} > 0, \]
where the sums are taken over all indices $1 \leq \ih \leq M$ for which $p_{\ih} + t v_{\ih} > 0$ for all $t \in (-t_-,t_+)$. %
Therefore $t \mapsto H(\mathbf{p} + t \mathbf{v} \| \mathbf{q} )$ is strictly convex, and has at most one minimum in $[-t_-,t_+]$. 
In particular, the uniqueness of $\mathbf{P}^*_t$ and $\mathbf{Q}^*_t$ follows from the convexity of $\mathcal{G}_t$ and $\mathcal{F}_t$ respectively. 
Note also that for all $t \in (0,t_+)$,
\[ \frac{d}{dt} H(\mathbf{q} + t \mathbf{v} \| \mathbf{q} )  = \sum_{\ih=1}^M v_{\ih} \log \left(1 + \frac{v_{\ih}}{q_{\ih}}t \right) > 0,\] %
so $t \mapsto H(\mathbf{q} + t \mathbf{v} \| \mathbf{q} )$ is strictly increasing on $(0,t_+)$. 
Applying this with $\mathbf{q}$ taken to be $\mathbf{P}$ or $\mathbf{Q}$ respectively gives that $\mathbf{P}^*_t,\mathbf{Q}^*_t\in\mathcal{E}_t$.  

To conclude, Lemma~\ref{lem:31} gives that
\begin{align*}
H(\mathbf{Q}^*_t\| \mathbf{P}) = H(\mathbf{Q}^*_t\| \mathbf{Q}) +\log(N/M)-t \leq H(\mathbf{P}^*_t\| \mathbf{Q}) +\log(N/M)-t &= H(\mathbf{P}^*_t\| \mathbf{P}) \\
&\leq H(\mathbf{Q}^*_t\| \mathbf{P}),
\end{align*}
so there is equality throughout. 
\end{proof}

The importance of choosing $t$ to lie in the interval $(\underline{t},\overline{t})$ (or equivalently $s\in(\underline{s},\overline{s})$) is that in this case the hyperplane $\mathcal{E}_t$ separates $\mathbf{P}$ and $\mathbf{Q}$. Otherwise, either $H(\mathbf{P}^*_t\| \mathbf{P})=0$ or $H(\mathbf{Q}^*_t\| \mathbf{Q})=0$, and~\eqref{eq:33} does not necessarily hold.

\subsubsection*{Method of types} 
The method of types is an elementary tool developed to study discrete memoryless systems in information theory. It has since found applications in hypothesis testing, combinatorics and large deviations (see~\cite{Csiszar1998methodoftypes} for some background). Kolossv\'ary used it to calculate the box dimension and $L^q$ spectra of Gatzouras--Lalley and Bara\'nski type sponges in $\mathbb{R}^d$~\cite{Kolossvary2022lqtypes}. 

Let $[M]=\{1,\dotsc,M\}$ denote a finite alphabet and assume $\iiv=(i_1,\dotsc,i_J)\in[M]^J$. For $J'\leq J$, the \emph{type of $\iiv$ at level $J'$} is the empirical probability vector
\begin{equation*}%
\boldsymbol{\tau}_{J'}^{J}(\iiv) = \frac{1}{J'}\big( \#\{ \, 1\leq\ell\leq J': i_{\ell}=j \, \} \big)_{j\in[M]} \in [0,1]^M.
\end{equation*}
When $J'=J$, we simply write $\boldsymbol{\tau}_J(\iiv) \coloneqq \boldsymbol{\tau}_J^{J}(\iiv)$. The set of all possible types of $[M]^J$ is
\begin{equation*}
\mathcal{T}_J=\big\{ \, \mathbf{p}\in\mathcal{P}_M: \text{ there exists } \iiv\in[M]^J \text{ such that } \mathbf{p}=\boldsymbol{\tau}_J(\iiv) \, \big\},
\end{equation*}
and for $J'\leq J$, the \emph{type class of} $\mathbf{p}\in \mathcal{T}_{J'}$ amongst $[M]^J$ is the set
\begin{equation*}
T_{J'}^{J}(\mathbf{p}) = \big\{ \, \iiv\in[M]^J: \boldsymbol{\tau}_{J'}^J(\iiv)=\mathbf{p} \, \big\}.
\end{equation*}
Similarly, $T_{J}(\mathbf{p}) \coloneqq T_{J}^{J}(\mathbf{p})$. 

We use the following two simple facts:
\begin{equation}\label{eq:205}
\# \mathcal{T}_J \leq (J+1)^M
\end{equation}
and for every type class
\begin{equation}\label{eq:206}
\big(J+1\big)^{-M} e^{J\cdot H(\mathbf{p})} 
\leq \#T_{J}(\mathbf{p}) \leq e^{J\cdot H(\mathbf{p})},
\end{equation}
see~\cite[Lemmas~2.1.2 and~2.1.8]{Dembo2010largedeviations}. 
The importance of~\eqref{eq:205} is that $\# \mathcal{T}_J$ grows only polynomially in $J$; on the other hand, the exponential terms in~\eqref{eq:206} are the same in both the lower and upper bounds. Since we are looking for critical exponents, sub-exponential multiplicative terms do not influence our calculations. To simplify notation, we write $f(J)\asymp g(J)$ if the exponential rates of growth of $f(J)>0$ and $g(J)>0$ exist and are equal to each other, so 
\begin{equation*}
f(J)\asymp g(J) \;\Longleftrightarrow\; \lim_{J\to\infty}\frac{1}{J}\log f(J) = \lim_{J\to\infty}\frac{1}{J}\log g(J). 
\end{equation*}  
In particular, if $f(J)$ is sub-exponential in $J$ (for example when $f(J) = \# \mathcal{T}_J$), then $f(J)\asymp 1$. %

The set $\mathcal{T}_J$ is a discrete set with polynomially many points which becomes dense in $\mathcal{P}_M$ as $J\to\infty$. For $\mathbf{p}\in\mathcal{P}_M$ let $\mathbf{p}_{J}$ denote the `best approximation' of $\mathbf{p}$ in $\mathcal{T}_J$, in the sense that $\|\mathbf{p}-\mathbf{p}_{J}\| = \min_{\mathbf{q}\in\mathcal{T}_J} \|\mathbf{p}-\mathbf{q}\|$, where we can take any norm. If there are many such $\mathbf{p}_{J}$ then we can choose the one with smallest coordinates when ordered lexicographically. %
For large enough $J$, $\|\mathbf{p}-\mathbf{p}_{J}\|$ is arbitrarily small. In particular, property~\eqref{eq:206} and the continuity of the entropy imply that $\#T_{J}(\mathbf{p}_J) \asymp e^{J\cdot H(\mathbf{p})}$. 

\subsection{Proof of Step 1}

This is a standard argument in large deviations theory, and in fact holds for all $t \in (\underline{t},\max_{1\leq i \leq M} \log N_i)$. We include a sketch of it for the convenience of the reader. An alternative approach would be to use Lagrange multipliers.

Let $\mathbf{I}=I_1,I_2,\dotsc$ be an infinite sequence of independent and identically distributed random variables on the set $\{1,\dotsc,M\}$ according to $\mathbf{q}\in\mathcal{P}_M$. Let $\mathds{P}_{\mathbf{q}}\coloneqq \mathbf{q}^{\mathds{N}}$ denote the product measure corresponding to the distribution of the sequence $\mathbf{I}$. Then $\boldsymbol{\tau}_J(\mathbf{I})$, the type of $(I_1,\dotsc,I_J)$, is a vector-valued random variable. For all $\mathbf{p}\in\mathcal{T}_J$,
\begin{equation*}
(J+1)^{-M} e^{-J\cdot H(\mathbf{p}\|\mathbf{q})} 
\leq \mathds{P}_{\mathbf{q}}(\boldsymbol{\tau}_J(\mathbf{I})=\mathbf{p}) \leq e^{-J\cdot H(\mathbf{p}\|\mathbf{q})},
\end{equation*}
see~\cite[Lemma~2.1.9]{Dembo2010largedeviations}. Sanov's theorem~\cite[Theorem~2.1.10]{Dembo2010largedeviations} shows that the family of laws $\mathds{P}_{\mathbf{q}} (\boldsymbol{\tau}_J(\mathbf{I})\in \cdot )$ satisfies a large deviations principle with the rate function $H(\cdot\|\mathbf{q})$. In particular, for $\mathbf{q}=\mathbf{Q}=(1/M,\dotsc,1/M)$ and the subset $\mathcal{F}_{t}$:
\begin{equation*}
\mathds{P}_{\mathbf{Q}} (\boldsymbol{\tau}_J(\mathbf{I})\in\mathcal{F}_{t} ) \asymp e^{-J \inf_{\mathbf{q}\in\mathcal{F}_t} H(\mathbf{q}\|\mathbf{Q})} = e^{-J H(\mathbf{Q}^*_{t}\|\mathbf{Q})}. 
\end{equation*}
Now define the random variable $X_\ell\coloneqq \log N_{I_\ell}$ and the averages $Y_J\coloneqq \frac{1}{J}\sum_{\ell=1}^JX_{\ell}$. Then
\begin{equation*}
Y_J = \sum_{\ih=1}^M \tau_{J,\ih}(\mathbf{I}) \cdot \log N_{\ih}
\end{equation*}
is a continuous function of $\boldsymbol{\tau}_{J}(\mathbf{I})$. Hence, by the `contraction principle'~\cite[Section~4.2.1]{Dembo2010largedeviations}, the rate function $I(t)$ of $\mathds{P}_{\mathbf{Q}} (Y_J\in \cdot )$ is equal to
\begin{equation*}
I(t) = \inf \Big\{ \, H(\mathbf{q}\|\mathbf{Q}): \sum_{\ih=1}^M q_{\ih} \log N_{\ih} = t \, \Big\}.
\end{equation*}
In particular, Lemma~\ref{lem:32} implies that
\begin{equation*}
I(t) = \inf_{\mathbf{q}\in\mathcal{E}_t} H(\mathbf{q}\|\mathbf{Q}) \stackrel{\eqref{eq:defineqstar}}{=} H(\mathbf{Q}^*_{t}\|\mathbf{Q}) \stackrel{\eqref{eq:32}}{=} \log M - H(\mathbf{Q}^*_{t}).
\end{equation*}

\subsection{Proof of Step 2}

Since $\min_{k\in\{0,1,\dotsc,J\}} \psi_{\iiv|k}(s)\leq \min\{1,\psi_{\iiv|J}(s)\}$, recall the definition of $\psi_{\iiv|k}(s)$ from~\eqref{eq:20},
\begin{equation}\label{eq:302}
\Psi_J(s) \leq \#\big\{ \, \iiv\in[M]^J: 1<\psi_{\iiv|J}(s) \, \big\} + \sum_{\iiv\in[M]^J:\, \psi_{\iiv|J}(s)\leq 1} \psi_{\iiv|J}(s).
\end{equation} 

\begin{lemma}\label{lem:33-first}
Assume $t \in (\underline{t},\max_{1\leq i \leq N} \log N_i)$ and that $(s,t)$ are related by~\eqref{eq:23}. Then
\begin{equation*}
\#\big\{ \, \iiv\in[M]^J: 1<\psi_{\iiv|J}(s) \, \big\} \asymp e^{J\cdot H(\mathbf{Q}^*_{t})}.
\end{equation*}
\end{lemma}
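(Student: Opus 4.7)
The plan is to translate the condition $\psi_{\iiv|J}(s)>1$ into a linear constraint on the type of $\iiv$, and then invoke the method of types to identify the exponential growth rate as the maximum of the entropy over the admissible set, which by the variational definition of $\mathbf{Q}^*_t$ equals $H(\mathbf{Q}^*_t)$.

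First, I would take logarithms in the definition \eqref{eq:20}: writing $\mathbf{p}=\boldsymbol\tau_J(\iiv)$,
\begin{equation*}
\tfrac{1}{J}\log \psi_{\iiv|J}(s) = \gamma\log M - s\log n + \sum_{\ih=1}^M p_{\ih}\log N_{\ih}.
\end{equation*}
Using the relation $s\log n = t + \gamma\log M$ from~\eqref{eq:23}, this simplifies to $\sum_{\ih} p_{\ih}\log N_{\ih} - t$. Hence
\begin{equation*}
\big\{\iiv\in[M]^J:\,\psi_{\iiv|J}(s)>1\big\} \;=\; \bigcup_{\mathbf{p}\in\mathcal T_J\cap \mathcal F_t^\circ} T_J(\mathbf{p}),
\end{equation*}
where $\mathcal F_t^\circ=\{\mathbf{p}\in\mathcal P_M:\sum p_{\ih}\log N_{\ih}>t\}$ denotes the (relative) interior of $\mathcal F_t$.

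For the upper bound, I would use the polynomial cardinality bound~\eqref{eq:205} and the type-class bound~\eqref{eq:206} to obtain
\begin{equation*}
\#\{\psi_{\iiv|J}(s)>1\}\;\le\;(J+1)^M\max_{\mathbf{p}\in\mathcal T_J\cap\mathcal F_t^\circ} e^{J\cdot H(\mathbf p)}\;\le\;(J+1)^M e^{J\cdot \sup_{\mathcal F_t}H}.
\end{equation*}
Since $H$ is continuous and $\mathcal F_t$ is compact, the supremum is attained; by the definition~\eqref{eq:defineqstar} of $\mathbf{Q}^*_t$ together with the identity $H(\mathbf q\|\mathbf Q)=\log M-H(\mathbf q)$ in~\eqref{eq:32}, the maximiser on $\mathcal F_t$ is $\mathbf{Q}^*_t$, so $\sup_{\mathcal F_t}H=H(\mathbf{Q}^*_t)$. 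This yields the $\asymp$ upper bound.

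For the matching lower bound, the obstacle is that $\mathbf{Q}^*_t\in\mathcal E_t$ lies on the boundary, while we need to count types strictly inside $\mathcal F_t^\circ$. Here the hypothesis $t<\max_{\ih}\log N_{\ih}$ enters crucially: because $\mathbf{Q}^*_t\in\mathcal E_t$ satisfies $\sum (\mathbf{Q}^*_t)_{\ih}\log N_{\ih}=t$ which is strictly below $\max_\ih \log N_\ih$, the coordinate $\ih^\star$ with $\log N_{\ih^\star}=\max\log N_\ih$ receives mass strictly less than $1$, so moving an arbitrarily small amount of mass towards $\ih^\star$ produces a vector $\mathbf p^{(\varepsilon)}\in\mathcal F_t^\circ$ with $|H(\mathbf p^{(\varepsilon)})-H(\mathbf{Q}^*_t)|<\varepsilon$. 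Approximating $\mathbf p^{(\varepsilon)}$ by its best type-approximation $\mathbf p^{(\varepsilon)}_J\in\mathcal T_J$, which for $J$ large enough still lies in $\mathcal F_t^\circ$ by openness, and applying the lower bound in~\eqref{eq:206}, gives
\begin{equation*}
\#\{\psi_{\iiv|J}(s)>1\}\;\ge\;\#T_J(\mathbf p^{(\varepsilon)}_J)\;\ge\;(J+1)^{-M}e^{J\cdot H(\mathbf p^{(\varepsilon)}_J)}.
\end{equation*}
Taking $\liminf_{J\to\infty}\frac{1}{J}\log$ and then letting $\varepsilon\to 0$ completes the lower bound. The only non-routine step is the perturbation argument, which is precisely where the assumption $t<\max_\ih\log N_\ih$ is used, and this is the main obstacle to overcome.
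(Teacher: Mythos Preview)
Your proof is correct and follows essentially the same approach as the paper: translate the condition $\psi_{\iiv|J}(s)>1$ into the linear constraint $\sum_{\ih} p_{\ih}\log N_{\ih}>t$ on the type of $\iiv$, then use the polynomial bound on the number of types together with~\eqref{eq:206} to identify the exponential rate as the maximal entropy over $\mathcal F_t$, which equals $H(\mathbf Q^*_t)$.

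The only minor difference is in the lower bound. The paper works directly with the empirical maximiser $\mathbf Q^*_{t,J}\coloneqq\arg\max_{\mathbf q\in\mathcal F_t\cap\mathcal T_J}H(\mathbf q)$ and uses $H(\mathbf Q^*_{t,J})\to H(\mathbf Q^*_t)$, glossing over the distinction between $\mathcal F_t$ and $\mathcal F_t^\circ$. Your perturbation argument (push $\mathbf Q^*_t$ slightly into $\mathcal F_t^\circ$ by shifting mass toward the column with $\log N_{\ih^\star}=\max_{\ih}\log N_{\ih}$, then approximate by a type) is a bit more careful about this strict-inequality issue and makes explicit where the hypothesis $t<\max_{\ih}\log N_{\ih}$ enters; in the paper this hypothesis is implicit in the non-emptiness of $\mathcal F_t^\circ\cap\mathcal T_J$ for large $J$. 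Both routes are standard method-of-types manoeuvres and yield the same result.
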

\begin{proof}
For any given word $\iiv\in[M]^J$, the average of the $\log N_{i_\ell}$ does not depend on the particular order of the symbols in $\iiv$, just on the relative frequency of each symbol. In other words, only the type $\boldsymbol{\tau}_J(\iiv)=(\tau_{J,1}(\iiv),\dotsc,\tau_{J,M}(\iiv))$ of $\iiv$ matters, and (recalling~\eqref{eq:309})
\begin{equation*}
\psi_{\iiv|J}(s)\leq 1 \;\Longleftrightarrow\;  \sum_{\ih=1}^{M} \tau_{J,\ih}(\iiv) \log N_{\ih} \leq t.
\end{equation*}
This reduces the problem back to a condition on probability vectors $\mathbf{p}\in\mathcal{P}_M$. This is the reason why we introduced $\mathcal{G}_t$ and $\mathcal{F}_t$ the way we did in~\eqref{eq:300}; we now see that
\begin{equation}\label{eq:301}
\psi_{\iiv|J}(s)\leq 1 \;\Longleftrightarrow\; \boldsymbol{\tau}_J(\iiv)\in \mathcal{G}_t.
\end{equation}

We are now ready to determine the exponential rate of growth of the two terms in~\eqref{eq:302} separately by grouping together words according to type class.
Let $\mathbf{Q}^*_{t,J}\in(\mathcal{F}_{t}\cap \mathcal{T}_J)$ be the type for which $H(\mathbf{Q}^*_{t,J}) = \max_{\mathbf{q}\in(\mathcal{F}_{t}\cap \mathcal{T}_J)} H(\mathbf{q})$ (if there is more than one such vector then we can choose the smallest lexicographically). Then 
\begin{equation*}
(J+1)^{-M}e^{J\cdot H(\mathbf{Q}^*_{t,J})} \leq \#\big\{ \, \iiv\in[M]^J: 1<\psi_{\iiv|J}(s) \, \big\} = \sum_{\mathbf{q}\in(\mathcal{F}_{t}\cap \mathcal{T}_J)} \# T_{J}(\mathbf{q})  \leq \#\mathcal{T}_J\cdot e^{J\cdot H(\mathbf{Q}^*_{t,J})}, 
\end{equation*}
where we used~\eqref{eq:206} for the two inequalities. As $J\to\infty$, the set $\mathcal{T}_J$ becomes dense in $\mathcal{P}_M$, and as a result $\|\mathbf{Q}^*_{t,J}-\mathbf{Q}^*_{t}\|\to 0$ so $H(\mathbf{Q}^*_{t,J})\to H(\mathbf{Q}^*_{t})$. 
Hence, it follows from~\eqref{eq:205} that $\#\big\{ \, \iiv\in[M]^J: 1<\psi_{\iiv|J}(s) \, \big\} \asymp e^{J\cdot H(\mathbf{Q}^*_{t})}$. 

An alternative way to see this would be to let $X_1,X_2,\dotsc,X_J,\dotsc $ be a sequence of i.i.d. random variables defined by~\eqref{e:definerandomvar}. Then 
\[ \#\big\{ \, \iiv\in[M]^J: 1<\psi_{\iiv|J}(s) \, \big\} = M^J \mathds{P}\left( \sum_{i = 1}^J X_i > tJ \right) \asymp M^J e^{-J \cdot I(t)} = e^{J\cdot H(\mathbf{Q}^*_{t})}, \]
where we used~\eqref{eq:309} for the first equality, Cram\'er's theorem from large deviations theory for the asymptotic equality, and Step~1 for the final equality. 
\end{proof}

\begin{lemma}\label{lem:33-second}
Assume $s\in(\underline{s},\overline{s})$ and that $(s,t)$ are related by~\eqref{eq:23}. Then
\begin{equation*}
 \sum_{\iiv\in[M]^J:\, \psi_{\iiv|J}(s)\leq 1} \psi_{\iiv|J}(s) \asymp e^{J\cdot H(\mathbf{Q}^*_{t})}.
\end{equation*}
\end{lemma}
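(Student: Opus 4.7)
The plan is to mirror the proof of Lemma~\ref{lem:33-first} by partitioning the sum according to types, using the method of types to reduce everything to a variational problem on $\mathcal{P}_M$, and then solving it by a strict concavity argument.

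First I would observe that by~\eqref{eq:23} we have $s\log n = t + \gamma\log M$, so for any $\iiv\in[M]^J$ whose type is $\mathbf{p} = \boldsymbol{\tau}_J(\iiv)$, equation~\eqref{eq:20} simplifies to
\begin{equation*}
\psi_{\iiv|J}(s) = \exp\!\Big( J\big(\gamma\log M - s\log n + \textstyle\sum_{\ih}p_{\ih}\log N_{\ih}\big)\Big) = e^{J\,u(\mathbf{p})},\quad u(\mathbf{p}) \coloneqq \sum_{\ih=1}^M p_{\ih}\log N_{\ih}-t.
\end{equation*}
By~\eqref{eq:301}, the condition $\psi_{\iiv|J}(s)\leq 1$ is equivalent to $u(\mathbf{p})\leq 0$, i.e.\ $\mathbf{p}\in\mathcal{G}_t$. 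Grouping the sum by type class and using~\eqref{eq:205} and~\eqref{eq:206},
\begin{equation*}
\sum_{\iiv:\,\psi_{\iiv|J}(s)\leq 1}\psi_{\iiv|J}(s) \;=\; \sum_{\mathbf{p}\in\mathcal{G}_t\cap \mathcal{T}_J} \#T_J(\mathbf{p})\cdot e^{J\,u(\mathbf{p})} \;\asymp\; \exp\!\Big( J\max_{\mathbf{p}\in\mathcal{G}_t}F(\mathbf{p})\Big), \quad F(\mathbf{p})\coloneqq H(\mathbf{p})+u(\mathbf{p}),
\end{equation*}
the upper direction following from the term-by-term bound $\#T_J(\mathbf{p})\leq e^{JH(\mathbf{p})}$ combined with the polynomial cardinality of $\mathcal{T}_J$, and the lower direction by evaluating at a single near-maximising type class.

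Next I would compute the maximum in the variational formula. The function $F$ is strictly concave on $\mathcal{P}_M$ (sum of strictly concave $H$ and an affine function), and a Lagrange multiplier computation on the constraint $\sum p_{\ih}=1$ yields $\log p_{\ih} = \log N_{\ih} + \text{const}$, so the unconstrained maximiser is $\mathbf{p}=\mathbf{P}$, which lies in $\mathcal{F}_t\setminus\mathcal{E}_t$ and hence outside $\mathcal{G}_t$. For any $\mathbf{p}\in\mathcal{G}_t$ with $\mathbf{p}\neq\mathbf{P}$, the segment from $\mathbf{p}$ to $\mathbf{P}$ crosses $\mathcal{E}_t$ at a point $\mathbf{p}^\circ = (1-\lambda)\mathbf{p}+\lambda\mathbf{P}$ with some $\lambda\in(0,1]$, and strict concavity gives $F(\mathbf{p}^\circ) > (1-\lambda)F(\mathbf{p}) + \lambda F(\mathbf{P}) > F(\mathbf{p})$. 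Hence the maximum of $F$ over $\mathcal{G}_t$ is attained on $\mathcal{E}_t$, where $u\equiv 0$ and thus $F\equiv H$; by the definition~\eqref{eq:104} this maximum equals $H(\mathbf{Q}^*_t)$, giving the claimed exponential rate.

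The only remaining technicality is the lower bound, and I do not expect it to be an obstacle: I would select types $\mathbf{p}_J \in\mathcal{G}_t\cap\mathcal{T}_J$ with $\mathbf{p}_J\to\mathbf{Q}^*_t$, for example by perturbing $\mathbf{Q}^*_t$ slightly towards the interior point $\mathbf{Q}\in\mathcal{G}_t$ (which is interior to $\mathcal{G}_t$ since $t>\underline{t}$) and then rounding to the nearest type; continuity of $H$ and $u$ then forces $H(\mathbf{p}_J)+u(\mathbf{p}_J)\to H(\mathbf{Q}^*_t)$, and the single term $\#T_J(\mathbf{p}_J)\,e^{J u(\mathbf{p}_J)}\asymp e^{J H(\mathbf{Q}^*_t)}$ provides the required lower bound. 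The essential new input compared with Lemma~\ref{lem:33-first} is the weight $e^{J u(\mathbf{p})}$ inside the sum, and the concavity argument above is the heart of the proof: it says that introducing this weight does not shift the optimiser away from $\mathbf{Q}^*_t$.
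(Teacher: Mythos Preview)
Your proof is correct and follows essentially the same strategy as the paper: both reduce the sum to a variational problem via the method of types, writing $\psi_{\iiv|J}(s)=e^{J(-t+\sum_{\ih}p_{\ih}\log N_{\ih})}$ for $\iiv\in T_J(\mathbf{p})$ and grouping by type class. The only difference is in how the variational problem is solved. The paper rewrites $F(\mathbf{p})=H(\mathbf{p})+u(\mathbf{p})=\log N-t-H(\mathbf{p}\|\mathbf{P})$ (via~\eqref{eq:32}), so that maximising $F$ over $\mathcal{G}_t$ becomes minimising $H(\cdot\|\mathbf{P})$, and then invokes Lemma~\ref{lem:32} to identify the optimiser as $\mathbf{P}^*_t\in\mathcal{E}_t$ and the optimal value as $H(\mathbf{Q}^*_t)$. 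You instead argue directly from the strict concavity of $F$ and the fact that its unconstrained maximiser $\mathbf{P}$ lies outside $\mathcal{G}_t$, which is a self-contained route that effectively reproves the relevant part of Lemma~\ref{lem:32} on the spot. Either way the answer is the same, and your presentation is arguably a little more transparent for this particular lemma; the paper's route has the advantage of reusing machinery already set up.
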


\begin{proof}
If $\iiv\in T_J(\mathbf{p})$, then
\begin{equation}\label{eq:308}
\psi_{\iiv|J}(s) = M^{J \log_m n} n^{-sJ}\cdot \prod_{\ih=1}^M N_{\ih}^{p_{\ih}J} = e^{J\left( \left(\frac{\log M}{\log m}-s\right)\cdot \log n  + \sum_{\ih} p_{\ih}\log N_{\ih} \right)} = e^{J( -t+ \sum_{\ih} p_{\ih}\log N_{\ih} )}.
\end{equation}
Using~\eqref{eq:206} and~\eqref{eq:32}, 
\begin{equation*}
\#T_J(\mathbf{p})\cdot \psi_{\iiv|J}(s) \asymp e^{J( -t + \sum_{\ih} p_{\ih}\log N_{\ih}+H(\mathbf{p})  )} 
= e^{J\left( -t + \log N - H(\mathbf{p}\| \mathbf{P})\right)}.
\end{equation*} 
Similarly to $\mathbf{Q}^*_{t,J}$, let $\mathbf{P}^*_{t,J}\in(\mathcal{G}_{t}\cap \mathcal{T}_J)$ satisfy $H(\mathbf{P}^*_{t,J}\| \mathbf{P}) = \min_{\mathbf{p}\in(\mathcal{G}_{t}\cap \mathcal{T}_J)} H(\mathbf{p}\|\mathbf{P})$. We have $H(\mathbf{P}^*_{t,J}\| \mathbf{P})\to H(\mathbf{P}^*_{t}\| \mathbf{P})$ as $J\to\infty$. Then
\begin{equation*}
\sum_{\iiv\in[M]^J:\, \psi_{\iiv|J}(s)\leq 1}  \psi_{\iiv|J}(s) = 
\sum_{\mathbf{p}\in(\mathcal{G}_{t}\cap \mathcal{T}_J)} \sum_{\iiv\in T_J(\mathbf{p})} \psi_{\iiv|J}(s) 
\asymp e^{J\left( -t + \log N - H(\mathbf{P}^*_{t,J}\| \mathbf{P})\right)}.
\end{equation*}
Using~\eqref{eq:33} and~\eqref{eq:32} in the exponent, $-t + \log N - H(\mathbf{P}^*_{t}\| \mathbf{P}) = \log M-H(\mathbf{Q}^*_{t}\| \mathbf{Q}) = H(\mathbf{Q}^*_{t})$, and the assertion follows. 
\end{proof}

Note the importance of the assumption $t < \overline{t}$ in the proof of Lemma~\ref{lem:33-second}. 
Lemmas~\ref{lem:33-first} and~\ref{lem:33-second} and~\eqref{eq:302} immediately imply that $\overline{P}(s) \leq H(\mathbf{Q}^*_{t})$. 

\subsection{Proof of Step 3}

This in fact holds for all $t \in (\underline{t},\max_{1\leq i \leq N} \log N_i)$. Assume $(s,t)$ are related by~\eqref{eq:23}. Fix $R \in \mathbb{N}$. For $J \in \mathbb{N}$, if $l \in \{0,1,\dotsc,R-1\}$, let $J_{l,R} \coloneqq  \lfloor (l+1) J / R \rfloor - \lfloor l J / R \rfloor$. We introduce 
\begin{align}\label{eq:definestjr}
\begin{split}
 S_{t,J,R} \coloneqq \big\{\, \iiv = (i_1,\dotsc,i_J) \in [M]^J : &(i_{\lfloor l J / R \rfloor + 1} , \dotsc, i_{\lfloor (l+1) J / R \rfloor }) \in T_{J_{l,R}}(\mathbf{Q}^*_{t,J_{l,R}}) \\*
&\mbox{ for all } l \in \{0,1,\dotsc,R-1\} \, \big\}.
\end{split}
 \end{align}
Then 
\begin{equation}\label{eq:lowerboundcard}
 \#  S_{t,J,R} = \prod_{l=0}^{R-1} \# T_{J_{l,R}}(\mathbf{Q}^*_{t,J_{l,R}}) \stackrel{\eqref{eq:206}}{\asymp} \prod_{l=0}^{R-1} e^{J_{l,R} \cdot H(\mathbf{Q}^*_{t,J_{l,R}})} \asymp e^{J \cdot H(\mathbf{Q}^*_{t})} = e^{J(\log M - I(t))}
 \end{equation}
as $J \to \infty$, using Step~1 of Proposition~\ref{prop:1} in the last step. %

Suppose $(i_1,\dotsc,i_J) \in S_{t,J,R}$. For all $l \in \{0,1,\dotsc,R-1\}$, $\mathbf{Q}^*_{t,J_{l,R}} \to \mathbf{Q}^*_t \in \mathcal{E}_t$, so by~\eqref{eq:309}, 
\[ \psi_{(i_{\lfloor l J / R \rfloor + 1} , \dotsc, i_{\lfloor (l+1) J / R \rfloor }) | J_{l,R}}(s) \asymp 1 \qquad \mbox{as } J \to \infty.\] 
Let $J' \in \mathbb{N}$ be large enough that for all $J \geq J'$ and $l \in \{0,1,\dotsc,R-1\}$, 
\[ \psi_{(i_{\lfloor l J / R \rfloor + 1} , \dotsc, i_{\lfloor (l+1) J / R \rfloor }) | J_{l,R}}(s) \geq e^{-\overline{t}J_{l,R}/R}.\] %
Assume $J \geq J'$. For each $k \in \{1,\dotsc,J\}$ let $\mathbf{p}(k)$ denote the type class of $(i_1,\dotsc,i_k)$ and let $l \in \{0,1,\dotsc,R-1\}$ be such that $\lfloor l J / R \rfloor < k \leq \lfloor (l+1) J / R \rfloor$. 
Then 
\begin{align*}
\psi_{(i_1,\dotsc,i_k) | k}(s) \stackrel{\eqref{eq:308}}{=} e^{k( -t+ \sum_{\ih} p_{\ih}(k)\log N_{\ih})} &\geq e^{\lfloor lJ/R\rfloor (-t  + \sum_{\ih} p_{\ih}(\lfloor lJ/R\rfloor)\log N_{\ih})} e^{(\lfloor lJ/R\rfloor - k)t} \\
&\geq \psi_{(i_1,\dotsc,i_{\lfloor l J / R \rfloor }) |  \lfloor l J / R \rfloor}(s) e^{-2\overline{t}J/R} \\*
&\geq e^{-3\overline{t}J/R},\stepcounter{equation}\tag{\theequation}\label{eq:lowerboundcombinatorialfudge}
\end{align*}
where in the last step we use that by~\eqref{eq:20}, 
\begin{equation*} \psi_{(i_1,\dotsc,i_A,i_{A+1},\dotsc,i_{A+A'}) | A+A'}(s) = \psi_{(i_1,\dotsc,i_A) | A}(s) \psi_{(i_{A+1},\dotsc,i_{A+A'}) | A'} (s).
\end{equation*} Therefore  
\begin{equation*}
\underline{P}(s) = \liminf_{J\to\infty} \frac{1}{J} \log  \Psi_J(s) \geq \liminf_{J\to\infty} \frac{1}{J} \log  \sum_{\iiv\in S_{t,J,R}} \min_{k\in\{0,1,\dotsc,J\}} \psi_{\iiv|k}(s) \geq
H(\mathbf{Q}^*_{t})-3\overline{t}/R. 
\end{equation*}
Since $R \in \mathbb{N}$ was arbitrary, the assertion $\underline{P}(s) \geq H(\mathbf{Q}^*_{t})$ follows.

	\subsection{Proof of Step 4}
This is an application of~\cite[Theorem~1]{Jordan2011bm}, and in fact holds for all 
\[ t \in \left(\min_{1 \leq i \leq M} \log N_i, \max_{1 \leq i \leq M} \log N_i\right).\]
 Indeed, by~\eqref{eq:definebeta}, for all $\lambda$, 
\begin{gather*}
(\log m) \left(  \beta_{\nu}\left( \frac{\lambda - \gamma^{-1}}{(\log m) (\frac{1}{\log m} - \frac{1}{\log n})}  \right)  + \frac{(\lambda - \gamma^{-1})\frac{\log N}{\log m}}{(\log m) (\frac{1}{\log m} - \frac{1}{\log n})}  \right)  -  \log M \\*
\qquad \qquad = \log \left( \frac{1}{M}\sum_{\ih=1}^M N_i^{\lambda} \right). 
\end{gather*}
Taking the Legendre transform of each side and using standard properties of Legendre transforms and~\eqref{eq:22} and~\eqref{eq:uniformmultifractal} proves Step~4. 
This completes the proof of Proposition~\ref{prop:1}.

\section{Preliminaries to the proof of Theorem~\ref{thm:main}}\label{sec:PrelimsProofMain}

Here we collect some notation and facts used in the proof of Theorem~\ref{thm:main}. Throughout the section, $\Lambda$ is a Bedford--McMullen carpet with non-uniform fibres.

\subsection{General theory}
Following~\cite{Burrell2021projections}, for a bounded and non-empty set $F\subset \mathbb{R}^d$, $\theta\in(0,1)$ and $s\in[0,d]$, let us introduce
\begin{equation}\label{eq:41}
S_{\delta, \theta}^{s}(F)\coloneqq \inf \Big\{\sum_{i}\left|U_{i}\right|^{s}:\left\{U_{i}\right\}_{i} \text { is a cover of } F \text { such that } \delta^{1/\theta} \leq\left|U_{i}\right| \leq \delta \text { for all } i\Big\}.
\end{equation}
The motivation for introducing $S_{\delta, \theta}^{s}(F)$ is that from~\cite[Lemma~2.1]{Burrell2021projections} and the definitions of $\underline{\dim}_{\, \theta}F$ and $\overline{\dim}_{\, \theta}F$ it follows that
\begin{equation*}
\underline{\dim}_{\, \theta} F=\text { the unique } s \in[0, d] \text { such that } \liminf_{\delta\searrow 0} \frac{\log S_{\delta, \theta}^{s}(F)}{-\log \delta}=0
\end{equation*}
and
\begin{equation}\label{eq:45}
\overline{\dim}_{\, \theta} F=\text { the unique } s \in[0, d] \text { such that } \limsup_{\delta\searrow 0} \frac{\log S_{\delta, \theta}^{s}(F)}{-\log \delta}=0.
\end{equation}
For each $\theta \in (0,1)$, $\liminf_{\delta\searrow 0} \frac{\log S_{\delta, \theta}^{s}(F)}{-\log \delta}$ and $\limsup_{\delta\searrow 0} \frac{\log S_{\delta, \theta}^{s}(F)}{-\log \delta}$ are strictly decreasing and continuous functions of $s$. 
To bound $S_{\delta, \theta}^{s}(\Lambda)$ from above, we construct an efficient cover of $\Lambda$ in Section~\ref{sec:mainupper}. %
The mass distribution principle is a useful tool to bound some dimension of a set from below by putting a measure on the set. For the matching lower bound, we will use the following version of the mass distribution principle for the intermediate dimensions of Falconer, Fraser and Kempton,~\cite[Proposition~2.2]{Falconer2020firstintermediate}. 

\begin{prop}\label{prop:mdp}
Let $F$ be a non-empty, bounded subset of $\mathbb{R}^d$, and let $\theta \in [0,1]$, $s \geq 0$, $\delta_0 \in (0,1)$. Suppose that for all $\delta \in (0,\delta_0)$ there exists a Borel measure $\mu_\delta$ with support $\supp(\mu_{\delta}) \subseteq F$ such that $\mu_\delta(U) \leq |U|^s$ for all Borel sets $U \subset \mathbb{R}^d$ with $\delta^{1/\theta} \leq |U| \leq \delta$. Then 
\[\liminf_{\delta\searrow 0} \frac{\log S_{\delta, \theta}^{s}(F)}{-\log \delta} \geq \liminf_{\delta\searrow 0} \frac{\log \mu_\delta(\supp(\mu_\delta))}{-\log \delta}.\]
The same holds if we replace $\liminf$ with $\limsup$.  
\end{prop}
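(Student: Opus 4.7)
The plan is to adapt the standard mass distribution principle to the restricted-scale setting of intermediate dimensions. The key observation will be that the hypothesis $\mu_\delta(U) \leq |U|^s$ is imposed precisely on the range of diameters $\delta^{1/\theta} \leq |U| \leq \delta$ that is permitted in~\eqref{eq:41} when computing $S^s_{\delta,\theta}(F)$. This perfect match should reduce the whole proposition to a short comparison between the $s$-cost of an admissible cover and the total mass of $\mu_\delta$, with no need to split across scales, invoke a covering lemma, or treat different ranges of $\theta$ separately.

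For the main step, I would fix $\delta \in (0, \delta_0)$ and let $\{U_i\}$ be any cover of $F$ with $\delta^{1/\theta} \leq |U_i| \leq \delta$. Replacing each $U_i$ by its closure $\overline{U_i}$ should change nothing since $|\overline{U_i}| = |U_i|$, but will ensure each cover element is Borel so that the hypothesis on $\mu_\delta$ applies. Since $\supp(\mu_\delta) \subseteq F \subseteq \bigcup_i \overline{U_i}$, countable subadditivity will then give
\[ \mu_\delta(\supp(\mu_\delta)) \leq \sum_i \mu_\delta(\overline{U_i}) \leq \sum_i |U_i|^s. \]
Taking the infimum over all admissible covers $\{U_i\}$ should yield $\mu_\delta(\supp(\mu_\delta)) \leq S^s_{\delta,\theta}(F)$.

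Finally, since $-\log\delta > 0$ for $\delta < 1$ and $\log$ is monotone, dividing through would give
\[ \frac{\log \mu_\delta(\supp(\mu_\delta))}{-\log \delta} \leq \frac{\log S^s_{\delta,\theta}(F)}{-\log \delta}, \]
and passing to the $\liminf$ (respectively $\limsup$) as $\delta \searrow 0$ should conclude the proof of both versions. I do not anticipate any serious obstacle here; the only point worth a line to verify is that one may replace cover elements by their closures in order to match the Borel assumption in the hypothesis on $\mu_\delta$.
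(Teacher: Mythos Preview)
Your proposal is correct and follows essentially the same argument as the paper: bound the total mass of $\mu_\delta$ by the $s$-cost of an arbitrary admissible cover via subadditivity, take the infimum to obtain $\mu_\delta(\supp(\mu_\delta)) \leq S^s_{\delta,\theta}(F)$, then pass to logs and $\liminf$/$\limsup$. The only difference is your extra line about replacing cover elements by their closures to ensure they are Borel, which the paper omits but is a harmless technical refinement.
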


\begin{proof}
If $\{U_i\}$ is a cover of $F$ with $\delta^{1/\theta} \leq |U_i| \leq \delta$ for all $i$, then $\mathrm{supp}(\mu_{\delta}) \subseteq F \subseteq \cup_i U_i$. Therefore 
\[ \mu_\delta (\mathrm{supp}(\mu_\delta)) \leq \sum_i \mu_\delta(U_i) \leq \sum_i |U_i|^s.\] 
Since the cover was arbitrary, also $\mu_\delta (\mathrm{supp}(\mu_\delta)) \leq S_{\delta,\theta}^s$. The result follows.  
\end{proof}

\subsection{Approximate squares}\label{subsec:approxsquare}

Let $\mathcal{F}=\{f_i\}_{i=1}^N$ be an IFS generating a Bedford--McMullen carpet $\Lambda$ with $M$ non-empty columns. Recall, $[N]=\{1,2,\dotsc,N\}$ and $[M]=\{1,2,\dotsc,M\}$. To keep track of which column $f_i$ maps to, we introduce the function
\begin{equation}
\phi\colon [N]\to[M],\;\; \phi(i)\coloneqq \ih\; \text{ if } f_i \text{ maps to column } \ih.
\end{equation}\label{eq:40}
We define the symbolic spaces
$\Sigma=[N]^\mathbb N$ and $\Sigma_{\mathcal{H}}=[M]^\mathbb N$
with elements $\ii=i_1i_2\dotsb\in\Sigma$ and $\iih=\ih_1\ih_2\dotsb\in\Sigma_{\mathcal{H}}$. We use the convention that indices $i$ corresponding to maps have a `dot' while the indices $\ih$ corresponding to columns have a `hat' on top. 
To truncate~$\ii$ (respectively~$\iih$) at the first $n$ symbols we write $\ii|n=i_1i_2\dotsb i_n$ (respectively $\iih|n$). The longest common prefix of $\ii$ and $\jj$ is denoted by $\ii\wedge\jj$: its length is $|\ii\wedge\jj|=\min \{\, k : i_k\neq j_k \, \}-1$. The function $\phi$ naturally induces the map $\Phi\colon  \Sigma\to\Sigma_{\mathcal{H}}$ defined by
\begin{equation*}
\Phi(\ii)\coloneqq \iih = \phi(i_1)\phi(i_2)\dotsb.
\end{equation*}
Slightly abusing notation, $\Phi$ is also defined on finite words: $\Phi(i_1\dotsb i_n)=\phi(i_1)\dotsb\phi(i_n)$.

For compositions of maps, we use the standard notation $f_{i_1\dotsb i_n}\coloneqq f_{i_1}\circ f_{i_2}\circ\dotsb \circ f_{i_n}$.
The $n$-th level cylinder corresponding to $\ii$ is
$ C_{n}(\ii)\coloneqq f_{\ii|n}([0,1]^2)$.
The sets $\{C_{n}(\ii)\}_{n=1}^\infty$ form a nested sequence of compact sets with diameter tending to zero, hence their intersection is a unique point $x\in\Lambda$. This defines the natural projection $\Pi\colon \Sigma\to\Lambda$,
\begin{equation*}
\Pi(\ii)\coloneqq \lim_{n\to\infty} \bigcap_{n=1}^\infty C_{n}(\ii)=\lim_{n\to\infty} f_{\ii|n}(\underline 0).
\end{equation*}
The coding of a point $x\in\Lambda$ is not necessarily unique, but $\Pi$ is finite-to-one.

It is not efficient to cover cylinder sets separately; instead they are grouped together to form `approximate squares' which play the role of balls in a cover of the attractor. 
Recall, $\gamma=\log_m n$ and for $\ell,K\in\mathbb{N}$ let
\begin{equation*}
\gamma^{\ell}(K)\coloneqq \lfloor \gamma^{\ell} \cdot K \rfloor.
\end{equation*}
In particular, we write $\gamma(K) = \gamma^1(K)$, and $n^{-K}\leq m^{-\gamma(K)} < n^{-(K-1)}$. A level-$K$ \emph{approximate square} is
\begin{equation*}
B_K(\ii) \coloneqq \big\{ \, C_{\gamma(K)}(\jjv): \jjv\in[N]^{\gamma(K)},\, \jjv|K=\ii|K,\, \Phi(\jjv) = \Phi(\ii|\gamma(K)) \, \big\}.
\end{equation*}
It is a collection of level-$\gamma(K)$ cylinder sets that lie in the same level-$\gamma(K)$ column of a specific level-$K$ cylinder set. In other words, $\Pi(\jj)\in B_K(\ii)$ if and only if $|\ii\wedge\jj|\geq K$ and $|\Phi(\ii)\wedge\Phi(\jj)|\geq \gamma(K)$. Hence, abusing notation slightly, we identify $B_K(\ii)$ with the single sequence
\begin{equation*}%
	B_K(\ii) = (i_1,\dotsc,i_{K}, \phi(i_{K+1}), \dotsc, \phi(i_{\gamma(K)})) = (i_1,\dotsc,i_{K}, \ih_{K+1}, \dotsc, \ih_{\gamma(K)}).
\end{equation*}
The choice of $\gamma(K)$ implies that there exists $C\geq 1$ independent of $K$ and $\ii$ such that $C^{-1}n^{-K}\leq |B_K(\ii)|\leq C n^{-K}$. The constant $C$ does not influence the behaviour of the $s$-cost of any cover with approximate squares. It is easy to see that two  approximate squares are either disjoint, completely agree or intersect just on the boundary. Hence, the set of all level-$K$ approximate squares, denoted by $\mathcal{B}_K$, gives an efficient $n^{-K}$-cover of $\Lambda$ with cardinality
\begin{equation*}
\# \mathcal{B}_K = N^{K}\cdot M^{\gamma(K)-K} \stackrel{\eqref{eq:103}}{\asymp} n^{K\dim_{\mathrm B}\Lambda}.
\end{equation*}

\subsection{Two lemmas}

Recall that $T_s(t)=\big(s-\frac{\log M}{\log m}\big)\log n +\gamma I(t)$. Since $I(t)$ is strictly convex, there exists a unique~$t'$ such that $I'(t')=\gamma^{-1}$. 

\begin{lemma}\label{lem:41}
For each fixed $s \in \R$, the function $T_s(t)$ is strictly convex with a minimum at $\underline{t}$, and satisfies $T_s'(t') = 1$. 
Moreover, for all $s\geq \dim_{\mathrm H}\Lambda$ and $t\in\mathbb{R}$ we have $T_s(t) \geq t$ with equality if and only if $s=\dim_{\mathrm H}\Lambda$ and $t=t'$.
\end{lemma}

\begin{proof}
Since $I(t)$ is strictly convex with a minimum at $\underline{t}$, the same is true of $T_s(t)$ for each fixed $s \in \R$, and the definition of $t'$ implies that $T'_s(t')=1$. 
Using the formula~\eqref{eq:102} for $\dim_{\mathrm H}\Lambda$ and then that 
\[I(t')=\gamma^{-1}t'-\log\bigg(\frac{1}{M}\sum_{\ih=1}^M N_{\ih}^{\gamma^{-1}}\bigg),\]
one gets $T_{\dim_{\mathrm H}\Lambda}(t')=t'$ after simplifications. 
Note that $T_s(t') > T_{\dim_{\mathrm H}\Lambda}(t')=t'$ for all $s > \dim_{\mathrm H}\Lambda$, which is enough to complete the proof. 
\end{proof}

 Since $I(t)$ is strictly increasing, let $t^*$ denote the unique solution to the equation
\begin{equation}\label{eq:44}
\dim_{\mathrm H}\Lambda = \dim_{\mathrm B}\Lambda - \big(1-\gamma^{-1}\big)\frac{I(t^*)}{\log m}. 
\end{equation}
Recall the notation for $t_\ell(s)$ from~\eqref{eq:definetsequence}. 

\begin{lemma}\label{lem:tprimelesststar}
We have $t_1(\dim_{\mathrm H} \Lambda) < t' < t^*$. 
\end{lemma}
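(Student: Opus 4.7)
The plan is to prove the two inequalities separately, both leveraging the computations already performed in the proof of Lemma~\ref{lem:41}.

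For the lower inequality $t_1(\dim_{\textrm H}\Lambda) < t'$, I will expand the identity $T_{\dim_{\textrm H}\Lambda}(t') = t'$ (established in the proof of Lemma~\ref{lem:41}) using the definition of $T_s$ in~\eqref{eq:defineiteratingfunction}. This yields
\begin{equation*}
t_1(\dim_{\textrm H}\Lambda) = \left(\dim_{\textrm H}\Lambda - \tfrac{\log M}{\log m}\right)\log n = t' - \gamma I(t').
\end{equation*}
Since $I'$ is non-negative and strictly increasing on $[\underline{t},\max_{\ih}\log N_{\ih}]$ with $I'(\underline{t}) = 0$, the requirement $I'(t') = \gamma^{-1} > 0$ forces $t' > \underline{t}$, hence $I(t') > 0$, which gives the strict inequality.

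For $t' < t^*$, since $I$ is strictly increasing on $[\underline{t},\max_{\ih}\log N_{\ih}]$ it suffices to show $I(t') < I(t^*)$. The plan is to derive a clean algebraic identity. Solving~\eqref{eq:44} gives $I(t^*) = \tfrac{\gamma}{\gamma-1}(\dim_{\textrm B}\Lambda - \dim_{\textrm H}\Lambda)\log m$, while the relation $T_{\dim_{\textrm H}\Lambda}(t') = t'$ can be rearranged as $I(t') = \tfrac{t'}{\gamma} + \log M - \dim_{\textrm H}\Lambda \cdot \log m$. Substituting the explicit formulas~\eqref{eq:102}--\eqref{eq:103} for $\dim_{\textrm H}\Lambda$ and $\dim_{\textrm B}\Lambda$ and simplifying, I expect the computation to collapse to
\begin{equation*}
I(t^*) - I(t') = \frac{1}{\gamma-1}\bigl(\log(N/M) + I(t') - t'\bigr) = \frac{h(t')}{\gamma - 1},
\end{equation*}
where $h(t) \coloneqq \log(N/M) + I(t) - t$.

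It then remains to check that $h(t') > 0$. Since $h'(t) = I'(t) - 1$ and $h''(t) = I''(t) > 0$, the function $h$ is strictly convex with its unique minimum attained where $I'(t) = 1$, that is, at $t = \overline{t}$. A short computation using $I(\overline{t}) = \log M - H(\mathbf{P})$ and $\overline{t} = \log N - H(\mathbf{P})$ gives $h(\overline{t}) = 0$, so $h(t) > 0$ for every $t \neq \overline{t}$. Because $I'$ is strictly increasing and $I'(t') = \gamma^{-1} < 1 = I'(\overline{t})$, we have $t' < \overline{t}$, in particular $t' \neq \overline{t}$, hence $h(t') > 0$ and the conclusion follows. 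The only mildly delicate step is the algebraic manipulation yielding the identity $I(t^*) - I(t') = h(t')/(\gamma-1)$; everything else is a direct consequence of convexity properties of $I$ already recorded after~\eqref{eq:22}.
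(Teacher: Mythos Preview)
Your proof is correct, and for the inequality $t' < t^*$ it takes a genuinely different, more direct route than the paper.

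For $t_1(\dim_{\textrm H}\Lambda) < t'$ the two arguments are essentially the same: both exploit the fixed-point identity $T_{\dim_{\textrm H}\Lambda}(t')=t'$ together with $I(t')>0$.

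For $t' < t^*$ the paper proceeds indirectly. It introduces an auxiliary function
\[
f(z) = \log n\,\log M - \log n\,\log\!\sum_{\ih}N_{\ih}^{z/\log n} + \log n\,\log(N/M) - (\log n - z)\,\frac{\sum_{\hat k}N_{\hat k}^{z/\log n}\log N_{\hat k}}{\sum_{\hat\jmath}N_{\hat\jmath}^{z/\log n}},
\]
shows via a Jensen-type computation that $f'(z)<0$ on $[\log m,\log n)$ and $f(\log n)=0$, whence $f(\log m)>0$; this is then reinterpreted as $I'(T_{\dim_{\textrm H}\Lambda}(t^*))>\gamma^{-1}$, giving $T_{\dim_{\textrm H}\Lambda}(t^*)>t'$ and hence $t^*>t'$ by monotonicity of $T_{\dim_{\textrm H}\Lambda}$ and Lemma~\ref{lem:41}. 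Your argument bypasses the auxiliary function entirely: starting from the fixed-point relation and~\eqref{eq:44} you obtain the clean identity $I(t^*)-I(t') = (\gamma-1)^{-1}\bigl(\log(N/M)+I(t')-t'\bigr)$, and the positivity of the right-hand side follows from the strict convexity of $h(t)=\log(N/M)+I(t)-t$ together with $h(\overline{t})=0$ and $t'\neq\overline{t}$. This uses only the values $I(\overline{t})=\log M - H(\mathbf P)$, $I'(\overline{t})=1$ and $\overline{t}=\log N - H(\mathbf P)$ already recorded after~\eqref{eq:22}, so it is entirely self-contained and arguably more transparent. The paper's route, in passing, establishes the slightly stronger statement $T_{\dim_{\textrm H}\Lambda}(t^*)>t'$, but that extra information is not needed for the lemma as stated.
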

\begin{proof}
Since $I(\underline{t}) = 0$, we have $t_1(\dim_{\mathrm H} \Lambda) = T_{\dim_{\mathrm H} \Lambda} (\underline{t})$. 
Also $I'(\underline{t}) < \gamma^{-1} = I'(t')$ and $I$ is strictly convex, so $\underline{t} < t'$. 
But $T_{\dim_{\mathrm H} \Lambda}$ is strictly increasing, so 
\[ t_1(\dim_{\mathrm H} \Lambda) = T_{\dim_{\mathrm H} \Lambda} (\underline{t}) < T_{\dim_{\mathrm H} \Lambda} (t') = t',\] 
where the last equality is by Lemma~\ref{lem:41}. 

To prove $t' < t^*$, for $z \in \mathbb{R}$ define 
\[ f(z) \coloneqq \log n \log M - \log n \log \sum_{\ih=1}^M N_{\ih}^{\frac{z}{\log n}}  + \log n \log (N/M) - (\log n - z)\frac{\sum_{\hat k=1}^M N_{\hat k}^{\frac{z}{\log n}}\log N_{\hat k}}{\sum_{\jh=1}^M N_{\jh}^{\frac{z}{\log n}}}. \]
Then after some algebraic manipulations, 
\begin{equation*}%
f'(z) = -\left(1 - \frac{z}{\log n}\right) \left( \sum_{\ih} \frac{N_{\ih}^{\frac{z}{\log n}}}{\sum_{\jh} N_{\jh}^{\frac{z}{\log n}}} (\log N_{\ih})^2  -  \Bigg( \sum_{\hat k} \frac{N_{\hat k}^{\frac{z}{\log n}}}{\sum_{\hat\ell} N_{\hat\ell}^{\frac{z}{\log n}}} \log N_{\hat k} \Bigg)^2  \right)  < 0 
\end{equation*}
for all $z \in [\log m,\log n)$ by Jensen's inequality, using that $\Lambda$ has non-uniform fibres. Moreover, $f$ is continuous on $\mathbb{R}$, so $f(\log m) > f(\log n) = 0$, so  
\begin{align}
0 &< \frac{f(\log m)}{\log (n/m)} \nonumber \\*
&= \frac{\mathrm{d}}{\mathrm{d}\lambda} \Bigg(   \lambda \Big( \Big( \dim_{\mathrm H} \Lambda - \frac{\log M}{\log m} \Big) \log n  +  \gamma (\dim_{\mathrm B} \Lambda - \dim_{\mathrm H} \Lambda)  \frac{\log m}{1-\gamma^{-1}} \Big)  \nonumber \\*
&\phantom{----} -  \log\Bigg(\frac{1}{M}\sum_{\jh = 1}^M N_{\jh}^\lambda \Bigg)  \Bigg) \Big|_{\lambda = \gamma^{-1}} \label{eq:tprimelesststar2} \\
&= \frac{\mathrm{d}}{\mathrm{d}\lambda} \left( \lambda T_{\dim_{\mathrm H} \Lambda}(t^*) - \log \Bigg(\frac{1}{M}\sum_{\jh = 1}^M N_{\jh}^\lambda \Bigg) \right) \Big|_{\lambda = \gamma^{-1}} \label{eq:tprimelesststar3}, 
\end{align}
where~\eqref{eq:tprimelesststar2} is by~\eqref{eq:102} and~\eqref{eq:103}, and~\eqref{eq:tprimelesststar3} is by~\eqref{eq:44} and~\eqref{eq:defineiteratingfunction}. 
This means that the value of $\lambda$ at which the supremum in the definition of $I(T_{\dim_{\mathrm H} \Lambda}(t^*))$ in~\eqref{eq:22} is attained is greater than $\gamma^{-1}$. Equivalently, $I'(T_{\dim_{\mathrm H} \Lambda}(t^*)) > \gamma^{-1}$. By the definition of $t'$, this means that $T_{\dim_{\mathrm H} \Lambda}(t^*) > t'$. By Lemma~\ref{lem:41}, it follows that $t' < t^*$. 
\end{proof}

\section{Proof of Theorem \ref{thm:main}}\label{sec:proofofmain}

Throughout, $\Lambda$ is a Bedford--McMullen carpet with non-uniform fibres. Recall, for $\theta\in(0,1)$ we defined $L \coloneqq 1 + \lfloor \frac{-\log \theta}{\log \gamma}\rfloor$ so that $\gamma^{-L} < \theta \leq \gamma^{-(L-1)}$, and for $\delta>0$ we define $K \coloneqq \lfloor\frac{-\log \delta}{\log n}\rfloor$. We begin in Section~\ref{sec:upperinteger} by constructing a simpler cover in the case when $\theta = \gamma^{-(L-1)}$ in order to establish certain relations which are crucial in bounding the $s$-cost of our general cover in Section~\ref{sec:mainupper}. Section~\ref{sec:prooflower} establishes the matching lower bound. 

\subsection{Upper bound \texorpdfstring{for $\theta = \gamma^{-(L-1)}$}{when θ is a power of 1/γ}}\label{sec:upperinteger}

In Lemma~\ref{lem:50} we construct a cover for $\theta = \gamma^{-(L-1)}$. 
The proof strategy is to keep a level-$K$ approximate square at level $K$ if and only if $\tau(\iih,K,\gamma(K))$ exceeds a constant $\tau_1$ which remains unspecified for now (recalling notation from~\eqref{eq:tauaverage}). Of those which we subdivide, we keep them at level $\gamma(K)$ if and only if $\tau(\iih,\gamma(K),\gamma^2(K)) \geq \tau_2$. Continuing this process gives a cover of $\Lambda$ using approximate squares at levels $K, \gamma(K),\dotsc,\gamma^{L-1}(K) = \lfloor K/\theta\rfloor$. This means that, up to some constant, all covering sets have diameter in the correct range $[\delta,\delta^{1/\theta}]$. In Lemma~\ref{lem:50} we calculate the $s$-cost of this cover for an arbitrary tuple, which will allow us to prove in Lemma~\ref{lem:51} that the relevant $t_i(s)$ are bounded above by $\overline{t}$, so results from Section~\ref{sec:proofProp} will apply. At the end we will optimise the thresholds so that the exponential rate of growth of each part is the same. The unique $s$ for which this can be done gives us the upper bound for $\overline{\dim}_{\, \theta}\Lambda$. 

Lemma~\ref{lem:simpleindcomb} will be used to calculate the cost of the cover in Lemma~\ref{lem:50}. 
Lemma~\ref{lem:simpleindcomb} is rather similar to Lemma~\ref{lem:33-second}, and is also proved using the method of types. 
For a probability vector $\mathbf{p}$ we write $t_{\mathbf{p}} \coloneqq \sum_{\ih} p_{\ih} \log N_{\ih}$. For a word $(i_1,\dotsc,i_J) \in [N]^J$, we write $\iih=(\ih_1,\dotsc,\ih_J)=(\phi(i_1),\dotsc,\phi(i_J))\in [M]^J$, recall~\eqref{eq:40}.

\begin{lemma}\label{lem:simpleindcomb}
For all $t \in [\underline{t}, \max_{1\leq \ih \leq M} \log N_{\ih})$, 
\[ \# \{ \, (i_1,\dotsc,i_J) \in [N]^J : \tau(\iih,0,J) \leq t \, \} \asymp e^{(\min\{t,\overline{t}\} + \log M - I(\min\{t,\overline{t}\} ))J}. \]
If, on the other hand, $t \in (\min_{1\leq \ih \leq M} \log N_{\ih}, \underline{t})$, then 
\[ \limsup_{J \to \infty} \frac{1}{J} \log \# \Big\{ \, (i_1,\dotsc,i_J) \in [N]^J : \sum_{j=1}^J \log N_{\ih_j} \leq t J \, \Big\} \leq t + \log M < \underline{t} + \log M. \]
\end{lemma}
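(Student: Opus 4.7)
The plan is to count by grouping words $\iiv \in [N]^J$ according to the $[M]$-valued type of their projection $\iih = \Phi(\iiv)$. Since $t \geq \underline{t}$ in the first case, the constraint $\tau(\iih,0,J) \leq t$ is equivalent to $\boldsymbol{\tau}_J(\iih) \in \mathcal{G}_t$, and for each $\iih$ with type $\mathbf{p}$ the fibre $\Phi^{-1}(\iih) \cap [N]^J$ has cardinality $\prod_{j=1}^J N_{\ih_j} = e^{Jt_{\mathbf{p}}}$. Hence
\begin{equation*}
\#\{\iiv \in [N]^J : \tau(\iih,0,J) \leq t\} = \sum_{\mathbf{p} \in \mathcal{G}_t \cap \mathcal{T}_J} \#T_J(\mathbf{p}) \cdot e^{Jt_{\mathbf{p}}}.
\end{equation*}

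By \eqref{eq:205} and \eqref{eq:206}, $\#T_J(\mathbf{p}) \asymp e^{JH(\mathbf{p})}$ and $|\mathcal{T}_J|$ grows only polynomially in $J$. Combined with the continuity of $\mathbf{p} \mapsto H(\mathbf{p}) + t_{\mathbf{p}}$ on the compact set $\mathcal{G}_t$ and the density of $\mathcal{G}_t \cap \mathcal{T}_J$ in $\mathcal{G}_t$ as $J \to \infty$, this yields
\begin{equation*}
\#\{\iiv \in [N]^J : \tau(\iih,0,J) \leq t\} \asymp \exp\!\Bigl(J \sup_{\mathbf{p} \in \mathcal{G}_t} \bigl(H(\mathbf{p}) + t_{\mathbf{p}}\bigr)\Bigr).
\end{equation*}

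Next I would evaluate this supremum using the identity $H(\mathbf{p}) + t_{\mathbf{p}} = \log N - H(\mathbf{p}\|\mathbf{P})$ from \eqref{eq:32}, which reduces the variational problem to minimising $H(\cdot\|\mathbf{P})$ over $\mathcal{G}_t$. If $t \geq \overline{t}$ then $\mathbf{P} \in \mathcal{G}_t$ itself is the minimiser with value $0$, giving supremum $\log N$. If $\underline{t} \leq t < \overline{t}$, then by Lemma~\ref{lem:32} the minimiser $\mathbf{P}^*_t$ lies on the boundary $\mathcal{E}_t$, and combining the identity $H(\mathbf{P}^*_t\|\mathbf{P}) = H(\mathbf{Q}^*_t\|\mathbf{Q}) + \log(N/M) - t$ from Lemma~\ref{lem:32} with $H(\mathbf{Q}^*_t\|\mathbf{Q}) = I(t)$ from Step~3 of Proposition~\ref{prop:1} gives supremum $t + \log M - I(t)$. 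Using $\overline{t} = \log N - H(\mathbf{P})$ and $I(\overline{t}) = \log M - H(\mathbf{P})$ one checks that both cases collapse into the uniform expression $\min\{t,\overline{t}\} + \log M - I(\min\{t,\overline{t}\})$, which proves the first assertion.

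For the second assertion, observe that $\tau(\iih,0,J) \geq \underline{t} > t$ by definition, so the set is actually empty and the stated bound holds vacuously; equivalently, counting the at most $M^J$ relevant $\iih \in [M]^J$ and bounding each fibre size by $e^{Jt}$ (on the event that the unmaxed average is at most $t$) yields $e^{J(t+\log M)}$ directly, and the strict inequality $t + \log M < \underline{t} + \log M$ is then immediate. The one substantive step is the variational computation in the first case, but this is essentially the mirror of the calculation in Lemma~\ref{lem:33-second}: once the supremum over $\mathcal{G}_t$ is written down, Lemma~\ref{lem:32} together with Step~3 of Proposition~\ref{prop:1} identifies it, so I do not anticipate any new technical obstacle beyond what has already been set up.
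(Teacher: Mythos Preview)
Your proposal is correct and follows the same method-of-types skeleton as the paper: both reduce the count to a sum over type classes $\mathbf{p}\in\mathcal{G}_t\cap\mathcal{T}_J$ with summand $\asymp e^{J(H(\mathbf{p})+t_{\mathbf{p}})}$, and both note that the second assertion is vacuous once the $\max\{\underline{t},\cdot\}$ in the definition of $\tau$ is unpacked. The only genuine difference is in how the variational problem $\sup_{\mathbf{p}\in\mathcal{G}_t}(H(\mathbf{p})+t_{\mathbf{p}})$ is evaluated. You rewrite it as $\log N-\inf_{\mathbf{p}\in\mathcal{G}_t}H(\mathbf{p}\|\mathbf{P})$ and invoke Lemma~\ref{lem:32} together with Step~3 of Proposition~\ref{prop:1} to identify the minimiser (mirroring Lemma~\ref{lem:33-second}, as you say). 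The paper instead relaxes $\#T_J(\mathbf{p})$ to $\#\{\iih:\text{avg}\geq t_{\mathbf{p}}\}\asymp e^{J(\log M-I(t_{\mathbf{p}}))}$ via Lemma~\ref{lem:33-first}, reducing everything to a one-dimensional optimisation in $t_{\mathbf{p}}$ that is settled by the sign of $1-I'(\tau)$ on either side of $\overline{t}$. Your route is a little more direct; the paper's route avoids Lemma~\ref{lem:32} and uses only the monotonicity and convexity of $I$. One small point: Lemma~\ref{lem:32} is stated for $t\in(\underline{t},\overline{t})$, so at the endpoint $t=\underline{t}$ you should either argue by continuity or note directly that $I(\underline{t})=0$ and $\mathbf{Q}\in\mathcal{G}_{\underline{t}}$ gives the supremum $\underline{t}+\log M$.
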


\begin{proof}
The second part of the statement holds simply by the fact that there are $M^J$ strings of length $J$ on the alphabet $[M]$, so we only prove the first part of the statement. 
The strategy for the upper bound is to work with an arbitrary type class and then use the fact that there are only polynomially many type classes. Note that if $\mathbf{p} \in \mathcal{T}_J$ and $\jjv$ is any representative of the type class $T_J(\mathbf{p})$ then 
\[ \# \{ \, \mathbf{i} \in [N]^J : \iih = \jjv \, \} = \prod_{\kh = 1}^M N_{\kh}^{p_{\kh} J}.\]  Therefore 
\begin{align*}
\# &\{ \, (i_1,\dotsc,i_J) \in [N]^J : \tau(\iih,0,J) \leq t \, \} = \sum_{\mathbf{p} \in \mathcal{T}_J : t_{\mathbf{p}} \leq t}  \# T_J(\mathbf{p}) \cdot \prod_{\kh = 1}^M N_{\kh}^{p_{\kh} J}  \\
&\leq \sum_{\mathbf{p} \in \mathcal{T}_J : t_{\mathbf{p}} \leq t} \prod_{\kh = 1}^M N_{\kh}^{p_{\kh} J} \cdot \# \Big\{ \, \iih \in [M]^J : \frac{1}{J} \sum_{\ell=1}^{J} \log N_{\ih_\ell} \geq t_{\mathbf{p}} \, \Big\}  \\
&\asymp \sum_{\mathbf{p} \in \mathcal{T}_J : t_{\mathbf{p}} \leq t} e^{t_{\mathbf{p}} J} e^{J\cdot H(\mathbf{Q}^*_{t_{\mathbf{p}}})} \qquad \text{by Lemma~\ref{lem:33-first}} \\
&\asymp \sum_{\mathbf{p} \in \mathcal{T}_J : t_{\mathbf{p}} \leq t} e^{J( t_{\mathbf{p}} + \log M - I(t_{\mathbf{p}}))} \qquad \text{by Step~1 of Proposition~\ref{prop:1}} \\
& \leq \# \mathcal{T}_J \cdot e^{(\min\{t,\overline{t}\} + \log M - I(\min\{t,\overline{t}\} ))J} \quad \text{since } \substack{ 0 < I'(\tau) < 1 \text{ if } \tau \in (\underline{t},\overline{t}], \\ I'(\tau) > 1 \text{ if } \tau \in (\overline{t},\max_{1\leq \ih \leq M} \log N_{\ih})} \\
&\asymp e^{(\min\{t,\overline{t}\} + \log M - I(\min\{t,\overline{t}\} ))J} \qquad \text{by the upper bound of~\eqref{eq:206}}. 
\end{align*}

For the lower bound, if $\mathbf{q}$ is the closest approximation in $\mathcal{T}_J$ to $\mathbf{Q}^*_{\min\{t,\overline{t}\}}$ for which $t_{\mathbf{q}} \leq t$, then 
\begin{align*}
 &\# \{ \, (i_1,\dotsc,i_J) \in [N]^J : \tau(\iih,0,J) \leq t \, \} \geq  \# T_J(\mathbf{q}) \cdot \prod_{\kh = 1}^M N_{\kh}^{q_{\kh} J} \\
 &\geq e^{t_{\mathbf{q}} J} (J+1)^{-M} e^{J\cdot H(\mathbf{q})} \qquad \text{by the lower bound of~\eqref{eq:206}}\\
 &\asymp e^{(\min\{t,\overline{t}\} + H(\mathbf{Q}^*_{\min\{t,\overline{t}\}}))J} \qquad  \substack{\text{since } \mathbf{q} \to \mathbf{Q}^*_{\min\{t,\overline{t}\}} \in \mathcal{E}_{\min\{t,\overline{t}\}} \text{ by Lemma~\ref{lem:32}}\\
 \text{and since } H \text{ is continuous}}\\
 &\asymp e^{(\min\{t,\overline{t}\} + \log M - I(\min\{t,\overline{t}\}))J} \qquad \text{by Step~1 of Proposition~\ref{prop:1}}.
 \end{align*}
Therefore the first part of the statement of Lemma~\ref{lem:simpleindcomb} holds. 
\end{proof}

In order to calculate the $s$-cost of the cover we construct in the proof of Lemma~\ref{lem:50}, for $\btau = (\tau_1,\dotsc,\tau_{L-1}) \in (\underline{t},\overline{t})^{L-1}$ and $s \in [\dim_{\mathrm H} \Lambda,\dim_{\mathrm B} \Lambda]$ we introduce 
\begin{align*}
G_{1}^{\btau}(s) \coloneqq \frac{\log N}{\log n} &+ \gamma^{L-1}\big(1-\gamma^{-1}\big)\frac{\log M}{\log m} - \gamma^{L-1}s \\*
&+ \frac{\gamma-1}{\log n} \sum_{i=0}^{L-2}\gamma^{i}(\tau_{L-1-i}+\log M -I(\tau_{L-1-i})) 
\end{align*}
and 
\begin{align*}%
 G_{\ell}^{\btau}(s) \coloneqq \frac{\log N}{\log n} &+\frac{\gamma-1}{\log n} \Big( \gamma^{L-\ell}(\log M - I(\tau_{\ell - 1}))+\sum_{i=0}^{L-1-\ell} \gamma^{i}(\tau_{L-1-i}+\log M -I(\tau_{L-1-i})) \Big) \\*
 &-\gamma^{L-\ell}s 
\end{align*}
for $\ell = 2,3,\dotsc,L$. In particular, when $\ell=L$ the sum is empty and 
\[ G_{L}^{\btau}(s) = \dim_{\mathrm B} \Lambda - (1-\gamma^{-1})\frac{I(\tau_{L-1})}{\log m} - s\]
(note the similarity with~\eqref{eq:44}). 

\begin{lemma}\label{lem:50}
For all $L \geq 2$, all tuples $\btau = (\tau_1,\dotsc,\tau_{L-1}) \in (\underline{t},\overline{t})^{L-1}$, and all $s \in [\dim_{\mathrm H} \Lambda,\dim_{\mathrm B} \Lambda]$, 
\begin{equation*}
\limsup_{\delta\searrow 0} \frac{\log S_{\delta, \gamma^{-(L-1)}}^{s}(\Lambda)}{-\log \delta} \leq \max_{1 \leq \ell \leq L} G_{\ell}^{\btau}(s),
\end{equation*}
where $S_{\delta, \theta}^{s}(\Lambda)$ was introduced in~\eqref{eq:41}. 
\end{lemma}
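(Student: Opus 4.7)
The plan is to construct a cover of $\Lambda$ at the $L$ permissible scales $n^{-\gamma^{j-1}(K)}$, $j = 1, \ldots, L$, with $K \coloneqq \lfloor -\log \delta/\log n \rfloor$, and decide at which scale to place each point using the thresholds $\tau_1, \ldots, \tau_{L-1}$ in reverse order. For each $\ii \in \Sigma$ let $j(\ii)$ be the smallest $j \in \{1,\ldots, L-1\}$ with $\tau(\iih, \gamma^{j-1}(K), \gamma^j(K)) \geq \tau_{L-j}$, and set $j(\ii) = L$ if no such index exists; then include the level-$\gamma^{j(\ii)-1}(K)$ approximate square through $\Pi(\ii)$ in the cover. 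After absorbing the universal constant from Section~\ref{subsec:approxsquare} relating $|B_{K'}(\ii)|$ to $n^{-K'}$ into the choice of $\delta$, these squares form an admissible cover in the sense of~\eqref{eq:41}, with diameters in $[\delta^{1/\theta}, \delta]$.

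Next I would count squares retained at each level $j \in \{1,\ldots,L\}$ and sum their $s$-cost. The symbolic representation $(i_1,\ldots,i_{\gamma^{j-1}(K)}, \ih_{\gamma^{j-1}(K)+1}, \ldots, \ih_{\gamma^{j}(K)})$ of a level-$j$ square decomposes into a free $[N]^K$-prefix, blocks of maps on $[\gamma^{j'-1}(K)+1, \gamma^{j'}(K)]$ (for $j' = 1, \ldots, j-1$) whose column-projections have average strictly less than $\tau_{L-j'}$ (the failed tests), and a final column block on $[\gamma^{j-1}(K)+1, \gamma^{j}(K)]$ which (if $j \leq L-1$) has average at least $\tau_{L-j}$. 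Lemma~\ref{lem:simpleindcomb}, applied with $t = \tau_{L-j'} \in (\underline{t}, \overline{t})$ so that $\min\{t, \overline{t}\} = t$, gives each failed-test block a count $\asymp e^{(\gamma-1)\gamma^{j'-1}(K)(\tau_{L-j'} + \log M - I(\tau_{L-j'}))}$, while a direct method-of-types computation (essentially the content of Lemma~\ref{lem:33-first}) gives the passing-test column block count $\asymp M^{(\gamma-1)\gamma^{j-1}(K)} e^{-(\gamma-1)\gamma^{j-1}(K) I(\tau_{L-j})}$; when $j = L$ the final column block is unconstrained, contributing simply $M^{(\gamma-1)\gamma^{L-1}(K)}$.

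Multiplying by the $s$-cost $\asymp n^{-\gamma^{j-1}(K) s}$ of a single level-$j$ square, taking logarithms, dividing by $K \log n \sim -\log \delta$, and using $(\gamma-1)/\log n = (1-\gamma^{-1})/\log m$ to convert the unconstrained $M$-factor in the $j=L$ case into the $\gamma^{L-1}(1-\gamma^{-1}) \log M/\log m$ boundary term appearing in $G_1^{\btau}$, one finds after reindexing $i = j' - 1$ that the level-$j$ contribution matches $n^{K \cdot G_{L-j+1}^{\btau}(s)}$. Summing over $j$ gives $S^s_{\delta, \gamma^{-(L-1)}}(\Lambda) \leq \sum_{j=1}^L n^{K \cdot G_{L-j+1}^{\btau}(s)}$ up to a polynomial-in-$K$ factor arising from the $\asymp$ relations; taking $(-\log \delta)^{-1}$ times the logarithm and passing to $\limsup_{\delta \to 0^+}$ collapses the sum to $\max_{1 \leq \ell \leq L} G_\ell^{\btau}(s)$, as required. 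The main obstacle is the bookkeeping, verifying that the reindexing $\ell = L - j + 1$ aligns each $\tau_{L-j'}$ in the level-$j$ count with a $\tau_{L-1-i}$ entry in the sum defining $G_\ell^{\btau}$, and that integer-flooring of the scales $\gamma^i(K)$ contributes only $O(1)$ errors that vanish in the $\limsup$.
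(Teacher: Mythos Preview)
Your proposal is correct and follows essentially the same approach as the paper's proof: you construct the same cover (your level-$j$ piece is exactly the paper's $\mathcal{U}_{L-j+1}^{\btau}$, consisting of level-$\gamma^{j-1}(K)$ approximate squares), and your cost computation via Lemma~\ref{lem:simpleindcomb} for the failed-test map blocks and Lemma~\ref{lem:33-first} for the passing column block reproduces the paper's calculation~\eqref{eq:intcost1}--\eqref{eq:intcost2}. The reindexing $\ell = L-j+1$, $i = j'-1$ is exactly right, and the flooring errors are indeed $O(1)$ and negligible in the $\limsup$.
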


\begin{proof}
We construct a cover of $\Lambda$ as follows. For $\ell \in \mathbb{N}$, let $J_{\ell}\coloneqq \gamma^{\ell}(K)-\gamma^{\ell-1}(K)$. %
Define 
\begin{equation*}
\mathcal{U}_{1}^{\btau} \coloneqq \left\{ B_{\gamma^{L-1}(K)}(\mathbf{i}) \in \mathcal{B}_{\gamma^{L-1}(K)} : \tau(\iih,\gamma^k(K),\gamma^{k+1}(K)) \leq \tau_{L-1-k} \;\text{ for } k=0,\dotsc,L-2 \right\}.
\end{equation*}

For $\ell \in \{2,3,\dotsc,L-1 \}$, define $\mathcal{U}_{\ell}^{\btau}$ to be the set of level-$\gamma^{L-\ell}(K)$ approximate squares $B_{\gamma^{L-\ell}(K)}(\mathbf{i})$ for which
\begin{align*}%
&\tau(\iih,\gamma^k(K),\gamma^{k+1}(K)) \leq \tau_{L-k-1} \;\text{ for } k=0,\dotsc,L-\ell-1, \\*
\text{ and } &\tau(\iih,\gamma^{L-\ell}(K),\gamma^{L-\ell+1}(K)) > \tau_{\ell - 1}. 
\end{align*}
Define $\mathcal{U}_{L}^{\btau} \coloneqq \big\{ \,  B_{K}(\ii)\in \mathcal{B}_{K} : \tau(\iih,K,\gamma(K)) > \tau_{L-1} \, \big\}$. 
By construction this is a cover: 
\[ \Lambda \subseteq \bigcup_{\ell=1}^{L}\mathcal{U}_{\ell}^{\btau}. \]
Observe that for all $0\leq k<\ell\leq L-1$, if $B\in\mathcal{U}_k$ and $B'\in\mathcal{U}_{\ell}$ then they are either disjoint or intersect on their boundary, but it can never happen that $B\cap B'=B'$. 

For $\ell \in \{2,3,\dotsc, L\}$, the symbolic representation of a level-$\gamma^{L-\ell}(K)$ approximate square $B_{\gamma^{L-\ell}(K)}(\mathbf{i}) \in \mathcal{U}_{\ell}^{\btau}$ is 
\[ (\underbrace{ i_1,\dotsc, i_{K} }_{\in  [N] \mbox{ freely}} , \underbrace{ i_{K+1},\dotsc, i_{\gamma(K)} }_{\tau(\iih,K,\gamma(K)) \leq \tau_{L-1}} , \dotsb , \underbrace{ i_{\gamma^{L-\ell-1}(K)+1},\dotsc,i_{\gamma^{L-\ell}(K)} }_{\tau(\iih,\gamma^{L-\ell-1}(K),\gamma^{L-\ell}(K)) \leq \tau_{\ell}}, \underbrace{ \ih_{\gamma^{L-\ell}(K) + 1},\dotsc,\ih_{\gamma^{L-l+1}(K)} }_{\tau(\iih,\gamma^{L-\ell}(K),\gamma^{L-l+1}(K)) > \tau_{\ell - 1}}). \] 
Therefore the $s$-cost of $\mathcal{U}_{\ell}^{\btau}$ is 
\begin{align*}
\sum_{U \in \mathcal{U}_{\ell}^{\btau}} |U|^s &\asymp \# \mathcal{U}_{\ell}^{\btau} \cdot n^{-\gamma^{L-\ell}Ks} \\
&\asymp N^K \prod_{k=0}^{L-\ell-1} \# \{ \, \mathbf{i} \in [N]^{J_{k+1}} : \tau(\iih,0,J_{k+1}) \leq \tau_{L-k-1} \, \} \\*
&\phantom{\leq}\times \# \{ \,  \jjh \in [M]^{J_{L-\ell+1}} : \tau(\jjh,0,J_{L-\ell+1}) > \tau_{\ell - 1} \,\} \cdot n^{-\gamma^{L-\ell}Ks} \\
&\asymp N^K \prod_{k=0}^{L-\ell-1} e^{(\tau_{L-k-1} + \log M - I(\tau_{L-k-1}))J_{k+1}} \cdot e^{(\log M - I(\tau_{\ell - 1}))J_{L-l+1}} n^{-\gamma^{L-\ell}Ks} \\
&\asymp n^{K\cdot G_{\ell}^{\btau}(s)},\stepcounter{equation}\tag{\theequation}\label{eq:intcost1}
\end{align*}
by Lemmas~\ref{lem:simpleindcomb} and~\ref{lem:33-first} and algebraic manipulations. In the case $l=L$ we used the convention that the empty product equals 1. 

The symbolic representation of $B_{\gamma^{L-1}(K)}(\mathbf{i}) \in \mathcal{U}_1^{\btau}$ is 
\[ (\underbrace{ i_1,\dotsc, i_{K} }_{\in  [N] \mbox{ freely}} , \underbrace{ i_{K+1},\dotsc, i_{\gamma(K)} }_{\tau(\iih,K,\gamma(K)) \leq \tau_{L-1}} , \dotsb , \underbrace{ i_{\gamma^{L-2}(K)+1},\dotsc,i_{\gamma^{L-1}(K)} }_{\tau(\iih,\gamma^{L-2}(K),\gamma^{L-1}(K)) \leq \tau_1} , \underbrace{ \ih_{\gamma^{L-1}(K) + 1},\dotsc,\ih_{\gamma^{L}(K)} }_{\in [M] \mbox{ freely}} ). \]
Therefore, as in~\eqref{eq:intcost1}, 
\begin{equation}\label{eq:intcost2}
\sum_{U \in \mathcal{U}_1^{\btau}} |U|^s \asymp N^K \prod_{k=0}^{L-2} e^{(\tau_{L-k-1} + \log M - I(\tau_{L-k-1}))J_{k+1}} \cdot M^{\gamma^L K - \gamma^{L-1}K} n^{-\gamma^{L-1}Ks} \asymp n^{K \cdot G_{1}^{\btau}(s)}. 
\end{equation}
We have bounded the $s$-cost of each part of the cover, so the proof is complete. 
\end{proof}
In~\eqref{eq:intcost1} and~\eqref{eq:intcost2} it was crucial that each $\tau_i \in (\underline{t},\overline{t})$ when using Lemma~\ref{lem:simpleindcomb}. 
Lemma~\ref{lem:50} tells us the exponential growth rate of the cover for all tuples $\btau$, but motivated by~\eqref{eq:45}, of particular interest is the case when 
\[  G_{1}^{\btau}(s) = \dotsb = G_{L}^{\btau}(s) = 0.\]
A tuple $\btau = (\tau_1,\dotsc,\tau_{L-1}) \in (\underline{t},\overline{t})^{L-1}$ satisfies $G_{1}^{\btau}(s) = \dotsb = G_{L}^{\btau}(s)$ if and only if $\tau_1 = t_1(s)$ (from $G_{1}^{\btau}(s) = G_{2}^{\btau}(s)$) and $\tau_{k} = T_s(\tau_{k-1})$ for $k = 2,3,\dotsc , L$ (from $G_{k}^{\btau}(s) = G_{k+1}^{\btau}(s)$). Equivalently, $\tau_k = t_k(s)$ for $k = 1,2,\dotsc,L-1$. 
The next lemma ensures that each of $t_1(s), \dotsc,t_L(s)$ lies in the correct range $(\underline{t},\overline{t})$.  In particular, writing $\mathbf{t}\coloneqq (t_1(s), \dotsc,t_{L-1}(s))$, we can then apply Lemma~\ref{lem:50} to obtain the upper bound 
\[ \limsup_{\delta\searrow 0} \frac{\log S_{\delta, \gamma^{-(L-1)}}^{s}(\Lambda)}{-\log \delta} \leq G_{L}^{\mathbf{t}}(s) = \dim_{\mathrm B} \Lambda - (1-\gamma^{-1})\frac{I(t_{L-1}(s))}{\log m} - s. \] 

\begin{lemma}\label{lem:51}
Let $\theta\in(0,1)$, $L = L(\theta) \coloneqq 1 + \lfloor \frac{-\log \theta}{\log \gamma} \rfloor$ and $s\in(\dim_{\mathrm H}\Lambda,\overline{\dim}_{\gamma^{-(L-1)}}\,\Lambda]$. Then, using notation from~\eqref{eq:definetsequence} and~\eqref{eq:44},  
\[ \underline{t} < t_1(\dim_{\mathrm H} \Lambda) < t_1(s) < t_2(s) < \dotsb < t_L(s) \leq t_L(\overline{\dim}_{\gamma^{-(L-1)}} \Lambda) < T_{\dim_{\mathrm H} \Lambda}(t^*) < \overline{t}. \]%
\end{lemma}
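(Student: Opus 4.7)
I would split the proof into three pieces: straightforward monotonicity claims, one strict Jensen-type inequality, and one delicate inductive comparison with Lemma~\ref{lem:50}.

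For the leftmost chain $\underline{t} < t_1(\dim_{\mathrm H}\Lambda) < t_1(s) < \cdots < t_L(s) \leq t_L(\overline{\dim}_{\gamma^{-(L-1)}}\Lambda)$: I would use $I(\underline{t}) = 0$ to write $t_1(\dim_{\mathrm H}\Lambda) = T_{\dim_{\mathrm H}\Lambda}(\underline{t}) > \underline{t}$, which is exactly the computation at the start of the proof of Lemma~\ref{lem:41} under the non-uniform fibres hypothesis. The linearity of $t_1(\cdot)$ in $s$ then gives $t_1(\dim_{\mathrm H}\Lambda) < t_1(s)$. Iterating Lemma~\ref{lem:41} yields the central chain, since $s > \dim_{\mathrm H}\Lambda$ makes $T_s(t) > T_{\dim_{\mathrm H}\Lambda}(t) \geq t$ strict for every $t$, so $t_{k+1}(s) = T_s(t_k(s)) > t_k(s)$. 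A simple induction on $\ell$ using $\partial_s t_{\ell+1}(s) = \log n + \gamma I'(t_\ell(s))\,\partial_s t_\ell(s) \geq \log n$ (valid because the central chain already places $t_\ell(s) > \underline{t}$ and hence $I'(t_\ell(s)) \geq 0$) establishes strict $s$-monotonicity of $t_\ell$, giving the rightmost inequality.

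For $T_{\dim_{\mathrm H}\Lambda}(t^*) < \overline{t}$, I would substitute \eqref{eq:102}, \eqref{eq:103}, \eqref{eq:44}, together with the identity $\dim_{\mathrm B}\Lambda \log n = \log N + (\gamma - 1)\log M$ that follows from \eqref{eq:103}. After clearing denominators the inequality collapses to
\[N^{1/\gamma}\exp\!\Bigl(\tfrac{\gamma - 1}{\gamma}H(\mathbf{P})\Bigr) < \sum_{\ih=1}^M N_{\ih}^{1/\gamma},\]
which is a strict Jensen inequality for the strictly convex map $\lambda \mapsto e^\lambda$ with weights $P_{\ih} = N_{\ih}/N$ evaluated at $(\gamma^{-1} - 1)\log N_{\ih}$; strictness holds precisely under the non-uniform vertical fibres hypothesis.

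For $t_L(\overline{\dim}_{\gamma^{-(L-1)}}\Lambda) < T_{\dim_{\mathrm H}\Lambda}(t^*)$, direct algebraic manipulation yields $T_{\dim_{\mathrm H}\Lambda}(t^*) = \gamma(\log N - \dim_{\mathrm H}\Lambda \log m)/(\gamma - 1)$, while the simplified form of the Theorem~\ref{thm:main} equation at $\theta = \gamma^{-(L-1)}$ gives $t_L(\tilde s_L) = \gamma(\log N - \tilde s_L \log m)/(\gamma - 1)$ for the Theorem~\ref{thm:main} candidate $\tilde s_L$. Hence $T_{\dim_{\mathrm H}\Lambda}(t^*) - t_L(\tilde s_L) = \gamma\log m(\tilde s_L - \dim_{\mathrm H}\Lambda)/(\gamma - 1)$, and the required inequality reduces to $\tilde s_L > \dim_{\mathrm H}\Lambda$ (which follows from the FFK lower bound $\underline{\dim}_{\, \theta}\Lambda > \dim_{\mathrm H}\Lambda$ for $\theta > 0$ and non-uniform fibres) together with $\overline{\dim}_{\gamma^{-(L-1)}}\Lambda \leq \tilde s_L$, obtained via Lemma~\ref{lem:50} applied with the tuple $\btau = (t_1(\tilde s_L),\ldots,t_{L-1}(\tilde s_L))$. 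The principal obstacle is precisely the interlacing of inductions: each application of Lemma~\ref{lem:50} at level $L$ requires $t_\ell(\tilde s_L) \in (\underline{t}, \overline{t})$ for $1 \leq \ell \leq L - 1$, which must be certified by the present lemma at levels strictly less than $L$ evaluated at the specific value $\tilde s_L$ (not at the \emph{a priori} unknown quantity $\overline{\dim}_{\gamma^{-(L-1)}}\Lambda$), so the joint induction on Lemmas~\ref{lem:50} and~\ref{lem:51} must be carefully sequenced to avoid circularity.
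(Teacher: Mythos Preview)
Your treatment of the leftmost chain is essentially the paper's, and your Jensen computation for $T_{\dim_{\mathrm H}\Lambda}(t^*)<\overline t$ is correct and in fact cleaner than the paper's route via the auxiliary function $f_{\mathbf p}(z)$.

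The gap is in the third block. You correctly reduce $t_L(\overline{\dim}_{\gamma^{-(L-1)}}\Lambda)<T_{\dim_{\mathrm H}\Lambda}(t^*)$ to the pair of statements $\overline{\dim}_{\gamma^{-(L-1)}}\Lambda\le\tilde s_L$ and $\tilde s_L>\dim_{\mathrm H}\Lambda$, but then you invoke Lemma~\ref{lem:50} at the point $\tilde s_L$ with the tuple $(t_1(\tilde s_L),\ldots,t_{L-1}(\tilde s_L))$. This requires both that $\tilde s_L$ exists and that $t_\ell(\tilde s_L)\in(\underline t,\overline t)$ for $\ell\le L-1$; you propose to certify the latter by the present lemma at lower levels, but the inductive hypothesis at level $L-1$ only covers $s\le\overline{\dim}_{\gamma^{-(L-2)}}\Lambda$, and you have no \emph{a priori} bound placing $\tilde s_L$ in that range. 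You flag the circularity but do not break it, and the detour through $\tilde s_L$ is exactly what makes the sequencing delicate.

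The paper avoids $\tilde s_L$ altogether and does not induct on $L$. It first proves the weaker claim $t_{L-1}(\overline{\dim}_{\gamma^{-(L-1)}}\Lambda)<t^*$ by contradiction, applying Lemma~\ref{lem:50} not with the natural tuple but with an auxiliary $\btau$ built by following $t_1,t_2,\ldots$ until the first index $l_*$ where the sequence would reach $t^*$, then linearly interpolating up to $\tau_{L-1}=t^*$. By construction every $\tau_l$ lies in $(\underline t,t^*]\subset(\underline t,\overline t)$, so Lemma~\ref{lem:50} applies with no appeal to the lemma being proved. Only after this does the paper apply Lemma~\ref{lem:50} with the natural tuple at $s=\overline{\dim}_{\gamma^{-(L-1)}}\Lambda$, now known to be admissible, and conclude via the chain \eqref{eq:rightrangelast1}--\eqref{eq:rightrangelast4}. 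Incidentally, your scheme can be salvaged by dropping $\tilde s_L$ and running a genuine induction on $L$: since $\overline{\dim}_{\gamma^{-(L-1)}}\Lambda\le\overline{\dim}_{\gamma^{-(L-2)}}\Lambda$, the level-$(L-1)$ hypothesis places $(t_1,\ldots,t_{L-1})$ at $s=\overline{\dim}_{\gamma^{-(L-1)}}\Lambda$ in range, and Lemma~\ref{lem:50} then yields exactly \eqref{eq:combrightrangeupperbound}, from which \eqref{eq:rightrangelast1}--\eqref{eq:rightrangelast4} finishes. That is the missing idea.
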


\begin{proof}
 Recall from~\eqref{e:sunderlinedef} and~\eqref{e:soverlinedef} that $\underline{s}<\dim_{\mathrm H}\Lambda<s$. It is therefore immediate that
\begin{equation*}
\underline{t} = t_1(\underline{s}) < t_1(\dim_{\mathrm H} \Lambda) < t_1(s). 
\end{equation*}
It follows from Lemma~\ref{lem:41} that $t_1(s) < t_2(s) < \dotsb < t_L(s)$ for all $s>\dim_{\mathrm H}\Lambda$. 
Since $s \leq \overline{\dim}_{\gamma^{-(L-1)}} \Lambda$ we have $t_L(s) \leq t_L(\overline{\dim}_{\gamma^{-(L-1)}} \Lambda)$. 

We now prove $T_{\dim_{\mathrm H} \Lambda}(t^*) < \overline{t}$. To do so, we define, for every fixed $\mathbf{p} \in \mathcal{P}_M$, the function $f_{\mathbf{p}} \colon (0,\infty) \to \mathbb{R}$ by 
\[ f_{\mathbf{p}}(z) \coloneqq \log n \cdot \log \sum_{\ih=1}^M p_{\ih}^{z/\log n}   -   (\log n -z)H(\mathbf{p}) . \]
Recall that $\mathbf{Q} = (1/M,\dotsc,1/M)$ and $\mathbf{P} = (N_1/N,\dotsc,N_M/N)$. 
Clearly, $f_{\mathbf{p}}(\log n) = 0$ for all $\mathbf{p}$ and $f_{\mathbf{Q}}(z) = 0$ for every $z$. The derivative of $f_{\mathbf{p}}(z)$ with respect to $z$ is
\begin{equation*}
f'_{\mathbf{p}}(z) = H(\mathbf{p}) + \sum_{\ih=1}^M \frac{p_{\ih}^{z/\log n}}{\sum_{\jh=1}^M p_{\jh}^{z/\log n}} \log p_{\ih}, 
\end{equation*}
while after some algebraic manipulations, we obtain that
\begin{equation*}
f''_{\mathbf{p}}(z) =\frac{1}{\log n} \left(
\sum_{\ih=1}^M \frac{p_{\ih}^{z/\log n}}{\sum_{\jh=1}^M p_{\jh}^{z/\log n}} (\log p_{\ih})^2 -
\Bigg( \sum_{\ih=1}^M \frac{p_{\ih}^{z/\log n}}{\sum_{\jh=1}^M p_{\jh}^{z/\log n}} \log p_{\ih} \Bigg)^2 
\right) \geq 0
\end{equation*}
by Jensen's inequality with equality if and only if $\mathbf{p}=\mathbf{Q}$ (here we use that $\mathbf{p}$ has strictly positive entries). Hence, for $\mathbf{p}\neq\mathbf{Q}$, $f_{\mathbf{p}}(z)$ is a strictly convex function for all $z>0$, and also $f'_{\mathbf{p}}(\log n)=0$, so $f_{\mathbf{p}}(z)$ has a global minimum at $z=\log n$. In particular, since $m \neq n$ and $\mathbf{P} \neq \mathbf{Q}$ as we assume the carpet has non-uniform fibres, $f_{\mathbf{P}}(\log m) > 0$. 
Using formulae~\eqref{eq:102} and $\eqref{eq:103}$ for $\dim_{\mathrm H}\Lambda$ and $\dim_{\mathrm B}\Lambda$, algebraic manipulations show that $f_{\mathbf{P}}(\log m) > 0$ is equivalent to 
\[ \left( \dim_{\mathrm H} \Lambda - \frac{\log M}{\log m} \right) \log n  +  \gamma \frac{\log m}{1-\gamma^{-1}} (\dim_{\mathrm B} \Lambda - \dim_{\mathrm H} \Lambda)   <   \log N  - H(\mathbf{P}). \]
But we can express $I(t^*)$ from~\eqref{eq:44} and use the definition $\overline{t} \coloneqq \log N  - H(\mathbf{P})$ to show that this is equivalent to the assertion $T_{\dim_{\mathrm H} \Lambda}(t^*) < \overline{t}$, as required. 

It remains to prove $t_L(\overline{\dim}_{\gamma^{-(L-1)}}\Lambda) < T_{\dim_{\mathrm H} \Lambda}(t^*)$. To do so, we first prove the weaker claim $t_{L-1}(\overline{\dim}_{\gamma^{-(L-1)}} \Lambda)  <  t^*$ using the fact that Lemma~\ref{lem:50} holds for an arbitrary tuple $\btau$. Assume for a contradiction that $t_{L-1}(\overline{\dim}_{\gamma^{-(L-1)}} \Lambda)  \geq t^*$. We define a tuple $\btau = (\tau_1,\dotsc,\tau_{L-1}) \in (\underline{t},\overline{t})^{L-1}$ as follows. %
If $t_1(\overline{\dim}_{\gamma^{-(L-1)}} \Lambda) \geq t^*$, then define $\tau_l \coloneqq t^*$ for all $l \in \{ 1,2,\dotsc,L-1\}$, noting that $\underline{t} < t^* \leq T_{\dim_{\mathrm H} \Lambda}(t^*) < \overline{t}$. If, on the other hand, $t_1(\overline{\dim}_{\gamma^{-(L-1)}} \Lambda) < t^*$, then define 
\[ l_* \coloneqq \max \{ \, l \in \{ 1,2,\dotsc,L-2 \} : t_{l}(\overline{\dim}_{\gamma^{-(L-1)}} \Lambda) < t^* \, \} .\] %
For $l \in \{1,2,\dotsc,l_*\}$ let $\tau_l \coloneqq t_l(\overline{\dim}_{\gamma^{-(L-1)}} \Lambda)$. For $l \in \{l_*+1,l_* + 2,\dotsc,L-1\}$ let $\tau_l \coloneqq t_{l_*} + \frac{l - l_*}{L-1-l_*}(t^* - t_{l_*}(s))$, %
so 
\[ \underline{t} < t_1(\dim_{\mathrm H} \Lambda) < t_1(\overline{\dim}_{\gamma^{-(L-1)}} \Lambda) = \tau_1 < \tau_2 < \dotsb < \tau_{L-2} < \tau_{L-1} = t^* \leq T_{\dim_{\mathrm H} \Lambda}(t^*) < \overline{t}.\] 
In either case, $\tau_1 \leq t_1(\overline{\dim}_{\gamma^{-(L-1)}} \Lambda)$ and $\tau_{l+1} \leq T_{\overline{\dim}_{\gamma^{-(L-1)}} \Lambda}(\tau_{l})$ for all $l \in \{1,2,\dotsc,L-2\}$, so 
\[ G_{1}^{\btau}(\overline{\dim}_{\gamma^{-(L-1)}} \Lambda) \leq G_{2}^{\btau}(\overline{\dim}_{\gamma^{-(L-1)}} \Lambda) \leq \dotsb \leq G_{L}^{\btau}(\overline{\dim}_{\gamma^{-(L-1)}} \Lambda).\]
 Therefore by Lemma~\ref{lem:50}, 
\begin{align*}
0 = \limsup_{\delta\searrow 0} \frac{\log S_{\delta, \gamma^{-(L-1)}}^{\overline{\dim}_{\gamma^{-(L-1)}} \Lambda}(\Lambda)}{-\log \delta} &\leq \max_{1 \leq \ell \leq L} G_{\ell}^{\btau}(\overline{\dim}_{\gamma^{-(L-1)}} \Lambda) 
 = G_{L}^{\btau}(\overline{\dim}_{\gamma^{-(L-1)}} \Lambda) < 0 
\end{align*} 
(the last inequality holds since $\tau_{L-1} = t^*$ and $\overline{\dim}_{\gamma^{-(L-1)}} \Lambda > \dim_{\mathrm H} \Lambda$, see \cite[Section~4]{Falconer2020firstintermediate}), a contradiction. Thus $t_{L-1}(\overline{\dim}_{\gamma^{-(L-1)}} \Lambda)  <  t^* \leq T_{\dim_{\mathrm H} \Lambda}(t^*) < \overline{t}$ (using Lemma~\ref{lem:41}). 

To complete the proof that $t_L(\overline{\dim}_{\gamma^{-(L-1)}} \Lambda) < T_{\dim_{\mathrm H} \Lambda}(t^*)$, we apply Lemma~\ref{lem:50} again but this time with the optimal tuple $\mathbf{t} \coloneqq (t_1(\overline{\dim}_{\gamma^{-(L-1)}} \Lambda),\dotsc,t_{L-1}(\overline{\dim}_{\gamma^{-(L-1)}} \Lambda))$ which we now know lies in the correct range: 
\begin{align*}
 0 &= \limsup_{\delta\searrow 0} \frac{\log S_{\delta, \gamma^{-(L-1)}}^{\overline{\dim}_{\gamma^{-(L-1)}} \Lambda}(\Lambda)}{-\log \delta} \\
 &\leq \max_{1 \leq \ell \leq L} G_{\ell}^{\mathbf{t}}(\overline{\dim}_{\gamma^{-(L-1)}} \Lambda) \\
 &= G_{L}^{\mathbf{t}}(\overline{\dim}_{\gamma^{-(L-1)}} \Lambda) \\
 &= \dim_{\mathrm B}\Lambda - (1-\gamma^{-1})\frac{I\big( t_{L-1}(\overline{\dim}_{\gamma^{-(L-1)}} \Lambda)\big)}{\log m} - \overline{\dim}_{\gamma^{-(L-1)}} \Lambda,\stepcounter{equation}\tag{\theequation}\label{eq:combrightrangeupperbound}
 \end{align*}%
noting that all terms in the maximum are in fact equal by the definition of $\mathbf{t}$. Therefore 
\begin{align}
t_L(\overline{\dim}_{\gamma^{-(L-1)}} \Lambda) 
&=  T_{\overline{\dim}_{\gamma^{-(L-1)}} \Lambda}(t_{L-1}(\overline{\dim}_{\gamma^{-(L-1)}} \Lambda)) \label{eq:rightrangelast1}\\
&\leq \left(\overline{\dim}_{\gamma^{-(L-1)}} \Lambda  -  \frac{\log M}{\log m}\right) \log n  +   \gamma\frac{\log m}{1-\gamma^{-1}}  (  \dim_{\mathrm B} \Lambda - \overline{\dim}_{\gamma^{-(L-1)}} \Lambda )  \label{eq:rightrangelast2} \\
&<   \left( \dim_{\mathrm H} \Lambda -  \frac{\log M}{\log m}\right) \log n  +   \gamma\frac{\log m}{1-\gamma^{-1}}  (  \dim_{\mathrm B} \Lambda -  \dim_{\mathrm H} \Lambda)  \label{eq:rightrangelast3}\\
&= T_{\dim_{\mathrm H} \Lambda}(t^*), \label{eq:rightrangelast4}
\end{align}
where~\eqref{eq:rightrangelast1} is by~\eqref{eq:definetsequence};~\eqref{eq:rightrangelast2} is by~\eqref{eq:defineiteratingfunction} and~\eqref{eq:combrightrangeupperbound};~\eqref{eq:rightrangelast3} is since $\overline{\dim}_{\gamma^{-(L-1)}} \Lambda > \dim_{\mathrm H} \Lambda$; and~\eqref{eq:rightrangelast4} is by~\eqref{eq:defineiteratingfunction} and~\eqref{eq:44}. 
This completes the proof. 
\end{proof}

\subsection{Lower bound}\label{sec:prooflower}%
For $\theta \in (0,1)$, $s \in (\dim_{\mathrm{H}} \Lambda,\overline{\dim}_{\gamma^{-(L-1)}} \Lambda]$, sufficiently small $\delta$, and $R \in \mathbb{N}$, we define a measure $\mu = \mu_{\delta,s,\theta,R}$ which we will use to apply the mass distribution principle. Recall that $K = K(\delta) = \lfloor\frac{-\log \delta}{\log n}\rfloor$. The measure will be defined by putting point masses on some carefully chosen level-$\lfloor K/\theta \rfloor$ approximate squares (corresponding to the very smallest scale $\delta^{1/\theta}$). 

If $k \in \mathbb{N}$ and $B_k$ is a level-$k$ approximate square in $\mathcal{B}_k$, we can choose a point $\Lambda_{B_k} \in \Lambda$ in the interior of $B_k$. We can make this choice explicitly by choosing the image of a distinguished point in $\Lambda$ inside the top-most (in the plane) level-$\gamma(k)$ cylinder within $B_k$. 
Let $\delta_{\Lambda_{B_k}}$ denote a unit point mass at $\Lambda_{B_k}$. 
For $l \in \mathbb{N}$ define 
\begin{equation}\label{eq:definejnumbers}
J_l \coloneqq \gamma^l(K)  -   \lfloor K/(\gamma^{L-l}\theta) \rfloor;  \qquad J_l' \coloneqq \lfloor K/(\gamma^{L-l}\theta) \rfloor - \gamma^{l-1}(K). 
\end{equation} %
Using notation from~\eqref{eq:definestjr}, define $C_{K,s,\theta,R}$ to be the set of level-$\lfloor K/\theta\rfloor$ approximate squares $(i_1,\dotsc,i_{\lfloor K/\theta\rfloor}, \ih_{\lfloor K/\theta\rfloor+1}, \dotsc, \ih_{\gamma(\lfloor K/\theta\rfloor)}) \in \mathcal{B}_{\lfloor K/\theta\rfloor}$ for which 
\begin{equation}\label{eq:definemass1}
(\ih_{\lfloor K/(\gamma^{L-l}\theta) \rfloor + 1},\dotsc,\ih_{\gamma^l(K)}) \in S_{t_{L-l}(s),J_l,R}  \quad \mbox{for} \quad l \in \{1,2,\dotsc,L-1\}.
\end{equation}
and
\begin{equation}\label{eq:definemass2}
(\ih_{\gamma^{l-1}(K) + 1},\dotsc,\ih_{\lfloor K/(\gamma^{L-l}\theta) \rfloor}) \in        S_{t_{L-l+1}(s),J_l',R} \quad \mbox{for} \quad l \in \{1,2,\dotsc,L\}.
\end{equation}
Note that when $\theta = \gamma^{-(L-1)}$ we do not impose the condition~\eqref{eq:definemass2}. 
Now we define the measure 
\begin{equation}\label{eq:definemu}
\mu = \mu_{\delta,s,\theta,R} \coloneqq \sum_{B_{\lfloor K/\theta \rfloor} \in C_{K,s,\theta,R}} n^{-Ks/\theta} \delta_{\Lambda_{B_{\lfloor K/\theta \rfloor}}}.
\end{equation}
This is clearly supported on $\Lambda$. 

For the remainder of this section, for 
\[ k \in \{ K, {K+1},\dotsc,{\lfloor K/\theta \rfloor} \}, \]
$B_k$ will denote an arbitrary level-$k$ approximate square in $\mathcal{B}_k \cap \mathrm{supp}(\mu)$. By the definition of the $S_{t,J,R}$ sets, $\mu(B_k)$ depends on $k,\delta,s,\theta,R$ and the carpet, but if $k = \gamma^l(K)$ or $k = \lfloor K/\gamma^j \theta \rfloor$ for some $j \in \{0,\dotsc,L-1\}$, then $\mu(B_k)=\mu(B'_k)$ for all $B_k,B'_k\in\mathcal{B}(k) \cap \mathrm{supp}(\mu)$.  

\begin{lemma}\label{lem:lowernicescales}
Fix $\theta \in (0,1)$, $s \in (\dim_{\mathrm{H}} \Lambda,\overline{\dim}_{\, \theta} \Lambda]$, and $R \in \mathbb{N}$ as above. %
For all $l = 0,1,\dotsc,L-1$, as $K \to \infty$, the following two asymptotic equalities hold: 
\begin{equation}\label{eq:nicescalesfirst} \mu_{\delta,s,\theta,R}(B_{\gamma^{l}(K)}) \asymp n^{-\gamma^l K s},
\end{equation}
\begin{equation}\label{eq:nicescaleslast}
\mu_{\delta,s,\theta,R}\left(B_{\left\lfloor \frac{K}{\theta \cdot \gamma^{L-l-1}} \right\rfloor} \right) \asymp n^{- \frac{Ks}{\theta \cdot \gamma^{L-l-1}}}.
\end{equation}
\end{lemma}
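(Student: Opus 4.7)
The strategy is a direct count. For any $k \in \{K, K+1, \ldots, \lfloor K/\theta\rfloor\}$ and $B_k \in \mathcal{B}_k \cap \supp(\mu)$, we have $\mu(B_k) = n^{-Ks/\theta} \cdot \#\{B_{\lfloor K/\theta\rfloor} \in C_{K,s,\theta,R} : B_{\lfloor K/\theta\rfloor} \subseteq B_k\}$. Since the conditions \eqref{eq:definemass1} and \eqref{eq:definemass2} are purely type-class constraints, the refinement count is independent of the particular $B_k$ in the support, so it suffices to compute it in general.

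The constraints partition $[K+1,\lfloor K/\theta\rfloor]$ into $2L-1$ blocks on which the $\ih$-string is restricted to some $S_{t,J,R}$. By \eqref{eq:lowerboundcard} each such set has cardinality $\asymp e^{J(\log M - I(t))}$, and each string in $S_{t,J,R}$ has the same type class (approximately $\mathbf{Q}^*_t \in \mathcal{E}_t$ on every sub-block), so $\prod_j N_{\ih_j} \asymp e^{tJ}$. Therefore a block where the $\ih$'s are still free to choose contributes $\asymp e^{J(t + \log M - I(t))}$ refinements (counting the $i_j$-choices within each column $\ih_j$), while a block where the $\ih$'s are already fixed by $B_k$ contributes only $\asymp e^{tJ}$ refinements. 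Finally, the column codes $\ih_{\lfloor K/\theta\rfloor+1}, \ldots, \ih_{\gamma(\lfloor K/\theta\rfloor)}$ are free in $[M]$, contributing $M^{\gamma(\lfloor K/\theta\rfloor) - \lfloor K/\theta\rfloor}$.

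For \eqref{eq:nicescalesfirst} with $k = \gamma^l(K)$, the positions $\gamma^l(K)+1$ to $\gamma^{l+1}(K)$ are fixed by $B_{\gamma^l(K)}$ and split across two constraints at $\lfloor K/(\gamma^{L-l-1}\theta)\rfloor$ with parameters $t_{L-l}(s)$ (length $J_{l+1}'$) and $t_{L-l-1}(s)$ (length $J_{l+1}$). The later blocks for positions $\gamma^{l+1}(K)+1$ to $\lfloor K/\theta\rfloor$ contribute full factors of $e^{J(t + \log M - I(t))}$. Multiplying everything and using the lengths from \eqref{eq:definejnumbers}, the recursion $t_{\ell+1}(s) = T_s(t_\ell(s)) = (s - \log M/\log m)\log n + \gamma I(t_\ell(s))$, and the defining equation \eqref{eq:mainformula} for $s$, one verifies that the total exponent simplifies to $-\gamma^l K s \log n$. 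The proof of \eqref{eq:nicescaleslast} is analogous: for $k = \lfloor K/(\gamma^{L-l-1}\theta)\rfloor$, the fixed block spans positions $k+1$ to $\gamma(k)$, splitting at $\gamma^{l+1}(K)$ into two parts, both carrying the same parameter $t_{L-l-1}(s)$ (lengths $J_{l+1}$ and $J_{l+2}'$), and the remaining free blocks reorganise accordingly.

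The main obstacle will be the algebraic bookkeeping that verifies the telescoping of exponents down to $-ks \log n$ in all $2L$ cases. This is essentially a generalisation of the identity underlying $G_1^{\mathbf{t}}(s) = \cdots = G_L^{\mathbf{t}}(s)$ in Lemma~\ref{lem:50}: the measure $\mu$ has been engineered exactly so that the cover constructed in Section~\ref{sec:mainupper} has $s$-cost $\asymp 1$, which is equivalent to the simultaneous validity of both \eqref{eq:nicescalesfirst} and \eqref{eq:nicescaleslast}. Once this accounting is verified, the lemma follows.
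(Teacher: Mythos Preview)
Your direct-count strategy is sound, and unrolled fully it would give the same asymptotics as the paper's argument. But there is a genuine misconception in your write-up: you invoke ``the defining equation \eqref{eq:mainformula} for $s$'' to close the algebra, and in the last paragraph you say the measure is engineered so the cover has $s$-cost $\asymp 1$. Neither is available here. The lemma is stated for \emph{every} $s \in (\dim_{\mathrm H}\Lambda,\overline{\dim}_{\,\theta}\Lambda]$, not just the unique root of $G(\theta,s)=0$; for such generic $s$ the cover's cost is $\asymp e^{KG(\theta,s)/(\gamma^L\theta)}$ (Lemma~\ref{lem:mainuppercost}), which is not bounded. So the telescoping you allude to must close using \emph{only} the base value $t_1(s)=(s-\tfrac{\log M}{\log m})\log n$ and the recursion $t_{\ell+1}(s)=T_s(t_\ell(s))$. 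If your bookkeeping appears to need \eqref{eq:mainformula}, that is a sign of an error in the accounting, not a feature of the argument.

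The paper sidesteps this pitfall by arguing inductively from the smallest scale upward rather than computing each $\mu(B_k)$ from scratch: \eqref{eq:nicescaleslast} at $l=L-1$ holds by definition of $\mu$; one then alternates, passing from \eqref{eq:nicescaleslast} at a given $l$ to \eqref{eq:nicescalesfirst} at the same $l$, and from \eqref{eq:nicescalesfirst} at $l+1$ to \eqref{eq:nicescaleslast} at $l$. Each step compares two \emph{adjacent} scales, so only one block of the symbolic representation changes status (from ``$\ih$ fixed'' to ``$\ih$ free'') and only a single instance of $t_{\ell}(s)=T_s(t_{\ell-1}(s))$ is needed to collapse the exponent. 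This makes it transparent that no global equation on $s$ enters. Your direct computation is equivalent to unrolling this induction into one long telescoping sum for each of the $2L$ scales; that is correct but hides the mechanism, and since you have not actually written out the telescoping it is also where all the unexecuted work lies.
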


\begin{proof}%
The proof is an induction argument, starting with the smaller scales. Note that~\eqref{eq:nicescaleslast} holds for $l=L-1$ by the definition of $\mu$. We first use this to prove~\eqref{eq:nicescalesfirst} for $l=L-1$. 
Indeed, consider an approximate square $B_{\gamma^{L-1}(K)}(\mathbf{i}) \in \mathcal{B}_{\gamma^{L-1}(K)}$ and assume it intersects $\mathrm{supp}(\mu)$. 
Because of the way $\mu$ is defined, the mass of all such squares will be the same. 
To calculate this mass, we need to count up the number of level-$\lfloor K/\theta \rfloor$ approximate squares $B_{\lfloor K/\theta \rfloor}(\mathbf{j})$ which lie inside the level-$\gamma^{L-1}(K)$ square (so $B_{\lfloor K/\theta \rfloor}(\mathbf{j}) \subset B_{\gamma^{L-1}(K)}(\mathbf{i})$), and which also carry mass. 

To count the number of such smaller squares, it is helpful to compare the symbolic representation of any such square $B_{\lfloor K/\theta \rfloor}(\mathbf{j})$ with that of $B_{\gamma^{L-1}(K)}(\mathbf{i})$: 
\begin{align*}
B_{\gamma^{L-1}(K)}(\mathbf{i}):  ( i_1,\dotsc,i_{\gamma^{L-1}(K)}, \overbrace{  \ih_{\gamma^{L-1}(K) + 1},\dotsc,\ih_{\lfloor K/\theta \rfloor}  }^{ \in S_{t_1(s),J_L',R} }     &,\ih_{\lfloor K/\theta \rfloor + 1},\dotsc, \ih_{\gamma^L (K)}  ) \\*
B_{\lfloor K/\theta \rfloor}(\mathbf{j}): ( \underbrace{ j_1, \dotsc, j_{\gamma^{L-1}(K)} }_{\mbox{equal}}, \underbrace{  j_{\gamma^{L-1}(K) + 1},\dotsc,j_{\lfloor K/\theta \rfloor}}_{\mbox{same column}} &, \underbrace{ \jh_{\lfloor K/\theta \rfloor + 1},\dotsc, \jh_{\gamma^L(K)} }_{\mbox{equal}},\underbrace{  \jh_{\gamma^L(K) + 1},\dotsc, \jh_{\gamma(\lfloor K/\theta \rfloor)} }_{\in [M] \mbox{ freely}}) 
\end{align*}
For $t \in (\underline{t},\overline{t})$, $J \in \mathbb{N}$, $R \in \mathbb{N}$, let $\mathbf{p}(t,J,R) = (p_1(t,J,R),\dotsc,p_M(t,J,R))$ be the type class of every element of $S_{t,J,R}$. 
 Then $\mathbf{p}(t,J,R) \xrightarrow[J \to \infty]{} \mathbf{Q}^*_{t} \in \mathcal{E}_{t}$ by Lemma~\ref{lem:32}. Therefore 
 \begin{align*}
 \mu(B_{\gamma^{L-1}(K)}(\mathbf{i})) &\asymp  \prod_{\ih=1}^M N_{\ih}^{p_{\ih}(t_1(s),J_L',R) \cdot J_L'} M^{\gamma K/\theta - \gamma^L K} n^{-Ks/\theta} \\
 &= e^{t_1(s) J_L'  + K((\gamma/\theta - \gamma^L)\log M - (s\log n)/\theta)} \\
 &\asymp n^{-\gamma^{L-1} K s},
\end{align*}
using~\eqref{eq:definejnumbers} and the definition of $t_1(s)$ in~\eqref{eq:definetsequence}. Therefore~\eqref{eq:nicescalesfirst} holds for $l=L-1$. 

We now use this to prove~\eqref{eq:nicescaleslast} for $l=L-2$. If %
$B_{\gamma^{L-1}(K)}(\mathbf{j}) \subset B_{\lfloor K/(\gamma \theta)\rfloor}(\mathbf{i})$, %
then 
\begin{align*}
&(  i_1,\dotsc,i_{\lfloor K/(\gamma \theta)\rfloor}  , \overbrace{ \ih_{\lfloor K/(\gamma \theta)\rfloor + 1}, \dotsc, \ih_{\gamma^{L-1}(K)}  }^{\in S_{t_1(s),J_{L-1},R} } , \ih_{\gamma^{L-1}(K) + 1},\dotsc, \ih_{\lfloor K/\theta \rfloor}  )  \\*
&(  \underbrace{ j_1,\dotsc,j_{\lfloor K/(\gamma \theta)\rfloor} }_{\mbox{equal}}, \underbrace{ j_{\lfloor K/(\gamma \theta)\rfloor +1} ,\dotsc,  j_{\gamma^{L-1}(K)}   }_{\mbox{same column}}  , \underbrace{ \jh_{\gamma^{L-1}(K) + 1}, \dotsc, \jh_{\lfloor K/\theta \rfloor} }_{\mbox{equal}}, \underbrace{ \jh_{\lfloor K/\theta \rfloor + 1},\dotsc, \jh_{\gamma^L (K)} }_{\in [M] \mbox{ freely}} ).
\end{align*}
Therefore by Lemma~\ref{lem:32}, case $l=L-1$ of~\eqref{eq:nicescalesfirst},~\eqref{eq:definejnumbers}, and~\eqref{eq:definetsequence}, 
\begin{equation*}
\mu(B_{\lfloor K/(\gamma \theta)\rfloor}(\mathbf{i}))  \asymp e^{t_1(s)J_{L-1}} M^{\gamma^L K - K/\theta} n^{-\gamma^{L-1}Ks}  \asymp n^{-Ks/(\gamma \theta)}, 
\end{equation*}
so~\eqref{eq:nicescaleslast} holds for $l=L-2$. 

Now fix any $l \in \{0,1,\dotsc, L-2\}$ and assume that~\eqref{eq:nicescaleslast} holds for $l$. We show that this implies that~\eqref{eq:nicescalesfirst} holds for $l$. Indeed, if %
$B_{\lfloor \frac{K}{\theta \cdot \gamma^{L-l-1}} \rfloor}(\mathbf{j}) \subset B_{\gamma^l (K)}(\mathbf{i})$, %
then 
\begin{align*}
&(  i_1, \dotsc, i_{\gamma^l (K)} , \overbrace{  \ih_{\gamma^l (K) + 1}, \dotsc,  \ih_{\left\lfloor \frac{K}{\gamma^{L-l-1} \cdot \theta} \right\rfloor}  }^{ \in S_{t_{L-l}(s),J_{l+1}',R}} ,  \ih_{\left\lfloor \frac{K}{\gamma^{L-l-1}\cdot \theta} \right\rfloor + 1}, \dotsc, \ih_{\gamma^{l+1} (K)}  )  \\*
&( \underbrace{ j_1, \dotsc, j_{\gamma^l (K)} }_{\mbox{equal}}  ,  \underbrace{ j_{\gamma^l(K) + 1} , \dotsc, j_{\left\lfloor \frac{K}{\gamma^{L-l-1}\cdot \theta} \right\rfloor} }_{\mbox{same column}}  ,  \underbrace{ \jh_{\left\lfloor \frac{K}{\gamma^{L-l-1}\cdot \theta} \right\rfloor + 1} , \dotsc, \jh_{\gamma^{l+1}(K)} }_{\mbox{equal}}  , \underbrace{  \jh_{\gamma^{l+1}(K) + 1} , \dotsc,      \jh_{\left\lfloor \frac{K}{\gamma^{L-l-2} \cdot \theta} \right\rfloor}  }_{ \in S_{t_{L-l-1}(s),J_{l+2}',R}} ). 
\end{align*}
Therefore %
\begin{align*}
\mu(B_{\gamma^l(K)}(\mathbf{i})) &\asymp e^{t_{L-l}(s) J_{l+1}'} \cdot \#  S_{t_{L-l-1}(s),J_{l+2}',R} \cdot n^{-K s \gamma^{l+1-L}\theta^{-1}} &\substack{\text{by Lemma~\ref{lem:32}}\\
\text{and case } l \text{ of~\eqref{eq:nicescaleslast}}} \\
&\asymp e^{t_{L-l}(s) J_{l+1}'} \cdot e^{(\log M - I(t_{L-l-1}(s))) J_{l+2}'} n^{-K s \gamma^{l+1-L}\theta^{-1}}  &\text{by~\eqref{eq:lowerboundcard}}  \\*
&\asymp  n^{-\gamma^l K s} &\text{by~\eqref{eq:definejnumbers} and~\eqref{eq:definetsequence}}, 
\end{align*}
so indeed~\eqref{eq:nicescalesfirst} holds for $l$. 

Finally, fix any $l \in \{0,1,\dotsc, L-3\}$ %
and assume~\eqref{eq:nicescalesfirst} holds for $l+1$. We now show that this implies that~\eqref{eq:nicescaleslast} holds for $l$. Indeed, if %
$B_{\gamma^{l+1}(K)}(\mathbf{j}) \subset B_{\left\lfloor \frac{K}{\gamma^{L-l-1} \theta} \right\rfloor}(\mathbf{i})$, %
then 
\begin{align*}
&(  i_1,\dotsc, i_{\left\lfloor \frac{K}{\gamma^{L-l-1} \theta} \right\rfloor} , \overbrace{ \ih_{\left\lfloor \frac{K}{\gamma^{L-l-1} \theta} \right\rfloor + 1} , \dotsc, \ih_{\gamma^{l+1}(K)} }^{\in S_{t_{L-l-1}(s),J_{l+1},R}} , \ih_{\gamma^{l+1}(K) + 1} , \dotsc, \ih_{\left\lfloor \frac{K}{\gamma^{L-l-2} \theta} \right\rfloor }  )  \\*
&( \underbrace{ j_1,\dotsc, j_{\left\lfloor \frac{K}{\gamma^{L-l-1} \theta} \right\rfloor} }_{\mbox{equal}} , \underbrace{ j_{\left\lfloor \frac{K}{\gamma^{L-l-1} \theta} \right\rfloor + 1} , \dotsc, j_{\gamma^{l+1}(K)} }_{\mbox{same column}} , \underbrace{ \jh_{\gamma^{l+1}(K) + 1} , \dotsc, \jh_{\left\lfloor \frac{K}{\gamma^{L-l-2} \theta} \right\rfloor } }_{\mbox{equal}} ,\underbrace{ \jh_{\left\lfloor \frac{K}{\gamma^{L-l-2} \theta} \right\rfloor  + 1},\dotsc,  \jh_{\gamma^{l+2}(K) } }_{\in S_{t_{L-l-2}(s),J_{l+2},R}}  ). 
\end{align*}
As above, 
\begin{equation*}
\mu\left(B_{\left\lfloor \frac{K}{\gamma^{L-l-1} \theta} \right\rfloor}(\mathbf{i})\right) \asymp e^{t_{L-l-1}(s) J_{l+1}} \cdot e^{(\log M - I(t_{L-l-2}(s))) J_{l+2}} n^{-\gamma^{l+1} K s} \asymp n^{- \frac{Ks}{\gamma^{L-l-1} \theta}}, 
\end{equation*}
so indeed~\eqref{eq:nicescaleslast} holds for $l$. 
The proof is complete by induction. 
\end{proof}

In Lemma~\ref{lem:lowerallscales} we prove that if we make $R$ large enough then the mass is sufficiently evenly distributed for us to apply the mass distribution principle Proposition~\ref{prop:mdp} in Section~\ref{sec:proofconclusion}. 

\begin{lemma}\label{lem:lowerallscales}
Let $\theta \in (0,1)$ and $s \in (\dim_{\mathrm{H}} \Lambda,\overline{\dim}_{\, \theta} \Lambda]$. For all $s'<s$ there exists $\delta_0 \in (0,1)$ and $R \in \mathbb{N}$ depending on $s,s',\theta$ and the carpet such that for all $\delta \in (0,\delta_0)$ and $k \in \{K,K+1,\dotsc,\lfloor K/\theta \rfloor \}$, if $b_k$ is any level-$k$ approximate square then $\mu_{\delta,s,\theta,R}(b_k) < n^{-ks'}$. 
\end{lemma}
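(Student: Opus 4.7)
The strategy is to interpolate between the $2L$ ``nice scales''
\[ K \;\leq\; \lfloor K/(\gamma^{L-1}\theta)\rfloor \;\leq\; \gamma(K) \;\leq\; \lfloor K/(\gamma^{L-2}\theta)\rfloor \;\leq\; \cdots \;\leq\; \gamma^{L-1}(K) \;\leq\; \lfloor K/\theta \rfloor, \]
at which Lemma~\ref{lem:lowernicescales} already yields $\mu_{\delta,s,\theta,R}(b_k) \asymp n^{-ks}$ uniformly over level-$k$ approximate squares meeting $\supp\mu$, with an implicit constant that is sub-exponential in $K$. At these scales the factor $n^{k(s-s')} \geq n^{K(s-s')}$ dominates the sub-exponential error for any $s'<s$ and all sufficiently large $K$, so $\mu(b_k) < n^{-ks'}$ there without any need to tune $R$.

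For a level-$k$ approximate square $b_k$ with $k$ strictly between two consecutive nice scales $k_1 < k < k_2$, I would use the bound
\[ \mu(b_k) \;=\; \sum_{\substack{b_{k_2} \subseteq b_k \\ b_{k_2} \cap \supp\mu \neq \emptyset}} \mu(b_{k_2}) \;\leq\; A_K \cdot n^{-k_2 s} \cdot E(b_k), \]
where $A_K$ is sub-exponential in $K$ and $E(b_k)$ counts the level-$k_2$ approximate squares contained in $b_k$ meeting $\supp\mu$. One bounds $E(b_k)$ by counting symbolic extensions of $b_k$ to $b_{k_2}$, subject to the single $S_{t,J,R}$-constraint from \eqref{eq:definemass1} or \eqref{eq:definemass2} that covers the index range $(k_1,k_2]$. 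Because $S_{t,J,R}$ is assembled from $R$ independent blocks of length $\approx J/R$ aligned with the endpoints $k_1, k_2$, the cut at level $k$ lies in at most one such block. The completed blocks contribute their exact type-class sizes, which by~\eqref{eq:206}, Lemma~\ref{lem:32}, and Step~3 of Proposition~\ref{prop:1}, combine with the cylinder-to-column factors $\prod N_{\ih_\ell}$ (exactly as in the calculations~\eqref{eq:intcost1}--\eqref{eq:intcost2}) to reconstruct the same exponential rate $n^{-ks}$ that Lemma~\ref{lem:lowernicescales} would predict at the hypothetical scale $k$. The single partial block is bounded by brute force by $M^{J/R} \leq M^{K/(R\theta)}$.

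Combining these estimates yields $\mu(b_k) \leq A_K' \cdot n^{-ks} \cdot M^{K/(R\theta)}$ for some sub-exponential $A_K'$. The desired bound $\mu(b_k) < n^{-ks'}$ then reduces to $n^{K(s-s')} > A_K' \cdot M^{K/(R\theta)}$, which holds for all large $K$ provided
\[ R \;>\; \frac{\log M}{(s-s')\,\theta\,\log n}. \]
This choice determines $R$ in terms of $s,s',\theta$ and the carpet, and then $\delta_0$ (equivalently, a lower bound on $K$) is chosen so that the sub-exponential factors $A_K'$ are absorbed.

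The main obstacle will be the combinatorial bookkeeping in the second paragraph: one must split into the subcases $\gamma(k) \leq k_2$ and $\gamma(k) > k_2$, since the symbolic representations of the extension differ in how many $i_\ell$-type versus $\ih_\ell$-type entries lie in the range $(k,k_2]$. In each subcase one needs to verify that, after summing the cylinder factors $N_{\ih_\ell}$ together with the type-class sizes of the completed blocks, the resulting product---apart from the single partial-block factor $M^{J/R}$---equals exactly the rate $n^{-ks}$ obtained by extrapolating between the nice scales. Once this is established, the choice of $R$ and $\delta_0$ is the short computation above.
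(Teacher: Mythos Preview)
Your approach is essentially the paper's: use Lemma~\ref{lem:lowernicescales} at the $2L$ nice scales, interpolate by counting sub-approximate-squares at the next nice scale down, exploit the $R$-block structure of $S_{t,J,R}$ to bound the ``cut'' block by $e^{cK/R}$, and absorb this by choosing $R$ large. The paper carries this out explicitly via four cases depending on which adjacent pair of nice scales $k$ lies between.

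A few points where your sketch needs tightening. First, passing from level $k$ to level $k_2$ cuts \emph{two} $S$-constraints, not one: the position $k$ cuts the constraint on the range $(k_1,k_2]$ (this governs the cylinder factor $\prod_{\ell=k+1}^{k_2} N_{\ih_\ell}$), and simultaneously the position $\gamma(k)$ cuts a \emph{different} constraint on the $\hat\jmath$-range (this governs the extension count). Both contribute an error of order $e^{cK/R}$, so the conclusion survives, but your accounting should reflect this. Second, the partial-block bound on the cylinder factor $\prod N_{\ih_\ell}$ is not simply $M^{J/R}$: the $N_{\ih}$ can exceed $M$, and what is really needed is the estimate \eqref{eq:lowerboundcombinatorialfudge} (equivalently \eqref{eq:lastpartproduct}), i.e.\ that partial products along any $\iiv \in S_{t,J,R}$ track $e^{t(\cdot)}$ up to $e^{O(J/R)}$. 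This is precisely why $S_{t,J,R}$ was defined blockwise in \eqref{eq:definestjr}. Third, your subcase ``$\gamma(k) \leq k_2$'' never arises: adjacent nice scales satisfy $k_2 \leq \gamma k_1 < \gamma k$, so $\gamma(k) > k_2$ always.
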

\begin{proof}
Fix $\theta \in (0,1)$, $s \in (\dim_{\mathrm{H}} \Lambda,\overline{\dim}_{\, \theta} \Lambda]$ and $R \in \mathbb{N}$. 
The idea is that for each scale $k$, we will choose from the finitely many scales considered in Lemma~\ref{lem:lowernicescales} the one which corresponds to the largest size that is smaller than $n^{-k}$. We will then bound the number of approximate squares of this level which are contained in each level-$k$ approximate square which carries mass, and use Lemma~\ref{lem:lowernicescales} to bound the mass of the level-$k$ approximate square. 

Let $J' \in \mathbb{N}$ be large enough that for each $t \in \{t_1(s),\dotsc,t_{L-1}(s)\}$ and $(s_t,t)$ related by~\eqref{eq:23}, \eqref{eq:lowerboundcombinatorialfudge} holds for all $J \geq J'$ and $k' \in \{1,\dotsc,J\}$. 
By Lemma~\ref{lem:32}, we may increase $J'$ to assume further that if $\iiv \in S_{t,J,R}$ then $\prod_{j=1}^J N_{\ih_j} \leq e^{(t+1/R)J}$. Then 
\begin{align}\label{eq:lastpartproduct}
\begin{split}
\prod_{j=k'}^{J} N_{\ih_j} = \frac{\prod_{j=1}^{J} N_{\ih_j}}{\prod_{j'=1}^{k'-1} N_{\ih_{j'}}} \leq \frac{e^{(t+1/R)J}}{\psi_{(\ih_1,\dotsc,\ih_{k'}) | k'}(s_t) n^{k's_t}M^{-k'\gamma}} &\leq e^{(1+3\overline{t})J/R} e^{tJ}n^{-k's_t} M^{\gamma k'} \\*
&= e^{(1+3\overline{t})J/R  +    (J-k')t}. 
\end{split}
\end{align}
We may increase $J'$ to ensure that for all $J \geq J'$ and $k' \in \{1,\dotsc,J\}$, letting $l' \in \{0,1,\dotsc,R-1\}$ be such that $\lfloor l'J/R \rfloor < k' \leq \lfloor (l'+1)J/R \rfloor $, if $(\jh_1,\dotsc,\jh_{k'}) \in [M]^{k'}$ then 
\begin{align}\label{eq:boundrightpart}
\begin{split}
 \# \{ \, (\ih_1,\dotsc,\ih_J) \in S_{t,J,R} : \ih_p = \jh_p \mbox{ for } p \in \{1,\dotsc,k'\} \, \} &\leq e^{(J-\lfloor l'J/R \rfloor) (H(\mathbf{Q}^*_t) + 1/R)} \\*
&\leq e^{(J-k') (\log M - I(t)) + 3J (1+H(\mathbf{Q}^*_t))/R}.
\end{split}
\end{align}
Let $\delta_0>0$ be small enough that for all $\delta \in (0,\delta_0)$, $J' < J_1 \leq J_2 \leq J_3 \leq \dotsb$ and, if $\theta \neq \gamma^{-(L-1)}$, $J' < J_1' \leq J_2' \leq J_3' \leq \dotsb$. By decreasing $\delta_0$ further we may assume by Lemma~\ref{lem:lowernicescales} that for all $l \in \{0,1,\dotsc,L-1\}$, 
\begin{equation}\label{eq:massonesquare}
\mu_{\delta,s,\theta,R}(B_{\gamma^{l}(K)}) \leq n^{\gamma^l K (-s+1/R)} \qquad \mbox{and} \qquad \mu_{\delta,s,\theta,R}\left(B_{\left\lfloor \frac{K}{\gamma^{L-l-1} \theta} \right\rfloor} \right) \leq n^{ \frac{K}{\gamma^{L-l-1} \theta}(-s+1/R)}.
\end{equation}

We now consider symbolic representations of approximate squares in a similar way to Lemma~\ref{lem:lowernicescales}. 

\emph{Case 1:} 
Suppose $l \in \{0,1,\dotsc,L-2\}$ and 
\[ k \in \{{\gamma^l(K)+1},{\gamma^{l}(K)+2},\dotsc, {\lfloor K \gamma^{l+1-L}\theta^{-1} \rfloor - 1}\}.\] 
The symbolic representations of approximate squares $B_{\lfloor K \gamma^{l+1-L}\theta^{-1} \rfloor} (\mathbf{j}) \subset B_{k}(\mathbf{i})$ are as follows (broken onto two lines because they do not fit onto one line): 
\begin{align*}
&(i_1,\dotsc, i_{\gamma^l(K)}, \overbrace{ i_{\gamma^l(K)+1},\dotsc,i_{k}  , \ih_{k+1},\dotsc,\ih_{\left\lfloor \frac{K}{\gamma^{L-l-1}\theta} \right\rfloor } }^{\in S_{t_{L-l}(s),J_{l+1}',R}} , \\*
&(\underbrace{ j_1,\dotsc,j_{\gamma^l(K)},j_{\gamma^l(K)+1},\dotsc,j_k}_{\mbox{equal}}, \underbrace{ j_{k+1},\dotsc,j_{\left\lfloor \frac{K}{\gamma^{L-l-1}\theta} \right\rfloor}}_{\mbox{same column}} ,
\end{align*}
and continuing
\begin{align*}
&\ih_{\left\lfloor \frac{K }{\gamma^{L-l-1}\theta} \right\rfloor + 1},\dotsc,\ih_{\gamma^{l+1}(K)},\ih_{\gamma^{l+1}(K)+1},\dotsc, \ih_{\gamma(k)}  ) \\*
& \rlap{\ensuremath{\underbrace{ \phantom{\jh_{\left\lfloor \frac{K }{\gamma^{L-l-1}\theta} \right\rfloor + 1} ,\dotsc, \jh_{\gamma^{l+1}(K)},  \jh_{\gamma^{l+1}(K)+1},\dotsc, \jh_{\gamma(k)} }}_{\mbox{equal}}}}  \jh_{\left\lfloor \frac{K }{\gamma^{L-l-1}\theta} \right\rfloor + 1} ,\dotsc,  \jh_{\gamma^{l+1}(K)}, \underbrace{ \vphantom{\frac{a}{\frac{a}{\frac{b}{c}}}}  \jh_{\gamma^{l+1}(K)+1},\dotsc, \jh_{\gamma(k)} , \jh_{\gamma(k)+1} ,\dotsc, \jh_{\left\lfloor \frac{K}{\gamma^{L-l-2}\theta} \right\rfloor}}_{ \in S_{t_{L-l-1}(s),J_{l+2}',R}} )  .
\end{align*}
Therefore we can bound the mass 
\begin{align}
\mu(B_k(\mathbf{i})) &\leq C \prod_{y=k+1}^{\left\lfloor \frac{K}{\gamma^{L-l-1}\theta} \right\rfloor} N_{\ih_y}  e^{ \left(\frac{K}{\gamma^{L-l-2}\theta} - \gamma k \right)(\log M - I(t_{L-l-1}(s)))  +  3J_{l+2}'(1+H(\mathbf{Q}^*_{t_{L-l-1}(s)}))/R} \nonumber \\*
&\phantom{\leq}\times n^{\frac{K}{\gamma^{L-l-1}\theta}(-s+1/R)} \label{eq:allscales1} \\
&\leq C e^{\left(\frac{K}{\gamma^{L-l-1}\theta} - k\right)t_{L-l}(s) + (1+3\overline{t})J_{l+1}'/R} e^{ \left(\frac{K}{\gamma^{L-l-2}\theta} - \gamma k \right)(\log M - I(t_{L-l-1}(s)))} \nonumber \\*
 &\phantom{\leq}\times e^{3J_{l+2}'(1+H(\mathbf{Q}^*_{t_{L-l-1}(s)}))/R}  n^{\frac{K}{\gamma^{L-l-1}\theta}(-s+1/R)} \label{eq:allscales2} \\
&\leq C n^{-ks + \big((1+3\overline{t})J_{l+1}'/(\log n) +3J_{l+2}'(1+H(\mathbf{Q}^*_{t_{L-l-1}(s)}))/(\log n) + \frac{K}{\gamma^{L-l-1}\theta}  \big)/R } \label{eq:allscales3} \\
&\leq C n^{\Big(-s + \frac{(1+3\overline{t}) + 3(1+H(\mathbf{Q}^*_{t_{L-l-1}(s)})) + \log n}{R\theta \log n}     \Big)k}, \label{eq:allscales4}
\end{align}
where $C$ is a constant depending only on the carpet,  %
~\eqref{eq:allscales1} is by~\eqref{eq:boundrightpart} and~\eqref{eq:massonesquare}; \eqref{eq:allscales2} is by~\eqref{eq:lastpartproduct}; \eqref{eq:allscales3} is by~\eqref{eq:definetsequence} and algebraic manipulations; and~\eqref{eq:allscales4} is since 
\[ \max\left\{ J_{l+1}' ,J_{l+2}',\frac{K}{\gamma^{L-l-1}\theta} \right\} \leq K/\theta < k/\theta.\] 

\emph{Case 2:} Suppose $l \in \{0,1,\dotsc,L-3\}$ and $k \in \{\lfloor K \gamma^{l+1-L}\theta^{-1} \rfloor + 1,\dotsc,\gamma^{l+1}(K) - 1\}$. If $B_{\gamma^{l+1}(K)}(\mathbf{j}) \subset B_k(\mathbf{i})$, then 
\begin{align*}
&(i_1,\dotsc,i_{\left\lfloor \frac{K }{\gamma^{L-l-1}\theta} \right\rfloor }, \overbrace{ i_{\left\lfloor \frac{K }{\gamma^{L-l-1}\theta} \right\rfloor + 1},\dotsc, i_{k},\ih_{k+1},\dotsc,\ih_{\gamma^{l+1}(K)} }^{\in S_{t_{L-l-1}(s),J_{l+1},R}} , \\*
&(\underbrace{ j_1,\dotsc,j_{\left\lfloor \frac{K }{\gamma^{L-l-1}\theta} \right\rfloor },j_{\left\lfloor \frac{K }{\gamma^{L-l-1}\theta} \right\rfloor + 1},\dotsc, j_{k} }_{\mbox{equal}},\underbrace{ j_{k+1},\dotsc,j_{\gamma^{l+1}(K)} }_{\mbox{same column}} , 
\end{align*}
continuing 
\begin{align*}
&\ih_{\gamma^{l+1}(K) + 1},\dotsc,\ih_{\left\lfloor \frac{K }{\gamma^{L-l-2}\theta} \right\rfloor },\ih_{\left\lfloor \frac{K }{\gamma^{L-l-2}\theta} \right\rfloor  + 1},\dotsc,\ih_{\gamma(k)} ) \\*
& \rlap{\ensuremath{\underbrace{ \phantom{ \jh_{\gamma^{l+1}(K) + 1},\dotsc,\jh_{\left\lfloor \frac{K }{\gamma^{L-l-2}\theta} \right\rfloor },\jh_{\left\lfloor \frac{K }{\gamma^{L-l-2}\theta} \right\rfloor  + 1},\dotsc,\jh_{\gamma(k)} }}_{\mbox{equal}}}} \jh_{\gamma^{l+1}(K) + 1},\dotsc,\jh_{\left\lfloor \frac{K }{\gamma^{L-l-2}\theta} \right\rfloor },\underbrace{\vphantom{\frac{a}{\frac{a}{\frac{b}{c}}}}  \jh_{\left\lfloor \frac{K }{\gamma^{L-l-2}\theta} \right\rfloor  + 1},\dotsc,\jh_{\gamma(k)},\jh_{\gamma(k) + 1},\dotsc,\jh_{\gamma^{l+2}(K)} }_{\in S_{t_{L-l-2}(s),J_{l+2},R}}). 
\end{align*}
Therefore there is a constant $C>0$ such that 
\begin{align*}
\mu(B_k(\mathbf{i})) &\leq C e^{(\gamma^{l+1}K - k) t_{L-l-1}(s) + (1+3\overline{t})J_{l+1}/R }  e^{(\gamma^{l+2}K - \gamma k)(\log M - I(t_{L-l-2}(s)))} \\*
&\phantom{\leq}\times e^{3 J_{l+2} (1 + H(\mathbf{Q}^*_{t_{L-l-2}(s)}))/R } n^{\gamma^{l+1}K(-s+1/R)} \\
&\leq C n^{\Big(-s + \frac{(1+3\overline{t}) + 3(1+H(\mathbf{Q}^*_{t_{L-l-2}(s)})) + \log n}{R\theta \log n}     \Big)k}.  
\end{align*}

\emph{Case 3:} If $k \in \{ \lfloor K/(\gamma\theta) \rfloor + 1,\dotsc, \gamma^{L-1}(K) - 1\}$ and $B_{\gamma^{L-1}(K)}(\mathbf{j}) \subset B_{k}(\mathbf{i})$, then 
\begin{align*}
&( i_1,\dotsc,i_{\left\lfloor \frac{K}{\gamma\theta}\right\rfloor}, \overbrace{ i_{\left\lfloor \frac{K}{\gamma\theta}\right\rfloor + 1} , \dotsc, i_k, \ih_{k+1},\dotsc,\ih_{\gamma^{L-1}(K)} }^{\in S_{t_1(s),J_{L-1},R}}, \\*
&( \underbrace{ j_1,\dotsc,j_{\left\lfloor \frac{K}{\gamma\theta}\right\rfloor},j_{\left\lfloor \frac{K}{\gamma\theta}\right\rfloor + 1} , \dotsc, j_k }_{\mbox{equal}},\underbrace{ j_{k+1},\dotsc,j_{\gamma^{L-1}(K)} }_{\mbox{same column}},
\end{align*}
continuing
\begin{align*}
&\ih_{\gamma^{L-1}(K)+1},\dotsc,\ih_{\lfloor K/\theta\rfloor},\ih_{\lfloor K/\theta\rfloor + 1},\dotsc,\ih_{\gamma(k)} ) \\*
&\underbrace{ \jh_{\gamma^{L-1}(K)+1},\dotsc,\jh_{\lfloor K/\theta\rfloor},\jh_{\lfloor K/\theta\rfloor + 1},\dotsc,\jh_{\gamma(k)}}_{\mbox{equal}},\underbrace{ \jh_{\gamma(k)+1},\dotsc,\jh_{\gamma^L(K)} }_{\in [M] \mbox{ freely}} ). 
\end{align*}
Therefore by the definition of $t_1(s)$ in~\eqref{eq:definetsequence} there is a constant $C>0$ such that 
\begin{equation*}
\mu(B_k(\mathbf{i})) \leq C e^{(\gamma^{L-1}K-k)t_1(s) + (1+3\overline{t})J_{L-1}/R} M^{\gamma^L K - \gamma k} n^{\gamma^{L-1} K(-s+1/R)} \leq C n^{\left( -s + \frac{1+3\overline{t} + \log n}{R\theta \log n} \right) k}. 
\end{equation*}

\emph{Case 4:} Finally, if $k \in \gamma^{L-1}(K) + 1,\dotsc , \lfloor K/\theta \rfloor - 1$ and $B_{\lfloor K/\theta \rfloor}(\mathbf{j}) \subset B_k(\mathbf{i})$ then 
\begin{align*}
&(i_1,\dotsc,i_{\gamma^{L-1}(K)}, \overbrace{ i_{\gamma^{L-1}(K)+1},\dotsc,i_k,\ih_{k+1},\dotsc,\ih_{\lfloor K/\theta \rfloor} }^{\in S_{t_1(s),J_L',R}} \\*
&(\underbrace{ j_1,\dotsc,j_{\gamma^{L-1}(K)},j_{\gamma^{L-1}(K)+1},\dotsc,j_k }_{\mbox{equal}} ,\underbrace{ j_{k+1} ,\dotsc , j_{\lfloor K/\theta \rfloor} }_{\mbox{same column}},
\end{align*}
continuing 
\begin{align*}
&\ih_{\lfloor K/\theta \rfloor + 1},\dotsc,\ih_{\gamma^L(K)},\ih_{\gamma^L(K) + 1},\dotsc, \ih_{\gamma(k)} ) \\*
&\underbrace{ \jh_{\lfloor K/\theta \rfloor + 1},\dotsc,\jh_{\gamma^L(K)},\jh_{\gamma^L(K) + 1},\dotsc, \jh_{\gamma(k)} }_{\mbox{equal}},\underbrace{ \jh_{\gamma(k)+1},\dotsc,\jh_{\gamma(K/\theta)} }_{\in [M] \mbox{ freely}} ). 
\end{align*}
Therefore by the definition of $t_1(s)$ there exists a constant $C>0$ such that 
\begin{equation*}
\mu(B_k(\mathbf{i})) \leq C e^{(K/\theta - k)t_1(s) + (1+3\overline{t})J_L'/R} M^{\gamma K/\theta - \gamma k} n^{-Ks/\theta} \leq C n^{( -s+ (1+3\overline{t})/R ) k}.
\end{equation*}
Therefore the result follows (using Lemma~\ref{lem:lowernicescales} if $k \in \{K, \gamma(K),\dotsc,\gamma^{L-1}(K)\}$) if we take $R$ large enough depending on $s,s',\theta,C$ and the carpet. 
\end{proof} 

We write 
\begin{equation}\label{eq:definemainquantity} 
G(\theta,s) \coloneqq \gamma^L\theta \log N - (\gamma^L\theta - 1) t_L(s) + \gamma(1-\gamma^{L-1}\theta)(\log M - I(t_L(s))) - s\log n.
\end{equation}
\begin{lemma}\label{lem:lowertotalmass}
Fix $\theta \in (0,1)$, $s \in (\dim_{\mathrm{H}} \Lambda,\overline{\dim}_{\gamma^{-(L-1)}} \Lambda]$, and $R \in \mathbb{N}$. The total mass 
\[ \mu_{\delta,s,\theta,R}(\Lambda) \asymp e^{K \cdot G(\theta,s) / (\gamma^L \theta)} \mbox{ as } K \to \infty. \]
\end{lemma}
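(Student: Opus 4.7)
The plan is to directly count the elements of $C_{K,s,\theta,R}$ via the method of types, then multiply by the per-point mass $n^{-Ks/\theta}$ from \eqref{eq:definemu}, and finally verify algebraically that the exponential growth rate is $G(\theta,s)/(\gamma^L\theta)$. Since $\mu_{\delta,s,\theta,R}(\Lambda) = \#C_{K,s,\theta,R}\cdot n^{-Ks/\theta}$, it suffices to understand $\#C_{K,s,\theta,R}$.

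First I would parse the symbolic representation of an element $B_{\lfloor K/\theta\rfloor}(\mathbf{i})$ of $C_{K,s,\theta,R}$ as a string $(i_1,\ldots,i_{\lfloor K/\theta\rfloor},\ih_{\lfloor K/\theta\rfloor+1},\ldots,\ih_{\gamma(\lfloor K/\theta\rfloor)})$. The positions $1,\ldots,K$ are free, contributing $N^K$. The constraints \eqref{eq:definemass1} and \eqref{eq:definemass2} partition the columns on positions $K+1,\ldots,\lfloor K/\theta\rfloor$ into alternating blocks $J_1',J_1,J_2',J_2,\ldots,J_{L-1}',J_{L-1},J_L'$, the block $J_l'$ constrained to $S_{t_{L-l+1}(s),J_l',R}$ and $J_l$ to $S_{t_{L-l}(s),J_l,R}$. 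For any such block $B$ with target $t$, by \eqref{eq:lowerboundcard} there are $\asymp e^{|B|(\log M - I(t))}$ admissible column strings, and since Lemma~\ref{lem:32} together with the construction \eqref{eq:definestjr} forces the type of each element to converge to $\mathbf{Q}^*_t\in\mathcal{E}_t$, the product $\prod_j N_{\ih_j}$ over each block is $\asymp e^{|B|t}$. Finally the free $\ih$'s on positions $\lfloor K/\theta\rfloor+1,\ldots,\gamma(\lfloor K/\theta\rfloor)$ contribute $\asymp M^{(\gamma-1)K/\theta}$, giving
\begin{equation*}
\#C_{K,s,\theta,R} \asymp N^K\cdot M^{(\gamma-1)K/\theta}\cdot\prod_{B}e^{|B|(\log M - I(t_B)+t_B)}.
\end{equation*}

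The algebraic endgame is a telescoping calculation driven by the recursion $t_{k+1}(s)=t_1(s)+\gamma I(t_k(s))$ from \eqref{eq:definetsequence} and \eqref{eq:defineiteratingfunction} — equivalently $\log M - I(t_k(s)) = (s\log n - t_{k+1}(s))/\gamma$ — together with the identity $t_1(s) = s\log n - \gamma\log M$. By \eqref{eq:definejnumbers}, for each $l=1,\ldots,L-1$ the blocks $J_l$ and $J_{l+1}'$ share the type $t_{L-l}(s)$ and have combined length $(\gamma-1)K/(\theta\gamma^{L-l})$, while $J_1'$ alone has type $t_L(s)$ and length $K(1-\gamma^{L-1}\theta)/(\gamma^{L-1}\theta)$. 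Writing each $U_k \coloneqq \log M - I(t_k(s)) + t_k(s) = (s\log n - t_{k+1}(s))/\gamma + t_k(s)$, the interior $t_k$'s cancel in the resulting sum $\sum_{k=1}^{L-1}(\gamma-1)\gamma^{-k}U_k/\theta$, leaving only boundary terms involving $t_1(s)$ and $t_L(s)$; combining these with the contributions $\log N$, $(\gamma-1)\log M/\theta$ and $-s\log n/\theta$ and multiplying through by $\gamma^L\theta$ produces exactly $G(\theta,s)$ as defined in \eqref{eq:definemainquantity}.

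The only real obstacle is the bookkeeping of block lengths and types, and verifying that the telescoping collapses correctly. This is why it is essential that the iterates $t_k(s)$ are defined so that each block pair $\{J_l,J_{l+1}'\}$ shares a common target — this is precisely what matches the recursion used in \eqref{eq:defineiteratingfunction} to the combinatorial structure \eqref{eq:definestjr}. In the special case $\theta = \gamma^{-(L-1)}$ the blocks $J_l'$ with $l<L$ are empty and the \eqref{eq:definemass2} conditions vacuous, so only $J_1,\ldots,J_{L-1},J_L'$ contribute and the calculation is considerably shorter while still yielding $G(\theta,s)/(\gamma^L\theta)$.
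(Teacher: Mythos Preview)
Your argument is correct: directly counting $\#C_{K,s,\theta,R}$ at the finest level $\lfloor K/\theta\rfloor$, splitting into the $2L-1$ blocks $J_1',J_1,\ldots,J_{L-1},J_L'$, and telescoping via the identity $\log M - I(t_k(s)) = (s\log n - t_{k+1}(s))/\gamma$ does produce $G(\theta,s)/(\gamma^L\theta)$. (A minor slip: when $\theta=\gamma^{-(L-1)}$ \emph{all} the $J_l'$ collapse, including $J_L'$, so only $J_1,\ldots,J_{L-1}$ contribute; this does not affect the outcome.)

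The paper proceeds differently and rather more economically. Instead of counting at level $\lfloor K/\theta\rfloor$, it invokes the case $l=0$ of Lemma~\ref{lem:lowernicescales}, which already establishes $\mu(B_K)\asymp n^{-Ks}$ for each level-$K$ approximate square in $\mathrm{supp}(\mu)$. This reduces the problem to counting level-$K$ approximate squares in the support, whose symbolic representation only reaches position $\gamma(K)$ and therefore involves just the two blocks $J_1'$ and $J_1$ (as column strings, so no $\prod N_{\ih_j}$ factor is needed). One line of algebra using \eqref{eq:definetsequence} then gives $G(\theta,s)$. In effect, the telescoping you perform explicitly is already encapsulated in the inductive proof of Lemma~\ref{lem:lowernicescales}; the paper simply cashes in that work. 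Your route is more self-contained but correspondingly more laborious, while the paper's exploits the structure already built up in Lemma~\ref{lem:lowernicescales}.
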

\begin{proof}
The symbolic representation of a level-$K$ approximate square $B_K(\mathbf{i}) \in \mathcal{B}_{K} \cap \mathrm{supp}(\mu)$ is 
\begin{equation*}
( \underbrace{ i_1,\dotsc, i_K }_{\in [N] \mbox{ freely}}, \underbrace{ \ih_{K+1},\dotsc, \ih_{\lfloor K \gamma^{-(L-1)}\theta^{-1}\rfloor} }_{\in S_{t_L(s),J_1',R}} , \underbrace{ \ih_{\lfloor K \gamma^{-(L-1)}\theta^{-1}\rfloor + 1},\dotsc, \ih_{\gamma(K)} }_{\in S_{t_{L-1}(s),J_1,R}} ).
\end{equation*}
Therefore 
\begin{align*}
 \mu(\Lambda) &\asymp \# (\mathcal{B}_{K} \cap \mathrm{supp}(\mu)) \cdot n^{-Ks} &\text{by case } l=0 \text{ of~\eqref{eq:nicescalesfirst}} \\
 &\asymp N^K e^{(\log M - I(t_L(s)))J_1'} e^{(\log M - I(t_{L-1}(s)))J_1} n^{-Ks} &\text{by~\eqref{eq:lowerboundcard}} \\ 
 &\asymp e^{K \cdot G(\theta,s) / (\gamma^L \theta)} &\text{by~\eqref{eq:definejnumbers},\eqref{eq:definetsequence},\eqref{eq:definemainquantity}},
 \end{align*}
 completing the proof. 
\end{proof}
We have now proved enough to give Theorem~\ref{thm:main} in the case when~$\theta$ is a negative integer power of~$\gamma$, see Section~\ref{sec:proofconclusion}. 
 
 \subsection{Upper bound for general \texorpdfstring{$\theta$}{θ}}\label{sec:mainupper}%
 Suppose $L \in \mathbb{N}$, $\theta \in (\gamma^{-L},\gamma^{-(L-1)})$, $s \in (\dim_{\mathrm H} \Lambda,\overline{\dim}_{\gamma^{-(L-1)}} \Lambda]$ and $0 < \delta \ll 1$. We define a cover $\{V_j\}_j$ of $\Lambda$ (depending on $\theta$, $s$ and $\delta$) as follows. Every level-$K$ cylinder will be covered in the same way, and the cover will consist of approximate squares $(i_1,\dotsc,i_k,\ih_{k+1},\dotsc,\ih_{\gamma(k)})$ of different levels $k \in \{ K, K+1,\dotsc, \lfloor K/\theta \rfloor\}$. This means that the diameter of each element of the cover will, up to an irrelevant multiplicative constant depending only on the carpet, lie in the interval $[\delta^{1/\theta},\delta]$. In fact, we will use only the scales $\gamma^l(K)$ and $\lfloor K/(\gamma^l \theta)\rfloor$ for $l \in \{0,1,2,\dotsc,L-1\}$. 
Figure~\ref{fig:CoverHelp} provides a diagram which may help the reader follow the construction of the cover. 
\begin{figure}[th]
\center{\includegraphics[width=.99\textwidth]
        {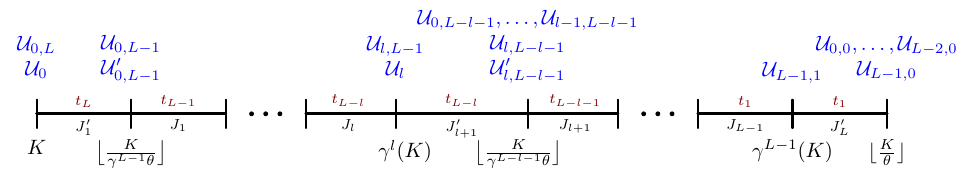}}
        \caption{Visualising the cover in~\eqref{eq:definerealcover} for $L \geq 3$. 
        Here, $l$ denotes an arbitrary number in $\{1,2,\dotsc,L-2\}$. 
The indices of the symbolic representation and the lengths of the different parts are in black. Above the scales explicitly written out are the sets (in blue) which make up the part of the cover consisting of approximate squares of the corresponding level. The `critical' thresholds $t_i$ for the averages of the different parts of the symbolic representation are in red. Recall that the $t_i$ depend on $s$, and the sets that make up the cover depend on $s$ and $\theta$.}
\label{fig:CoverHelp}
\end{figure}

Recall that $L \coloneqq 1 + \lfloor \frac{-\log \theta}{\log \gamma} \rfloor$, and $\iih = (\ih_1,\dotsc,\ih_k,\ih_{k+1},\dotsc,\ih_{\gamma(k)})$, and we use the notation from~\eqref{eq:tauaverage}. 
We define $\mathcal{U}_{L-1,1}$ to be the set of level-$\gamma^{L-1}(K)$ approximate squares for which~\eqref{eq:secondthreshold} and~\eqref{eq:firstthreshold} below hold for all $j \in \{ 1,2,\dotsc,L-1\}$, %
and~\eqref{eq:lastthreshold} holds: 
\begin{align}
	\tau(\iih,\lfloor K/(\gamma^{L-j}\theta)\rfloor,\gamma^j(K))   &<    t_{L-j}(s); \label{eq:secondthreshold} \\
	\tau(\iih,\gamma^{j-1}(K),\lfloor K/(\gamma^{L-j}\theta)\rfloor )  &<   \frac{\gamma^j  -  (\gamma^{L-j}\theta)^{-1}}{(\gamma^{L-j}\theta)^{-1}  -  \gamma^{j-1}}  \big( t_{L-j}(s) - \tau(\iih, \lfloor K/(\gamma^{L-j}\theta)\rfloor , \gamma^j(K)) \big) \nonumber \\*
	&\phantom{-}+ t_{L-j+1}(s) ; \label{eq:firstthreshold} \\
	\tau(\iih,\gamma^{L-1}(K),\lfloor K/\theta \rfloor)    &\geq   t_1(s).  \label{eq:lastthreshold}
\end{align}
Note that when defining $\mathcal{U}_{L-1,1}$ we imposed no restriction on $\ih_{\lfloor K/\theta \rfloor + 1},\dotsc,\ih_{\gamma^L(K)}$, or on $i_1,\dotsc,i_{K-1}$. 
Define $\mathcal{U}_{L-1,0}$ to be the set of level-$\lfloor K/\theta \rfloor$ approximate squares for which~\eqref{eq:secondthreshold} and~\eqref{eq:firstthreshold} hold for all $j \in \{ 1,2,\dotsc,L-1 \}$, and~\eqref{eq:lastthreshold} does \emph{not} hold (no restriction on $\ih_{\lfloor K/\theta \rfloor + 1},\dotsc,\ih_{\gamma(\lfloor K/\theta \rfloor)}$). 
If $L=1$ then our cover is simply $\mathcal{U}_{0,0} \cup \mathcal{U}_{0,1}$, so for the rest of the construction of the cover we assume that $L > 1$.

For $l = 0,1,\dotsc,L-2$ we define $\mathcal{U}_l$ to be the set of level-$\gamma^l(K)$ approximate squares which satisfy condition~\eqref{eq:secondthreshold} for all $j \in \{1,2,\dotsc,l+1\}$, and which satisfy~\eqref{eq:firstthreshold} for all $j \in \{1,2,\dotsc, l\}$ %
 but do \emph{not} satisfy~\eqref{eq:firstthreshold} for $j=l+1$. 
For $l = 0,1,\dotsc,L-2$ we define $\mathcal{U}_{l,L-l}$ to be the set of level-$\gamma^l(K)$ approximate squares for which 
\eqref{eq:secondthreshold} holds for all $j \in \{ 1,2,\dotsc,l\}$ %
but not for $j=l+1$, and~\eqref{eq:firstthreshold} holds for all $j \in \{ 1,2,\dotsc,l\}$, %
and~\eqref{eq:tsaverage} holds: 
\begin{align}\label{eq:tsaverage}
\begin{split}
\tau(\iih,\gamma^l(K),&\lfloor K/(\gamma^{L-l-1}\theta) \rfloor) \geq \\*
&T_s \Big(  \max\Big\{ \underline{t},  \frac{1}{(\gamma^{L-l-2}\theta)^{-1}  -  \gamma^{l+1}} \Big( \Big(  \frac{1}{\gamma^{L-l-2}\theta}  - \frac{1}{\gamma^{L-l-1} \theta} \Big)  t_{L-l-1}(s)  \\*
&\phantom{T_s \;}-   \big(\gamma^{l+1}  -  (\gamma^{L-l-1}\theta)^{-1}  \big) \tau(\iih,\lfloor K/(\gamma^{L-l-1}\theta) \rfloor  , \gamma^{l+1}(K))   \Big)    \Big\} \Big).
\end{split}
\end{align}%

For $l = 0,1,\dotsc,L-2$ define $\mathcal{U}_{l,L-l-1}'$ to be the set of level-$\lfloor K/(\gamma^{L-l-1}\theta) \rfloor$ approximate squares for which~\eqref{eq:secondthreshold} and~\eqref{eq:firstthreshold} hold for all $j \in \{1,2,\dotsc,l\}$, %
 and~\eqref{eq:tsaverage} does not hold, and~\eqref{eq:secondpartverybig} holds: 
 \begin{align}\label{eq:secondpartverybig}
 \begin{split}
\tau(\iih,&\lfloor K/(\gamma^{L-l-1}\theta) \rfloor  , \gamma^{l+1}(K)) \geq \\* 
&\frac{1}{\gamma^{l+1}  -  (\gamma^{L-l-1}\theta)^{-1}  }   \Big(  \Big(  \frac{1}{\gamma^{L-l-2}\theta}  - \frac{1}{\gamma^{L-l-1} \theta} \Big)  t_{L-l-1}(s)  
-   ((\gamma^{L-l-2}\theta)^{-1}  -  \gamma^{l+1}) \underline{t}  \Big) .
\end{split}
 \end{align}
 Note that~\eqref{eq:secondpartverybig} means that~\eqref{eq:secondthreshold} does not hold for $j=l+1$, and that the maximum in~\eqref{eq:tsaverage} equals $\underline{t}$ (since $t_{L-l-1}(s) > \underline{t}$). 
 Note also that no restriction was imposed upon $(\ih_{\gamma^{l+1}(K) + 1},\dotsc,\ih_{\lfloor K/(\gamma^{L-l-2}\theta))\rfloor})$. %
For $l = 0,1,\dotsc,L-2$ define $\mathcal{U}_{l,L-l-1}$ to be the set of level-$\lfloor K/(\gamma^{L-l-1}\theta) \rfloor$ approximate squares for which~\eqref{eq:secondthreshold} holds for all $j \in \{1,2,\dotsc,l\}$ %
 but not for $j=l+1$, and~\eqref{eq:firstthreshold} holds for all $j \in \{1,2,\dotsc,l\}$, %
 and~\eqref{eq:tsaverage} does not hold, and~\eqref{eq:secondpartverybig} does not hold, and~\eqref{eq:thirdthreshold} holds: 
 \begin{align}\label{eq:thirdthreshold}
\begin{split} 
 \tau(\iih ,    \gamma^{l+1}(K) , \lfloor K/(\gamma^{L-l-2}\theta) \rfloor ) &\geq  \frac{1}{(\gamma^{L-l-2}\theta)^{-1}  -  \gamma^{l+1}} \Big( \Big(  \frac{1}{\gamma^{L-l-2}\theta}  - \frac{1}{\gamma^{L-l-1} \theta} \Big)  t_{L-l-1}(s)  \\*
&\phantom{\geq}-   (\gamma^{l+1}  -  (\gamma^{L-l-1}\theta)^{-1}  ) \tau(\iih,\lfloor K/(\gamma^{L-l-1}\theta) \rfloor  , \gamma^{l+1}(K))   \Big) .
 \end{split}
\end{align}

For $l = 0,1,\dotsc,L-2$ define $\mathcal{U}_{l,0}$ to be the set of level-$\lfloor K/\theta \rfloor$ approximate squares for which~\eqref{eq:secondthreshold} holds for all $j \in \{1,2,\dotsc,l\}$ %
but not for $j=l+1$, and~\eqref{eq:firstthreshold} holds for all $j \in \{1,2,\dotsc,l\}$, %
 and~\eqref{eq:tsaverage} does not hold, and~\eqref{eq:secondpartverybig} does not hold, and~\eqref{eq:thirdthreshold} does not hold, and~\eqref{eq:lowerthresholds} holds for all $j \in \{ 1,2,\dotsc,L-l-2\}$: 
\begin{equation}\label{eq:lowerthresholds}
 \tau(\iih,  \lfloor K/(\gamma^{j}\theta) \rfloor, \lfloor K/(\gamma^{j-1}\theta) \rfloor )   <    t_j(s).
\end{equation}
Note that we imposed no restriction on $\ih_{\lfloor K/\theta \rfloor + 1},\dotsc,\ih_{\gamma(\lfloor K/\theta \rfloor)}$, and in the case $l=L-2$ we did not require the extra condition~\eqref{eq:lowerthresholds}. %
If $L=2$ then we have constructed the cover 
\[ \Lambda \subseteq \mathcal{U}_0 \cup \mathcal{U}_{0,0} \cup \mathcal{U}_{0,1}\cup \mathcal{U}_{0,1}'  \cup  \mathcal{U}_{0,2}\cup \mathcal{U}_{1,0} \cup \mathcal{U}_{1,1}.\] 

If $L >2$, then for $l = 0,1,\dotsc,L-3$ and $k = 1,2,\dotsc,L-l-2$
define $\mathcal{U}_{l,k}$ to be the set of level-$\lfloor K/(\gamma^k \theta) \rfloor$ approximate squares for which~\eqref{eq:secondthreshold} holds for all $j \in \{1,2,\dotsc,l\}$ %
 but not for $j=l+1$, and~\eqref{eq:firstthreshold} holds for all $j \in \{1,2,\dotsc,l\}$, %
 and~\eqref{eq:tsaverage} does not hold, and~\eqref{eq:secondpartverybig} does not hold, and~\eqref{eq:thirdthreshold} does not hold, and~\eqref{eq:lowerthresholds} holds for all $j \in \{ k+1,k+2,\dotsc, L-l-2 \}$ %
 but not for $j=k$. 
We have finally constructed a cover of~$\Lambda$: 
\begin{equation}\label{eq:definerealcover}
\Lambda \subseteq \mathcal{U}_{L-1,0} \cup \mathcal{U}_{L-1,1}  \cup   \bigcup_{l = 0}^{L-2} \bigg(  \mathcal{U}_l \cup  \mathcal{U}_{l,L-l-1}' \cup \bigcup_{k=0}^{L-l} \mathcal{U}_{l,k}  \bigg). 
\end{equation}
 For simplicity, we denote the cover by $\{ V_j\}_j$. Observe that any two elements of this cover are either disjoint or intersect on their boundary; it can never happen that one is contained within the other. 
 Figure~\ref{fig:CoverHelp} depicts the different parts of the cover in the most complicated case, namely when $\gamma^{-L} < \theta < \gamma^{-(L-1)}$ for some natural number $L \geq 3$. 
 
 We will bound the $s$-cost of this cover in Lemma~\ref{lem:mainuppercost}. For this, we need Lemmas~\ref{lem:simpleindcomb},~\ref{lem:inductioncombinatorics} and~\ref{lem:tripleindcomb}, which we prove using the method of types. 
The inequalities in Lemma~\ref{lem:inductioncombinatorics} mimic~\eqref{eq:secondthreshold} and~\eqref{eq:firstthreshold}. 

\begin{lemma}\label{lem:inductioncombinatorics}
Suppose $c \in (0,1)$, $\overline{t} > t_1 > t_2 > \underline{t}$ %
and $J \in \mathbb{N}$. Then as $J \to \infty$, 
\begin{enumerate}

\item\label{eq:inductioncombinatorics1}
\begin{align*}
\# &\{ \, \iih \in [M]^J : \tau(\iih,\lfloor cJ \rfloor ,J) \leq t_2, \, \tau(\iih,0,\lfloor cJ \rfloor) \geq t_1 + ((1-c)/c) ( t_2 - \tau(\iih,\lfloor cJ \rfloor,J) ) \, \} \\*
&\asymp e^{J(c(\log M - I(t_1)) + (1-c)(\log M - I(t_2)))};
\end{align*}

\item\label{eq:inductioncombinatorics2}
\begin{align*}
\# &\{ \, \mathbf{i} \in [N]^{J} : \tau(\iih,\lfloor cJ \rfloor ,J) \leq t_2, \, \tau(\iih,0,\lfloor cJ \rfloor) \leq t_1 + ((1-c)/c) ( t_2 - \tau(\iih,\lfloor cJ \rfloor,J) ) \, \} \\*
&\asymp e^{J (c(t_1 + \log M - I(t_1)) + (1-c)(t_2 + \log M - I(t_2)))}. 
\end{align*}

\end{enumerate}
\end{lemma}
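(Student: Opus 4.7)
The strategy is the method of types applied to the two halves of each word. Write $\mathbf{i} = (\mathbf{i}^{(1)}, \mathbf{i}^{(2)})$ with $\mathbf{i}^{(1)}$ of length $\lfloor cJ \rfloor$ and $\mathbf{i}^{(2)}$ of length $J - \lfloor cJ \rfloor$, and let $\mathbf{p}, \mathbf{q}$ denote the types of the corresponding halves of $\iih$, with raw averages $u := t_{\mathbf{p}}$ and $v := t_{\mathbf{q}}$. Provided both raw averages are at least $\underline{t}$, the event in (\ref{eq:inductioncombinatorics1}) reduces to $\{v \leq t_2\} \cap \{cu + (1-c)v \geq ct_1 + (1-c)t_2\}$, and the one in (\ref{eq:inductioncombinatorics2}) to $\{v \leq t_2\} \cap \{cu + (1-c)v \leq ct_1 + (1-c)t_2\}$. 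By \eqref{eq:205}--\eqref{eq:206}, the number of strings with prescribed type pair $(\mathbf{p},\mathbf{q})$ is $\asymp e^{cJH(\mathbf{p}) + (1-c)JH(\mathbf{q})}$ in (\ref{eq:inductioncombinatorics1}), whereas in (\ref{eq:inductioncombinatorics2}) one additionally multiplies by $\prod_{\ell=1}^J N_{\ih_\ell} \asymp e^{J(cu + (1-c)v)}$. Summing over the sub-exponentially many types and using the variational formulas $\sup\{H(\mathbf{r}) : t_{\mathbf{r}} = t\} = \log M - I(t)$ and $\sup\{H(\mathbf{r}) + t_{\mathbf{r}} : t_{\mathbf{r}} = t\} = t + \log M - I(t)$ for $t \in (\underline{t}, \overline{t})$ (Step~3 of Proposition~\ref{prop:1} together with Lemma~\ref{lem:32}) reduces both counts to continuous optimisations over $(u,v)$.

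For (\ref{eq:inductioncombinatorics1}) I minimise $g(u,v) := cI(u) + (1-c)I(v)$ on the polygon $\{v \leq t_2,\; cu + (1-c)v \geq ct_1 + (1-c)t_2,\; u, v \geq \underline{t}\}$. Parametrising the slanted edge $cu + (1-c)v = ct_1 + (1-c)t_2$ by $v = t_2 - \alpha$, $u = t_1 + ((1-c)/c)\alpha$ with $\alpha \geq 0$ gives $dg/d\alpha = (1-c)(I'(u) - I'(v)) > 0$, since $v \leq t_2 < t_1 \leq u$ and $I'$ is strictly increasing on $[\underline{t}, \max_{\ih}\log N_{\ih})$; on the edge $v = t_2$ the constraint forces $u \geq t_1$ and $I$ is increasing, so in either case the minimum is attained at $(u,v) = (t_1,t_2)$. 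This gives the upper bound $e^{J(c(\log M - I(t_1)) + (1-c)(\log M - I(t_2)))}$, and the matching lower bound follows by choosing $\mathbf{p}, \mathbf{q}$ to be the best $\mathcal{T}$-approximations of $\mathbf{Q}^*_{t_1}$ and $\mathbf{Q}^*_{t_2}$.

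Part (\ref{eq:inductioncombinatorics2}) is analogous: maximise $F(u,v) := c(u + \log M - I(u)) + (1-c)(v + \log M - I(v))$ on $\{v \leq t_2,\; cu + (1-c)v \leq ct_1 + (1-c)t_2\}$. Since $t \mapsto t - I(t)$ has derivative $1 - I'(t) > 0$ for $t \in [\underline{t}, \overline{t})$ (as $I'(\overline{t}) = 1$), the same slanted-edge computation yields $dF/d\alpha = (1-c)(I'(v) - I'(u)) < 0$, and both active edges pin the maximum at $(t_1, t_2)$; the lower bound uses types approximating $\mathbf{P}^*_{t_1}$ and $\mathbf{P}^*_{t_2}$, whose exponent matches by Lemma~\ref{lem:32}.

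The main technical obstacle is the boundary case where a raw average falls below $\underline{t}$ and the $\max\{\underline{t},\cdot\}$ in $\tau$ activates. In (\ref{eq:inductioncombinatorics1}) this can only happen for the second half and tightens the constraint on $u$ to $u \geq u^*(\underline{t}) := t_1 + ((1-c)/c)(t_2 - \underline{t})$; the resulting rate $\log M - cI(u^*(\underline{t}))$ is dominated by the main one $\log M - cI(t_1) - (1-c)I(t_2)$ via the mean value theorem on $[t_1, u^*(\underline{t})]$ combined with $I(t_2) = \int_{\underline{t}}^{t_2} I'(s)\,ds \leq (t_2 - \underline{t})I'(t_2) < (t_2 - \underline{t})I'(t_1)$ (using $I(\underline{t}) = 0$ and monotonicity of $I'$). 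An analogous check, using that $t - I(t)$ is increasing on $[\underline{t},\overline{t})$, dominates the first-half boundary case in (\ref{eq:inductioncombinatorics2}). Hence both claimed asymptotic equalities hold.
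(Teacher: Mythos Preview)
Your argument is correct and follows the same method-of-types strategy as the paper's own proof: split each word into its two halves, group by the pair of type classes, bound the cardinality via \eqref{eq:206} together with Step~3 of Proposition~\ref{prop:1}, and reduce to a two-variable optimisation governed by the convexity of $I$ (and, for \eqref{eq:inductioncombinatorics2}, the monotonicity of $t\mapsto t-I(t)$ on $[\underline t,\overline t]$). The only cosmetic difference is that the paper absorbs your separate ``boundary-case'' analysis by writing $\max\{t_{\mathbf q},\underline t\}$ directly in the exponent and then invoking convexity, rather than treating the truncation in $\tau$ as a side case; the inequalities you verify (e.g.\ $cI(u^*)\geq cI(t_1)+(1-c)I(t_2)$) are exactly what make the paper's convexity step go through.
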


\begin{proof}
The lower bounds for the asymptotic growth follow from considering those strings for which $\ih_{\lfloor cJ \rfloor + 1}, \dotsc, \ih_J$ and $\ih_{1},\dotsc,\ih_{\lfloor cJ \rfloor}$ are the best approximations to $\mathbf{Q}^*_{t}$ and $\mathbf{Q}^*_{T_s(t)}$ respectively in $\mathcal{T}_{J-\lfloor cJ \rfloor}$ and $\mathcal{T}_{\lfloor cJ \rfloor}$ for which the required inequalities hold. 
The strategy for the upper bounds is to fix arbitrary type classes for the different parts of the string which satisfy the desired inequalities and then use the fact that there are only polynomially many type classes. 

\eqref{eq:inductioncombinatorics1} 
Fix $\mathbf{p} \in \mathcal{T}_{\lfloor cJ \rfloor}$ and $\mathbf{q} \in \mathcal{T}_{J - \lfloor cJ \rfloor}$ such that $t_{\mathbf{q}} \leq t_2$ and $t_{\mathbf{p}} \geq t_1 + ((1-c)/c) ( t_2 - t_{\mathbf{q}} )$, recalling that $t_{\mathbf{p}} = \sum_{\ih} p_{\ih} \log N_{\ih}$. Then 
\begin{align}
\# T_{\lfloor cJ \rfloor}(\mathbf{p}) \cdot \# T_{J - \lfloor cJ \rfloor}(\mathbf{q}) &\leq e^{J( c(\log M - I(t_{\mathbf{p}}))    +   (1-c) (\log M - I(\max\{t_{\mathbf{q}},\underline{t}\}))      )} \label{eq:alignindcomb1} \\
&\leq e^{J( c(\log M - I(t_1 + ((1-c)/c) ( t_2 - \max\{t_{\mathbf{q}},\underline{t}\} )))    +   (1-c) (\log M - I(\max\{t_{\mathbf{q}},\underline{t}\}))      )} \label{eq:alignindcomb2} \\ 
&\leq e^{J(c(\log M - I(t_1)) + (1-c)(\log M - I(t_2)))}. \label{eq:alignindcomb3}
\end{align}
In~\eqref{eq:alignindcomb1} we used~\eqref{eq:206} and Step~1 of Proposition~\ref{prop:1}. 
In~\eqref{eq:alignindcomb2} we used that the rate function is increasing. 
In~\eqref{eq:alignindcomb3} we used the convexity of the rate function. 
Therefore using~\eqref{eq:205} we can bound the cardinality of the set in the statement of~\eqref{eq:inductioncombinatorics1} from above by 
\[(\lfloor cJ \rfloor + 1 )^M (J-\lfloor cJ \rfloor +1)^M e^{J(c(\log M - I(t_1)) + (1-c)(\log M - I(t_2)))}. \]

\eqref{eq:inductioncombinatorics2} 
Similarly, fix $\mathbf{p} \in \mathcal{T}_{\lfloor cJ \rfloor}$ and $\mathbf{q} \in \mathcal{T}_{J - \lfloor cJ \rfloor}$ such that $t_{\mathbf{q}} \leq t_2$ and $t_{\mathbf{p}} \leq t_1 + {((1-c)/c) ( t_2 - t_{\mathbf{q}} )}$. 
Then 
\begin{align} 
&\# \{ \, \mathbf{i} \in [N]^{\lfloor cJ \rfloor} : \iih \in T_{\lfloor cJ \rfloor}(\mathbf{p}) \, \} \cdot \# \{ \,  \mathbf{j} \in [N]^{J- \lfloor cJ \rfloor} : \jjh \in T_{J- \lfloor cJ \rfloor}(\mathbf{q}) \, \} \nonumber \\
&\leq e^{J(c(\max\{t_{\mathbf{p}},\underline{t}\} + \log M - I(\max\{t_{\mathbf{p}},\underline{t}\})) + (1-c)(\max\{t_{\mathbf{q}},\underline{t}\} + \log M - I(\max\{t_{\mathbf{q}},\underline{t}\})))} \nonumber \\
&\leq e^{ J c(\min\{t_1 + \frac{1-c}{c} ( t_2 - \max\{t_{\mathbf{q}},\underline{t}\} ),\overline{t} \} + \log M -  I(\min\{t_1 + \frac{1-c}{c} ( t_2 - \max\{t_{\mathbf{q}},\underline{t}\} ),\overline{t} \}) )} \nonumber \\*
&\phantom{\leq}\times e^{J (1-c)( \max\{t_{\mathbf{q}},\underline{t}, t_2 - \frac{c}{1-c} (\overline{t} - t_1)\} + \log M - I( \max\{t_{\mathbf{q}},\underline{t}, t_2 - \frac{c}{1-c} (\overline{t} - t_1) \} )  )  } \label{eq:bothlower2} \\
&\leq e^{J (c(t_1 + \log M - I(t_1)) + (1-c)(t_2 + \log M - I(t_2)))}. \label{eq:bothlower3}
\end{align}
In~\eqref{eq:bothlower2} we used the fact that $I'(t) < 1$ if $t \in (\underline{t},\overline{t})$ and $I'(t) > 1$ if $t \in (\overline{t},\max_{1\leq i \leq M} \log N_i)$. In~\eqref{eq:bothlower3} we used the convexity of the rate function. 
In light of~\eqref{eq:205}, the result follows. 
\end{proof}

The inequalities in Lemma~\ref{lem:tripleindcomb} mimic~\eqref{eq:secondthreshold}, \eqref{eq:secondpartverybig}, \eqref{eq:tsaverage} and \eqref{eq:thirdthreshold}. 

\begin{lemma}\label{lem:tripleindcomb}
Suppose $s \in (\dim_{\mathrm H} \Lambda,\dim_{\mathrm B} \Lambda)$, $c \in (0,1)$, $\underline{t} < t < T_s(t) < \overline{t}$ and $J \in \mathbb{N}$. Then as $J \to \infty$, 
\begin{enumerate}
\item\label{eq:tripleindcomb1}
\begin{align*}
\# &\{ \, \iih \in [M]^J  :     \tau(\iih,\lfloor cJ \rfloor, J )  \geq  t \\
&\mbox{ and }  \tau(\iih,0,\lfloor cJ \rfloor)    \geq  T_s  (  \max\{ \underline{t},     ( 1 - c + \gamma c ) t / (\gamma c)    -    (1-c) \tau(\iih,\lfloor cJ \rfloor, J ) / (\gamma c)    \}         )  \, \}   \\
&\asymp   e^{ J( c( \log M - I(T_s(t)) )   +   (1-c) ( \log M - I(t) )     ) }    ;
\end{align*}

\item\label{eq:tripleindcomb2}
\begin{align*}
\# \{ \, &(i_1,\dotsc,i_{\lfloor cJ \rfloor} , \ih_{\lfloor cJ \rfloor + 1},\dotsc, \ih_J, \ih_{J+1},\dotsc, \ih_{\lfloor (1+ \gamma c) J \rfloor}  \in [N]^{\lfloor cJ \rfloor}  \times [M]^{\lfloor (1 + \gamma c) J \rfloor - \lfloor cJ \rfloor} ) : \\
&t \leq \tau(\iih,\lfloor cJ \rfloor, J  ) \leq ( (1-c+\gamma c) t  -   \gamma c \underline{t} )/(1-c) \\
&\mbox{ and } \tau(\iih,0,\lfloor cJ \rfloor)  \leq  T_s( (  ( 1 - c + \gamma c ) t   -    (1-c) \tau(\iih,\lfloor cJ \rfloor, J )) / (\gamma c) ) \\
&\mbox{ and }   \tau(\iih,J, \lfloor (1 + \gamma c) J \rfloor  ) \geq  (( 1 - c + \gamma c ) t    -    (1-c) \tau(\iih,\lfloor cJ \rfloor, J ) )/ (\gamma c)      \, \} \\
&\asymp   e^{J(c(T_s(t) + \log M - I(T_s(t))) + (1-c + \gamma c) (\log M - I(t))   )};
 \end{align*}

\item\label{eq:tripleindcomb3}
\begin{align*}
\# &\{ \, \mathbf{i}  \in [N]^{\lfloor (1 + \gamma c) J\rfloor }  : t \leq \tau(\iih,\lfloor cJ \rfloor, J  ) \leq ( (1-c+\gamma c) t  -   \gamma c \underline{t}  )/(1-c) \\
&\mbox{ and } \tau(\iih,0,\lfloor cJ \rfloor)  \leq  T_s( ( ( 1 - c + \gamma c ) t   -    (1-c) \tau(\iih,\lfloor cJ \rfloor, J )) / (\gamma c) ) \\
&\mbox{ and }   \tau(\iih,J, \lfloor (\gamma c + 1)J \rfloor  ) \leq  (( 1 - c + \gamma c ) t  -    (1-c) \tau(\iih,\lfloor cJ \rfloor, J )) / (\gamma c)      \, \} \\
&\asymp   e^{J(c(T_s(t) + \log M - I(T_s(t))) + (1-c + \gamma c) (t + \log M - I(t))   )}.
\end{align*}

\end{enumerate}
\end{lemma}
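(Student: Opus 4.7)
My approach is to follow exactly the method-of-types template already used in the proofs of Lemmas~\ref{lem:simpleindcomb} and~\ref{lem:inductioncombinatorics}, treating each of the three parts of Lemma~\ref{lem:tripleindcomb} in parallel. For the lower bound, I would exhibit a specific collection of strings by demanding that each block of the string has type as close as possible (within the discrete set $\mathcal{T}_{\cdot}$) to the entropy-maximising probability vector associated with that block's critical average: the block of length $\lfloor cJ\rfloor$ should approximate $\mathbf{Q}^*_{T_s(t)}$ and the remaining blocks should approximate $\mathbf{Q}^*_{t}$. Since these lie in $\mathcal{E}_{T_s(t)}$ and $\mathcal{E}_{t}$ respectively (Lemma~\ref{lem:32}), all coupled inequalities in the statement are satisfied with equality in the limit $J\to\infty$. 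Counting using~\eqref{eq:206}, applying Step~3 of Proposition~\ref{prop:1} to convert each $H(\mathbf{Q}^*_{\cdot})$ into $\log M - I(\cdot)$, and (for $[N]$-indexed blocks) multiplying by $\prod_{\ih} N_{\ih}^{p_{\ih}\cdot(\text{length})}$ to pick up the extra $t+\log M - I(t)$ or $T_s(t)+\log M - I(T_s(t))$ factor as in~\eqref{eq:308}, gives the claimed $\asymp$ from below.

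For the upper bound in each part, I would fix arbitrary types $\mathbf{p},\mathbf{q},\mathbf{r}$ on each block, count configurations with those types via~\eqref{eq:206}, and then use the polynomial bound~\eqref{eq:205} to sum over the $O(J^{3M})$ possible triples of types. After this reduction it remains to bound the exponent
\[
E(t_{\mathbf{p}},t_{\mathbf{q}},t_{\mathbf{r}}) \;=\; c\bigl(\chi_{\mathrm p} t_{\mathbf p} + \log M - I(\max\{\underline t,t_{\mathbf p}\})\bigr) + \cdots
\]
(with $\chi$ indicating whether the block is $[N]$- or $[M]$-indexed) under the system of inequalities in the statement. For part~\eqref{eq:tripleindcomb1} this involves the same clipping move as in~\eqref{eq:alignindcomb1}--\eqref{eq:alignindcomb3}: since $I$ is strictly increasing on $[\underline t,\overline t]$ and the constraint on $t_{\mathbf p}$ is a lower bound, the contribution $\log M - I(t_{\mathbf p})$ is maximised when $t_{\mathbf p}$ equals the right-hand side, after which convexity of $I$ (equivalently of $T_s$) together with the coupling through $T_s$ collapses the expression to $c(\log M - I(T_s(t))) + (1-c)(\log M - I(t))$. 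For parts~\eqref{eq:tripleindcomb2} and~\eqref{eq:tripleindcomb3}, both upper and lower bounds appear on $t_{\mathbf q}$, and the additive $\chi_{\mathbf p} t_{\mathbf p}$ term is present: the same bookkeeping as in~\eqref{eq:bothlower2}--\eqref{eq:bothlower3}, exploiting $I'(\tau)\in(0,1)$ on $(\underline t,\overline t)$ so that $\tau+\log M - I(\tau)$ is strictly increasing on that interval, will show that the optimum is attained at $t_{\mathbf q}=t$ and $t_{\mathbf p}=T_s(t)$, producing the target exponent.

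\textbf{Main obstacle.} The technical difficulty is purely combinatorial bookkeeping around the coupled constraints. In parts~\eqref{eq:tripleindcomb2} and~\eqref{eq:tripleindcomb3} the upper bound on $t_{\mathbf q}$ is precisely $((1-c+\gamma c)t-\gamma c\underline t)/(1-c)$, which is the value for which the implicit linear combination $((1-c+\gamma c)t-(1-c)t_{\mathbf q})/(\gamma c)$ on the right-hand sides of the other two inequalities equals $\underline t$; one must check that this is exactly the range on which $T_s$ applied to that combination stays in $(\underline t,\overline t)$ so that the rate function is well-defined and strictly convex there, and that the bound on the third block (the constraint in $\tau(\iih,J,\lfloor (1+\gamma c)J\rfloor)$) is compatible with $\mathcal{E}_{\cdot}$ for the relevant limiting type. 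Once this consistency is verified, the fact that both $\tau\mapsto \tau+\log M - I(\tau)$ (on $(\underline t,\overline t)$) and $\tau\mapsto T_s(\tau)+\log M - I(T_s(\tau))$ are non-decreasing on the feasible region, combined with the saturated constraint when $t_{\mathbf q}=t$, forces the maximiser to lie at the corner and yields the stated exponential rate. The $\max\{\underline t,\cdot\}$ clipping on the linear combination inside $T_s$ (appearing in~\eqref{eq:tripleindcomb1}) is handled exactly as the $\max\{t_{\mathbf q},\underline t\}$ clipping in Lemma~\ref{lem:inductioncombinatorics}.
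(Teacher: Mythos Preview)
Your proposal is correct and follows essentially the same route as the paper: lower bounds by choosing the blocks to have types approximating $\mathbf{Q}^*_{T_s(t)}$ and $\mathbf{Q}^*_{t}$, and upper bounds by fixing arbitrary type triples $(\mathbf{p},\mathbf{q},\mathbf{r})$, bounding via~\eqref{eq:206} and~\eqref{eq:205}, and then optimising the resulting exponent over the feasible region using the monotonicity and convexity properties of $I$ (specifically $I'\in(0,1)$ on $(\underline t,\overline t)$). The paper phrases the final optimisation step as ``the derivative of the exponent with respect to $t_{\mathbf q}$ is negative'' rather than locating a corner maximum, but this is the same computation; your identification of the role of the upper bound $((1-c+\gamma c)t-\gamma c\underline t)/(1-c)$ as the threshold where the argument of $T_s$ hits $\underline t$ is exactly what the paper uses.
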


\begin{proof}
The proof strategy is rather similar to that of Lemma~\ref{lem:inductioncombinatorics}. The lower bounds follow from considering those strings for which $\ih_{\lfloor cJ \rfloor + 1}, \dotsc, \ih_J$ and $\ih_{1},\dotsc,\ih_{\lfloor cJ \rfloor}$ are the best approximations to $\mathbf{Q}^*_{t}$ and $\mathbf{Q}^*_{T_s(t)}$ respectively in $\mathcal{T}_{J-\lfloor cJ \rfloor}$ and $\mathcal{T}_{\lfloor cJ \rfloor}$, and (for~\eqref{eq:tripleindcomb2} and~\eqref{eq:tripleindcomb3}) $\ih_{J+1},\dotsc,\ih_{\lfloor \gamma c J \rfloor}$ is the best approximation to $\mathbf{Q}^*_{t}$ in $\mathcal{T}_{\lfloor (1 + \gamma c) J \rfloor - J}$, for which the required inequalities hold. 
The upper bounds follow from the following estimates and~\eqref{eq:205}. 

 \eqref{eq:tripleindcomb1} 
 Fix $\mathbf{p} \in \mathcal{T}_{\lfloor cJ \rfloor}$ and $\mathbf{q} \in \mathcal{T}_{J - \lfloor cJ \rfloor}$ such that $t_{\mathbf{q}} \geq t$ and 
 \[ t_{\mathbf{p}} \geq T_s  (  \max\{ \underline{t},     ( 1 - c + \gamma c ) t / (\gamma c)    -    (1-c) t_{\mathbf{q}} / (\gamma c).\]
  Then 
 \begin{align*} 
\# T_{\lfloor cJ \rfloor}(\mathbf{p}) \cdot \# T_{J- \lfloor cJ \rfloor}(\mathbf{q}) &\leq e^{Jc(\log M -  I( T_s  (  \max\{ \underline{t},     ( 1 - c + \gamma c ) t / (\gamma c)  -  (1-c) t_{\mathbf{q}} / (\gamma c) \}  )) )}  \\*
&\phantom{\leq}\times e^{J(1-c) (\log M -  I(\min\{t_{\mathbf{q}},( (1-c+\gamma c) t  -   \gamma c \underline{t} )/(1-c)\})    )  )} \\
&\leq e^{ J( c( \log M - I(T_s(t)) )   +   (1-c) ( \log M - I(t) )     ) }.  
\end{align*}
The final step holds since 
\begin{align*}
 ( (1-c+\gamma c) t  -   \gamma c \underline{t} )/(1-c)  &\leq t \leq t_{\mathbf{q}}; \\*
 T_s\big(( (1-c+\gamma c) t  -   \gamma c \underline{t} )/(1-c)\big)  &\leq  T_s(t) < \overline{t}, 
\end{align*}
so using standard properties of the rate function, the derivative of the exponent with respect to $t_{\mathbf{q}}$ is negative. 

\eqref{eq:tripleindcomb2} 
Now fix $\mathbf{p} \in \mathcal{T}_{\lfloor cJ \rfloor}$, $\mathbf{q} \in \mathcal{T}_{J - \lfloor cJ \rfloor}$ and $\mathbf{r} \in \mathcal{T}_{\lfloor (1 + \gamma c) J \rfloor - J}$ such that $t \leq t_{\mathbf{q}} \leq ( (1-c+\gamma c) t  -  \gamma c \underline{t} )/(1-c)$, $t_{\mathbf{p}} \leq  T_s( (  ( 1 - c + \gamma c ) t   -    (1-c) t_{\mathbf{q}}) / (\gamma c) )$ and $t_{\mathbf{r}} \geq (( 1 - c + \gamma c ) t    -    (1-c) t_{\mathbf{q}} )/ (\gamma c)$. Then 
\begin{align*}
&\# \{ \, \mathbf{i} \in [N]^{\lfloor cJ \rfloor} : \iih \in T_{\lfloor cJ \rfloor}(\mathbf{p}) \, \} \cdot \# T_{J- \lfloor cJ \rfloor}(\mathbf{q}) \cdot \# T_{\lfloor (1 + \gamma c) J \rfloor - J}(\mathbf{r}) \\
&\leq  e^{J( c(T_s( (  ( 1 - c + \gamma c ) t   -    (1-c) t_{\mathbf{q}}) / (\gamma c) ) + \log M - I(T_s( (  ( 1 - c + \gamma c ) t   -    (1-c) t_{\mathbf{q}}) / (\gamma c) )))                    +   (1-c) (\log M - I(t_{\mathbf{q}})) )} \\*
&\phantom{\leq}\times e^{J(  \gamma c (\log M  -  I((( 1 - c + \gamma c ) t    -    (1-c) t_{\mathbf{q}} )/ (\gamma c)))        )}   \\
&\leq e^{J(c(T_s(t) + \log M - I(T_s(t))) + (1-c + \gamma c) (\log M - I(t))   )}
\end{align*}
since the derivative of the exponent with respect to $t_{\mathbf{q}}$ is negative. 

\eqref{eq:tripleindcomb3} 
Now fix $\mathbf{p} \in \mathcal{T}_{\lfloor cJ \rfloor}$, $\mathbf{q} \in \mathcal{T}_{J - \lfloor cJ \rfloor}$ and $\mathbf{r} \in \mathcal{T}_{\lfloor (1 + \gamma c) J \rfloor - J}$ such that $t \leq t_{\mathbf{q}} \leq ( (1-c+\gamma c) t  -  \gamma c \underline{t} )/(1-c)$, $t_{\mathbf{p}} \leq  T_s( (  ( 1 - c + \gamma c ) t   -    (1-c) t_{\mathbf{q}}) / (\gamma c) )$ and $t_{\mathbf{r}} \leq (( 1 - c + \gamma c ) t    -    (1-c) t_{\mathbf{q}} )/ (\gamma c)$. Then 
\begin{align}
 &\# \{ \, \mathbf{i} \in [N]^{\lfloor cJ \rfloor} : \iih \in T_{\lfloor cJ \rfloor}(\mathbf{p}) \, \} \cdot \# \{ \, \mathbf{j} \in [N]^{J - \lfloor cJ \rfloor} : \jjh \in T_{J - \lfloor cJ \rfloor}(\mathbf{q}) \, \} \nonumber \\*
 &\phantom{\leq}\times \# \{ \, \mathbf{k} \in [N]^{\lfloor (1 + \gamma c) J \rfloor - J} : \kkh \in T_{\lfloor (1 + \gamma c) J \rfloor - J}(\mathbf{r}) \, \} \nonumber \\
 &\leq e^{J c(T_s( (  ( 1 - c + \gamma c ) t   -    (1-c) t_{\mathbf{q}}) / (\gamma c) ) + \log M - I(T_s( (  ( 1 - c + \gamma c ) t   -  (1-c) t_{\mathbf{q}}) / (\gamma c) ))) } \\*
 &\phantom{\leq}\times e^{J(1-c) (\min\{t_{\mathbf{q}},\overline{t}\} + \log M - I(\min\{t_{\mathbf{q}},\overline{t}\})) } \nonumber \\*
 &\phantom{\leq}\times e^{J  \gamma c  ((( 1 - c + \gamma c ) t    -    (1-c) t_{\mathbf{q}} )/ (\gamma c) + \log M  -  I((( 1 - c + \gamma c ) t    -    (1-c) t_{\mathbf{q}} )/ (\gamma c))        )}  \nonumber \\
 &\leq e^{J   c(T_s( (  ( 1 - c + \gamma c ) t   -    (1-c) \min\{t_{\mathbf{q}},\overline{t}\}) / (\gamma c) ) + \log M - I(T_s( (  ( 1 - c + \gamma c ) t   -    (1-c) \min\{t_{\mathbf{q}},\overline{t}\}) / (\gamma c) )))                    }\nonumber \\*
 &\phantom{\leq}\times e^{J   (1-c) (\min\{t_{\mathbf{q}},\overline{t}\} + \log M - I(\min\{t_{\mathbf{q}},\overline{t}\})) } \nonumber \\*
 &\phantom{\leq}\times e^{J  \gamma c  ((( 1 - c + \gamma c ) t    -    (1-c) \min\{t_{\mathbf{q}},\overline{t}\} )/ (\gamma c) + \log M  -  I((( 1 - c + \gamma c ) t    -    (1-c) \min\{t_{\mathbf{q}},\overline{t}\} )/ (\gamma c))        )} \label{eq:aligntripleindcomb1} \\
 &\leq e^{J(c(T_s(t) + \log M - I(T_s(t))) + (1-c + \gamma c) (t + \log M - I(t))   )}. \label{eq:aligntripleindcomb2}
\end{align}
We have~\eqref{eq:aligntripleindcomb1} because $t \leq t_{\mathbf{q}} \leq ( (1-c+\gamma c) t  -  \gamma c \underline{t} )/(1-c)$, so 
\[ \underline{t} \leq (( 1 - c + \gamma c ) t  -  (1-c) t_{\mathbf{q}} )/ (\gamma c) \leq T_s\big((( 1 - c + \gamma c ) t    -    (1-c) t_{\mathbf{q}} )/ (\gamma c)\big)  \leq T_s(t) < \overline{t},\] 
so the derivative of the rate function here is between 0 and~1. 
We have~\eqref{eq:aligntripleindcomb2} since the derivative of the exponent in~\eqref{eq:aligntripleindcomb1} with respect to $t_{\mathbf{q}}$ is negative. %
In light of~\eqref{eq:205}, this completes the proof. 
\end{proof}

We are now ready to prove an upper bound for the $s$-cost of the cover that we have constructed. Recall that $G(\theta,s)$ is defined in~\eqref{eq:definemainquantity}. 
\begin{lemma}\label{lem:mainuppercost}
For $L \in \mathbb{N}$, $\theta \in (\gamma^{-L},\gamma^{-(L-1)})$, $s \in (\dim_{\mathrm H} \Lambda,\dim_{\gamma^{-(L-1)}} \Lambda]$ and $0 < \delta \ll 1$, let $\{V_j\}_j$ be the cover of $\Lambda$ defined in~\eqref{eq:definerealcover}. Then as $K = K(\delta) \to \infty$, 
\[ \sum_j |V_j|^s \asymp e^{K \cdot G(\theta,s)/(\gamma^L \theta)}. \]
\end{lemma}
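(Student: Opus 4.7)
The plan is to bound the $s$-cost separately for each of the $O(L^2)$ pieces in the decomposition \eqref{eq:definerealcover}. Since the number of pieces is a constant (once $\theta$ is fixed), the sum is asymptotically equal to the largest contribution, so it suffices to show each piece contributes at most $e^{K\cdot G(\theta,s)/(\gamma^L\theta)}$, and that at least one piece realises this exponent. A piece consisting of level-$k$ approximate squares has $s$-cost asymptotically $\#(\text{piece})\cdot n^{-ks}$, so the task reduces to counting approximate squares satisfying the defining inequalities.

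For each piece, the defining inequalities decouple the symbolic word $(i_1,\ldots,i_k,\ih_{k+1},\ldots,\ih_{\gamma(k)})$ into consecutive segments on which the counting is controlled by one of three combinatorial lemmas already established: a single-sided average bound on a segment (of lengths of the form $J_l$ or $J_l'$ from \eqref{eq:definejnumbers}) is handled by Lemma~\ref{lem:simpleindcomb}; the coupled pair of bounds \eqref{eq:secondthreshold}--\eqref{eq:firstthreshold} (and its negation) on adjacent segments is handled by Lemma~\ref{lem:inductioncombinatorics}\eqref{eq:inductioncombinatorics2}--\eqref{eq:inductioncombinatorics1}; and the triple of coupled bounds \eqref{eq:secondthreshold}, \eqref{eq:tsaverage}, \eqref{eq:secondpartverybig}, \eqref{eq:thirdthreshold} is handled by Lemma~\ref{lem:tripleindcomb}. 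The parts of the word indexed above $K$ that correspond to the cylinder indices $i_j$ contribute factors of the form $e^{J(t_l(s) + \log M - I(t_l(s)))}$, while parts corresponding to column indices $\ih_j$ contribute $e^{J(\log M - I(t_l(s)))}$; the remaining free segments contribute $N^K$ or $M^{\gamma(k)-k}$. Multiplying in the factor $n^{-ks}$ and substituting the lengths from \eqref{eq:definejnumbers}, the recursion $\gamma I(t_l(s)) = t_{l+1}(s) - \bigl(s - \tfrac{\log M}{\log m}\bigr)\log n$ coming from \eqref{eq:defineiteratingfunction}--\eqref{eq:definetsequence} causes the exponents to telescope, reducing each piece's exponent to the same quantity $K\cdot G(\theta,s)/(\gamma^L\theta)$ as defined in \eqref{eq:definemainquantity}; Lemma~\ref{lem:51} ensures each $t_l(s)\in(\underline{t},\overline{t})$, which is exactly the range in which the three counting lemmas apply.

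The matching lower bound on $\sum_j|V_j|^s$ follows by isolating the piece $\mathcal{U}_{L-1,1}$, in which every threshold is attained with the optimal choice $\tau_j=t_{L-j}(s)$; the lower bounds of the same three combinatorial lemmas give exactly the rate $e^{K\cdot G(\theta,s)/(\gamma^L\theta)}$, and in fact essentially the same counts that appear in the mass computation of Lemma~\ref{lem:lowertotalmass}. The main obstacle is the bookkeeping: one must verify, piece by piece, that the coupled inequalities really are of the exact form handled by Lemmas~\ref{lem:simpleindcomb}--\ref{lem:tripleindcomb} (in particular, that the constants $c$ in those lemmas match the lengths $J_l,J_l'$), and that upon applying the recursion for $t_l(s)$, the extra terms introduced by the critical thresholds in \eqref{eq:firstthreshold}, \eqref{eq:tsaverage}, \eqref{eq:secondpartverybig}, \eqref{eq:thirdthreshold} cancel correctly so that every piece produces the same exponent. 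Once this verification is carried out — a long but mechanical computation — the lemma follows.
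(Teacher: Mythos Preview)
Your proposal is correct and follows essentially the same strategy as the paper: decompose the cover into its $O(L^2)$ pieces, count each using Lemmas~\ref{lem:simpleindcomb}, \ref{lem:inductioncombinatorics}, \ref{lem:tripleindcomb} (with Lemma~\ref{lem:51} guaranteeing the $t_l(s)$ lie in $(\underline t,\overline t)$), and check that every piece yields the same exponent $K\cdot G(\theta,s)/(\gamma^L\theta)$. The one presentational difference is that the paper, instead of telescoping each exponent by hand via the recursion $t_{l+1}(s)=T_s(t_l(s))$, recognises after the counting step that each piece's cardinality coincides with $\#(\mathcal{B}_k\cap\mathrm{supp}(\mu_{\delta,s,\theta,R}))$ for the appropriate level $k$, and then invokes Lemmas~\ref{lem:lowernicescales} and~\ref{lem:lowertotalmass} (where the telescoping has already been done) to read off the common exponent; this saves repeating the algebra but is the same computation you describe.
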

\begin{proof}
The strategy is to bound the $s$-costs of the different parts of the cover separately using Lemmas~\ref{lem:simpleindcomb},~\ref{lem:inductioncombinatorics} and~\ref{lem:tripleindcomb}, which can be applied since the $t_i(s)$ lie in the right range by Lemma~\ref{lem:51}. 
We use the convention that an empty product equals 1. 
In the following, as above, $R \in \mathbb{N}$ will be fixed and arbitrary. 
We first consider the $s$-cost of $\mathcal{U}_{l}$ for $l \in \{0,1,\dotsc,L-2\}$: %
\begin{align*}
\sum_{U \in \mathcal{U}_{l}} |U|^s &\asymp \# \mathcal{U}_{l} \cdot  n^{-\gamma^{l}K s} \\
  &\asymp N^K \prod_{j = 1}^{l} e^{K(t_{L-j}(s) + \log M - I(t_{L-j}(s)))(\gamma^j - \gamma^{-(L-j)}\theta^{-1})} \\*
  &\phantom{\leq}\times e^{K(t_{L-j+1}(s) + \log M - I(t_{L-j+1}(s)) ) (\gamma^{-(L-j)}\theta^{-1} - \gamma^{j-1}) ) }  \\*
&\phantom{\leq}\times e^{K((\log M - I(t_{L-l-1}(s)))(\gamma^{l+1} - \gamma^{-(L-l-1)}\theta^{-1})  +  (\log M - I(t_{L-l}))  (\gamma^{-(L-l-1)}\theta^{-1} - \gamma^{l})   )}  \\*
&\phantom{\leq}\times n^{-\gamma^{l}K s}  \qquad \qquad \text{by Lemma~\ref{lem:inductioncombinatorics}~\eqref{eq:inductioncombinatorics1} and~\eqref{eq:inductioncombinatorics2}} \\
&\asymp \# (\mathcal{B}_{\gamma^{l}(K)} \cap \mathrm{supp}(\mu_{\delta,s,\theta,R})) \cdot n^{-\gamma^{l}K s} \qquad \substack{\text{ by~\eqref{eq:lowerboundcard}} \\ \text{and Proposition~\ref{prop:1}, Step~1}} \\ 
&\asymp \mu_{\delta,s,\theta,R} (\Lambda) \qquad  \qquad \text{by~\eqref{eq:definemu} and Lemma~\ref{lem:lowernicescales}}  \\
&\asymp e^{K \cdot G(\theta,s)/(\gamma^L \theta)} \qquad \text{by Lemma~\ref{lem:lowertotalmass}}. \stepcounter{equation}\tag{\theequation}\label{eq:mainupperfirstcost}
\end{align*}

The $s$-costs of $\mathcal{U}_{l,0}$ are equal for all $l \in \{0,1,\dotsc,L-1\}$: 
\begin{align*}
\sum_{U \in \mathcal{U}_{l,0}} |U|^s &\asymp \# \mathcal{U}_{L-1,0} \cdot n^{-K s/\theta}  \\
&\asymp  \prod_{j = 1}^{L-1} e^{K(t_{L-j}(s) + \log M - I(t_{L-j}(s)))(\gamma^j - \gamma^{-(L-j)}\theta^{-1})} \\*
&\phantom{\leq}\times e^{K(t_{L-j+1}(s) + \log M - I(t_{L-j+1}(s)) ) (\gamma^{-(L-j)}\theta^{-1} - \gamma^{j-1})  )   }  \\*
&\phantom{\leq}\times N^K e^{(t_1(s) + \log M - I(t_1(s)) )(1/\theta - \gamma^{L-1}) K }  M^{(\gamma - 1)K/\theta}   \\*
&\phantom{\leq}\times n^{-K s/\theta} \qquad \substack{\text{by Lemmas~\ref{lem:simpleindcomb},~\ref{lem:inductioncombinatorics}~\eqref{eq:inductioncombinatorics2}} \\ \text{and (when }l<L-1)\text{ Lemma~\ref{lem:tripleindcomb}~\eqref{eq:tripleindcomb3}} } \\
&\asymp \# (\mathcal{B}_{\lfloor K/\theta\rfloor} \cap \mathrm{supp}(\mu_{\delta,s,\theta,R})) \cdot n^{-Ks/\theta} \asymp \mu_{\delta,s,\theta,R} (\Lambda) \asymp e^{K \cdot G(\theta,s)/(\gamma^L \theta)}. 
\end{align*}

To bound the $s$-cost of $\mathcal{U}_{l,L-l}$ when $l \in \{0,1,\dotsc,L-2\}$, note that by Lemma~\ref{lem:33-first} and Step~1 of Proposition~\ref{prop:1}, 
\begin{align*}
\# &\Big\{ \, \iih \in [M]^{\gamma^{l+1}(K) - \gamma^l(K)}  :  \tau(\iih,\lfloor K/(\gamma^{L-l-1}\theta) \rfloor - \gamma^l(K),\gamma^{l+1}(K) -   \gamma^l(K) ) \\
&\phantom{--}\geq \frac{1}{(\gamma^{l+1}  -  (\gamma^{L-l-1}\theta)^{-1}  )}   \Big(  \Big(  \frac{1}{\gamma^{L-l-2}\theta}  - \frac{1}{\gamma^{L-l-1} \theta} \Big)  t_{L-l-1}(s)  -   ((\gamma^{L-l-2}\theta)^{-1}  -  \gamma^{l+1}) \underline{t}  \Big)  \\
&\phantom{--}\mbox{ and } \tau( \iih, 0,   \lfloor K/(\gamma^{L-l-1}\theta) \rfloor - \gamma^l(K)  )  \geq  T_s(\underline{t}) \, \Big\} \\
&\asymp  e^{-K (\gamma^{l+1}  -  \gamma^{l+1-L}\theta ) I\left(\frac{1}{(\gamma^{l+1}  -  (\gamma^{L-l-1}\theta)^{-1}    )}   \big(  \big(  \frac{1}{\gamma^{L-l-2}\theta}  - \frac{1}{\gamma^{L-l-1} \theta} \big)  t_{L-l-1}(s) - ((\gamma^{L-l-2}\theta)^{-1}  -  \gamma^{l+1}) \underline{t}  \big)\right)  }\\*
&\phantom{\leq}\times e^{K \gamma^{l+1}  -  \gamma^{l+1-L}\theta ) \log M }\cdot e^{K \left(\gamma^{l+1-L}\theta   -  \gamma^l)  (\log M - I(T_s(\underline{t})))   \right)} \\
&\leq  e^{K\left(   (\gamma^{l+1}  -  \gamma^{l+1-L}\theta^{-1}  )\left( \log M - I(t_{L-l-1}(s))    \right)   +  (\gamma^{l+1-L}\theta^{-1}  -  \gamma^l)  (\log M - I(t_{L-l}(s)))   \right)},\stepcounter{equation}\tag{\theequation}\label{eq:combwithincostproof}
\end{align*}
where the last step follows from the case $t_{\mathbf{q}} = ( (1-c+\gamma c) t  -   \gamma c \underline{t} )/(1-c)$ of Lemma~\ref{lem:tripleindcomb}~\eqref{eq:tripleindcomb1}. 

Now, for $l \in \{0,1,\dotsc,L-1\}$, %
\begin{align*}
\sum_{U \in \mathcal{U}_{l,L-l}} |U|^s &\asymp  N^K \prod_{j = 1}^{l} e^{K(t_{L-j}(s) + \log M - I(t_{L-j}(s)))(\gamma^j - \gamma^{-(L-j)}\theta^{-1})} \\*
&\phantom{\leq}\times e^{K(t_{L-j+1}(s) + \log M - I(t_{L-j+1}(s)) ) (\gamma^{-(L-j)}\theta^{-1} - \gamma^{j-1})  )   }  \\*
&\phantom{\leq}\times e^{K((\log M - I(t_{L-l-1}(s)))(\gamma^{l+1} - \gamma^{-(L-l-1)}\theta^{-1})  +  (\log M - I(t_{L-l}))  (\gamma^{-(L-l-1)}\theta^{-1} - \gamma^{l})   )}  \\*
&\phantom{\leq}\times n^{-\gamma^{l}K s}  \\
&\asymp \# (\mathcal{B}_{\gamma^{l}(K)} \cap \mathrm{supp}(\mu_{\delta,s,\theta,R})) \cdot n^{-\gamma^{l}K s} \asymp \mu_{\delta,s,\theta,R} (\Lambda) \asymp e^{K \cdot G(\theta,s)/(\gamma^L \theta)}. 
\end{align*}
In the case $l=L-1$ we used Lemma~\ref{lem:inductioncombinatorics}~\eqref{eq:inductioncombinatorics2} and Lemma~\ref{lem:33-first} and Step~1 of Proposition~\ref{prop:1}. 
In the case $l<L-1$ we used Lemma~\ref{lem:inductioncombinatorics}~\eqref{eq:inductioncombinatorics2} and (in the case when the maximum in~\eqref{eq:tsaverage} does not take the value $\underline{t}$, or equivalently when~\eqref{eq:secondpartverybig} does not hold) Lemma~\ref{lem:tripleindcomb}~\eqref{eq:tripleindcomb1}, and (in the case when the maximum does take the value $\underline{t}$) we used~\eqref{eq:combwithincostproof}. 

Now, for $l \in \{0,1,\dotsc,L-2\}$, 
 \begin{align*}
\sum_{U \in \mathcal{U}_{l,L-l-1}'} |U|^s &\asymp N^K \prod_{j = 1}^{l} e^{K(t_{L-j}(s) + \log M - I(t_{L-j}(s)))(\gamma^j - \gamma^{-(L-j)}\theta^{-1})} \\*
&\phantom{\leq}\times e^{K(t_{L-j+1}(s) + \log M - I(t_{L-j+1}(s)) ) (\gamma^{-(L-j)}\theta^{-1} - \gamma^{j-1})   } \cdot e^{K(\gamma^{l+1} - \gamma^{l+1-L}\theta^{-1} )}  \\*
&\hspace{-2.5cm}\phantom{\leq}\times e^{-K (\gamma^{l+1} - \gamma^{l+1-L}\theta^{-1} ) I\left(  \frac{1}{\gamma^{l+1}  -  (\gamma^{L-l-1}\theta)^{-1}  }   \big(  \big(  \frac{1}{\gamma^{L-l-2}\theta}  - \frac{1}{\gamma^{L-l-1} \theta} \big)  t_{L-l-1}(s)    -   ((\gamma^{L-l-2}\theta)^{-1}  -  \gamma^{l+1}) \underline{t}  \big)  \right)   } \\*
&\phantom{\leq}\times e^{K(\gamma^{l+1-L}\theta^{-1}  -  \gamma^l) ( T_s(\underline{t})  +  \log M - I(T_s(\underline{t})) ) } \cdot  M^{K(\gamma^{l+2-L}\theta^{-1} - \gamma^{l+1})}   \cdot n^{K/(\gamma^{L-l-1}\theta)} \\
&\leq N^K \prod_{j = 1}^{l} e^{K(t_{L-j}(s) + \log M - I(t_{L-j}(s)))(\gamma^j - \gamma^{-(L-j)}\theta^{-1})} \\*
&\phantom{\leq}\times e^{K(t_{L-j+1}(s) + \log M - I(t_{L-j+1}(s)) ) (\gamma^{-(L-j)}\theta^{-1} - \gamma^{j-1})     }   \\*
&\phantom{\leq}\times e^{ K  (\gamma^{l+2-L}\theta^{-1} - \gamma^{l+1-L}\theta^{-1} ) (\log M - I(t_{L-l-1}(s))} \\*
&\phantom{\leq}\times e^{K(\gamma^{l+1-L}\theta^{-1}  -  \gamma^l) ( t_{L-l}(s)  +  \log M - I(t_{L-l}(s))) )  } \cdot  n^{K/(\gamma^{L-l-1}\theta)} \\
&\asymp  \# (\mathcal{B}_{\lfloor K/(\gamma^{L-l-1}\theta) \rfloor}  \cap \mathrm{supp}(\mu_{\delta,s,\theta,R})) \cdot n^{K/(\gamma^{L-l-1}\theta)} \asymp \mu_{\delta,s,\theta,R} (\Lambda) \\
&\asymp e^{K \cdot G(\theta,s)/(\gamma^L \theta)}, 
\end{align*}
where the inequality follows from the case $t_{\mathbf{q}} = ( (1-c+\gamma c) t  -   \gamma c \underline{t} )/(1-c)$ of the proof of Lemma~\ref{lem:tripleindcomb}~\eqref{eq:tripleindcomb2}. 

For $l \in \{0,1,\dotsc,L-2\}$, using Lemma~\ref{lem:tripleindcomb}~\eqref{eq:tripleindcomb2} (and Lemma~\ref{lem:inductioncombinatorics}~\eqref{eq:inductioncombinatorics2}), 
\begin{align*}
\sum_{U \in \mathcal{U}_{l,L-l-1}} |U|^s 
&\asymp N^K \prod_{j = 1}^{l} e^{K(t_{L-j}(s) + \log M - I(t_{L-j}(s)))(\gamma^j - \gamma^{-(L-j)}\theta^{-1})} \\*
&\phantom{\leq}\times e^{K(t_{L-j+1}(s) + \log M - I(t_{L-j+1}(s)) ) (\gamma^{-(L-j)}\theta^{-1} - \gamma^{j-1})    }  \\*
&\phantom{\leq}\times e^{ K (\gamma^{l+2-L}\theta^{-1} - \gamma^{l+1-L}\theta^{-1} ) (\log M - I(t_{L-l-1}(s))} \\*
&\phantom{\leq}\times e^{K(\gamma^{l+1-L}\theta^{-1}  -  \gamma^l) ( t_{L-l}(s)  +  \log M - I(t_{L-l}(s))) )  } n^{K/(\gamma^{L-l-1}\theta)} \\
&\asymp  \# (\mathcal{B}_{\lfloor K/(\gamma^{L-l-1}\theta) \rfloor}  \cap \mathrm{supp}(\mu_{\delta,s,\theta,R})) \cdot n^{K/(\gamma^{L-l-1}\theta)} \\
&\asymp \mu_{\delta,s,\theta,R} (\Lambda) \asymp e^{K \cdot G(\theta,s)/(\gamma^L \theta)}.
\end{align*}

Finally, if $L \geq 3$, $l \in \{0,1,\dotsc,L-3\}$ and $k \in \{1,2,\dotsc, L-l-2\}$, 
\begin{align*}
\sum_{U \in \mathcal{U}_{l,k}} |U|^s 
&\asymp N^K \prod_{j = 1}^{L-k-1} e^{K(t_{L-j}(s) + \log M - I(t_{L-j}(s)))(\gamma^j - \gamma^{-(L-j)}\theta^{-1})}\\*
&\phantom{\leq}\times e^{K(t_{L-j+1}(s) + \log M - I(t_{L-j+1}(s)) ) (\gamma^{-(L-j)}\theta^{-1} - \gamma^{j-1})  }   \\*
&\phantom{\leq}\times e^{K(\gamma^{-k}\theta^{-1}  -  \gamma^{L-k-1})(t_{L-k-1}(s)  +  \log M - I(t_{L-k-1})) } \\*
&\phantom{\leq}\times e^{K(\gamma^{-(k+1)}\theta^{-1} - \gamma^{-k}\theta^{-1})(\log M - I(t_{L-k-2}(s))) } \cdot n^{-Ks/(\gamma^k \theta)} \\
&\asymp \# \mathcal{B}_{\lfloor K/(\gamma^k \theta)\rfloor} \cap \mathrm{supp}(\mu_{\delta,s,\theta,R}) \cdot n^{-Ks/(\gamma^k \theta)} \asymp \mu_{\delta,s,\theta,R} (\Lambda) \asymp e^{K \cdot G(\theta,s)/(\gamma^L \theta)},
\end{align*}
using Lemma~\ref{lem:tripleindcomb}~\eqref{eq:tripleindcomb3} (and Lemma~\ref{lem:inductioncombinatorics}~\eqref{eq:inductioncombinatorics2} and Lemmas~\ref{lem:simpleindcomb},~\ref{lem:33-first} and~\ref{lem:lowertotalmass}). 
We have now bounded the $s$-cost of each part of the cover, so the proof is complete.  
\end{proof}

Note that when we applied Lemma~\ref{lem:inductioncombinatorics}, in the proof of Lemma~\ref{lem:mainuppercost} (for example in~\eqref{eq:mainupperfirstcost}), we used that $t_L(s) < \overline{t}$ for all $s \in [\dim_{\gamma^{-L}} \Lambda,\dim_{\gamma^{-(L-1)}} \Lambda]$, see Lemma~\ref{lem:51}. 
We needed Section~\ref{sec:upperinteger} to establish the inequality $t_L(s) < \overline{t}$ before Section~\ref{sec:mainupper}. 

\subsection{Conclusion and discussion of the proof}\label{sec:proofconclusion} 

We now conclude the proof of Theorem~\ref{thm:main} by combining the upper bounds in Sections~\ref{sec:upperinteger} and~\ref{sec:mainupper} and a lower bound from the mass distribution principle Proposition~\ref{prop:mdp}, which can be applied by virtue of the results in Section~\ref{sec:prooflower}. 

\begin{proof}[Proof of Theorem~\ref{thm:main}]%
Fix $\theta \in (0,1)$ and $s \in (\dim_{\mathrm{H}} \Lambda,\overline{\dim}_{\gamma^{-(L-1)}} \Lambda]$. Then 
\[ \limsup_{\delta\searrow 0} \frac{\log S_{\delta, \theta}^{s}(\Lambda)}{-\log \delta} \leq \frac{G(\theta,s)}{\gamma^L \theta \log n}.\]
This follows from Lemma~\ref{lem:51} and Lemma~\ref{lem:50} with $\btau = (t_1(s),\dotsc, t_{L-1}(s))$ in the case $\theta = \gamma^{-(L-1)}$, and from Lemma~\ref{lem:mainuppercost} in the case $\gamma^{-L} < \theta < \gamma^{-(L-1)}$. 
If $U \subset \mathbb{R}^2$ is Borel with $|U| <1$ then the number of approximate squares of level $\lceil \frac{-\log |U|}{\log n} \rceil$ which $U$ intersects is at most an absolute constant depending only on the carpet. Therefore by Lemma~\ref{lem:lowerallscales}, for all $s' < s$ there exists $\delta_0 > 0$ and $R \in \mathbb{N}$ such that for all $\delta \in (0,\delta_0)$, if $\delta^{1/\theta} \leq |U| \leq \delta$ then $\mu_{\delta,s,\theta,R}(U) < |U|^{s'}$.  This means that we can use Lemma~\ref{lem:lowertotalmass} and apply the mass distribution principle Proposition~\ref{prop:mdp} and deduce that 
\[ \liminf_{\delta\searrow 0} \frac{\log S_{\delta, \theta}^{s'}(\Lambda)}{-\log \delta} \geq \frac{G(\theta,s)}{\gamma^L \theta \log n}.\] %
Since $s' < s$ was arbitrary and $\liminf_{\delta\searrow 0} \frac{\log S_{\delta, \theta}^{s'}(\Lambda)}{-\log \delta}$ is continuous in $s'$ by \cite[Lemma~2.1]{Burrell2021projections}, $\liminf_{\delta\searrow 0} \frac{\log S_{\delta, \theta}^{s}(\Lambda)}{-\log \delta} \geq G(\theta,s)/(\gamma^L \theta \log n)$. Thus 
\[ \frac{\log S_{\delta, \theta}^{s}(\Lambda)}{-\log \delta} \to \frac{G(\theta,s)}{\gamma^L \theta \log n} \mbox{ as } \delta \searrow 0. \]
Therefore by~\eqref{eq:45}, $\dim_{\, \theta} \Lambda$ exists and $G(\theta,\dim_{\, \theta} \Lambda) = 0$. 
By induction $t_1(s),t_2(s),\dotsc,t_L(s)$ are strictly increasing functions of $s$. Thus for fixed $\theta$, $G(\theta,s)$ is strictly decreasing in $s$, so $s=\dim_{\, \theta} \Lambda$ is the only solution $s \in [\dim_{\mathrm{H}} \Lambda, \dim_{\mathrm{B}} \Lambda]$ to the equation $G(\theta,s) = 0$. 
\end{proof}

\begin{remark}\label{rem:pressureexp}
The significance of the pressure function can be illustrated by the simple case $\theta \geq \gamma^{-1}$. Indeed, in this case the optimal cover which gives the smallest possible $s$-cost (up to absolute multiplicative constants depending only on the carpet) involves subdividing a level-$K$ approximate square to a level $k \in \{K,K+1,\dotsc,\lfloor K/\theta \rfloor \}$ which minimises $\psi_{(\ih_{K+1},\dotsc,\ih_{\lfloor K/\theta \rfloor}) | k}(s)$. By considering the symbolic representation of the approximate squares in such a cover, the $s$-cost is, up to multiplicative constants, 
\begin{align*}
 N^K \Psi_{\lfloor K/\theta \rfloor - K}(s) M^{\gamma (K) -\lfloor K/\theta \rfloor} n^{-Ks} &\asymp e^{K( \log N + (\theta^{-1} - 1)P(s) - \theta^{-1} \log M - s\log n)} \\*
 &\asymp e^{K( (\dim_{\mathrm B} \Lambda) \log n - (\theta^{-1} - 1)I(t) - s\log n )}
 \end{align*}
by Proposition~\ref{prop:1}, where $(s,t)$ are related by~\eqref{eq:23}. 
Therefore the exponential growth rate of this $s$-cost is in fact the same as that of the cover constructed in Section~\ref{sec:mainupper} using just the two extreme scales $K$ and $\lfloor K/\theta \rfloor$. 
\end{remark}

\section{Proof of corollaries of Theorem \ref{thm:main}}\label{sec:proofcorollaries}

In this section we prove the corollaries and consequences of Theorem~\ref{thm:main}. 

\subsection{Proof of Corollary \ref{cor:allprop}}\label{subsec:proofform}
\begin{proof}[Proof of part~\eqref{itemi}]
For 
\[ (\theta, s) \in (\gamma^{-L},\gamma^{-(L-1)}) \times (\dim_{\mathrm H} \Lambda, \dim_{\mathrm B} \Lambda) \coloneqq D \subset \mathbb{R}^2,\]
define $G(\theta,s)$ by~\eqref{eq:definemainquantity}. Then $G(\theta,s)$ has continuous partial derivatives of all orders, so is $C^\infty$, on $D$. Moreover, the rate function $I$ is analytic (as the Legendre transform of an analytic function) and compositions of analytic functions are analytic. It follows that for all $(\theta,s) \in D$ and $(\theta_1,s_1) \in \mathbb{R}^2$ there exists $r = r(\theta,s,\theta_1,s_1) > 0$ such that the function $\lambda \mapsto G(\theta+\lambda \theta_1,s+\lambda s_1)$ is real analytic for $\lambda \in (-r,r)$. Therefore by a result of Siciak \cite[Theorem~1]{Siciak1970analytic}, $G(\theta,s)$ is jointly analytic in $(\theta,s) \in D$. Thus by the analytic implicit function theorem, the function $\theta \mapsto \dim_{\, \theta} \Lambda$ (describing the zero set of $G(\theta,s)$) is analytic for $\theta \in (\gamma^{-L},\gamma^{-(L-1)})$. 
\end{proof}

The next lemma gives a formula for the derivative. Recall, if $\theta\in(\gamma^{-L},\gamma^{-(L-1)}]$ then the formula for the intermediate dimension $s(\theta) = \dim_{\, \theta} \Lambda$ is
\begin{equation}\label{eq:61}
\gamma^L\theta \log N - (\gamma^L\theta - 1) t_L(s(\theta)) + \gamma(1-\gamma^{L-1}\theta)\big(\log M - I(t_L(s(\theta)))\big) - s(\theta)\log n = 0.
\end{equation}

\begin{lemma}\label{lem:60}
For all $L\in\mathbb{N}$ and $\theta\in(\gamma^{-L},\gamma^{-(L-1)})$, we have $\partial_- \dim_{\, \theta} \Lambda  =  \partial_+ \dim_{\, \theta} \Lambda = s'(\theta)$, where
\begin{equation*}
s'(\theta) = \frac{\gamma^L}{\log n} \cdot \frac{\log N - t_L(s(\theta))-\log M + I\big(t_L(s(\theta))\big) }{ 1+\big(\gamma^L \theta -1 + \gamma(1-\gamma^{L-1}\theta)I'\big(t_L(s(\theta))\big)\big) \cdot A_L(\theta) },
\end{equation*}
where $I'$ denotes the derivative of the rate function $I$, and 
\begin{equation*}
A_L(\theta)\coloneqq \sum_{\ell=0}^{L-1}\gamma^{\ell}\prod_{m=1}^{\ell}I'\big(t_{L-m}(s(\theta))\big), 
\end{equation*}
with the empty product defined to be 1. It follows that for $\theta=\gamma^{-L}$
\begin{align}
\partial_- \dim_{\gamma^{-L}} \Lambda &= \frac{\gamma^{L+1}}{\log n} \cdot \frac{\log N - t_{L+1}(s(\gamma^{-L}))-\log M + I\big(t_{L+1}(s(\gamma^{-L}))\big) }{ 1+(\gamma-1) \cdot A_{L+1}(\gamma^{-L}) }, \label{eq:62} \\*
\partial_+ \dim_{\gamma^{-L}} \Lambda &= \frac{\gamma^L}{\log n} \cdot \frac{\log N - t_L(s(\gamma^{-L}))-\log M + I\big(t_L(s(\gamma^{-L}))\big) }{ 1+(\gamma-1)I'\big(t_L(s(\gamma^{-L}))\big) \cdot A_L(\gamma^{-L}) }. \label{eq:63}
\end{align}
\end{lemma}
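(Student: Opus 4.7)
The proof is a straightforward implicit differentiation of \eqref{eq:61}, combined with a recursive computation of $\partial t_L(s)/\partial s$. Part~\eqref{itemi} of Corollary~\ref{cor:allprop} already gives analyticity of $\theta \mapsto s(\theta)$ on each open interval $(\gamma^{-L},\gamma^{-(L-1)})$, so $s'(\theta)$ exists there.

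The key preliminary step is the chain-rule identity $\partial_s t_L(s) = (\log n)\, A_L(\theta)$. Indeed, from $t_{\ell}(s) = T_s(t_{\ell-1}(s)) = (s - \log M/\log m)\log n + \gamma I(t_{\ell-1}(s))$ one obtains the recursion
\[
\partial_s t_\ell(s) = \log n + \gamma I'(t_{\ell-1}(s))\, \partial_s t_{\ell-1}(s),\qquad \partial_s t_1(s)=\log n,
\]
and an induction gives $\partial_s t_L(s) = \log n \sum_{\ell=0}^{L-1}\gamma^{\ell}\prod_{m=1}^{\ell}I'(t_{L-m}(s)) = (\log n) A_L(\theta)$.

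Now differentiate \eqref{eq:61} in $\theta$. The explicit $\theta$-dependence contributes
\[
\gamma^L\log N - \gamma^L t_L - \gamma^L \log M + \gamma^L I(t_L),
\]
the coefficient multiplying $dt_L/d\theta = (\partial_s t_L)\, s'(\theta) = (\log n)A_L(\theta)\, s'(\theta)$ is $-\bigl[(\gamma^L\theta -1) + \gamma(1-\gamma^{L-1}\theta)I'(t_L)\bigr]$, and differentiating $-s(\theta)\log n$ contributes $-s'(\theta)\log n$. Collecting terms and solving for $s'(\theta)$ yields exactly the formula in the statement. The denominator is strictly positive on $(\gamma^{-L},\gamma^{-(L-1)})$ since $\gamma^L\theta - 1 > 0$, $1-\gamma^{L-1}\theta > 0$, $I'(t_L) \geq 0$, and $A_L > 0$, so there is no issue with the division.

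For the one-sided derivatives at the phase transition $\theta = \gamma^{-L}$, observe that $L(\gamma^{-L}) = L+1$, so the formula applicable from the right (in $(\gamma^{-L},\gamma^{-(L-1)})$) uses index $L$, while the formula from the left (in $(\gamma^{-(L+1)},\gamma^{-L})$) uses index $L+1$. Continuity of $\theta \mapsto s(\theta)$ at $\gamma^{-L}$ (from~\cite[Proposition~2.1]{FFK2019}) and existence of the one-sided limits of $s'$, together with the mean value theorem, imply that $\partial_\pm \dim_{\gamma^{-L}}\Lambda$ exists and equals the corresponding one-sided limit of $s'(\theta)$. As $\theta \to (\gamma^{-L})^+$ with parameter $L$, we have $\gamma^L\theta - 1 \to 0$ and $\gamma(1-\gamma^{L-1}\theta) \to \gamma - 1$, which produces \eqref{eq:63}. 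As $\theta \to (\gamma^{-L})^-$ with parameter $L+1$, we have $\gamma^{L+1}\theta - 1 \to \gamma - 1$ and $\gamma(1-\gamma^{L}\theta) \to 0$, which yields \eqref{eq:62}. The only real obstacle is careful algebraic bookkeeping; there is no conceptual difficulty beyond the inductive formula for $\partial_s t_L$ and the observation that the two formulas at $\theta = \gamma^{-L}$ use different values of $L$.
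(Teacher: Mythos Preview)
Your proof is correct and follows essentially the same route as the paper: both compute $\partial_s t_L(s) = (\log n) A_L(\theta)$ by the same recursion and induction, then implicitly differentiate \eqref{eq:61} and solve for $s'(\theta)$. Your justification of the one-sided limits at $\theta=\gamma^{-L}$ via continuity of $s$ and the mean value theorem is a slightly more explicit version of the paper's ``by analyticity'', and your remark that the denominator is strictly positive is a welcome addition; otherwise the arguments coincide.
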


\begin{proof}
We first show by induction on $L$ that
\begin{equation}\label{eq:60}
\frac{\mathrm{d}}{\mathrm{d}\theta}t_L(s(\theta)) = s'(\theta)\cdot \log n \cdot A_L(\theta).
\end{equation}
For $L=1$, we have $t_1(s(\theta))=\big(s(\theta)-\frac{\log M}{\log m}\big)\log n$ and $A_1(\theta)=1$, so the claim holds. Assuming~\eqref{eq:60} for $L-1$, we now prove for $L$:
\begin{align*}
\frac{\mathrm{d}}{\mathrm{d}\theta}t_L(s(\theta)) &= \frac{\mathrm{d}}{\mathrm{d}\theta}T_{s(\theta)}\big(t_{L-1}(s(\theta))\big) = s'(\theta)\cdot \log n + \gamma I'\big( t_{L-1}(s(\theta)) \big)\cdot \frac{\mathrm{d}}{\mathrm{d}\theta}t_{L-1}(s(\theta)) \\
&= s'(\theta)\cdot \log n + \gamma I'\big( t_{L-1}(s(\theta)) \big)\cdot s'(\theta)\cdot \log n \cdot A_{L-1}(\theta) \\
&= s'(\theta)\cdot \log n \cdot \bigg( 1+ \sum_{\ell=0}^{L-2}\gamma^{\ell+1}\prod_{m=0}^{\ell}I'\big(t_{L-1-m}(s(\theta))\big) \bigg) \\
&= s'(\theta)\cdot \log n \cdot A_L(\theta),
\end{align*}
completing the proof of~\eqref{eq:60}.

Differentiating~\eqref{eq:61} with respect to $\theta$,
\begin{multline*}
s'(\theta)\cdot \log n = \gamma^L \log N - \gamma^L t_L(s(\theta)) -(\gamma^L \theta-1)\frac{\mathrm{d}}{\mathrm{d}\theta}t_L(s(\theta)) \\*
-\gamma^L\big( \log M - I\big(t_L(s(\theta)) \big) \big)  -\gamma(1-\gamma^{L-1}\theta) \frac{\mathrm{d}}{\mathrm{d}\theta}I\big(t_L(s(\theta))\big).
\end{multline*}
Using~\eqref{eq:60}, after rearranging we obtain the formula for $s'(\theta)$. The claims for $\theta=\gamma^{-L}$ follow by analyticity.
\end{proof}

\begin{proof}[Proof of part~\eqref{itemii}]
Follows directly from Lemma~\ref{lem:60}.
\end{proof}

The following lemma describes the behaviour of $t_{L(\theta)} (\dim_{\, \theta} \Lambda)$ for small $\theta$. 
\begin{lemma}\label{lem:tlconvergence}
We have \begin{itemize} \item $\lim_{L \to \infty} t_{L-1}(\dim_{\gamma^{-(L-1)}} \Lambda) = \liminf_{\theta \to 0^+} t_{L(\theta)} (\dim_{\, \theta} \Lambda) = t^*$
\item $\lim_{L \to \infty} t_L(\dim_{\gamma^{-(L-1)}} \Lambda) = \limsup_{\theta \to 0^+} t_{L(\theta)} (\dim_{\, \theta} \Lambda) = T_{\dim_{\mathrm H} \Lambda}(t^*)$
\end{itemize}
\end{lemma}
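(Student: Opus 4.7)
My plan is to deduce the two genuine limits along the sequence $\theta_L=\gamma^{-(L-1)}$ from the simplification of~\eqref{eq:mainformula} at this phase‑transition point, and then read off the $\liminf$/$\limsup$ statements by combining these with continuity and monotonicity of $\theta\mapsto\dim_{\,\theta}\Lambda$ on each interval $(\gamma^{-L},\gamma^{-(L-1)}]$ on which $L(\theta)$ is constant.

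First I would substitute $\theta=\gamma^{-(L-1)}$, where $L(\theta)=L$, into~\eqref{eq:mainformula}; the factor $(1-\gamma^{L-1}\theta)$ vanishes and a short manipulation using $t_L(s)=T_s(t_{L-1}(s))$ collapses the equation to $\dim_{\mathrm B}\Lambda-(1-\gamma^{-1})I(t_{L-1}(s))/\log m-s=0$, with $s=\dim_{\gamma^{-(L-1)}}\Lambda$. By continuity of $\theta\mapsto\dim_{\,\theta}\Lambda$ at $\theta=0$~\cite[Proposition~4.1]{FFK2019} we have $s\to\dim_{\mathrm H}\Lambda$, so $I(t_{L-1}(s))\to(\dim_{\mathrm B}\Lambda-\dim_{\mathrm H}\Lambda)\log m/(1-\gamma^{-1})=I(t^*)$ by the definition~\eqref{eq:44}. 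Lemma~\ref{lem:51} keeps $t_{L-1}(s)$ inside a subinterval of $(\underline{t},\overline{t})$ on which $I$ is strictly increasing and continuous, so inversion yields $t_{L-1}(\dim_{\gamma^{-(L-1)}}\Lambda)\to t^*$. Applying the jointly continuous map $(s,t)\mapsto T_s(t)$ to $(s,t_{L-1}(s))\to(\dim_{\mathrm H}\Lambda,t^*)$ then gives $t_L(\dim_{\gamma^{-(L-1)}}\Lambda)=T_s(t_{L-1}(s))\to T_{\dim_{\mathrm H}\Lambda}(t^*)$, settling the first bullet point entirely.

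For the $\liminf$/$\limsup$, I would note that on each interval $(\gamma^{-L},\gamma^{-(L-1)}]$ the index $L(\theta)$ equals $L$, and the map $\theta\mapsto t_L(\dim_{\,\theta}\Lambda)$ is continuous (by continuity of $\theta\mapsto\dim_{\,\theta}\Lambda$~\cite[Proposition~2.1]{FFK2019}) and monotone nondecreasing in $\theta$ (combining weak monotonicity of $\theta\mapsto\dim_{\,\theta}\Lambda$, valid for every bounded set, with strict monotonicity of $s\mapsto t_L(s)$, which follows by an easy induction from the definition of $T_s$). Hence its values on the interval lie between the right‑limit $t_L(\dim_{\gamma^{-L}}\Lambda)$ and the attained value $t_L(\dim_{\gamma^{-(L-1)}}\Lambda)$. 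The identical manipulation applied at $\theta=\gamma^{-L}$ (where $L(\theta)=L+1$, the factor $(1-\gamma^L\theta)$ vanishes, and the equation reduces to $\dim_{\mathrm B}\Lambda-(1-\gamma^{-1})I(t_L(s))/\log m-s=0$) yields $t_L(\dim_{\gamma^{-L}}\Lambda)\to t^*$. Combined with $t_L(\dim_{\gamma^{-(L-1)}}\Lambda)\to T_{\dim_{\mathrm H}\Lambda}(t^*)$ from the previous paragraph, the $\liminf$ and $\limsup$ assertions follow, since both limits are approached via the two endpoint behaviours on the intervals.

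The step requiring the most care will be the discontinuous dependence of $L(\theta)$ on $\theta$: at $\theta=\gamma^{-L}$ the integer $L(\theta)$ jumps from $L$ to $L+1$, so the value of $t_{L(\theta)}(\dim_{\,\theta}\Lambda)$ at this point equals $t_{L+1}(\dim_{\gamma^{-L}}\Lambda)=T_{\dim_{\gamma^{-L}}\Lambda}(t_L(\dim_{\gamma^{-L}}\Lambda))$ rather than the right‑hand limit $t_L(\dim_{\gamma^{-L}}\Lambda)$. Fortunately this jump is harmless for the $\liminf$/$\limsup$, because by joint continuity of $T_s(t)$ the jump values also converge to $T_{\dim_{\mathrm H}\Lambda}(t^*)$ as $L\to\infty$, matching the $\limsup$ already identified from the behaviour at $\theta=\gamma^{-(L-1)}$.
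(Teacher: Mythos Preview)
Your argument is correct and follows essentially the same route as the paper: simplify~\eqref{eq:mainformula} at $\theta=\gamma^{-(L-1)}$ to obtain $\dim_{\mathrm B}\Lambda-(1-\gamma^{-1})I(t_{L-1}(s))/\log m-s=0$, use continuity at $\theta=0$ and the definition~\eqref{eq:44} of $t^*$ to force $t_{L-1}(\dim_{\gamma^{-(L-1)}}\Lambda)\to t^*$, push through $T_s$ for the second limit, and then sandwich $t_{L(\theta)}(\dim_{\,\theta}\Lambda)$ on each interval $(\gamma^{-L},\gamma^{-(L-1)}]$ using monotonicity of $s\mapsto t_L(s)$ and of $\theta\mapsto\dim_{\,\theta}\Lambda$. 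Your invocation of Lemma~\ref{lem:51} to justify inverting $I$, and your explicit treatment of the jump in $L(\theta)$ at $\theta=\gamma^{-L}$, are in fact slightly more careful than the paper's presentation; the only cosmetic slip is the phrase ``settling the first bullet point entirely'' after the first paragraph, where you have actually established the two \emph{sequence} limits (one from each bullet), not the $\liminf$ part of the first bullet.
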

\begin{proof}
By~\eqref{eq:44}, the equation that $\dim_{\gamma^{-(L-1)}} \Lambda$ satisfies from Theorem~\ref{thm:main}, and the fact that the intermediate dimensions and rate function are continuous, we have $t_{L-1}(\dim_{\gamma^{-(L-1)}} \Lambda) \to t^*$ as $L \to \infty$. It follows that $t_L(\dim_{\gamma^{-(L-1)}} \Lambda) \to T_{\dim_{\mathrm H} \Lambda}(t^*)$, and that $\limsup_{\theta \to 0^+} t_{L(\theta)} (\dim_{\, \theta} \Lambda) \geq T_{\dim_{\mathrm H} \Lambda}(t^*)$. By considering $\theta > \gamma^{-L}$ very close to $\gamma^{-L}$, we see that $\liminf_{\theta \to 0^+} t_{L(\theta)} \dim_{\, \theta} \Lambda \leq t^*$. If $\gamma^{-L} < \theta \leq \gamma^{-(L-1)}$ then $t_L(\dim_{\gamma^{-L}} \Lambda) < t_L(\dim_{\, \theta} \Lambda) \leq t_L(\dim_{\gamma^{-(L-1)}} \Lambda)$. Therefore 
\begin{align*}
 t^* = \lim_{L \to \infty} t_L(\dim_{\, \theta} \Lambda) \leq \liminf_{\theta \to 0^+} t_{L(\theta)} (\dim_{\, \theta} \Lambda) &\leq \limsup_{\theta \to 0^+} t_{L(\theta)} (\dim_{\, \theta} \Lambda) \\
 &\leq \lim_{L \to \infty} t_L(\dim_{\gamma^{-(L-1)}} \Lambda) \\*
 &= T_{\dim_{\mathrm H} \Lambda}(t^*),
 \end{align*}
completing the proof. 
\end{proof}

\begin{proof}[Proof of part~\eqref{itemiii}]
For brevity, let us write 
\begin{equation}\label{eq:64}
s'(\theta) = \frac{\gamma^L}{\log n}\cdot \frac{f_L(\theta)}{1+g_L(\theta)A_L(\theta)}.
\end{equation}
Lemma~\ref{lem:51} ensures that $\underline{t}<t_1(\dim_{\mathrm{H}}\Lambda)<t_{\ell}(s(\theta))<T_{\dim_{\mathrm{H}}\Lambda}(t^{\ast})<\overline{t}$ for all $1\leq \ell\leq L$. Using that $I$ is strictly increasing and convex, there exist constants $c_1,c_1',c_2,c_2'>0$ independent of $\theta$ such that for all $t_1(\dim_{\mathrm{H}}\Lambda)\leq t\leq T_{\dim_{\mathrm{H}}\Lambda}(t^{\ast})$,
\begin{equation*}
0<c_1< I(t) <c_1'<\log M - H(\mathbf{P}) \;\text{ and }\; 0<c_2< I'(t) <c_2'<1.
\end{equation*}
Hence, recalling that $\overline{t} \coloneqq \log N - H(\mathbf{P})$ and $I(\overline{t}) = \log M - H(\mathbf{P})$, there exists $c_3>0$ such that the numerator
\begin{equation*}
f_L(\theta)= \overline{t} - t_L(s(\theta)) - \big( I(\overline{t}) - I\big(t_L(s(\theta))\big) \big) \geq c_3 >0.
\end{equation*}
Furthermore, there also exists $c_4>0$ such that $0<c_4\leq g_L(\theta)\leq c_4^{-1}<\infty$. Therefore,
\begin{equation*}
s'(\theta)\geq \frac{c_3}{\log n} \cdot \frac{\gamma^L}{1+c_4^{-1}A_L(\theta)} \geq \frac{c_3}{\log n} \cdot \frac{\gamma^L}{1+c_4^{-1}\cdot \frac{\gamma^L-1}{\gamma-1}} \geq \frac{c_3}{\log n \big(\gamma^{-1}+\frac{c_4^{-1}}{\gamma-1}\big)} =:C_0 > 0. 
\end{equation*}%

Next we show that $\partial_- \dim_{\gamma^{-L}} \Lambda < \partial_+ \dim_{\gamma^{-L}} \Lambda$ for all $L\in\mathbb{N}$ from~\eqref{eq:62} and~\eqref{eq:63}. We can divide both~\eqref{eq:62} and~\eqref{eq:63} by $\gamma^L/\log n$. We claim that
\begin{equation*}
\gamma^{-1}\cdot  A_{L+1}(\gamma^{-L}) - I'\big(t_L(s(\gamma^{-L}))\big) \cdot A_L(\gamma^{-L}) = \gamma^{-1},
\end{equation*} 
implying $\gamma^{-1}\cdot (1+g_{L+1}(\gamma^{-L})\cdot A_{L+1}(\gamma^{-L})) = 1+g_{L}(\gamma^{-L})\cdot A_{L}(\gamma^{-L})$. Indeed, applying the definition of $A_{L+1}(\gamma^{-L})$ and $A_{L}(\gamma^{-L})$, observe that it is a telescopic sum with only $\gamma^{-1}$ not cancelling out. Since we also have 
\begin{equation*}
t_{L+1}(s(\gamma^{-L})) - t_{L}(s(\gamma^{-L})) -\big( I\big(t_{L+1}(s(\gamma^{-L}))\big) - I\big(t_{L}(s(\gamma^{-L}))\big) \big)>0 
\end{equation*}
for the same reason that $f_L(\theta)>0$, it follows that $\partial_- \dim_{\gamma^{-L}} \Lambda < \partial_+ \dim_{\gamma^{-L}} \Lambda$. Finally, by Lemma~\ref{lem:tlconvergence}, 
\[ \frac{\partial_+ \dim_{\gamma^{-L}} \Lambda}{\partial_- \dim_{\gamma^{-L}} \Lambda} \xrightarrow[(L \to \infty)]{} \frac{\log N - t^* - \log M + I(t^*)}{\log N - T_{\dim_{\mathrm H} \Lambda}(t^*) - \log M + I(T_{\dim_{\mathrm H} \Lambda}(t^*))} \in (1,\infty). \qedhere \]%
\end{proof}

\begin{proof}[Proof of part~\eqref{itemiv}]
The idea is to estimate $L(\theta)$ (which is the number of the $t_i(s(\theta))$) in terms of $s(\theta) \coloneqq \dim_{\, \theta} \Lambda$. We do this by using the fact that $t'$ is a neutral fixed point of the function $T_{\dim_{\mathrm H} \Lambda}$, and 
\[ t_1(s(\theta)) \xrightarrow[\theta \to 0^+]{} \left(\dim_{\mathrm H} \Lambda - \frac{\log M}{\log m}\right) \log n < t' \] 
and $\liminf_{\theta \to 0^+} t_{L(\theta)}(s(\theta)) > t'$ by Lemmas~\ref{lem:tlconvergence} and~\ref{lem:tprimelesststar}, so most of the $t_i(s(\theta))$ lie close to $t'$. 

By Lemma~\ref{lem:41} and Taylor's theorem, since $T_s'(t') = 1$ and $T_s''(t') > 0$, there exists $c \geq 1$ such that for all $t \in (\underline{t},\overline{t})$, 
\begin{align}\label{eq:taylor}
\begin{split}
 T_s(t') + T_s'(t')(t-t') + c^{-1}(t-t')^2 &\leq T_s(t) \leq T_s(t') + T_s'(t')(t-t') + c(t-t')^2; \\*
t +  c^{-1}(t-t')^2 &\leq T_s(t) \leq t + c(t-t')^2.
\end{split}
\end{align}
If $L$ is large enough and $k_L \coloneqq \big\lfloor \max\left\{ L/10 , \frac{2}{\log 2} \log \left( \frac{\overline{t} - \underline{t}}{L(s(\theta)-\dim_{\mathrm H}\Lambda)\log n}\right) \right\} \big\rfloor$ then 
\begin{equation*}
t'-2^{k_L}\frac{L(s(\theta)-\dim_{\mathrm H}\Lambda)\log n}{4} < \underline{t} \;\text{ and }\; t' + 2^{k_L}\frac{L(s(\theta)-\dim_{\mathrm H}\Lambda)\log n}{4} > \overline{t}. 
\end{equation*}

Suppose $k \in \{1,2,\dotsc,k_L\}$. Then by~\eqref{eq:taylor}, 
\begin{align*}
 & \# \left\{ i \in \{1,2,\dotsc,L(\theta) \} : t' - 2^k\frac{L(s(\theta)-\dim_{\mathrm H}\Lambda)\log n}{4} < t_i(s(\theta)) \leq 2^{k-1} \frac{L(s(\theta)-\dim_{\mathrm H}\Lambda)\log n}{4} \right\} \\
 & \leq 1 + \frac{16c}{2^k L(s(\theta)-\dim_{\mathrm H}\Lambda)\log n}. 
 \end{align*}
 Summing up, it follows that 
 \[ \# \left\{  i : t_i(s(\theta)) \leq t' - \frac{L(s(\theta)-\dim_{\mathrm H}\Lambda)\log n}{4} \right\} \leq k_L +  \frac{16c}{L(s(\theta)-\dim_{\mathrm H}\Lambda)\log n}, \]
 and we similarly obtain the same bound for the number of $t_i(s)$ which are greater than $t' + \frac{L(s(\theta) + \dim_{\mathrm H}\Lambda)\log n}{4}$. 
 But 
 \[ \# \left\{ i : t' - \frac{L(s(\theta)-\dim_{\mathrm H}\Lambda)\log n}{4} < t_i(s(\theta)) \leq t' + \frac{L(s(\theta)-\dim_{\mathrm H}\Lambda)\log n}{4} \right\} \leq 1 + L/2. \]
 Therefore for $L$ sufficiently large, 
 \[ L \leq  2 k_L +  \frac{32c}{L(s(\theta)-\dim_{\mathrm H}\Lambda)\log n} + 1 + \frac{L}{2} \leq 0.9 L + \frac{40c}{ L(s(\theta)-\dim_{\mathrm H}\Lambda)\log n}. \]
 Decreasing $\theta_0$ further if required, this tells us that for all $\theta < \theta_0$, 
 \[ s(\theta) \leq  \dim_{\mathrm H} \Lambda +  \frac{400c}{L^2 \log n}  \leq  \dim_{\mathrm H} \Lambda + \frac{500c\cdot (\log \gamma)^2}{(\log \theta)^2 \log n} , \]
 proving the upper bound. This shows in particular that $L(s(\theta)-\dim_{\mathrm H}\Lambda)\log n \to 0$ as $\theta \to 0^+$. 
 
 For the lower bound, we may decrease $\theta_0$ further to assume that for all $\theta<\theta_0$, $t_1(\dim_{\mathrm H} \Lambda) < t_1(s(\theta)) < t' - 3L(s(\theta)-\dim_{\mathrm H}\Lambda)\log n$ and $t_L(s(\theta)) > t'$ by Lemmas~\ref{lem:tlconvergence} and~\ref{lem:tprimelesststar}. Then by~\eqref{eq:taylor}, for large enough $L$, 
 \begin{align*}
  L &\geq \# \left\{ i : t' - 3L(s(\theta)-\dim_{\mathrm H}\Lambda)\log n \leq t_i(s(\theta)) \leq t' - L(s(\theta)-\dim_{\mathrm H}\Lambda)\log n \right\} \\
  &\geq \frac{2L(s(\theta)-\dim_{\mathrm H}\Lambda)\log n}{1.1((s(\theta)-\dim_{\mathrm H}\Lambda)\log n + c( 3L(s(\theta)-\dim_{\mathrm H}\Lambda)\log n)^2)}. 
  \end{align*}%
 Rearranging, for large enough $L$,  
 \[ s(\theta) \geq \dim_{\mathrm H}\Lambda + \frac{0.9}{L^2 \cdot 9.9 c \log n} \geq  \dim_{\mathrm H}\Lambda + \frac{(\log \gamma)^2}{20c \log n (\log \theta)^2 }. \qedhere \]
\end{proof} 

\begin{proof}[Proof of part~\eqref{itemv}]
One can differentiate~\eqref{eq:64} to obtain
\begin{equation*}
s''(\theta) = \frac{\gamma^L\cdot \big( f'_L(\theta)(1+g_L(\theta)A_L(\theta)) - f_L(\theta)(g'_L(\theta)A_L(\theta)+g_L(\theta)A'_L(\theta)) \big)}{(1+g_L(\theta)A_L(\theta))^2\log n}.
\end{equation*}
The sign of $s''(\theta)$ is determined by the sign of the term in parenthesis in the numerator. We know that there exists $c>0$ such that $f_L(\theta), g_L(\theta), A_L(\theta)\geq c$. There also exists $c'>0$ such that
\begin{equation*}
f'_L(\theta) = \underbrace{\big( I'\big(t_L(s(\theta))\big)-1 \big)}_{\leq -c'<0}\cdot \underbrace{\frac{\mathrm{d}}{\mathrm{d}\theta}t_L(s(\theta))}_{\geq c'>0 \text{ by~\eqref{eq:60}}} \leq -(c')^2<0
\end{equation*}
and $A'_L(\theta)>c'$ because all $I'\big(t_{\ell}(s(\theta))\big)$ and $I''\big(t_{\ell}(s(\theta))\big)$ are uniformly positive. Finally,
\begin{equation*}
g'_L(\theta) = \gamma^L \underbrace{\big( 1-I'\big(t_L(s(\theta))\big) \big)}_{\geq c'>0} + \gamma \underbrace{( 1-\gamma^{L-1}\theta)}_{\geq 0} \underbrace{I''\big( t_L(s(\theta)) \big)}_{>0}\cdot \underbrace{\frac{\mathrm{d}}{\mathrm{d}\theta}t_L(s(\theta))}_{\geq c'>0} \geq \gamma^L c'>0.
\end{equation*}
Hence, $s''(\theta)\leq C<0$ for some uniform constant $C>0$, implying that $s(\theta)$ is strictly concave on every interval $[\gamma^{-L},\gamma^{-(L-1)}]$.
\end{proof}
Note that $s''(\theta)\leq C<0$ holds for a constant $C$ that is independent of both $L$ and $\theta$. 

\subsection{Proof of Corollary \ref{cor:twoscales}}
We now use the fact that $\dim_{\, \theta} \Lambda$ is strictly increasing in $\theta$ to prove Corollary~\ref{cor:twoscales}. 

\begin{proof}[Proof of Corollary~\ref{cor:twoscales}]
Fix $\eta>0$ small enough that $\dim_{\mathrm H} \Lambda + \eta < \dim_{\mathrm B} \Lambda$. Since $\dim_{\, \theta} \Lambda$ exists and is continuous in $\theta$ including at $\theta=0$ (see Theorem~\ref{thm:main} and \cite[Proposition~4.1]{Falconer2020firstintermediate}), we can fix $\theta_1 < 1/\gamma$ small enough that $\dim_{\, \theta_1} \Lambda + \eta < \dim_{\mathrm B} \Lambda$. 
Let $s \in (\dim_{\mathrm H} \Lambda, \dim_{\, \theta_1} \Lambda + \eta]$, and for each $\delta \in (0,1)$, let $K = K(\delta) \in \mathbb{N}$ be such that $n^{-K} \leq \delta < n^{-(K-1)}$. 
For some string $\ii$, let $\mathcal{B}_{\lfloor K/\theta\rfloor}^{K,\ii}$ denote the set of level-$\lfloor K/\theta\rfloor$ approximate squares within the level-$K$ approximate square $B_{K}(\ii)$. For each $B_{K}(\ii)$ it is more cost efficient (in terms of $s$-cost, up to irrelevant multiplicative constants depending only on $\Lambda$) to subdivide it into level-$\lfloor K/\theta\rfloor$ approximate squares if and only if
\begin{equation}\label{eq:50}
n^{-Ks} \geq \#\mathcal{B}_{\lfloor K/\theta\rfloor}^{K,\ii} \cdot n^{-sK/\theta}.
\end{equation}

To determine $\#\mathcal{B}_{\lfloor K/\theta\rfloor}^{K,\ii}$ for some $\theta < 1/\gamma$, we compare the sequences that define $B_{K}(\ii)$ and a level-$\lfloor K/\theta\rfloor$ approximate square within it:
\[
\begin{array}{c|c|c|ccc}
	i_1  \;\dotsb\;  i_{K} & \ih_{K+1}  \;\dotsb\;  \ih_{\gamma(K)} &   &&& \\
	\underbrace{j_1  \;\dotsb\;  j_{K}}_{\text{equal}} & \underbrace{j_{K+1} \;\dotsb\; j_{\gamma(K)}}_{\text{same column}} & \underbrace{j_{\gamma(K)+1} \;\dotsb\; j_{\lfloor K/\theta \rfloor }}_{\in[N] \text{ freely}} & \underbrace{\jh_{\lfloor K/\theta \rfloor+1} \;\dotsb\; \jh_{\gamma(K/\theta)}}_{\in[M] \text{ freely}}.
\end{array}
\]
Thus, $\#\mathcal{B}_{\lfloor K/\theta\rfloor}^{K,\ii} = N^{\lfloor K/\theta \rfloor - \gamma(K)}\cdot M^{\gamma(K/\theta)- \lfloor K/\theta \rfloor }\cdot \prod_{\ell=K+1}^{\gamma(K)}N_{\ih_{\ell}}$. Substituting this back into~\eqref{eq:50}, we get after algebraic manipulations that it is more cost-efficient to subdivide if and only if 
\begin{equation*}
s\geq \frac{\theta}{(1-\theta)}\frac{\gamma-1}{\log n} \Bigg( \frac{1}{\gamma(K)-K} \sum_{\ell=K+1}^{\gamma(K)} \log N_{\ih_{\ell}} \Bigg)  
+ \frac{\theta}{1-\theta}( 1/\theta-\gamma ) \frac{\log N}{\log n} + \frac{\gamma-1}{1-\theta}\frac{\log M}{\log n}. 
\end{equation*}
But since $s \leq \dim_{\, \theta_1} \Lambda + \eta$, if is more cost-efficient to subdivide then the following condition for the average must hold: 
\begin{equation*}
	\frac{1}{\gamma(K)-K} \sum_{\ell=K+1}^{\gamma(K)} \log N_{\ih_{\ell}} \leq \log (N/M) - \Big(\frac{1}{\theta}-1\Big) \frac{\log n}{\gamma-1} ( \dim_{\mathrm B} \Lambda - \dim_{\, \theta_1} \Lambda - \eta).
\end{equation*}

As $\theta\to 0$, the right-hand side tends to $-\infty$, so there exists $\theta_0 < \theta_1 /2$ small enough that for all $\theta \leq 2\theta_0$ it is more cost efficient not to subdivide any of the level-$K$ approximate squares, if using only scales $\delta$ and $\delta^{1/\theta}$. 
Now, since $\dim_{\, \theta} \Lambda$ is strictly increasing in $\theta$ by Corollary~\ref{cor:allprop}, there exists $\epsilon < \eta$ small enough that $\dim_{\, \theta_0} \Lambda + \epsilon < \dim_{2\theta_0} \Lambda$. Then by the definition of $\theta_0$ and since $\dim_{\, \theta_1} \Lambda + \eta < \dim_{\mathrm B} \Lambda$, there exists $\delta_1 < 1$ such that if $\{U_i\}$ is a cover of $\Lambda$ using just two scales $\delta$ and $\delta'$ with $\delta' \leq \delta^{1/(2\theta_0)} < \delta \leq \delta_1$, then for all $\theta \leq 2\theta_0$, 
\[ \sum_i |U_i|^{\dim_{\, \theta} \Lambda + \epsilon} \geq \sum_i |U_i|^{\dim_{\, \theta_1} \Lambda + \eta} \geq 1.\]
Since  $\dim_{\, \theta_0} \Lambda + \epsilon < \dim_{2\theta_0} \Lambda$, there exists $\delta_0 < \delta_1$ such that if $\{U_i\}$ is a cover using just two scales $\delta$, $\delta'$ with $\delta^{1/(2\theta_0)} < \delta' \leq \delta \leq \delta_0$, then for all $\theta \leq \theta_0$, $\sum_i |U_i|^{\dim_{\, \theta} \Lambda + \epsilon} \geq \sum_i |U_i|^{\dim_{\, \theta_0} \Lambda + \epsilon} \geq 1$, completing the proof. 
\end{proof}

\subsection{Connection to multifractal analysis, proof of Theorem \ref{thm:multifractal}}\label{s:multifractalproof}

We use primes to denote the parameters of $\Lambda'$, and use notation from Section~\ref{subsec:multifractal}. 
In particular, in this section only, $I'$ will denote the rate function associated with $\Lambda'$, and not the derivative of $I$. 

\begin{lemma}\label{lem:multifractallemma}
Let $\Lambda$ and $\Lambda'$ be two Bedford--McMullen carpets with non-uniform fibres defined on grids of size $m \times n$ and $m' \times n'$ respectively. 
If either of~\eqref{item1} or~\eqref{itemnow2} from Theorem~\eqref{thm:multifractal} hold, then $\frac{\log n}{\log n'} = \frac{\log m}{\log m'} \in \mathbb{Q}$. 
\end{lemma}

\begin{proof}
First assume that~\eqref{item1} holds. 
Since the intermediate dimensions of $\Lambda$ and $\Lambda'$ have phase transitions at $\gamma^{-1}$ and $(\gamma')^{-1}$ respectively (see Corollary~\ref{cor:allprop}), we must have $\gamma=\gamma'$. To show that $\frac{\log n'}{\log n} \in \mathbb{Q}$, note that Theorem~\ref{thm:main} and the equality of the intermediate dimensions for $\theta \in (\gamma^{-1},1)$ tells us that for an open interval of $s$, 
\[ \frac{1}{\log n} I\left(\left(s-\frac{\log M}{\log m}\right)\log n\right) = \frac{1}{\log n'} I'\left(\left(s-\frac{\log M'}{\log m'}\right)\log n'\right). \]
Taking Legendre transforms of both sides and using scaling properties of Legendre transforms, using~\eqref{eq:22}, for all $\lambda$,  
\begin{equation}\label{eq:rational1}
 \frac{1}{\log n}\log \left( \frac{1}{M}\sum_{\ih=1}^{M_0}
 R_{\ih} N_{\ih}^{\lambda} \right) + \lambda \frac{\log M}{\log m}  =
  \frac{1}{\log n'}\log \left( \frac{1}{M'}\sum_{\jh=1}^{M_0'} R'_{\jh}  (N'_{\jh})^{\lambda} \right) + \lambda \frac{\log M'}{\log m'}.  
  \end{equation}
  Fix $K \in \mathbb{N}$ large enough that \[ N_1^{\frac{\log n'}{\log n}} (N_2/N_1)^K  <   N'_{M_0'} \cdot (n')^{\left( \frac{\log M'}{\log m'} - \frac{\log M}{\log m}\right)}.\] 
  Then exponentiating~\eqref{eq:rational1}, 
\begin{align*}
  &\frac{M^{\frac{\log n'}{\log n}}}{M'} \sum_{\jh=1}^{M_0'} R'_{\jh} \left( N'_{\jh} \cdot (n')^{\left( \frac{\log M'}{\log m'} - \frac{\log M}{\log m}\right)}\right)^{\lambda} =  \left( \sum_{\ih=1}^{M_0} R_{\ih} N_{\ih}^{\lambda} \right)^{\frac{\log n'}{\log n}}  \\
  &\phantom{--}= (R_1 N_1^\lambda)^{\frac{\log n'}{\log n}} \left( \sum_{k=0}^{K}  \frac{\frac{\log n'}{\log n} \left(\frac{\log n'}{\log n} - 1\right) \dotsb \left(\frac{\log n'}{\log n} - k + 1 \right) }{k!} \left( \sum_{\ih=2}^{M_0}  \frac{R_{\ih}}{R_1}\left(\frac{N_{\ih}}{N_1}\right)^{\lambda} \right)^{k}  \right)  \\*
  &\phantom{----}+  O\left(\left( N_1^{\frac{\log n'}{\log n}} \left(\frac{N_2}{N_1}\right)^{K+1}\right)^{\lambda}  \right)  
  \end{align*}
as $\lambda \to +\infty$, by the generalised binomial theorem. Therefore the coefficient of $N_1^{\frac{\log n'}{\log n}} (N_2/N_1)^{K \lambda}$, which is a polynomial equation in $\frac{\log n'}{\log n}$ with rational coefficients, must be 0. So $\frac{\log n'}{\log n}$ is algebraic. But $n^{\frac{\log n'}{\log n}} = n'$, so by the Gelfond--Schneider theorem, $\frac{\log n'}{\log n} \in \mathbb{Q}$. 

Now we assume only~\eqref{itemnow2}. We first show that $\frac{\log m'}{\log m} \in \mathbb{Q}$. Since the functions $\beta_{\nu}$ and $\beta_{\nu'}$ are equal, using~\eqref{eq:definebeta} with the change of variable $\lambda \coloneqq \gamma^{-1} + (1-\gamma^{-1}) \xi$, 
\[ \frac{ \frac{\lambda - \gamma^{-1}}{1-\gamma^{-1}} \log N - \log \sum_{\ih=1}^{M_0} R_{\ih} N_{\ih}^{\lambda}}{\log m} 
= \frac{ \frac{\lambda - \gamma^{-1}}{1-\gamma^{-1}} \log N' - \log \sum_{\jh=1}^{M'_0} 
{R_{\jh}N_{\jh}'}^{(\gamma')^{-1} + 
(1-(\gamma'))^{-1}
\frac{\lambda - \gamma^{-1}}{1-\gamma^{-1}} } }{\log m'}\] 
for all $\lambda$. A similar argument to the above using the generalised binomial theorem and Gelfond--Schneider theorem now gives that $\frac{\log m'}{\log m} \in \mathbb{Q}$. 
To show moreover that $\gamma = \gamma'$, note that these quantities are bi-Lipschitz invariants which depend only on the respective carpets, not the choice of iterated function system (see \cite[Theorem~3.3]{Fraser2018secondassouad}). So since $m$ and $m'$ are multiplicatively dependent, we can iterate the IFS to assume without loss of generality that $\Lambda$ is defined on an $m \times n$ grid, and $\Lambda'$ is defined on an $m \times n'$ grid, for the same $m$. Then by~\eqref{eq:definebeta}, 
\begin{equation}\label{eq:rational2} N^{-\xi} \sum_{\ih=1}^{M_0} R_{\ih} N_{\ih}^{\gamma^{-1} + (1-\gamma^{-1})\xi}  =  {N'}^{-\xi} \sum_{\jh=1}^{M'_0} R_{\jh}{N_{\jh}'}^{(\gamma')^{-1} + (1-(\gamma')^{-1})\xi} 
\end{equation}
for all $\xi$. So using a similar argument to the proof of \cite[Theorem~1.2]{RaoPreprintlipschitz}, equating exponential terms and coefficients gives $M_0=M'_0$. Also, $N_{\ih}^{\frac{\log m}{\log n}} = (N'_{\ih})^{\frac{\log m}{\log n'}}$, so 
\begin{equation}\label{eq:rational3} 
N_{\ih}' =  N_{\ih}^{\frac{\log n'}{\log n}} \qquad \mbox{for all} \quad \ih \in \{1,\dotsc,M_0\}. 
\end{equation}
Equating corresponding exponential bases in~\eqref{eq:rational2}, applying~\eqref{eq:rational3} and using that the carpet has non-uniform fibres (so not all $N_{\ih}$ are equal) shows that $n=n'$. %
In particular, $\frac{\log n'}{\log n} \in \mathbb{Q}$, as required. 
\end{proof}

\begin{proof}[Proof of Theorem~\ref{t:grid}]
\eqref{i:gridonecarpet}
The equality $\log n / \log m = \log n' / \log m'$ follows from observing the phase transitions of the Assouad spectrum (see~\cite[Theorem~3.3]{Fraser2018secondassouad}) or intermediate dimensions. The fact that $\log n / \log n' \in \mathbb{Q}$ follows from Lemma~\ref{lem:multifractallemma}. 

\eqref{i:gridtwocarpets}
The forward implication follows from~\eqref{i:gridonecarpet}, and the backward implication holds by iterating the IFSs (recalling the discussion after the statement of Theorem~\ref{t:grid}). 
\end{proof}

We now prove Theorem~\ref{thm:multifractal}. 
Since the intermediate dimension and multifractal spectra obviously do not depend on the grid on which the carpet is defined, in light of Lemma~\ref{lem:multifractallemma} and Theorem~\ref{t:grid} it suffices to assume henceforth that both carpets are defined on the same $m \times n$ grid to begin with. 
We already mentioned that the equivalence of~\eqref{itemnow2} and~\eqref{itemnow5} was proved in~\cite[Theorem~1.2]{RaoPreprintlipschitz}. To complete the proof, we show that $\eqref{item1}\Rightarrow\eqref{itemnow3}\Rightarrow\eqref{item4}\Rightarrow\eqref{itemnow5}\Rightarrow\eqref{itemlq}\Rightarrow\eqref{itemnow5}\Rightarrow\eqref{item4}\Rightarrow\eqref{item1}$. Of these, the implication $\eqref{item1}\Rightarrow\eqref{itemnow3}$ is obvious. 

\begin{proof}[Proof of $\eqref{itemnow3}\Rightarrow\eqref{item4}$]
Assume $\dim_{\, \theta}\Lambda = \dim_{\, \theta}\Lambda'$ on the open interval $(a,b)\subset [\gamma^{-1},1]$. After rearranging the formula in Theorem~\ref{thm:main} for $\dim_{\, \theta}\Lambda$ in the case $L=1$, we obtain that
\begin{equation*}
\dim_{\, \theta}\Lambda=\dim_{\mathrm B}\Lambda - \Big( \frac{1}{\theta}-1 \Big) \frac{I\big(t_1(\dim_{\, \theta}\Lambda)\big)}{\log n}.
\end{equation*}
By Corollary~\ref{cor:allprop} part~\eqref{itemi}, $\dim_{\, \theta}\Lambda$ and $\dim_{\, \theta}\Lambda'$ are real analytic on $(\gamma^{-1},1)$, hence $\dim_{\, \theta}\Lambda = \dim_{\, \theta}\Lambda'$ on the whole interval $[\gamma^{-1},1]$. In particular, $\dim_{\mathrm B}\Lambda=\dim_{\mathrm B}\Lambda'$, so 
\begin{align*}
I\big(t_1(\dim_{\, \theta}\Lambda)\big) &= I'\big(t'_1(\dim_{\, \theta}\Lambda')\big) \\
&= I'\Big(\Big(\dim_{\, \theta}\Lambda'-\frac{\log (M' M/M)}{\log m'}\Big)\log n'\Big) \\
&= I'\Big( \underbrace{\Big(\dim_{\, \theta}\Lambda-\frac{\log M}{\log m}\Big)\log n}_{=t_1(\dim_{\, \theta}\Lambda)} -\gamma \log(M'/M) \Big).
\end{align*}
Setting $t=t_1(\dim_{\, \theta}\Lambda)$, we see that 
\[ I(t)=I'(t-\gamma\log(M'/M)) \qquad \mbox{ for all } t \in (t_1(\dim_{\, a}\Lambda),t_1(\dim_{\, b}\Lambda)).\] Since the rate function is analytic,~\eqref{item4} follows. 
\end{proof}

\begin{proof}[Proof of $\eqref{item4}\Rightarrow\eqref{itemnow5}$]
Assume $I(t) = I'(t-\gamma \log(M'/M))$ on an open interval of $t$. Without loss of generality we assume that $M'\geq M$. Using definition~\eqref{eq:22} of the rate function,
\begin{equation*}
I'(t-\gamma \log(M'/M))=\sup_{\lambda\in \R} \bigg\{\lambda t - \log\bigg( \frac{(M'/M)^{\gamma \lambda}}{M'} \sum_{\ih=1}^{M'_0} R'_{\ih} (N'_{\ih})^{\lambda}\bigg)\bigg\}.
\end{equation*} 
Since $I$ and $I'$ are convex functions, their Legendre transforms must agree on an open interval, implying that
\begin{equation}\label{eq:65}
\frac{1}{M} \sum_{\ih=1}^{M_0} R_{\ih} N_{\ih}^{\lambda} = \frac{1}{M'} \sum_{\ih=1}^{M'_0} R'_{\ih} \big((M'/M)^{\gamma }N'_{\ih}\big)^{\lambda}
\end{equation}
on an open interval of $\lambda$. 

From here the proof follows the idea of the proof of \cite[Theorem~1.2]{RaoPreprintlipschitz}. Taking the $k$-th derivative of both sides of~\eqref{eq:65} with respect to $\lambda$ gives
\begin{equation}\label{eq:66}
\frac{1}{M} \sum_{\ih=1}^{M_0} R_{\ih} N_{\ih}^{\lambda}\cdot (\log N_{\ih})^k = \frac{1}{M'} \sum_{\ih=1}^{M'_0} R'_{\ih} \big((M'/M)^{\gamma }N'_{\ih}\big)^{\lambda} \cdot \big(\log( (M'/M)^{\gamma }N'_{\ih})\big)^k.
\end{equation}
Recall that the $N_{\ih}$ and $N'_{\ih}$ are ordered in decreasing order. Since~\eqref{eq:66} holds for all $k$, the largest term on either side must be equal, so $N_{1}/N'_{1} = (M'/M)^{\gamma}$, and also its coefficient
\begin{equation*}
\frac{R_{1} N_{1}^{\lambda}}{M} = \frac{R'_{1} \big((M'/M)^{\gamma }N'_{1}\big)^{\lambda} }{M'} \;\Longleftrightarrow\; \frac{R'_{1}}{R_{1}} = \frac{M'}{M}\cdot \left(\frac{M}{M'}\right)^{\gamma\lambda} \cdot \left(\frac{N_1}{N'_1}\right)^{\lambda} =  \frac{M'}{M}.
\end{equation*}
After subtracting these terms from both sides of~\eqref{eq:66}, we repeat the argument for the next largest term and so on. If $M_0\neq M'_0$ then after $\min\{M_0,M'_0\}$ steps one side would be 0 and the other non-zero, a contradiction. Hence, we conclude that $M_0= M'_0$ and $N_{\ih}/N'_{\ih}=(R'_{\ih}/R_{\ih})^{\gamma}= (M'/M)^{\gamma}$ for all $\ih=1,\dotsc,M_0$.
\end{proof}

\begin{proof}[Proof of $\eqref{itemnow5}\Rightarrow\eqref{itemlq}$]
If~\eqref{itemnow5} holds, then substituting $R_{\ih}' = R_{\ih}M'/M$ and $N_{\ih}' = N_{\ih}(M'/M)^{-\gamma}$ gives $N' = N(M'/M)^{1-\gamma}$. Substituting into~\eqref{e:lqformula} gives $T_{\nu}(q) = T_{\nu'}(q)$. 
\end{proof}

\begin{proof}[Proof of $\eqref{itemlq}\Rightarrow\eqref{itemnow5}$]
Equating the constant term and coefficient of $q$ from~\eqref{e:lqformula} gives that 
\[ \sum_{\ih=1}^{M_0'} R_{\ih}' (N_{\ih}')^q = \left(\frac{N'}{N}\right)^{\frac{\log n - q\log n}{\log(n/m)}} \sum_{\ih=1}^{M_0} R_{\ih} (N_{\ih})^q. \]
By the same differentiation argument from~\cite{RaoPreprintlipschitz} that was used in the proof of~$\eqref{item4}\Rightarrow\eqref{itemnow5}$, $M_0 = M_0'$ and $N_{\ih}/N'_{\ih}=(R'_{\ih}/R_{\ih})^{\gamma}= (N'/N)^{\frac{\log n}{\log (n/m)}}$ for all $\ih$. 
Now observing that 
\[ \frac{M'}{M} = \frac{\sum_{\ih=1}^{M_0} R_{\ih}'}{\sum_{\jh=1}^{M_0} R_{\jh}} = \left(\frac{N'}{N}\right)^{\frac{\log n}{\log (n/m)}}\]
 shows that~\eqref{itemnow5} holds. 
\end{proof}

\begin{proof}[Proof of $\eqref{itemnow5}\Rightarrow\eqref{item4}$]
Assume that $M_0= M'_0$ and $N_{\ih}/N'_{\ih}=(R'_{\ih}/R_{\ih})^{\gamma}= (M'/M)^{\gamma}$ for all $\ih=1,\dotsc,M_0$. Then~\eqref{eq:65} holds for every $\lambda\in\mathbb{R}$. Since both sides of~\eqref{eq:66} are strictly positive for $k=2$, both sides of~\eqref{eq:65} are convex functions of $\lambda$. Hence, their Legendre transforms are also equal: 
\begin{equation*}
\sup_{\lambda\in \R} \bigg\{\lambda t - \log\bigg( \frac{1}{M} \sum_{\ih=1}^{M_0} R_{\ih} N_{\ih}^{\lambda}\bigg)\bigg\} = \sup_{\lambda\in \R} \bigg\{\lambda t - \log\bigg( \frac{(M'/M)^{\gamma \lambda}}{M'} \sum_{\ih=1}^{M'_0} R'_{\ih} (N'_{\ih})^{\lambda}\bigg)\bigg\}
\end{equation*}
for all $t$, which is precisely $I(t) = I'(t-\gamma \log(M'/M))$.
\end{proof}

\begin{proof}[Proof of $\eqref{item4}\Rightarrow\eqref{item1}$]
Assume $I(t) = I'(t-\gamma \log(M'/M))$ for every $t\in\mathbb{R}$. We claim that for every $s\in(\dim_{\mathrm H}\Lambda,\dim_{\mathrm B}\Lambda)$,
\begin{equation}\label{eq:67}
t'_{\ell}(s) = t_{\ell}(s) - \gamma \log(M'/M) \quad\text{ for every } \ell\in\N.
\end{equation}
The proof goes by induction on $\ell$. For $\ell=1$, $t_1(s)= \big( s-\log(M'M/M)\log m \big)\log n = t_1(s)-\gamma\log(M'/M)$. Assuming~\eqref{eq:67} for $\ell-1$, we prove for $\ell$:
\begin{align*}
t'_{\ell}(s) &= T'_s(t'_{\ell-1}(s)) \\*
&= \left( s-\frac{\log M'}{\log m} \right)\log n +\gamma I'\left(t_{\ell-1}(s) - \gamma \log\left(\frac{M'}{M}\right)\right) \\
&\stackrel{\eqref{item4}}{=} \left( s-\frac{\log(M'M/M)}{\log m} \right)\log n +\gamma I\big(t_{\ell-1}(s)\big) \\
&= T_s(t_{\ell-1}(s)) - \gamma\log\left(\frac{M'}{M}\right), 
\end{align*} 
which completes the proof of~\eqref{eq:67} since $T_s(t_{\ell-1}(s))=t_{\ell}(s)$. 

From~\eqref{eq:67} and assumption~\eqref{item4} it immediately follows that 
\begin{equation}\label{eq:68}
I(t_{\ell}(s))=I'(t'_{\ell}(s)). 
\end{equation}
Assumption~\eqref{item4} also implies~\eqref{itemnow5}, thus we know that
\begin{equation}\label{eq:69}
N'=\sum_{\ih=1}^{M'_0} R'_{\ih} N'_{\ih} \stackrel{\eqref{itemnow5}}{=} \sum_{\ih=1}^{M_0} \frac{M'R_{\ih}}{M} \cdot N_{\ih} \left( \frac{M}{M'}\right)^{\gamma} = N\left( \frac{M}{M'}\right)^{\gamma-1}. 
\end{equation}
Writing $s'_{\theta}=\dim_{\, \theta}\Lambda'$, using~\eqref{eq:67}, \eqref{eq:68},~\eqref{eq:69} and algebraic manipulations, we obtain
\begin{align*}
0&= -s'_{\theta} \log n + \gamma^L\theta \log N' - (\gamma^L\theta - 1) t'_L(s'_{\theta}) + \gamma(1-\gamma^{L-1}\theta)(\log M' - I'(t'_L(s'_{\theta}))) \\
&= -s'_{\theta} \log n + \gamma^L\theta \log N - (\gamma^L\theta - 1) t_L(s'_{\theta}) + \gamma(1-\gamma^{L-1}\theta)(\log M - I(t_L(s'_{\theta}))).
\end{align*} 
By Theorem~\ref{thm:main}, $\dim_{\, \theta}\Lambda$ is the unique solution to this equation, so $\dim_{\, \theta}\Lambda=\dim_{\, \theta}\Lambda'$. This completes the proof of Theorem~\ref{thm:multifractal}. 
\end{proof}

\subsection*{Acknowledgements}
This work was completed while both authors were based at the University of St Andrews, and supported by a \emph{Leverhulme Trust Research Project Grant} (RPG-2019-034). 
The authors thank Kenneth J. Falconer, Jonathan M. Fraser and Alex Rutar for useful discussions. 
Chapter~5 of AB's PhD thesis~\cite{Banaji2023thesis} is based on this paper, and he thanks his examiners Lars Olsen and P\'eter Varj\'u for helpful comments. 
\vspace{0.2cm}

\section*{References}

\printbibliography[heading=none]%

\end{document}